\title{The de Rham Homotopy Theory and Differential Graded Category}
\date{Department of Mathematics, Faculty of Science, 
Kyoto University.}
\author{Syunji Moriya\footnote{Corresponding address: Department of Mathematics, Faculty of Science, 
Kyoto University, 
Kyoto, 606-8502, Japan.          \   
E-mail adress: \texttt{moriyasy@math.kyoto-u.ac.jp} \ 
Telephone number: 81-075-753-3700 \ 
FAX number: 81-075-753-3711}}
\theoremstyle{plain}
\newtheorem{defi}{Definition}[subsection]
\newtheorem{prop}[defi]{Proposition}
\newtheorem{rem}[defi]{Remark}
\newtheorem{lem}[defi]{Lemma}
\newtheorem{sublem}[defi]{Sub-lemma}
\newtheorem{thm}[defi]{Theorem}
\newtheorem{cor}[defi]{Corollary}
\newtheorem{exa}[defi]{Example}
\newcommand{\mf}[1]{{\mathfrak{#1}}}
\newcommand{\mb}[1]{{\mathbf{#1}}}
\newcommand{\bb}[1]{{\mathbb{#1}}}
\newcommand{\mca}[1]{{\mathcal{#1}}}
\newcommand{\ms}[1]{{\mathsf{#1}}}
\newcommand{\msc}[1]{\mathscr{#1}}
\newcommand{\Z}{\bb{Z}}
\newcommand{\Q}{\bb{Q}}
\newcommand{\R}{\bb{R}}
\renewcommand{\L}{\bb{L}}
\newcommand{\gam}{\Gamma}
\newcommand{\lam}{\lambda}
\newcommand{\set}{\ms{Set}}
\newcommand{\sset}{\mathrm{s}\ms{Set}}
\newcommand{\ssetp}{\mathrm{s}\ms{Set}_*}
\newcommand{\ssetpc}{\mathrm{s}\ms{Set}_*^c}
\newcommand{\ssetpgd}{\mathrm{s}\ms{Set}_*^{gd}}
\newcommand{\cset}{\ms{Set}^{\square}}
\newcommand{\nngc}{\ms{C}^{\geq 0}(k)}
\newcommand{\dgc}{\mathrm{dg}\ms{Cat}^{\geq 0}}
\newcommand{\cldgc}{\mathrm{dg}\ms{Cat}^{\mathrm{cl}}}
\newcommand{\dgclp}{\mathrm{dg}\ms{Cat}^{\mathrm{cl}}_*}
\newcommand{\dgcl}{\mathrm{dg}\ms{Cat}^{\mathrm{cl}}}
\newcommand{\acldgc}{\mathrm{dg}\ms{Cat}^{\mathrm{cl}}_*}
\newcommand{\cldgck}{\mathrm{dg}\ms{Cat}^{\mathrm{cl}}_*}
\newcommand{\clcat}{\ms{Cat}^{\mathrm{cl}}}
\newcommand{\clcatk}{\ms{Cat}^{\mathrm{cl}}_*}
\newcommand{\tann}{\ms{Tan}}
\newcommand{\tannp}{\ms{Tan}_*}
\newcommand{\tannplus}{\ms{Tan}^{+}_*}
\newcommand{\tannplusgd}{\ms{Tan}^{+gd}_*}
\newcommand{\calg}{\mathrm{c}\ms{Alg}}
\newcommand{\dgalg}{\mathrm{dg}\ms{Alg}}
\newcommand{\redeqalg}{\mathrm{dg}\ms{Alg}^{\mathrm{red}}}
\newcommand{\credeqalg}{\mathrm{dg}\ms{Alg}^{\mathrm{red}}_0}
\newcommand{\aredeqalg}{\mathrm{dg}\ms{Alg}^{\mathrm{red}}_{0,*}}
\newcommand{\sprk}{\mathrm{s}\ms{Pr}(k)}
\newcommand{\sprkp}{\mathrm{s}\ms{Pr}(k)_*}
\newcommand{\sprq}{\mathrm{s}\ms{Pr}(\Q)}
\newcommand{\sprqp}{\mathrm{s}\ms{Pr}(\Q)_*}
\newcommand{\sprl}[1]{\mathrm{s}\ms{Pr}(#1)^{\mathrm{loc}}}
\newcommand{\sprkl}{\mathrm{s}\ms{Pr}(k)^{\mathrm{loc}}}
\newcommand{\sprklp}{\mathrm{s}\ms{Pr}(k)^{\mathrm{loc}}_*}
\newcommand{\sprkobj}{\mathrm{s}\ms{Pr}(k)^{\mathrm{obj}}}
\newcommand{\sprkobjp}{\mathrm{s}\ms{Pr}(k)^{\mathrm{obj}}_*}
\newcommand{\sht}{\ms{SHT}}
\newcommand{\shtp}{\ms{SHT}_*}
\newcommand{\Mod}{\ms{Mod}}
\newcommand{\proj}{\ms{Proj}}
\newcommand{\Alg}{\ms{Alg}}
\newcommand{\kalg}{k-\ms{Alg}}
\newcommand{\kaff}{\ms{Aff}_k}
\newcommand{\aff}{\ms{Aff}}
\newcommand{\grp}{\ms{Grp}}
\newcommand{\ho}{\ms{Ho}}
\newcommand{\ob}{\mathrm{Ob}}
\newcommand{\homo}{\mathrm{Hom}}
\newcommand{\aut}{\mathrm{Aut}}
\newcommand{\gl}{\mathrm{GL}}
\newcommand{\autbar}{\underline{\aut}}
\newcommand{\inhom}{\mf{Hom}}
\newcommand{\tdr}{\mathrm{T}_{\mathrm{PL}}}
\newcommand{\tdrk}{T_{PL}(K)}
\newcommand{\tdrl}{T_{PL}(L)}
\newcommand{\cspl}{\mathrm{C}_{\mathrm{spl}}}
\newcommand{\ccube}{\mathrm{C}_{\square}}
\newcommand{\tcube}{\mathrm{T}_{\square}}
\newcommand{\T}{\ms{T}}
\newcommand{\Tss}{\ms{T}^{\mathrm{ss}}}
\newcommand{\rep}{\operatorname{Rep}}
\newcommand{\repg}{\mathrm{Rep}(G)}
\newcommand{\reph}{\mathrm{Rep}(H)}
\newcommand{\loc}{\mathrm{Loc}}
\newcommand{\vect}{\mathrm{Vect}}
\newcommand{\coh}{\mathrm{H}}
\newcommand{\coc}{\mathrm{Z}}
\newcommand{\Hom}{\operatorname{Hom}}
\newcommand{\pibar}{\underline{\pi}}
\newcommand{\cdr}{\mathrm{C}_{\mathrm{PL}}}
\newcommand{\cone}{\operatorname{Cone}}
\newcommand{\ared}{\operatorname{A_{red}}}
\newcommand{\mred}{\mathcal{M}_{\red}}
\newcommand{\mdr}{\mathcal{M}_{\mathrm{PL}}}
\newcommand{\alg}{\mathrm{alg}}
\newcommand{\red}{\mathrm{red}}
\newcommand{\ssim}{\mathrm{ss}}
\renewcommand{\ker}{\operatorname{Ker}}
\newcommand{\coker}{\operatorname{Coker}}
\renewcommand{\hom}{\operatorname{Hom}}
\newcommand{\colim}{\operatorname{colim}}
\newcommand{\ot}{\! \otimes \!}
\newcommand{\bt}{\! \boxtimes \!}
\newcommand{\iso}{\mathrm{iso}}
\newcommand{\wcl}{\mathrm{W_{cl}}}
\newcommand{\M}{\mca{M}}
\newcommand{\uni}{\mb{1}}
\newcommand{\op}{\mathrm{op}}
\newcommand{\id}{\mathrm{id}}
\newcommand{\sch}{\mathrm{sch}}
\newcommand{\N}{\mathcal{N}}
\newcommand{\adr}{\mathrm{A}_{PL}}
\newcommand{\ring}{\mca{O}}
\newcommand{\aloc}{\msc{L}}
\newcommand{\ru}{\mathrm{R}_{\mf{u}}}
\newcommand{\gr}{\mathrm{gr}}
\begin{document}

\maketitle
\begin{abstract}
This paper is a generalization of \cite{moriya}. We develop the de Rham homotopy theory of not necessarily nilpotent spaces. We   use two algebraic objects: \textit{closed dg-categories} and \textit{equivariant dg-algebras}. We see these two  objects correspond in a certain way (Prop.\ref{thmbasic}, Thm.\ref{propcomplete1}). We prove an equivalence between the homotopy category of schematic homotopy types \cite{champs} and a homotopy category of closed dg-categories (Thm.\ref{result4}). We give a description of homotopy invariants of  spaces in terms of   minimal models (Thm.\ref{result3}). The minimal model in this context behaves much like the Sullivan's minimal model.   We also provide some examples. We prove an equivalence between   fiberwise rationalizations \cite{kan} and  closed dg-categories with subsidiary data (Thm.\ref{result5}).
 \end{abstract}
\begin{center}
\textit{Keywords: rational homotopy theory, non-simply connected space, \\
dg-category, schematic homotopy type.} 
\end{center}
\section{Introduction}
In \cite{sul}, Sullivan constructed the correspondence between the rational homotopy types of nilpotent spaces and commutative dg-algebras over rationals, which associates polynomial de Rham algebras to spaces.  In particular, he showed that  homotopy invariants of nilpotent spaces, such as rational homotopy groups, can be derived from the algebras. This Sullivan's theory is called the de Rham homotopy theory.\\
\indent In the non-nilpotent case, as a generalization of the rationalization, the fiberwise rationalization was proposed by Bousfield and Kan (\cite{kan}, see also introductions of \cite{nsrat,moriya}). For this notion, A.G$\acute{\textrm{o}}$mez-Tato, S.Halperin and D.Tanr$\acute{\textrm{e}}$ \cite{nsrat} generalized the Sullivan's result to non-nilpotent spaces. \\
\indent Recently, as another non-nilpotent generalization of the rationalization, the schematization \cite{champs} was introduced by To\"en. While the rationalization is the localization with respect to the rational homology groups, the schematization is a candidate for a localization with respect to all cohomology groups with coefficients in finite rank local systems.   In this paper, we generalize the Sullivan's result for the schematization over a field of characteristic 0. We use two algebraic objects which are generalizations of commutative dg-algebras: \textit{closed dg-categories} \cite{moriya} and \textit{equivariant commutative dg-algebras}.  These two algebraic objects  have different advantages. We establish an equivalence between the homotopy category of schematic homotopy types and a homotopy category of closed dg-categories (Thm.\ref{result4}). We give a description of homotopy invariants of  not necessarily nilpotent spaces in terms of the minimal models of  equivariant dg-algebras (Thm.\ref{result3}).\\

\indent Let $k$ be a field of characteristic 0. A closed dg-category is a $k$-linear dg-category which is equipped with a  closed tensor structure consistent with the differential graded structure (see Def.\ref{defofcldgc}). A typical example of a closed tensor structure is tensors and internal homs on the category of representations of a group. If one views a dg-algebra as a dg-category with only one object, a tensor structure on a dg-category is a natural generalization of commutativity of a dg-algebra.  We also need to consider internal homs. The existence of a model category structure on the category of small closed dg-categories was proved in \cite{moriya}. \\
\indent In \cite{moriya}, a closed dg-category $\tdr(K)$ is defined for a simplicial set (or triangulated space) $K$.  Its objects are finite rank $k$-linear local systems on $K$ and  complexes of morphisms are polynomial de Rham complexes with local coefficients.  This is an analogue of  the dg-category of  flat bundles on a manifold, which was defined by Simpson \cite[section 3]{sim}. We consider $\tdr(K)$ as a generalization  of the polynomial de Rham algebra which contains information of finite dimensional representation of the fundamental group. As we see in \cite{moriya}, when the fundamental group of $K$ is a finite group and $k=\Q$, the construction $K\mapsto \tdr(K)$ is equivalent to the fiberwise rationalization  of $K$. But in general, it is different as we consider only {\em finite rank} local systems. \\
\indent We introduce a special class of closed dg-categories which we call Tannakian dg-categories (Def.\ref{deftannakian}). It is characterized by conditions abstracted from the closed dg-category $\tdr(K)$ of connected $K$. These conditions are stated in terms of  Tannakian theory \cite{dmos} and the completeness of a dg-category \cite[section 3]{sim}. Tannakian theory concerns a duality between  affine group schemes and certain closed tensor $k$-linear categories (see Appendix \ref{proobj}).  The completeness of the dg-category means that information of exact sequences in the category of $0$-th cocycles   determines and is determined by the first cohomology groups of morphisms (see subsection \ref{complete}). Tannakian dg-categories are exactly those which correspond to schematic homotopy types.\\
\indent While closed dg-categories have good functorial and homotopical properties, they are not  suitable for computations. To cover this inconvenience,  we  use a $\pi_1$-equivariant commutative dg-algebra $\ared(K)$ defined as follows ($\pi_1=\pi_1(K)$).   
Let $\pi_1^{\red}$ be the  pro-reductive  completion of $\pi_1$ over $k$ (see subsection \ref{notation}) and $\mca{O}(\pi_1^{\red})$ be its coordinate ring. It  has two actions of $\pi_1$: the left and right translations.  With the right translation, we regard it as a local system on $K$. 
\begin{enumerate}
\item As a complex, $\ared(K)$ is the polynomial de Rham complex with coefficients in the local system $\ring(\pi_1^\red)$. 
\item The multiplication is defined from those of polynomial forms and  the coordinate ring. 
\item The action of $\pi_1$ is defined from the left translation on $\ring(\pi_1^\red)$. 
\end{enumerate}
The importance of this dg-algebra first seemed to be recognized by Deligne and it has been studied by Katzarkov, Pantev,  To\"en \cite{champs, kpt1,kpt2} and Pridham \cite{prid0,prid1,prid2}. We will prove a correspondence between Tannakian dg-categories and equivariant dg-algebras, where $\tdr(K)$ corresponds to $\ared(K)$ (see Prop.\ref{thmbasic}, Thm.\ref{propcomplete1}). The proof is not difficult but this  is very useful. For example, we cannot find  the natural representations of the fundamental group on higher homotopy groups if we  see only the dg-algebra, but these representations appear as  objects of the corresponding dg-category.  Results in the following are obtained by using this correspondence.
\subsubsection{Main results}
We shall state main results. Let $\shtp$ be the category of pointed schematic homotopy types (Def.\ref{defsht}). It is a full subcategory of the category of $\infty$-stacks over $k$, which is characterized by a certain $k$-linearity of homotopy sheaves. The schematization $(K\otimes k)^\sch$ of a simplicial set $K$ is a universal schematic homotopy type  for $K$.  Let $\tannp$ be the category of Tannakian dg-categories with a fiber functor. A fiber functor of a Tannakian dg-category $C$ is a dg-functor from $C$ to the category of finite dimensional vector spaces, which preserves closed tensor structures (Def.\ref{defacldgc}). Let $\ssetp^c$ denote the category of pointed connected simplicial sets.
\begin{thm}[Thm.\ref{thmsht}]\label{result4}
There exists an equivalence of categories $\ho(\shtp)\longrightarrow \ho(\tannp)^\op$ such that the following diagram is commutative up to natural isomorphisms:
\[
\xymatrix{\ho(\shtp)\ar[r]^{\sim}&\ho(\tannp)^\op \\
\ho(\ssetp^c)\ar[u]^{(-\otimes k)^\sch}\ar[ur]_{\tdr}.&}
\]
 Here, $\ho(-)$ denotes the corresponding homotopy category (see subsection \ref{notation}).  The equivalence is induced by a Quillen pair between larger model categories.
\end{thm}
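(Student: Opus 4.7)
The plan is to exploit the correspondence between Tannakian dg-categories and $\pi_1$-equivariant commutative dg-algebras established in Prop.\ref{thmbasic} and Thm.\ref{propcomplete1} in order to transport the problem to a setting where To\"en's description of schematic homotopy types via equivariant dg-algebras \cite{champs} is directly applicable. Concretely, I would first verify that the construction $K\mapsto \tdr(K)$ corresponds, under the Tannakian/equivariant-algebra equivalence, to the equivariant dg-algebra $\ared(K)$ defined in the introduction; once this is in place, the commutativity of the triangle in the statement reduces to the essentially known fact that $\ared(K)$ models the schematization $(K\otimes k)^{\sch}$.

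To promote this object-level correspondence to a functor of homotopy categories, I would enlarge both sides to ambient model categories and construct a Quillen pair between them. On the geometric side I take the model category $\sprkp$ of pointed simplicial presheaves on $k$-affine schemes; on the algebraic side I take the model category $\dgclp$ of pointed closed dg-categories from \cite{moriya}. The right adjoint $\T$ sends a pointed stack $X$ to the closed dg-category of finite-rank local systems on $X$ with de Rham morphism complexes, augmented by the chosen point; on $\ssetp^c$ viewed inside $\sprkp$ this recovers $\tdr$. The left adjoint is obtained by first going through equivariant dg-algebras (using Prop.\ref{thmbasic}/Thm.\ref{propcomplete1}) and then applying the spectrum/realization functor of an equivariant dg-algebra in the sense of To\"en.

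The key technical steps are then: (i) check the Quillen pair at the level of the larger model categories, which amounts to verifying preservation of fibrations and trivial fibrations and can be done on generators; (ii) identify the essential images of the derived functors as precisely $\shtp$ and $\tannp$, the first by the $k$-linearity characterization of homotopy sheaves in Def.\ref{defsht}, the second by the Tannakian/completeness conditions in Def.\ref{deftannakian}; and (iii) show that the unit and counit are weak equivalences on these full subcategories. Step (iii) is the core of the argument: on the algebraic side it uses Tannakian reconstruction together with Thm.\ref{propcomplete1} to reconstruct a Tannakian dg-category from its equivariant dg-algebra; on the geometric side it uses the fact that a pointed schematic homotopy type is reconstructed from its equivariant algebra of functions with coefficients in $\ring(\pi_1^\red)$.

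The main obstacle will be step (iii), specifically verifying that the counit produced by our Quillen pair matches the reconstruction morphism implicit in To\"en's theory, since the two set-ups live a priori in different model categorical frameworks; controlling this requires a careful three-way comparison through the intermediate category of reductively equivariant dg-algebras, making crucial use of the completeness condition to ensure that passing between $\tannp$ and equivariant dg-algebras loses no information about $\coh^1$ and hence about higher homotopy groups. Once this comparison is in place, the compatibility $\T\circ(-\otimes k)^{\sch}\simeq \tdr$ follows because the natural map $K\to (K\otimes k)^{\sch}$ induces an equivalence on categories of finite-rank local systems and on their de Rham cohomology, a property dual to the defining universal property of the schematization.
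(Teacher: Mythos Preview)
Your overall philosophy---reduce to equivariant dg-algebras and then invoke To\"en's description of schematic homotopy types---is the right one, but the specific construction of the Quillen pair you propose has a genuine gap. You write that the left adjoint (closed dg-category $\to$ stack) is ``obtained by first going through equivariant dg-algebras (using Prop.\ref{thmbasic}/Thm.\ref{propcomplete1}) and then applying the spectrum/realization functor''. This cannot work as stated: the assignment $T\mapsto \ared(T)$ is \emph{not functorial}, because a morphism of Tannakian dg-categories need not preserve semisimple objects, and hence need not induce a map of pro-reductive groups (this is exactly the content of Rem.\ref{remnotfunct}). So you cannot build a functor, let alone a Quillen adjoint, by factoring through $\aredeqalg$.

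The paper avoids this by defining both adjoints directly at the level of closed dg-categories, with no detour through equivariant algebras: $\tdr(X)$ is the closed dgc of local systems on the simplicial presheaf $X$ with polynomial de Rham hom-complexes, and its right adjoint is the explicit formula $\langle C\rangle(R)_n=\hom_{\cldgc}(C,\tdr(h_R\times\Delta^n))$. The role of equivariant dg-algebras is then purely computational, not structural: they enter only through Lem.\ref{lempushout} (the pushout square $\Tss(G,k)^c\to\Tss(G,A)^c\to\Tss(e,A)$) and Lem.\ref{lemfiberseq}, which together show that for a pointed schematic homotopy type $X$ the fibration $F\to X\to K(\pi_1(X)^\red,1)$ is taken by $\R\langle\L\tdr(-)\rangle$ to a homotopy fiber sequence. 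This reduces the verification that the unit $X\to\R\langle\L\tdr(X)\rangle$ is an equivalence to the affine-stack case, where it is To\"en's Thm.\ref{thmaffine}. Your step (iii) gestures at ``Tannakian reconstruction'' but does not isolate this fiber-sequence reduction, which is the actual heart of the argument; without it, there is no mechanism to pass from the affine case to the general one.
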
   
It is known that the schematization comes from $\ared(K)$. In fact, it is realized as the homotopy quotient of  topological realization of $\ared(K)$ by $\pi_1^\red$ (see \cite[Cor.3.3]{kpt2}). But this construction is not natural with respect to maps between simplicial sets as the construction $K\mapsto \pi_1(K)^\red$ is  not natural with respect to those which do not preserve semi-simple representations of fundamental groups (see Rem.\ref{remnotfunct}, the naturality part  of the statement of \cite[Cor.3.3]{kpt2} is wrong. the naturality folds only for the subcategory whose morphisms are those which preserve semi-simple ones). The theorem says the schematization and the construction $K\mapsto \tdr(K)$ are naturally equivalent. We prove this using the model structure on the category of closed dg-categories and results of \cite{champs}. We deduce a similar equivalence in the unpointed case  from Thm.\ref{result4}, see Cor.\ref{thmunpointedsht}. \\ 

\indent A feature of the Sullivan's theory is that the minimal model describes the rational homotopy theory of a space in a very transparent way (see \cite[P38, Analogy to topology]{sul} or  \cite[Thm.11.5]{grimor}). For a simply connected space, the indecomposable modules are  dual to the rational homotopy groups and the differentials correspond to rational k-invariants of the Postnikov tower. \\
\indent The correspondence between dg-categories and dg-algebras enables us to  translate homotopy invariants of a space into invariants of minimal algebras and we obtain an analogous description in the non-nilpotent case, as follows.     
  $\ared(K)$  has a minimal model (in the usual sense) with a semi-simple  $\pi_1$-action. Let $\mca{M}$ be such a minimal model of $\ared (K)$.  Let 
\[
V^1,V^2,\dots, V^i,\dots,\ \ \ \ V^i\subset \mca{M}^i
\] be a sequence of semisimple $\pi_1$-modules generating $\mca{M}$ freely as a commutative graded algebra. Of course, $V^i$ is isomorphic to the $i$-th indecomposable module.  As $\mca{M}$ is minimal, $d(V^i)\subset  \M^1\otimes_kV^i\oplus \M(i-1)^{i+1}$, where $d$ is the differential of $\mca{M}$ and $\M(i-1)$ is the dg-subalgebra of $\M$ generated by $\bigoplus_{j\leq i-1}M^j$. So there is a unique decomposition 
\[
d|_{V^i}=d^{\M^1\otimes_k V^i}\oplus d^{\M(i-1)}
\]
consisting of two homomorphisms of $\pi_1$-modules $d^{\M^1\otimes_k V^i}:V^i\to \M^1\otimes_kV^i$ and $d^{\M(i-1)}:V^i\to \M(i-1)^{i+1}$. 
\begin{thm}[Lem.\ref{lemghe}, Thm.\ref{thmfdhtpygr}]\label{result3}
We use the above notations. Put $\pi_i=\pi_i(K)$ for $i\geq 1$. For $i\geq 2$, we consider $\pi_i$ as a $\pi_1$-module by the canonical action. Let $n\geq 2$. Suppose $\pi_1$ is algebraically good (Def.\ref{defalggood}) and $\pi_i$ is of finite rank as an Abelian group for each $2\leq i\leq n$. Then, for each $ 2\leq i\leq n$,
\begin{enumerate}
\item the dual $(V^i)^{\vee}$ is isomorphic to  $(\pi_i\otimes _{\Z}k)^{\ssim}$,  the semisimplification  of $\pi_1$-module $\pi_i\otimes _{\Z}k$,
\item  the  map $d^{\M^1\otimes_k V^i}$ encodes the information of a presentation of $\pi_i\otimes_{\Z}k$ as  successive extensions of irreducible components of  $(\pi_i\otimes_{\Z}k)^{\ssim}$, and
\item the map $d^{\M(i-1)}$ corresponds to the k-invariant (tensored with $k$)  of the $i$-th level of the Postnikov tower  of $K$.
\end{enumerate}
 In particular, the two data $
V^i$ and $d^{\M^1\otimes V^i}$
determine the $\pi_1$-module $\pi_i\otimes_{\Z}k$ up to isomorphisms.
\end{thm}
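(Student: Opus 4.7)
The plan is to reduce the statement to a stage-by-stage comparison of the minimal model $\M$ with the Postnikov tower of the pointed schematization $(K\otimes k)^\sch$. By Prop.~\ref{thmbasic} and Thm.~\ref{propcomplete1}, the $\pi_1$-equivariant minimal commutative dg-algebra $\M$ is equivalent to a minimal presentation of the Tannakian dg-category $\tdr(K)$, which by Thm.~\ref{result4} corresponds to $(K\otimes k)^\sch$. The hypothesis that $\pi_1$ is algebraically good enters precisely here: it is what allows one to identify $\pi_i\bigl((K\otimes k)^\sch\bigr)$ with the $k$-linearisation $\pi_i(K)\otimes_\Z k$ as a $\pi_1^{\red}$-module for each $2\leq i\leq n$.

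With this dictionary in place, I would argue by induction on $i$. Writing $\M(i)$ for the sub-dga generated by $\bigoplus_{j\leq i}V^j$, the inclusion $\M(i-1)\hookrightarrow\M(i)$ is a Hirsch-type (KS-)extension of $\pi_1^{\red}$-equivariant commutative dgas, and under the equivalence it is sent to the pointed Postnikov stage $P_i\to P_{i-1}$ of $(K\otimes k)^\sch$, classified by a k-invariant valued in a cohomology group with coefficients in the representation $\pi_i(K)\otimes k$. The decomposition $d|_{V^i}=d^{\M(i-1)}\oplus d^{\M^1\otimes V^i}$ mirrors the decomposition of this attaching data into a ``k-invariant'' part recording how $P_i$ sits over $P_{i-1}$ and a ``linear'' part recording the $\pi_1$-module structure of the fiber. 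Part~(1) then follows because the indecomposables $V^i$ are Koszul/bar dual to the $i$-th homotopy vector space of $(K\otimes k)^\sch$; since the generators $V^i$ were chosen semisimple, what one recovers on dualisation is the semisimplification $(\pi_i(K)\otimes k)^{\ssim}$. Part~(3) is then the standard identification of $d^{\M(i-1)}$ with the classifying map of the Postnikov stage, carried over equivariantly from the nilpotent Sullivan theory.

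The main obstacle will be part~(2). Here one must show that the component $d^{\M^1\otimes V^i}\colon V^i\to \M^1\otimes V^i$ carries exactly the information needed to present $\pi_i(K)\otimes k$ as successive extensions of irreducible constituents of $(\pi_i\otimes k)^{\ssim}$. I would dualise this map to $(V^i)^\vee\to \M^1\otimes(V^i)^\vee$, interpret $\M^1$ via the de Rham complex with local coefficients built into $\ared$, and then use algebraic goodness of $\pi_1$ to identify the class it carries with an element of $\operatorname{Ext}^1_{\pi_1}\bigl((\pi_i\otimes k)^{\ssim},(\pi_i\otimes k)^{\ssim}\bigr)$. The equation $d^2=0$ on $V^i$, together with minimality and semisimplicity of the generators, should force this class to be the Yoneda class assembling $\pi_i\otimes k$ from its semisimplification. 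The delicate steps are verifying this cohomological identification inside the equivariant cdga category and checking that passing to semisimple generators loses no extension data beyond what is restored by $d^{\M^1\otimes V^i}$; this is where the Tannakian correspondence of Prop.~\ref{thmbasic}--Thm.~\ref{propcomplete1} must be used in an essential way, since the extension data is visible as morphisms in $\tdr(K)$ but not directly on $\ared(K)$.
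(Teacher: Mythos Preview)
Your inductive outline is the right shape and matches the paper's architecture, but you have misplaced the main difficulty. You call part~(3) ``the standard identification \ldots\ carried over equivariantly from the nilpotent Sullivan theory,'' and this is exactly where the real work sits. The paper does not route through the Postnikov tower of the schematization; it works directly with the topological Postnikov tower $K\to\cdots\to K_{(i)}\to K_{(i-1)}\to\cdots$ and proves a $\pi_1^{\red}$-equivariant Hirsch lemma (Lem.~\ref{hirschlem}): for the fibration $K_{(i)}\to K_{(i-1)}$ with fibre $K(\pi_i,i)$, one constructs by hand a map
\[
\varrho:\ \M(i-1)\otimes_{(\alpha,\eta)}\textstyle\bigwedge\bigl((\pi_i\otimes k)^{\ssim\,\vee},i\bigr)\ \longrightarrow\ \ared(K_{(i)})
\]
using a relative cocycle representing the $k$-tensored k-invariant, and shows $\varrho$ is a quasi-isomorphism by a filtered comparison of spectral sequences (the Serre filtration on $\tdr(K_{(i)})$ versus the $\M(i-1)$-adic filtration on the Hirsch extension), carried out in cubical de Rham theory. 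That argument is what simultaneously pins down $(V^i)^\vee\cong(\pi_i\otimes k)^{\ssim}$, the MC-element $\eta$, and the class $[\alpha]$; parts (1)--(3) then drop out of Lem.~\ref{lemghe}. Nothing here is ``standard'': the non-nilpotent twist is that the $E_2$-comparison must be made as a morphism of \emph{closed} dg-categories, and one needs an explicit element $e\in E_2^{0,i}$ coming from the fibre to propagate the $(p,0)$-isomorphism to all $(p,q)$.

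Your proposed detour through $(K\otimes k)^{\sch}$ risks circularity. You invoke ``algebraic goodness allows one to identify $\pi_i((K\otimes k)^{\sch})$ with $\pi_i(K)\otimes_\Z k$ as a $\pi_1^{\red}$-module,'' but the module structure (not just the underlying vector space) and the matching of k-invariants are precisely among the conclusions being established here; the prior literature you could cite gives only the vector-space statement under stronger hypotheses. If you want to make the schematization route honest, you still need an independent argument that the stages $\T\M(i)$ realise the Postnikov truncations of $(K\otimes k)^{\sch}$ with the correct attaching maps, and that argument will reduce to the same equivariant Hirsch lemma.

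Finally, your treatment of part~(2) is too coarse. The datum $d^{\M^1\otimes V^i}$ is not a single class in $\operatorname{Ext}^1_{\pi_1}\bigl((\pi_i\otimes k)^{\ssim},(\pi_i\otimes k)^{\ssim}\bigr)$; dualised it is a Maurer--Cartan element $\eta$ on $W=(V^i)^\vee$ in $\Tss\M$, i.e.\ an object $(W,\eta)\in\T\M$. The point, via Thm.~\ref{propcomplete1} and Lem.~\ref{lemghe}, is that under the equivalence $\coc^0\T\M\simeq\rep(\pi_1)$ this object \emph{is} $\pi_i\otimes k$; the upper-triangular form of $\eta$ then exhibits the iterated extension by irreducibles. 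An $\operatorname{Ext}^1$ class alone cannot recover a module from its semisimplification when the composition series has length $>2$, so your formulation would not suffice.
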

  As examples of algebraically good groups, finitely generated free groups, finitely generated  Abelian groups, fundamental groups of Riemann surfaces are known (see \cite[subsection 4.3]{kpt2}). For other examples, see \cite[Thm.4.16]{kpt2} and Thm.\ref{thmext}. We  prove Thm.\ref{result3} by an argument similar to the proof of Hirsch Lemma \cite[Thm.11.1]{grimor}.\\
\indent   We can also recover cohomology groups of any finite rank $k$-local coefficients easily from the minimal model. For a semisimple local system or equivalently, a semi-simple representation $V$ of $\pi_1$, it is \\
$\coh^*((\M\otimes V)^{\pi_1})$, the cohomology group of the complex of invariants of $\M\otimes V$. For a general finite dimensional representation $V$, we need to twist the complex $(\M\otimes V^\ssim)^{\pi_1}$ by some Maurer-Cartan element before we take cohomology ($V^\ssim$ denotes the semi-simplification of $V$).  
  
  \subsubsection{Examples}
We shall provide a simple example deduced from Thm.\ref{result3}.
\begin{exa}[Example \ref{exaknmn}] Suppose $k$ is algebraically closed. Let $n\geq 2$ be an integer and  $M$ be a finite rank abelian group with $\Z$-action. Let $K=K(\Z,M,n):=K(\Z\ltimes  K(M,n-1),1)$. Here, $K(M,n-1)$ is an  Eilenberg Maclane space realized as simplicial abelian group with the induced action of $\Z$. Let $g\in \gl (M\otimes_ {\Z}k)$ be  the action of the generater $1$ of $\Z$ and $g =g^s+g^n$ be a Jordan decomposition, where $g^s$ is semi-simple and $g^n$ is nilpotent. Then, 
\[
V^i=
\left\{ 
\begin{array}{cc}
k\cdot s& (i=1)\\
((M\otimes_{\Z} k)^{\ssim})^{\vee}& (i=n)\\
0   & (otherwise)
\end{array}\right.
\]   
and $d(s)=0$, $d(x)={}^tg^n(x)\cdot s$ for $x\in V^n$. Here, $(M\otimes_{\Z}k)^\ssim$ is the vector space $M\otimes_\Z k$ on which $1\in\Z$ acts by the semisimple part $g^s$, and $\Z$ acts  on $V^1$ trivially. 
\end{exa}
We  give an explicit description of the  model  of   components of a free loop space  (Prop.\ref{proploop}) under the assumption that the component contains loops which represent an element of the center of the fundamental group. We also present a model of cell attachment (Example \ref{exacell}). Models of these topological constructions are not known for the formulation of \cite{nsrat}.  By a method similar to the proof of Thm.\ref{result3}, we will prove that for a nilpotent simplicial set $K$ of finite type, the minimal model of $\ared(K)$ is isomorphic to the Sullivan's minimal model with trivial $\pi_1$-action (Thm.\ref{thmnilpotent}). We also provide a description of minimal model  of a classifying space of a group which is an extension of given group by an abelian group (Thm.\ref{thmext}).\\
\subsubsection{ Equivalence with the fiberwise rationalization} 
In general, it is impossible to recover the fundamental group  from the closed dg-category. The best thing we can obtain is the pro-algebraic completion (\cite{hm}, see also subsection \ref{notation}). So we cannot expect the closed dg-category corresponds to the fiberwise rationalization of a space. Instead, we prove an equivalence between the fiberwise rationalization and the closed dg-category with subsidiary data.  We say a pointed connected simplicial set  is algebraically good if its fundamental group is algebraically good (see Def.\ref{defalggood}) and each higher homotopy groups are finite dimensional $\Q$-vector spaces.  Let $\rep(\gam)$ denotes the category of finite dimensional $k$-linear representation of a discrete group $\gam$.
\begin{thm}[Thm.\ref{thmspace}]\label{result5}
There exists a category $\tannplusgd$ whose objects are triples $(T,\gam,\phi)$ consisting of an object $T\in\tannp$,  an algebraically good group $\gam$, and an equivalence $\phi:\coc^0T\to \rep (\gam)$ of closed $k$-categories with a fiber functor. To a simplicial set $K$ whose fundamental group is algebraically good and whose higher homotopy groups are of finite rank, we can assign an object $(\tdr(K),\pi_1(K),\phi_K)\in\tannplusgd$. This construction induces an equivalence between the homotopy category of algebraically good spaces and the homotopy category of $(\tannplusgd)^\op$.  Under this equivalence $(\tdr(K),\pi_1(K),\phi_K)$ corresponds to the fiberwise rationalization of $K$.
\end{thm}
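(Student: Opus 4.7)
The plan is to build the functor $\Phi\colon K \mapsto (\tdr(K),\pi_1(K),\phi_K)$ explicitly, and then exhibit it as an enhancement of the equivalence of Theorem \ref{result4}, where the subsidiary data $(\gamma,\phi)$ is designed to recover the discrepancy between the pro-reductive completion $\pi_1^{\red}$ (which is all that $\tdr(K)$ can see intrinsically) and the honest fundamental group $\pi_1(K)$ (which is preserved by the fiberwise rationalization).

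First I would construct $\phi_K$. An object of $\coc^0\tdr(K)$ is a finite-rank $k$-linear local system on $K$, and after choosing the basepoint the monodromy representation assembles into a functor $\phi_K\colon \coc^0\tdr(K)\to\rep(\pi_1(K))$. A direct check shows $\phi_K$ is an equivalence of closed $k$-linear categories intertwining the fiber functors ``take the fiber at the basepoint''. Functoriality on the homotopy category follows from the functoriality of $\tdr$ proved in \cite{moriya} together with the naturality of monodromy; algebraic goodness of $K$ guarantees that $\pi_1(K)$ is of the required type and that the higher homotopy groups extracted from $\tdr(K)$ via Theorem \ref{result3} are finite-dimensional $\Q$-vector spaces, so $\Phi$ lands in $\tannplusgd$.

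To exhibit an inverse and verify essential surjectivity, combine Theorem \ref{result4} with the standard dictionary between schematic and fiberwise rationalizations. Given $(T,\gamma,\phi)\in\tannplusgd$, apply the inverse of Theorem \ref{result4} to obtain a schematic homotopy type $|T|\in\shtp$, whose fundamental group sheaf is the pro-reductive-completion-type affine group scheme tautologically attached to $T$. The datum $(\gamma,\phi)$ determines a homomorphism from the discrete group $\gamma$ to this affine group scheme, hence a map $B\gamma\to|T|_{\leq 1}$ on $1$-truncations, and one forms the homotopy pullback of $|T|\to|T|_{\leq 1}$ along $B\gamma\to|T|_{\leq 1}$; this yields a pointed connected simplicial set whose fundamental group is $\gamma$ and whose higher homotopy groups are the finite-dimensional $\Q$-modules predicted by Theorem \ref{result3}. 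Algebraic goodness of $\gamma$ is exactly what makes this pullback behave well: the canonical map from $\gamma$ into its pro-reductive completion is ``cohomologically faithful'' on finite-dimensional representations, so the pullback preserves all homotopy invariants. Compatibilities checked levelwise show $\Phi$ is fully faithful on the homotopy category as well.

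Finally, I would identify the object produced by this inverse with the fiberwise rationalization. By \cite[Cor.3.3]{kpt2}, $(K\otimes k)^{\sch}$ is the homotopy quotient by $\pi_1(K)^{\red}$ of the topological realization of $\ared(K)$, and the fiberwise rationalization $K^{\mathrm{fr}}_{\Q}$ of Bousfield--Kan fits in a fibration $\widetilde{K}_{\Q}\to K^{\mathrm{fr}}_{\Q}\to B\pi_1(K)$ whose fiber is the rationalized universal cover. Pulling back the fibration $(\widetilde{K}\otimes k)^{\sch}\to (K\otimes k)^{\sch}\to B\pi_1(K)^{\red}$ along $B\pi_1(K)\to B\pi_1(K)^{\red}$ produces, under the algebraic goodness hypothesis, exactly $K^{\mathrm{fr}}_{\Q}$ (over $k$), because the higher homotopy sheaves of the schematization and those of the fiberwise rationalization coincide as $\pi_1$-modules. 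The main obstacle will be showing that this pullback construction is natural and compatible with the inverse constructed in the second paragraph; this will require a Postnikov-stage induction parallel to the proof of Theorem \ref{result3}, using algebraic goodness at each stage to control $\pi_1$-cohomology of finite-dimensional coefficients and thereby match the $k$-invariants on the two sides.
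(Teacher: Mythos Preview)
Your strategy is the paper's: build $\tdr^+(K)=(\tdr(K),\pi_1(K),\phi_K)$, construct an inverse by pulling back a realization of $T$ along a map from $B\gam$, and verify the unit is an equivalence by Postnikov induction (Lem.~\ref{hirschlem}, Lem.~\ref{lemhtpygr}). The paper does this more directly than you propose: rather than routing through Thm.~\ref{result4}, it works in $\ssetp$, defining $\R\langle T,\gam,\phi\rangle$ as the strict pullback of $\langle QT\rangle\to B\Pi_1\langle QT\rangle$ along $B\gam\to B\Pi_1\langle QT\rangle$, and sets up an honest adjunction $(\tdr^+,\R\langle-,-,-\rangle)$ using Lem.~\ref{lemhomotopic} to identify homotopy classes. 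Your detour through $\shtp$ is not wrong, but it adds a layer you then have to undo by taking $k$-points.

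There are two genuine errors in your description that you should fix. First, the fundamental group sheaf of the schematic homotopy type $|T|$ is the full pro-\emph{algebraic} group $\gam^{\alg}$, not its reductive quotient (see Cor.~\ref{corsht}); the datum $\phi$ gives $\gam\to\gam^{\alg}(k)$, and the pullback over the $1$-truncation $|T|_{\leq 1}\simeq K(\gam^{\alg},1)$ is what you want. Second, and more seriously, your final paragraph does not work as written: the fiber of the fibration $(K\otimes k)^{\sch}\to K(\pi_1^{\red},1)$ from \cite[Cor.~3.3]{kpt2} is the affine stack $|\ared(K)|$, whose $\pi_1$ is $\ru(\pi_1^{\alg})$, not the schematization of the universal cover; pulling back along $B\pi_1\to B\pi_1^{\red}$ therefore yields a space with fundamental group $\ru(\pi_1^{\alg})\rtimes\pi_1$, which is not $\pi_1$ and hence not the fiberwise rationalization. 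The paper avoids this by pulling back over $B\Pi_1\langle QT\rangle$, where the map from $\langle QT\rangle$ is a $\pi_1$-isomorphism by construction, so the pullback trivially has $\pi_1=\gam$; the identification with fiberwise rationalization then follows immediately from the Postnikov argument already used for part (1), with no separate comparison needed.
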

\subsubsection{Relation with other works}  
We shall mention the relation with other works: \cite{champs,kpt2} and \cite{prid0}. The schematic homotopy types and schematization  are defined over any field of any characteristic. We use results of \cite{champs,kpt2} in this paper. In \cite{prid0}, another kind of algebraic models of  spaces is proposed. They are called pro-algebraic homotopy types and realized as certain simplicial affine group schemes. Schematic homotopy types and pro-algebraic homotopy types are closely related. A pro-algberaic homotopy types are considered as a relative pro-unipotent completion of a schematic homotopy type and these two objects are equivalent on those which come from spaces (see \cite[Cor.3.57]{prid0}).  Some of the results of this paper were proved earlier by To\"en and Pridham (see \cite{champs,kpt2} and \cite{prid0}). For example, in the notation of Thm.\ref{result3}, it was proved in \cite{prid0} that $(V^i)^\vee$ is isomorphic to $\pi_i\otimes k$ as vector spaces under a  bit stronger assumption,  see \cite[Thm.1.58, Rem.4.43]{prid0}. A feature of our approach is that the closed dg-categories are nearer to the equivariant dg-algebras than other  models. In fact, the correspondence between dg-categories and dg-algebras is very clear and so we can obtain  descriptions of the action of  fundamental group on  homotopy groups and k-invariants   as in Thm.\ref{result3} and prove fundamental theorems \ref{thmnilpotent} and \ref{thmext}. These are new results.


\subsubsection{Organization of the paper}
In  section \ref{preliminaries}, we mainly gather  definitions and results from other papers. We recall basic properties of the completion of dg-categories from \cite{sim}. We see that the completion, slightly modified, fits in the context of  closed dg-categories. \\
\indent In  section \ref{dgcdga}, we prepare some technical results  to prove  results in sections \ref{derhamhtpy} and \ref{sht}.    In \ref{tandgc}  we introduce  the notion of Tannakian dg-cateogries.   In \ref{comparison}, we compare Tannakian dg-categories and equivariant dg-algebras and deduce  some lemmas. In the statements and proofs of these lemmas, we use the model category structure on the category of closed dg-categories (see Thm.\ref{thmmodelcldgc}). We also use internal homs. We  introduce  a notion of iterated Hirsch extensions of dg-algebras.  In \ref{derhamfunct}, we define the functor $\tdr$, which we call the generalized de Rham functor, and prove that $\tdr(K)$ comes from $\ared(K)$. All arguments in this section are elementary except for the languages of model categories.\\
\indent In section \ref{derhamhtpy},  we recall the notion of algebraically goodness of a discrete group and prove Thm.\ref{result3} and \ref{result5}. We also provide some examples. For the proof of Thm.\ref{result3}, we mainly follow the method of \cite{grimor} and justify a technical part by using the model category structure on cubical sets in \cite{cisinski}, see Appendix \ref{derhamthm}. In the proofs of some results in this section, we use results of the next section \ref{sht}. \\
\indent In section \ref{sht}, we prove Thm.\ref{result4}. In \ref{definitions}, we recall the notions of schematic homotopy types and schematization from \cite{champs} and define a functor from the category of simplicial presheaves to the category of closed dg-categories. This is an analogue of the generalized de Rham functor (and denoted by $\tdr$, too). As for  logical order, section \ref{sht} is previous to section \ref{derhamhtpy}. \\   
\indent In Appendix, we show some variants of the polynomial de Rham theorem and summarize the Tannakian theory of \cite{dmos}.\\ 
\subsection{Notations and terminologies}\label{notation}
We fix a field $k$ of characteristic 0.  $\Q$ denotes the field of rational numbers. All differential graded objects are assumed to be defined over $k$ and {\em non-negatively cohomologically graded}. 
We denote by $\nngc$ the category of non-negatively cohomologically graded complexes and chain maps. By dg-algebra, we mean commutative dg-algebra (with non-negative cohomological grading). Dg-algebra and dg-category are abbreviated to dga and dgc, respectively. We denote by $\vect$ the category of finite dimensional $k$-vector spaces and $k$-linear maps. More precisely, it denotes a suitably small full subcategory, see the paragraph which precedes Def.\ref{defacldgc} for definition.   We use the notations and terminologies in \cite[1.1]{moriya}. In particular, for a category (resp. a dg-category) $C$, $\ob(C)$ denotes the set of objects of $C$ and $\hom_C(c_0,c_1)$ denotes the set of morphisms (resp. the complex of morphisms) between two objecs $c_0$ and $c_1\in\ob(C)$. $\dgc$ denotes the category of small non-negatively cohomologically graded dg-categories and dg-functors. For a dg-category $C$, $\coc^0C$ denotes the category of $0$-th cocycles of $C$. Its objects are the same as objects of $C$, and its morphisms are morphisms of $C$ which are cocycles of degree 0. For two dgc's $C$ and $D$, $C\boxtimes D$ denotes a dgc whose objects are pairs $(c,d)$ of $c\in \ob(C)$ and $d\in\ob(D)$, and whose complexes of morphisms are tensors of those of $C$ and $D$  (see \cite{moriya}). If a dga $A$ is free as a graded commutative algebra, i.e., it is the tensor of the symmetric algebra generated by even degree generators and the wedge algebra of odd degree ones, we sometimes write $A=\bigwedge(V^i)$ where $V^i$ denotes a module of generators of degree $i$.\\

\indent All schemes are assumed to be defined over $k$. For an affine scheme $X$, we denote by $\ring(X)$ the coordinate ring of $X$. We always identify a finite dimensional $k$-vector space with an affine additive group scheme in the obvious way. \\
\indent Let $\gam$ be a discrete group and $G$ be an affine group scheme.  The term, $\gam$-module or $\gam$-representation represents the same thing.  We always identify  $G$-modules (or $G$-representations) with $\ring(G)$-comodules (see \cite[P.126]{dmos}). $\rep^{\infty}(\gam)$ (resp. $\rep^{\infty}(G)$) denotes the category of possibly infinite dimentional $\gam$-modules (resp. $G$-modules) over $k$ and  $\rep(\gam)$ (resp. $\rep(G)$) denotes the full subcategory of $\rep^{\infty}(\gam)$ (resp. $\rep^{\infty}(G)$) consisting of finite dimensional objects. For a technical reason, we need to make the categories $\rep(\gam)$ and $\rep(G)$ suitably small. See see the paragraph which precedes Def.\ref{defacldgc} for the precise definition. 
$\ring(G)$ has two natural $G$-action: the right and left translations. We denote by $\ring(G)_r$ (resp. $\ring(G)_l$) $\ring(G)$ considered as a $G$-module by the right (resp. left) translation. We say a representation (of a discrete group or a group scheme) is semisimple if it can be decomposed into a direct sum of irreducible representations. We say an affine group scheme is {\em pro-reductive} (or simply, {\em reductive}) if any of its representations is semi-simple. Any affine group scheme $G$ has a decomposition:
\[
G\cong \ru(G)\rtimes G^\red,
\]  
where $\ru(G)$ is the pro-unipotent radical of $G$, and $G^\red=G/\ru(G)$ is pro-reductive (see \cite{prid3} for details). 
Representations of $G^\red$ are in one to one correspondence with semi-simple representations of $G$ via the pullback by the projection $G\to G^\red$. \\
\indent  The {\em pro-algebraic completion of} $\gam$ (\cite{hm}), denoted by $\gam^\alg$, is an affine group scheme over $k$ with a group homomorphism $\psi_\gam:\gam\to\gam^\alg(k)$, where $\gam^\alg(k)$ denotes the group of $k$-valued points of $\gam^\alg$, such that finite dimensional $k$-linear representations of $\gam$ are in one to one correspondence with finite dimensional representations of $\gam^\alg$ via the pullback of action by $\psi_\gam$. 
We put $\gam^\red:=(\gam^\alg)^\red$ and call it the {\em pro-reductive completion of} $\gam$. Finite dimensional representation of $\gam^\red$ are in one to one correspondence with finite dimensional semi-simple $k$-representation of $\gam$ via the pullback by $\gam\stackrel{\psi_\gam}{\to}\gam^\alg(k)\to \gam^\red(k)$.         
\\ 

\indent Our notion of model categories is that of \cite{hov}. $\ho(M)$ denotes the homotopy category of a model category $M$. If $M'$ is a full subcategory of $M$ which is stable under weak equivalences, $\ho(M')$ denotes the full subcategory of $\ho(M)$ spanned by $M'$. This is isomorphic to the localization of $M'$ by weak equivalences. $[-,-]_{M'}$ denotes the set of morphisms of $\ho(M')$. $\sset$ (resp. $\ssetp$ denote the category of simplicial sets (resp. pointed simplicial sets). For a group $\gam$, $B\gam$ or $K(\gam,1)$ denotes the simplicial nerve of $\gam$.   
  
  \section{Preliminaries}\label{preliminaries}
 
\subsection{Closed dg-categories}\label{cldgc}
The following is a rewrite of \cite[Def.2.1.1]{moriya}, where we call the same objects closed tensor dg-categories.
\begin{defi}[closed dg-categories, $ \cldgc, \clcat$]\label{defofcldgc}
 \ \\
\textup{(1)} Let $C$ be an object of $\dgc$. A \textup{closed tensor structure} on $C$
is a 11-tuple
\[
((-\ot -),\, \mb{1},\, a,\, \tau ,\, u,\, \inhom,\,  \phi,\,  (-\oplus -),\, s_1,\, s_2,\, \mb{0})
\]
consisting of 
\begin{enumerate}
\item a morphism $(-\ot -):C\bt C\longrightarrow C \in \dgc$,
\item a distinguished object $\mb{1}\in C$,

\item natural isomorphisms 
\begin{align*}
a :((-\ot -)\ot -)&\Longrightarrow  
(-\ot (-\ot -))\,\, \, :(C\bt C)\bt C\cong C\bt (C\bt C)\longrightarrow C,  \\
\tau:(-\ot -)&\Longrightarrow (-\ot -)\circ \mca{T}_{C,C}:C\bt C\longrightarrow C,\\
u:(-\ot \mb{1})&\Longrightarrow id_C\qquad\qquad\ \,:C\longrightarrow C
\end{align*}
satisfying
usual coherence conditions on associativity, commutativity and unity, see \cite[pp.251]{mac},
\item a morphism $\inhom :C^{op}\bt C\longrightarrow C \in \dgc$,
\item a natural isomorphism 
\begin{align*}
\phi :\homo _C(-\ot -,-)\Longrightarrow \homo _C(-&,\inhom (-,-)):C^{op}\bt C^{op}\bt C\longrightarrow \nngc,
\end{align*}
\item a morphism $(-\oplus -):C\times C\longrightarrow C\in \dgc$,
\item two natural transformations 
\[
\xymatrix{P_1\ar@{=>}[r]^{s_1\ \ \ }&(-\oplus -)&
P_2:C\times C\ar@{=>}[l]_{s_2}\ar[r]& C,}
\] where $P_i:C\times C\longrightarrow C$ is 
the $i$-th projection, such that the induced morphism $s_1^*\times s_2^*:\homo _C(c_0\oplus c_1,c')
\longrightarrow \homo_C(c_0,c')\times \homo_C(c_1,c')$ is an isomorphism (i.e., $c_0\oplus c_1$ is a coproduct), and
\item a distinguished object $\mb{0}\in \ob (C)$ such that $\homo _C(\mb{0},c)=0$ for any 
$c\in \ob (C)$. 
\end{enumerate}
We call $(-\otimes -)$ a \textup{tensor functor} and $\inhom$ a \textup{internal hom functor}.\\
\textup{(2)} 
 A \textup{closed  dg-category} is an object $C$ of $\dgc$ equipped with a closed tensor structure.
  For two closed dg-categories $C, D$, a \textup{morphism of closed dg-categories} is a morphism $F:C\rightarrow D$ 
of dgc's which preserves all of the above structures. For example,
$F(c\otimes d)=F(c)\otimes F(d)$ (not only naturally isomorphic), $F(\tau_{c,c'})=\tau_{Fc,Fc'}$ 
and $F(\mb{1})=\mb{1}$.
We denote by $\cldgc$ the category of small closed dgc's.\\
\textup{(3)} A \textup{closed  $k$-category} is a closed dg-category whose complexes of morphisms are concentrated in degree 0 and a \textup{morphism of closed $k$-categories} is the same as a morphism of closed  dg-categories. We denote by $\clcat$ the category of small closed $k$-categories. 
\end{defi}
The important part of this definition is the data concerning $(-\otimes-)$ and $\inhom$. We assume the existence of  coproducts in order to make the initial object of $\cldgc$ be (equivalent to) the closed $k$-categroy of finite dimensional $k$-vector spaces\\

\indent \textbf{Notation.} We set $C(c):=\hom_C(\uni,c)$ for a closed dgc $C$ and an object $c\in\ob(C)$.
\begin{exa}\label{exacldgc}
\textup{(1)} Let $\Gamma$ be a descrete group. The $k$-linear category $\rep (\Gamma)$ of finite dimensional $k$-linear representations has a closed tensor structure. The tensor of two representations is, as usual, the tensor of vector spaces with the diagonal action and the internal hom is similar.\\
 \textup{(2)(The dg-category of flat bundles \cite{sim})} Let $X$ be a $C^\infty$-manifold.
A flat bundle $(V,D)$ on $X$ is a pair of a $C^\infty$-vector bundle $V$ and a flat connection $D:V\to \msc{A}^1_X\otimes V$, where $\msc{A}^1_X$ is the sheaf of $C^\infty$ one-forms on $X$. 
The tensor of two flat bundles $(V, D)$, $(V',D')$ is the pair of the tensor of vector bundles $V\otimes V'$ and the flat connection $D\otimes \id +\id\otimes D'$. The internal hom is similar. \\
\indent The \textup{dg-category $\msc{C}_{dR}$ of flat bundles on} $X$ is defined as follows. Its objects are flat bundles on $X$ and its complex of morphisms between $(V,D)$ and $(V',D')$ is the twisted de Rham complex of forms with coefficients in the internal hom $\inhom((V,D),(V',D'))$. It is easy to see the tensor and the internal hom defined above are extended to a closed tensor structure on $\msc{C}_{dR}$. See \cite{sim} for details. \\
\textup{(3)} The reader may feel the definition of morphism of closed dgc's is non-natural as it does not require natural isomorphisms but equalities.
The motivation of this definition is to ensure that $\cldgc$ is closed under limits and colimits. An example of a morphism of $\cldgc$ is the functor $\rep(\Gamma')\to \rep(\Gamma)$ induced by a group homomorphim $\Gamma\to \Gamma'$.
\end{exa}
We apply the notions of an equivalence and a quasi-equivalence to 
morphisms of $\cldgc$ via the forgetful functor $\cldgc\longrightarrow \dgc$. For example, we say a morphism in $\cldgc$ is an equivalence if it induces an equivalence of underlying categories. Note that an equivalence  in $\cldgc$ does not always have a  
quasi-inverse which is a morphism of $\cldgc$. We say two objects of $\cldgc$ are equivalent if  
they can be connected by a finite chain of equivalences in $\cldgc$.\\ 
\indent We denote the initial object of $\cldgc$ by $\vect$.  $\vect$ is equivalent to the closed $k$-category of all finite dimensional $k$-vector spaces and $k$-linear maps. In fact, $\vect$ is identified with the smallest full subcategory which includes the distinguished objects $\uni$ and $\mb{0}$ and is closed under $\otimes$, $\inhom$, and $\oplus$.\\
\indent In the rest of the paper, we assume a vector space which underlies a finite dimensional representation of a discrete group or an affine group scheme belongs to $\vect$. 
\begin{defi}[closed dg-categories with a fiber functor, $\cldgck, \clcatk$]\label{defacldgc}
\ \\ 
\textup{(1)} The  \textup{category of closed dg-categories with a fiber functor} is  the over category
\[
\cldgc/\vect
\]
and denoted by $\cldgck$. An object $(C,\omega_C)$ of $\acldgc$ consists of a closed dg-category $C$ and a morphism $\omega_C:C\to\vect \in\cldgc$. We call $\omega_C$ the \textup{fiber functor of} $C$. \\
\textup{(2)} A \textup{closed  $k$-category with a fiber functor} is a closed dg-category with a fiber functor whose complexes of morphisms are concentrated in degree 0 and  \textup{morphisms of closed $k$-categories with a fiber functor} are the same as  morphisms of closed tensor dg-categories with a fiber functor. We denote by $\clcatk$ the category of small closed $k$-categories with a fiber functor. \end{defi}
\begin{exa}\label{defrep} 
Let $\gam$ (resp. $G$) be a discrete group (resp. an affine group scheme).  We regard $\rep(\gam)$ (resp. $\rep(G)$) as an object of $\clcatk$ with the forgetful functor to $\vect$.
\end{exa}

The following is proved in \cite{moriya}. 
\begin{thm}[Thm.2.3.2 of \cite{moriya}]\label{thmmodelcldgc}
\textup{(1)}  The category $\cldgc$ has a 
cofibrantly generated model category structure where 
weak equivalences and fibrations are
defined as follows.
\begin{itemize}
\item A morphism $F:C\rightarrow D\in\cldgc$ is a weak equivalence if and only if
it is a quasi-equivalence.
\item A morphism $F:C\rightarrow D\in\cldgc$ is a fibration if and only if it satisfies 
the following two conditions.
\begin{itemize}
\item For $c,c'\in \ob (C)$ the morphism $F_{(c,c')}:\hom _C(c,c')\to \hom _D(Fc,Fc')$ is a levelwise epimorphism. 

\item For any $c\in \ob (C)$ and any isomorphism $f:Fc\to d'\in \coc^0(D)$, there exists an isomorphism 
$g:c\to c' \in \coc^0(C)$ such that $F(g)=f$.
\end{itemize}
\end{itemize}
\textup{(2)} 
$\cldgck$ has a model category structure induced by that of $\cldgc$.
\end{thm}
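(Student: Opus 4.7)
The plan is to transfer the Tabuada model structure on $\dgc$ to $\cldgc$ along the forgetful functor $U:\cldgc\to\dgc$. Tabuada's structure on $\dgc$ has quasi-equivalences as weak equivalences and fibrations characterized by exactly the two conditions in the statement (levelwise surjectivity on hom-complexes plus isomorphism lifting). If $U$ admits a left adjoint $F$ and Kan's transfer theorem applies, then declaring $f\in\cldgc$ to be a weak equivalence (resp.\ fibration) iff $U(f)$ is one in $\dgc$ yields a cofibrantly generated model structure with generators $F(I)$ and $F(J)$, where $(I,J)$ is the generating set for $\dgc$.

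The first step is to construct the left adjoint $F:\dgc\to\cldgc$. Given a small dgc $D$, I would build $F(D)$ by freely adjoining an object for each formal expression in $\otimes,\inhom,\oplus,\uni,\mathbf{0}$ applied to objects already present, together with the structural morphisms ($a,\tau,u$, the unit and counit of the internal-hom adjunction, the coproduct inclusions), and then imposing the relations demanded by Def.\ref{defofcldgc}. Since morphisms of $\cldgc$ preserve this structure strictly, the universal property of such a presentation gives $F\dashv U$. The functor $U$ creates small limits and preserves filtered colimits (the closed structure on a filtered colimit is computed pointwise), which in particular ensures that $F$ sends small objects of $\dgc$ to small objects of $\cldgc$; this verifies the smallness hypothesis in Kan's theorem.

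The main obstacle is the acyclicity condition: one must show that every transfinite composition of pushouts of $F(J)$-maps becomes a quasi-equivalence after applying $U$. Pushouts in $\cldgc$ are more intricate than in $\dgc$ because freely adjoining a generating trivial cofibration interacts with the operations $\otimes$, $\inhom$, and $\oplus$. I would handle this via a functorial path object construction: given $C\in\cldgc$, produce $C^I\in\cldgc$ by taking the analogue of the path object for dgc's (adjoining formal invertible degree-$0$ morphisms together with contracting homotopies between isomorphic objects) and equipping it with the closed structure induced pointwise from $C$ — so that $\otimes$, $\inhom$, and $\oplus$ on $C^I$ are computed componentwise from those on $C$. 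Verifying that the diagonal $C\to C^I$ is a quasi-equivalence and that the evaluation $C^I\to C\times C$ meets the relevant lifting conditions gives a standard path-object argument which supplies the acyclicity condition, completing (1).

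For (2), the statement is just the standard slice-category model structure: for any model category $M$ and any object $X\in M$, the over category $M/X$ inherits a model structure in which weak equivalences, fibrations, and cofibrations are all detected by the forgetful functor $M/X\to M$. Applying this with $M=\cldgc$ and $X=\vect$ gives (2) without further work.
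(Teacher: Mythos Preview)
The paper does not supply a proof of this theorem; it simply records the statement and attributes it to \cite{moriya}. So there is no in-paper argument to compare against, only the student's outline to assess on its own terms.

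Your overall strategy---transfer the Tabuada model structure on $\dgc$ to $\cldgc$ along the forgetful functor $U$, using a free--forgetful adjunction and Kan's lifting theorem---is the natural approach and almost certainly the one taken in \cite{moriya}; the paper's repeated references to explicit generating cofibrations and path objects in $\cldgc$ (e.g.\ in the proof of Lem.~\ref{lemmor}) are consistent with this. Part~(2) is indeed the standard slice-category argument and needs no further comment.

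The one place where your sketch is genuinely thin is the acyclicity step. You propose to build a path object $C^I$ by ``taking the analogue of the path object for dgc's \dots and equipping it with the closed structure induced pointwise from $C$.'' But Tabuada's path object does not keep the same object set as $C$: it freely adjoins new objects (essentially pairs of objects together with a specified isomorphism and a contracting homotopy), so ``pointwise from $C$'' does not specify how $\otimes$, $\inhom$, and $\oplus$ act on those new objects or on morphisms into and out of them. One has to say explicitly what $(c_0,c_1,h)\otimes(c_0',c_1',h')$ is, check that the resulting associativity and symmetry isomorphisms are compatible with the adjoined homotopies, and verify that the factorization $C\to C^I\to C\times C$ consists of maps in $\cldgc$ (strict preservation of all the structure) rather than merely in $\dgc$. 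None of this is deep, but it is exactly the content that distinguishes the $\cldgc$ case from the $\dgc$ case, and your outline does not address it. The alternative route---analysing pushouts along $F(J)$ directly and showing they are quasi-equivalences after applying $U$---runs into the same issue: one must track how the free closed structure interacts with the adjoined acyclic cells. Either way, this is the step that requires actual work beyond invoking Tabuada, and it is the step your proposal leaves open.
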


\subsection{Completeness of dg-category}\label{complete}
We shall recall the notion of completeness of a dg-category from \cite[section 3]{sim}. Let $C$ be a dg-category. An extension in $C$ is a pair of morphisms
\[
c_0\stackrel{a}{\to} c_2\stackrel{b}{\to}c_1
\]
with $a,b\in \homo^0$, $b\circ a=0$ and $d(a)=0$, $d(b)=0$, such that a splitting exists: a splitting is a pair of morphisms of degree 0
\[
c_0\stackrel{g}{\leftarrow}c_2\stackrel{h}{\leftarrow}c_1
\]
such that $ga=id_{c_0}$, $bh=id_{c_1}$ and $ag+hb=id_{c_2}$. We define a morphism $\delta\in\homo^1(c_1,c_0)$ by $\delta = gd(h)$. $d(\delta )=0$ and $\delta$ defines a class  $[\delta]\in H^1(\homo (c_1,c_0))$. It is easy to check that this class is independent of a choice of  splittings. We call $[\delta]$ the  class of the extension $c_0\to c_2\to c_1$. 
\begin{defi}\label{defcomplete}
We say a dg-category $C$ is complete if for each $c_0, c_1\in\ob (C)$ any class in $H^1(\homo (c_1,c_0))$ is a class of some extension.
\end{defi}
\begin{exa}
Let $\msc{C}_{dR}$ be the category of flat bundles defined in Example \ref{exacldgc},(2). As in \cite{sim}, $\msc{C}_{dR}$ is complete. In fact, for a cocycle $\delta\in \homo^1_{\msc{C}_{dR}}((V,D),(V',D'))$, we define a flat bundle $(V'',D'')$ by $V''=V\oplus V'$ and $D''=\left(
\begin{smallmatrix}
D & 0 \\
\delta & D' 
\end{smallmatrix}
\right)$. The sequence $(V',D')\longrightarrow (V'',D'')\longrightarrow (V,D)$ 
is an extension corresponding to $\delta$.
\end{exa}   
We will construct the completion of a dgc $C$ which has finite coproducts. We define a dgc $\overline{C}$ as follows. The objects of $\overline{C}$ are pairs $(c,\eta )$ with $c\in \ob (C)$ and $\eta \in \homo ^1(c,c)$ such that 
\[
d(\eta)+\eta^2=0.
\]
( We call an element $\eta$ satisfying the above equation a {\em Maurer-Cartan (MC) element on} $c$.) 
We set
\[
\homo^n((c,\eta),(c',\eta ')):=\homo^n(c,c')
\] 
and 
\[
d_{\overline{C}}(f):=d_{C}(f)+\eta'\circ f-(-1)^{\deg f}f\circ \eta.
\]
We identify $C$ with the full sub-dg-category of $\overline{C}$ consisting of objects of the form $(c,0)$, $c\in C$ and define the completion $\widehat C$ to be the smallest full sub-dg-category of $\overline{C}$ including $C$ and closed under extensions (and isomorphisms). \\
\indent The following is a rewrite of \cite[Lemma 3.1, Lemma 3.3]{sim}.
\begin{lem}[\cite{sim}]\label{lemcompletion}
\textup{(1)} Let $C$ be a dgc closed under finite coproducts. $\widehat{C}$ is complete. For two objects $(c_0,\eta_0), (c_1,\eta_1)\in\widehat C$, a MC-element corresponding to a cocycle $\alpha\in\coc^1(\hom_{\widehat C}((c_1,\eta_1), (c_0,\eta_2)))$ is given by 
$\left(\begin{array}{cc}
\eta_0&\alpha \\
0&\eta_1
\end{array}\right)$. \\
\textup{(2)} The functor $C\to \widehat C$, $c\mapsto (c,0)$ is initial up to natural isomorphims among dg-functors $C\to D$ with $D$ complete. More precisely,
for such functor $F:C\to D$ there exists a functor $\widehat F:\widehat C\to D$  factorizing $F$ and such $\widehat F$ is unique up to unique natural isomorphisms.\\
\textup{(3)} Let $F:C\to D$ be a quasi-equivalence. The induced functor $\widehat F:\widehat C\to \widehat D$ is also a quasi-equivalence. 
\end{lem}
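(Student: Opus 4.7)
For part (1), the plan is a direct verification of the proposed block formula. Given $\alpha\in\coc^1\hom_{\widehat C}((c_1,\eta_1),(c_0,\eta_0))$, set $c_2:=c_0\oplus c_1$ and let $\eta\in\hom^1(c_2,c_2)$ be the upper-triangular block with $\eta_0,\eta_1$ on the diagonal and $\alpha$ in the upper-right entry. A $2\times 2$ block computation shows $d\eta+\eta^2$ has vanishing diagonal entries (by the Maurer--Cartan condition on $\eta_0,\eta_1$) and off-diagonal entry $d_C\alpha+\eta_0\alpha+\alpha\eta_1$, which vanishes because $\alpha$ is a cocycle for the twisted differential of $\widehat C$; hence $\eta$ is a Maurer--Cartan element. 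The coproduct inclusion $i_0$ and projection $p_1$ are degree-$0$ cocycles in $\widehat C$ since the triangular form of $\eta$ commutes with them, while the complementary splitting $p_0,i_1$ yields $p_0\,d_{\widehat C}(i_1)=p_0(\eta i_1-i_1\eta_1)=\alpha$, identifying the class of the extension with $[\alpha]$ and proving completeness of $\widehat C$.

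For part (2), the cleanest approach is to first build the canonical formal extension $\widetilde F:\widehat C\to\widehat D$ by $(c,\eta)\mapsto (Fc,F\eta)$ on objects and $f\mapsto F(f)$ on morphisms. Because $F$ is a dg-functor it carries the Maurer--Cartan equation and the twisted differentials of $\widehat C$ to those of $\widehat D$, so $\widetilde F$ is well-defined. Next I would show that when $D$ is complete the inclusion $D\hookrightarrow\widehat D$ is a dg-equivalence: it is fully faithful by construction (morphisms in $\overline D$ between objects with trivial twist coincide with morphisms in $D$), and essentially surjective because, for $(d,\mu)\in\widehat D$ presented by iterated extensions, completeness of $D$ realizes each extension by an actual object of $D$, and the splittings assembled as in the proof of (1) produce an isomorphism in $\overline D$ between $(d,\mu)$ and an object of $D$. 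Composing $\widetilde F$ with a quasi-inverse of this equivalence gives the desired $\widehat F:\widehat C\to D$; uniqueness up to unique natural isomorphism is then inherited from the fact that $\widetilde F$ is forced by the defining formula and any two quasi-inverses of an equivalence of dgc's differ by a unique natural isomorphism.

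For part (3), it suffices to prove that $\widetilde F:\widehat C\to\widehat D$ constructed above is a quasi-equivalence whenever $F$ is. For the quasi-isomorphism on morphism complexes, observe that $\hom_{\widehat C}((c_0,\eta_0),(c_1,\eta_1))$ equals $\hom_C(c_0,c_1)$ as a graded module with a differential perturbed by $\eta_0$ and $\eta_1$; using the iterated-extension presentations of $(c_0,\eta_0),(c_1,\eta_1)$, filter by the induced upper-triangular block decomposition, whose associated graded is a direct sum of untwisted $\hom_C$-complexes on which $F$ is a quasi-isomorphism by hypothesis, and conclude by the standard two-filtration spectral sequence argument. For essential surjectivity on $\coc^0$, induct on extension depth of $(d,\mu)\in\widehat D$: at the base step lift $d\in D$ to $c\in C$ via essential surjectivity of $F$; at the inductive step lift the two outer terms of an extension presenting $(d,\mu)$ by the induction hypothesis, then lift the extension class through the just-established isomorphism on $\coh^1$ of morphism complexes, and assemble via part (1). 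The main technical obstacle is organizing the filtration/spectral sequence argument for the twisted morphism complexes compatibly with the extension-depth induction, but since the ambiguity at each stage is controlled by the hypothesis that $F$ is a quasi-isomorphism on the untwisted pieces, the induction closes without further difficulty.
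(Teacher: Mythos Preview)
Your proposal is correct and essentially reproduces Simpson's argument, which is all the paper does here: the lemma is stated as a rewrite of \cite[Lemma 3.1, Lemma 3.3]{sim} with no proof given in the paper itself. The block-matrix verification in (1), the filtration/spectral-sequence argument for hom complexes in (3), and the induction on extension depth for essential surjectivity are exactly the standard approach.

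One small point to tighten in part (2): your uniqueness argument via ``any two quasi-inverses of an equivalence differ by a unique natural isomorphism'' is a bit loose. The cleaner way to get \emph{unique} natural isomorphism is to argue directly by induction on extension depth. If $\widehat F_1,\widehat F_2:\widehat C\to D$ both extend $F$, then on an object $(c,\eta)$ presented as an extension of $(c_1,\eta_1)$ by $(c_0,\eta_0)$, the inductive hypothesis gives unique isomorphisms on the outer terms, and since $\widehat F_1,\widehat F_2$ agree on morphisms (both are $F$ on underlying complexes), they send the extension to extensions in $D$ with the same class; the resulting isomorphism of middle terms is then pinned down by compatibility with the outer isomorphisms. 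This is how Simpson handles it, and it avoids having to justify uniqueness of the quasi-inverse choice. Your route through $\widetilde F$ and the equivalence $D\simeq\widehat D$ gives existence cleanly, but the uniqueness really wants the inductive formulation.
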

\subsubsection{Completion and closed dg-categories}\label{modify}
A closed tensor structure on $C$ induces a closed tensor structure on $\widehat C$ as follows. 
\[
\begin{split}
(c_1,\eta_1)\otimes (c_2,\eta_2)&=(c_1\otimes c_2,\eta_1\otimes \id+\id\otimes \eta_2),\\
\inhom((c_1,\eta_1),(c_2,\eta_2))&=(\inhom(c_1,c_2), \inhom(\id, \eta_2)-\inhom(\eta_1,\id)).
\end{split}
\]
The other structures such as homomorphisms between complexes of morphisms and natural isomorphisms are the same as those of $C$.
If $C$ is a closed dgc, we consider $\widehat C$ as a closed dgc with this induced structure. Note that this closed  dgc does not have a universality like Lem.\ref{lemcompletion} in $\cldgc$. We shall modify $\widehat C$. The modification makes the tensor and the internal hom "free". For example, we want to avoid two objects $(c_0\otimes c_1)\otimes c_2$ and $c_0\otimes (c_1\otimes c_2)$ happen to be equal. \\
\indent Let $D$ be a closed dgc. Let $\wcl(\ob(D)\sqcup \{\uni,\mb{0}\})$ denote the set of the words generated by $\ob(D)\sqcup\{\uni,\mb{0}\}$ with operations $\otimes', \inhom'$ and $\oplus'$ ($\uni$ and $\mb{0}$ are formal symbols, see \cite[sub-subsection 2.2.2]{moriya} for an explicit definition). We regard $\ob(D)\sqcup\{\uni,\mb{0}\}$ as a subset of $\wcl(\ob(D)\sqcup \{\uni,\mb{0}\})$. Let  
\[
R:\wcl(\ob(D)\sqcup\{\uni,\mb{0}\})\longrightarrow \ob(D)\sqcup\{\uni,\mb{0}\}
\]
be the function given by 
\begin{enumerate}
\item $RX=X$ for $X\in\ob(D)\sqcup \{\uni,\mb{0}\}$.

\item $R(X\otimes'Y)=RX\otimes RY$, $R(\inhom'(X,Y))=\inhom(RX,RY)$ and $R(X\oplus'Y)=RX\oplus RY$, inductively.
\end{enumerate}
We define a closed dgc $D^c$ as a "pullback" of $D$ by $R$ :
\[
\ob(D^c)=\wcl(\ob(D)\sqcup\{\uni,\mb{0}\}),\ \ \ \ \hom_{D^c}(X,Y)=\hom_D(RX,RY).
\]
Clearly the construction 
\[
\cldgc\ni D\longmapsto D^c\in\cldgc
\]
is functorial and $R$ induces a natural equivalence $R_D:D^c\longrightarrow D\in\cldgc$.
\begin{lem}\label{lemcompletion2}Let $E\in\cldgc$. Let $C, D\in\cldgc/E$. Suppose   $D$ is complete.\\
\textup{(1)}Suppose $D$ is fibrant in $\cldgc/E$. We regard $C^c$  as an object over $E$ whose augmentation $a_{C^c}:C^c\to E$ is  the composition $\xymatrix{C^c\ar[r]^{R_C}&C\ar[r]& E}$.
Suppose an augmentation $a_{\widehat C^c}:(\widehat C)^c\to E\in\cldgc$ such that $a_{\widehat C^c}\circ (I_C)^c=a_{C^c}$ ($I_C:C\to \widehat C$ is the canonical inclusion) is given. Let $F:C^c\longrightarrow D\in\cldgc/E$ be a morphism. There exists a morphism $\widetilde F:(\widehat{C})^c\longrightarrow D\in\cldgc/E$ such that the following diagram is commutative.
\[
\xymatrix{C^c\ar[r]^F\ar[d]_{(I_C)^c}&D \\
\widehat C^c\ar[ur]_{\widetilde F}&}
\] 
Let $\widetilde F':(\widehat{C})^c\longrightarrow D\in\cldgc/E$ be another morphism with $F=\widetilde F'\circ (I_C)^c$. Then there exists a unique natural isomorphism $\varphi:\widetilde F\Rightarrow \widetilde F'$ preserving tensors, such that $\varphi|_{C^c}$  and $(a_D)_*(\varphi)$ are the identities ($a_D:D\to E$ is the augmentation of $D$).\\
\textup{(2)} Assume the inclusion $E\to \widehat E$ is an isomorphism (i.e., MC-elements in $E$ are all zeros).  The pullback by the canonical functor $C\to \widehat C$ gives a bijection:
\[
[\widehat C,D]_{\cldgc/E}\cong [C, D]_{\cldgc/E}.
\]
Here, the augmentation $\widehat C\longrightarrow E$ is given by $\widehat C\to \widehat E\cong E$. 
\end{lem}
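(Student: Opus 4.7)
For part (1), the plan is to bootstrap the universal property of $\widehat{C}$ at the $\dgc$-level (Lem.\ref{lemcompletion}(2)) to an analogous statement at the $\cldgc$-level, using the freeness of the $(-)^c$ construction to strictify the closed tensor structure. First I restrict $F$ along the natural embedding $J_C:C\hookrightarrow C^c$ sending each $c$ to its length-one word, obtaining a dg-functor $f:=F\circ J_C:C\to D$; this is not closed, but $F$ produces canonical ``comparator'' isomorphisms such as $\mu^F_{c_1,c_2}:f(c_1)\otimes f(c_2)\cong f(c_1\otimes_C c_2)$, obtained by applying $F$ to the iso $c_1\otimes' c_2\cong c_1\otimes_C c_2\in C^c$ induced by $\id\in\hom_C(c_1\otimes c_2,c_1\otimes c_2)$, and analogously for internal hom and coproduct, all satisfying the coherence inherited from $F$.

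Next I apply Lem.\ref{lemcompletion}(2) to extend $f$ to a dg-functor $\widehat{f}:\widehat{C}\to D$, then extend the comparators. For $(c_i,\eta_i)\in\widehat{C}$, both $\widehat{f}(c_1,\eta_1)\otimes\widehat{f}(c_2,\eta_2)$ and $\widehat{f}((c_1,\eta_1)\otimes(c_2,\eta_2))$ realize the same Maurer--Cartan element in $D$ (using the $C$-level comparator), so completeness of $D$ together with the essential uniqueness of extensions yields a canonical iso between them, an extended comparator $\widehat{\mu}$. I then define $\widetilde{F}:\widehat{C}^c\to D$ by: assigning $\widehat{f}$ on length-one words; extending to longer words by strict preservation of the closed tensor structure of $D$; and on a morphism $\alpha\in\hom_{\widehat{C}^c}(X,Y)=\hom_{\widehat{C}}(RX,RY)$, conjugating $\widehat{f}(\alpha)$ by comparator isomorphisms $\widetilde{F}(X)\cong\widehat{f}(RX)$ built inductively from $\widehat{\mu}$. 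Coherence of the comparators makes $\widetilde{F}$ a morphism in $\cldgc$, and $\widetilde{F}\circ(I_C)^c=F$ holds on length-one generators (since $\widehat{f}$ extends $f$) and on all words by strict preservation.

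To land $\widetilde{F}$ in $\cldgc/E$ and to handle uniqueness I invoke the fibrancy of $D\to E$ (Thm.\ref{thmmodelcldgc}(2)) via its iso-lifting property: both $a_D(\widehat{f}(c,\eta))$ and $a_{\widehat{C}^c}((c,\eta))$ are extensions in $E$ of the MC-element $a_C(\eta)$ on $a_C(c)$, hence canonically isomorphic, and this iso lifts along $a_D$ to replace $\widehat{f}(c,\eta)$ by an isomorphic object of $D$ with the correct $E$-image; strict preservation then propagates the match to all of $\widehat{C}^c$. For uniqueness, given $\widetilde{F},\widetilde{F}'$, Simpson's uniqueness (Lem.\ref{lemcompletion}(2)) produces a unique natural iso between their restrictions to $\widehat{C}$ that is the identity on $C$; extension by strict preservation of tensors yields $\varphi:\widetilde{F}\Rightarrow\widetilde{F}'$ on $\widehat{C}^c$ with $\varphi|_{C^c}=\id$, and the condition $(a_D)_*(\varphi)=\id$ combined with iso-lifting pins down the unique representative.

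For part (2), the hypothesis $E\cong\widehat{E}$ makes the Simpson extension $\widehat{a_C}:\widehat{C}\to\widehat{E}\cong E$ automatically a morphism of $\cldgc$ (directly from the explicit formulas for the closed structure on $\widehat{C}$ recalled in subsubsection \ref{modify}), furnishing the augmentation $a_{\widehat{C}^c}:=\widehat{a_C}\circ R_{\widehat{C}}$ that part (1) requires. Combining part (1) with the fact that $R_C$ and $R_{\widehat{C}}$ are equivalences in $\cldgc$, hence weak equivalences in $\cldgc/E$ inducing bijections on $[-,D]_{\cldgc/E}$ for $D$ fibrant, one chains the isomorphisms $[\widehat{C},D]\cong[\widehat{C}^c,D]\cong[C^c,D]\cong[C,D]$. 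The hardest step will be the extension of comparators and the verification that the resulting $\widetilde{F}$ strictly preserves the closed structure on $\widehat{C}^c$: this is a coherence calculation whose success rests on the completeness of $D$ through the uniqueness of MC-realizations, while orchestrating this universality with the iso-lifting of the fibration $D\to E$ is the main technical obstacle.
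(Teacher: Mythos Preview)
Your treatment of part (1) is essentially the paper's argument: use Simpson's extension (Lem.~\ref{lemcompletion}(2)) to produce a dg-functor $\widehat C\to D$ preserving the closed structure only up to coherent comparators, then exploit the freeness of $(-)^c$ to strictify, and finally use the iso-lifting property of the fibration $a_D:D\to E$ to arrange compatibility over $E$. The paper organizes this by first doing $E=*$ and then bootstrapping, but the content is the same.

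In part (2) there is a genuine gap. Your chain $[\widehat C,D]\cong[\widehat C^c,D]\cong[C^c,D]\cong[C,D]$ is fine at the two outer steps (the $R$'s are weak equivalences), but the middle step $[\widehat C^c,D]\cong[C^c,D]$ is not a consequence of part (1) as stated. Part (1) says that two extensions of the \emph{same} morphism $F:C^c\to D$ are related by a unique tensor-preserving natural isomorphism; it says nothing about extensions of two merely \emph{homotopic} morphisms. The inclusion $(I_C)^c:C^c\to\widehat C^c$ is not a weak equivalence, so you cannot appeal to invariance under weak equivalence either.

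The paper closes this gap by a path-object trick: after first proving that $C^c$ and $\widehat C^c$ are cofibrant (so that $[-,D]_{\cldgc/E}$ is computed by right homotopy classes), it takes a right homotopy $H:C^c\to PD$ between $\widetilde F_1|_{C^c}$ and $\widetilde F_2|_{C^c}$ and regards $C^c,\widehat C^c,PD$ as objects of $\cldgc/(D\times_E D)$, with $\widehat C^c$ augmented by $(\widetilde F_1,\widetilde F_2)$. Since $PD$ is complete and fibrant over $D\times_E D$, part (1) applies in this over-category to extend $H$ to $\widetilde H:\widehat C^c\to PD$, which is exactly a right homotopy $\widetilde F_1\sim\widetilde F_2$. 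This re-application of (1) over a different base is the missing idea in your argument; the cofibrancy of $C^c$ and $\widehat C^c$ (itself proved via (1)) is needed to make the passage to right homotopy classes legitimate.
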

\begin{proof} 
(1) We first consider the case of $E=*$. As in \cite{sim}, one can choose a functor $F_1:\widehat C^c\to D$ such that $F=F_1\circ (I_C)^c$ and $F_1$ preserves tensors and internal hom's up to natural isomorphisms which are compatible with coherency isomorphisms. Since the objects of $\widehat C^c$ are freely generated by objects of $\widehat C$ and $\{ \mb{0}, \mb{1}\}$, one can modify $F_1$ so that it becomes a morphism of $\cldgc$. Thus we get $\widetilde F$. The latter part is similar to \cite{sim}. For general $E$, we first take a morphism $F_2:\widehat C^c\to D\in\cldgc$ such that $F=F_2\circ (I_C)^c$.  By the latter claim in the case of $E=*$, we have a unique natural isomorphism $\varphi:a_D\circ F_2\Rightarrow a_{\widehat C^c}$. Let $x\in \ob(\widehat C)\sqcup \{\mb{1},\mb{0}\}$. As the augmentation $a_D:D\to E\in\cldgc$ is a fibration, one can lift $\varphi_x$ to an isomorphism $f_x:F_2(x)\to {}^{\exists}\widetilde F(x)\in D$. Using $f_x$'s, one can modify $F_2$ so that $a_D\circ F_2=a_{\widehat C^c}$. This modified $F_2$ is the required $\widetilde F$. The latter claim follows from the case of $E=*$ \\
(2) We may assume $C$ is cofibrant and $D$ is fibrant in $\cldgc/E$.  Note that $C^c$ and $\widehat C^c$ are cofibrant. Indeed, let $Q(C^c)$ be a cofibrant replacement of $C^c$ with a trivial fibration $Q(C^c)\to C^c$. As the composition $Q(C^c)\to C^c\to C$ is a trivial fibration, we can take a right inverse $C\to Q(C^c)$. Using this morphism, one can see  the map $Q(C^c)\to C^c$ has a right inverse so $C^c$  is cofibrant. For $\widehat C^c$, one can find a right inverse of a trivial fibration $Q(\widehat C^c)\to \widehat C^c$ using (1). We only have to check the map
\[
((I_C)^c)^*:\homo_{\cldgc/E}(\widehat C^c,D)\longrightarrow \homo_{\cldgc/E}(C^c, D)
\]
induces a bijection between the sets of right homotopy classes. The surjectivity follows from (1). Let 
$\widetilde F_1,\widetilde F_2:\widehat C^c\longrightarrow D\in\cldgc/E$ be two morphisms such that 
$\widetilde F_1|_{C^c}$ and $\widetilde F_2|_{C^c}$ are right homotopic. Let $PD$ be a path object of $D$.
We may regard $C^c$, $\widehat C^c$ and $PD$ as objects of $\cldgc/D\times_ED$. Then by (1), there exists a 
morphism $\widetilde H :\widehat C^c\longrightarrow PD\in\cldgc/E$ such that the following diagram commutes.
\[
\xymatrix{C^c\ar[r]^{H}\ar[d]&PD\ar[d]\\
\widehat C^c\ar[ur]^{\widetilde H}\ar[r]_{\widetilde F_1\times \widetilde F_2\ \ \ }&D\times_ED}
\]
Thus, $\widetilde F_1$ and $\widetilde F_2$ are right homotopic.
\end{proof}
\section{Tannakian dg-categories and reductive equivariant  dg-algebras}\label{dgcdga}

\subsection{Tannakian dg-categories}\label{tandgc}
We shall define Tannakian dg-categories. For the definition of neutral Tannakian categories, see Appendix \ref{proobj}.
\begin{defi}[Tannakian dgc's, $\tann$, $\tannp$]\label{deftannakian}
Let $C$ be a closed dg-category. We say $C$ is a Tannakian dg-category  if 
the following conditions are satisfied.
\begin{enumerate} 
\item $Z^0(C)$ is a neutral Tannakian category with respect 
to the closed tensor structure induced from that of $C$ or equivalently, there exist an affine group scheme $G$ over $k$ and a finite chain of morphisms of closed $k$-categories: 
\[
\coc^0(C)\to C_1\leftarrow C_2\rightarrow\cdots\leftarrow C_n\rightarrow \rep(G),
\]
where all arrows are  equivalences of underlying categories. In particular, $\coc^0(C)$ is an Abelian category. 
\item $C$ is complete (see Def.\ref{defcomplete}). 
\item If $0\to c_0 \to c_1 \to c_2\to 0$ is a short exact sequence in $Z^0(C)$ then 
$c_0 \to c_1 \to c_2$ is an extension in $C$ in the sense explained in the beginning of subsection \ref{complete} .
\end{enumerate}
We denote by $\tann$ the full subcategory of $\cldgc$ consisting of Tannakian dg-categories. We denote by $\tannp$ the full subcategory of $\acldgc$ consisting of objects whose underlying closed dgc belongs to $\tann$.\end{defi}
The third condition means that extensions in terms of representations and extensions in terms of a dgc coincide. Let $C$ be a Tannakian dg-category and $C^\ssim$ denote full sub dg-category of $C$ consisting of semisimple objects of $\coc^0C$. The third one is equivalent to the one that the morphism $\widehat{C^\ssim}^c\to C$ induced by the natural inclusion is a quasi-equivalence. So $\tann$ and $\tannp$ are stable under weak equivalences of $\cldgc$ and $\acldgc$, respectively. Note that for a Tannakian dg-category with a fiber functor $(C,\omega_C)\in\tannp$, the functor $\coc^0\omega_C:\coc^0C\to \vect$ is automatically exact and faithful.

\subsection{Equivariant  dg-algebras}\label{dga}
Let $G$ be an affine group scheme. We denote by $\dgalg(G)$ the {\em category of $G$-equivariant  dg-algebras}. An object of $\dgalg(G)$ is a commutative dg-algebra with a $\ring(G)$-comodule structure which is compatible with the grading, the differential and the algebra structure. A morphism of $\dgalg(G)$ is a morphism of dga's which is compatible with the $\ring(G)$-comodule structures. $G$-equivariant  dg-algebra is abbreviated to $G$-dga. We say a $G$-dga $A$ is {\em connected} if $\coh^0 A\cong k$.  $\dgalg(G)_0$ denotes the full subcategory of $\dgalg(G)$ consisting of connected objects.\\ 
\indent The following can be proved by an argument similar to the non-equivariant case.
\begin{prop}
Let $G$ be a reductive affine group scheme. The category $\mathrm{dg}\ms{Alg}(G)$ admits a model category structure such that 
\begin{enumerate}
\item a morphism $f:A\longrightarrow B\in\mathrm{dg}\ms{Alg}(G)$ is a weak equivalence if and only if it is a quasi-isomorphism (of underlying complexes), and
\item a morphism $f:A\longrightarrow B\in\mathrm{dg}\ms{Alg}(G)$ is a fibration if and only if it is a levelwise epimorphism.
\end{enumerate}
\end{prop}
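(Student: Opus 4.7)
The plan is to transfer a cofibrantly generated model structure from the category of $G$-equivariant cochain complexes to $\dgalg(G)$ along the free-forgetful adjunction $\mathrm{Sym}\colon \nngc(G)\rightleftarrows\dgalg(G)$, following the classical non-equivariant construction of Bousfield--Gugenheim (or Hinich). Reductivity of $G$ enters through the semisimplicity of $\rep^\infty(G)$ in characteristic $0$, which allows the auxiliary model structure on the category of equivariant complexes, and its interaction with $\mathrm{Sym}$, to be set up formally as in the non-equivariant case.

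First I would establish the projective-style model structure on the category $\nngc(G)$ of non-negatively graded $G$-equivariant complexes whose weak equivalences are quasi-isomorphisms and whose fibrations are levelwise epimorphisms. Since $G$ is reductive in characteristic $0$, every short exact sequence of $G$-modules splits equivariantly, so generating (trivial) cofibrations can be taken to be the evident sphere--disk inclusions $S^n(V)\hookrightarrow D^n(V)$ and $0\hookrightarrow D^n(V)$ with $V$ running over a set of isomorphism-class representatives in $\rep(G)$. The small object argument applies because $\rep^\infty(G)$ is a locally presentable Grothendieck category (being comodules over the Hopf algebra $\ring(G)$).

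Next I would verify the hypotheses of Kan's transfer theorem for the adjunction $\mathrm{Sym}\dashv\mathrm{forget}$. The left adjoint preserves smallness automatically. The substantive condition, and the anticipated main obstacle, is that every pushout in $\dgalg(G)$ of a map of the form $\mathrm{Sym}(0\hookrightarrow D^n(V))$ be a quasi-isomorphism; concretely, for $A\in\dgalg(G)$ one must show that $A\to A\otimes_k\mathrm{Sym}(D^n(V))$ is a quasi-isomorphism. This reduces to the standard characteristic-$0$ fact that the symmetric algebra on an acyclic bounded-below complex is acyclic, proved by filtering $\mathrm{Sym}$ by polynomial degree and invoking that every symmetric power of an acyclic complex over a field of characteristic $0$ is acyclic. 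Tensoring with $A$ over $k$ preserves quasi-isomorphisms, and $G$-equivariance of every construction is automatic because tensor products of $G$-modules are computed underlyingly. Stability under transfinite composition follows by the same argument.

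Finally, since the forgetful functor $\dgalg(G)\to\nngc(G)$ creates both quasi-isomorphisms and levelwise epimorphisms, the transferred model structure has precisely the weak equivalences and fibrations described in the statement, completing the proof.
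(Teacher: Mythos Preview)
Your proposal is correct and is precisely a spelled-out version of what the paper intends: the paper gives no proof beyond the remark that the argument is ``similar to the non-equivariant case,'' and your transfer along $\mathrm{Sym}\dashv\mathrm{forget}$ with the characteristic-$0$ acyclicity check is exactly the standard non-equivariant construction (Bousfield--Gugenheim/Hinich), with reductivity of $G$ used only to ensure that the equivariant chain complexes behave like complexes over a field.
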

\begin{defi}[minimal $G$-dga's]\label{defminimal}
Let $G$ be an affine group scheme. We say a connected $G$-dga is \textup{minimal} if its underlying dga is minimal in the usual sense (see \cite{bg,grimor}). Let $A$ be a connected $G$-dga. A \textup{minimal model of} $A$ is a minimal $G$-dga $\M$ such that there exists a weak equivalence $\M\to A$ of $G$-dga's. For a minimal $G$-dga $\M$, the $G$-module
\[
[\M/(\M^{\geq 1}\cdot\M^{\geq 1})]^i
\]
is called the \textup{$i$-th indecomposable module of} $\M$. Here, $\M^{\geq1}\cdot\M^{\geq 1}$ is the submodule of $\M$ generated by $\{ x\cdot y|\deg x\geq 1, \deg y\geq 1\}$.     
\end{defi}
The following is well-known. 
\begin{prop}
Let $G$ be a pro-reductive affine group scheme. Then, any connected $G$-dga has a minimal model. such minimal model is unique up to non-unique natural isomorphisms. \end{prop}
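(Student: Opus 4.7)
The plan is to mimic the classical proof of existence and uniqueness of Sullivan minimal models (see, e.g., \cite{bg,grimor}), carrying along the $G$-action throughout and invoking pro-reductivity of $G$ to extract $G$-equivariant complements and sections wherever the classical proof chooses a vector space complement or a lift.

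For existence, I would construct $\M$ as the colimit of an increasing sequence of free commutative $G$-dg-subalgebras $\M(0)\subset \M(1)\subset \cdots$, equipped with compatible $G$-equivariant quasi-isomorphisms $\rho_n:\M(n)\to A$ that become isomorphisms on cohomology in successively higher degrees. Start with $\M(0)=k$ with trivial $G$-action (legal since $\coh^0 A\cong k$). At the inductive step, choose a $G$-submodule $V^n$ isomorphic to a $G$-equivariant complement of the image of $\coh^n(\M(n-1))\to \coh^n(A)$, together with a $G$-equivariant lift $V^n\hookrightarrow \coc^n(A)$; and a $G$-submodule $W^{n-1}$ surjecting onto $\ker(\coh^n(\M(n-1))\to \coh^n(A))$, together with $G$-equivariant cochain lifts in $A$ whose differentials realize the designated boundaries. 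Define $\M(n):=\M(n-1)\otimes \bigwedge(V^n\oplus W^{n-1})$ with the evident differential, and extend $\rho_{n-1}$ using the chosen lifts. The colimit $\M=\colim_n\M(n)$ is then free on generators of strictly positive degree with differential landing in decomposables, i.e.\ minimal, and $\rho:=\colim_n\rho_n$ is a quasi-isomorphism by construction.

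Pro-reductivity of $G$ is used precisely at the choice of $V^n, W^{n-1}$ and their lifts: each relevant cochain-level short exact sequence of (possibly infinite-dimensional) $G$-modules splits, since every $\ring(G)$-comodule is a filtered colimit of its finite-dimensional subcomodules and each such subcomodule is semisimple. Hence the classical step-by-step construction transfers verbatim to the equivariant setting.

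For uniqueness, given two minimal models $\rho_i:\M_i\to A$ ($i=1,2$), I would build a morphism $\varphi:\M_1\to\M_2$ of $G$-dga's inductively on the filtration by $\M_1(n)$, by choosing for each generator $x\in V^n_1$ a $G$-equivariant lift $\varphi(x)\in\M_2$ with $d\varphi(x)=\varphi(dx)$ and with $\rho_2\varphi(x)$ cohomologous to $\rho_1(x)$ in $A$; pro-reductivity again supplies the needed $G$-equivariant splittings to make the lifts $G$-equivariant at every stage. The resulting $\varphi$ induces an isomorphism on indecomposable $G$-modules (because both $\rho_i$ are quasi-isomorphisms and each $\M_i$ is minimal), hence is itself an isomorphism by the standard argument filtering by word length in the generators. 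The main obstacle is bookkeeping rather than conceptual: organizing the cochain-level choices so that they are simultaneously $G$-equivariant, which is exactly what semisimplicity of finite-dimensional $G$-representations allows. Non-uniqueness of $\varphi$ already appears in the nonequivariant case, from the freedom in the choice of cochain lifts modulo coboundaries.
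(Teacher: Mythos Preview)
Your proposal is correct and is exactly the approach the paper indicates: the paper's proof is a single sentence deferring to the classical construction in \cite{bg,grimor,sul}, and your sketch spells out precisely how that construction is adapted equivariantly, with pro-reductivity supplying the $G$-equivariant splittings at each inductive step. If anything, you have written out more than the paper does.
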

\begin{proof} 
The proof is similar to the usual trivial group case. See \cite{bg,grimor,sul}.
\end{proof}
\begin{defi}[reductive dga's, $\credeqalg, \aredeqalg$]\label{defofeqalg}
\ 
\begin{enumerate}
\item A \textup{reductive equivariant dg-algebra} in short, a \textup{reductive dga} is  a pair $(G,A)$ of a pro-reductive affine group scheme $G$ and a $G$-dga $A$. 
\item A \textup{morphism of reductive dga's} $f:(G,A)\rightarrow (H,B)$ is a pair of a morphism of group schemes  
$f^{\mathrm{gr}}:H\rightarrow G$ and a morphism of $H$-dga's $f:(f^{\mathrm{gr}})^*A.\longrightarrow B$. (Note that $f^{gr}$ defines a functor $(f^{gr})^*:\dgalg(G)\longrightarrow \dgalg(H)$ by pulling back the group action.)
\item A morphism of reductive dga's $f:(G,A)\rightarrow (H,B)$ is said to be a \textup{quasi-isomorphism} if $f^{\mathrm{gr}}$ is an isomorphism and $f:(f^{\mathrm{gr}})^*A.\longrightarrow B$ is a quasi-isomorphism.\end{enumerate}
The category of reductive dga's is denoted by  $\redeqalg$. We say a reductive dga $(G,A)$ is connected if $\coh^0(A)\cong k$. We denote by $\redeqalg_0$ the full subcategory of $\redeqalg$ consisting of  connected objects and  by $\aredeqalg$ the over category $\redeqalg _0/(e,k)$, where $e$ is the trivial group. We always identify the category $\dgalg(G)$ with a subcategory of $\redeqalg$ in the obvious way. For an object $(G,A)\in\credeqalg$, a \textup{minimal model} of $(G,A)$ is a minimal model of $A$ as a $G$-dga in the sense of Def.\ref{defminimal} 
\end{defi}

\subsection{Comparison}\label{comparison}
In this subsection, we show a correspondence between reductive equivariant dga's and Tannakian dgc's. The direction from dga to dgc is functorial but the other direction is not so and we only have a function $\ared:\ob(\tann_*)\to\ob(\aredeqalg)$, see Rem.\ref{remnotfunct}.  \\

\indent We shall define two functors 
\[
\Tss:\credeqalg\longrightarrow \cldgc,\quad\T:\credeqalg\longrightarrow \tann .
\]
Let $A=(G,A)\in \credeqalg$. 
For a comodule $M\in\rep^\infty(G)$, we define the module of invariants $M^G$ by 
\[
M^G:=\ker( \id_M\otimes k-\rho_M:M\longrightarrow M\otimes\ring(G)), 
\]
where $k:k\to\ring(G)$ is the unit map and $\rho_M:M\to M\otimes\ring(G)$ is the coaction. We set 
\[
\ob(\Tss A):=\ob(\mathrm{Rep}(G)),\ \ \ \ \homo _{\Tss A}(V,W):=(\inhom(V,W)\otimes A)^G.
\]
Here, $\inhom$ is the internal hom of $\rep(G)$, and $\inhom(V,W)\otimes A$ is considered as a complex of comodules. $(\inhom(V,W)\otimes A)^G$ is defined by taking  invariants in the degreewise manner.  We define the composition and  closed tensor structure of $\Tss(A)$ using corresponding structures of $\repg$ and the multiplication of $A$. A morphism $f:(G,A)\rightarrow (H,B)$ of $\credeqalg$ gives a functor 
$(f^{gr})^*:\repg \rightarrow \reph $ and $f$ induces a morphism 
$\Tss f:\Tss A\rightarrow \Tss B$ of closed dg-categories.  We set 
\[
\T A:=\widehat{\Tss A}.
\]
The right hand side is the completion (see subsection \ref{complete}).  As $\T(e,k)\cong\Tss(e,k)\cong \vect$, a morphism $(G,A)\to (e,k)$ defines  morphisms $\Tss(G,A)\to\vect, \T(G,A)\to\vect$ so we obtain functors between augmented categories: 
\[
\Tss:\aredeqalg\longrightarrow \cldgck,\quad\T:\aredeqalg\longrightarrow \tannp .
\]
We omitt the proof of the following. See \cite[Lem.1.3]{prid3}.the reductivity of $G$ is necessary for (2).
\begin{lem}\label{lembasic}
\textup{(1)} For $V\in\ob(\rep^\infty(G))$ the  coaction $V\to V\otimes_k\mca{O}(G)$ induces an isomorphism of vector spaces: $V\to (V\otimes \mca{O}(G)_r)^G$. 
Similarly, the coaction also induces an isomorphism $V\to V\otimes^G \ring(G)_l$. Here, 
\[
V \otimes^G \ring(G)_l=\ker( (\id_V\otimes \tau)\circ(\rho_V\otimes\id_{\ring(G)})-\id_V\otimes\lambda_G:V\otimes \ring(G)_l\to V\otimes\ring(G)_l\otimes\ring(G)),
\]
where $\tau:\ring(G)\otimes\ring(G)\to \ring(G)\otimes\ring(G)$ is the isomorphism given by $\tau(x\otimes y)=y\otimes x$ and  $\lambda_G$ is the coaction of the left translation.(In short, $V \otimes^G \ring(G)_l=\{\Sigma_iv_i\otimes l_i;\Sigma_ig\cdot v_i\otimes l_i=\Sigma_iv_i\otimes g\cdot_ll_i \forall g\in G\}$) In particular, as submodules of $V\otimes \ring(G)$, $(V\otimes \ring(G)_r)^G=V\otimes^G\ring(G)_l$.\\
\textup{(2)} If $f:A\to B\in \dgalg(G)_0$ is a quasi-isomorphism, the induced morphism  
$\ms{T}f:\ms{T}A\to \ms{T}B\in\cldgc$ is a quasi-equivalence. \\
\textup{(3)} $\ms{T}A$ is a Tannakian dgc  for any $A\in \credeqalg$. 
\end{lem}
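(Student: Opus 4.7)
For part (1), the plan is to invoke the standard Hopf-algebraic identification of diagonal invariants in the coregular representation. I would exhibit an explicit inverse to the coaction-induced map $V \to (V \otimes \ring(G)_r)^G$ as $\id_V \otimes \varepsilon$, where $\varepsilon:\ring(G)\to k$ is the counit; verification that this inverts the coaction on invariants uses only the counit axiom of the comodule $V$ together with the definition of the diagonal $G$-action on $V\otimes\ring(G)_r$. The parallel statement for $V \otimes^G \ring(G)_l$ follows by the same argument with left translation in place of right. The identification of the two subspaces inside $V\otimes\ring(G)$ reduces to unwinding the two invariance conditions against a common Sweedler-type expansion and applying the antipode axioms.

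For part (2), the morphism $f:(G,A)\to(G,B)$ has $f^{\mathrm{gr}}=\id_G$, so $\ms{T}f$ is the identity on the set of objects $\ob(\rep(G))$ and essential surjectivity is automatic. It remains to show that for each $V,W\in\rep(G)$ the induced chain map $(\inhom(V,W)\otimes A)^G \to (\inhom(V,W)\otimes B)^G$ is a quasi-isomorphism. Finite-dimensionality of $\inhom(V,W)$ gives that $\inhom(V,W)\otimes A \to \inhom(V,W)\otimes B$ is a quasi-isomorphism of complexes in $\rep^\infty(G)$. Reductivity of $G$ then ensures that the invariants functor $(-)^G:\rep^\infty(G)\to\vect$ is exact, hence preserves quasi-isomorphisms; this is where the reductivity hypothesis is essential.

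For part (3), I must verify the three conditions of Def.\ref{deftannakian}. Condition (2) is immediate from Lem.\ref{lemcompletion}(1). For Condition (1), connectedness of $A$ forces $\coc^0 A = \coh^0 A = k$, so
\[
\coc^0\hom_{\ms{T}^{\mathrm{ss}}A}(V,W) = (\inhom(V,W)\otimes k)^G = \hom_{\rep(G)}(V,W),
\]
giving $\coc^0\ms{T}^{\mathrm{ss}}A \cong \rep(G)$, which is already neutral Tannakian. Passing to the completion, the objects of $\coc^0\widehat{\ms{T}^{\mathrm{ss}}A}$ are pairs $(V,\eta)$ with $\eta\in(\End(V)\otimes A^1)^G$ satisfying $d\eta+\eta^2=0$, and a $0$-cocycle morphism $(V,\eta_V)\to(W,\eta_W)$ is a $G$-equivariant element $f\in(\inhom(V,W)\otimes A^0)^G$ with $d(f)+\eta_W\circ f - f\circ\eta_V=0$. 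I would show this category is $k$-linear abelian rigid tensor, with a faithful exact fiber functor to $\vect$ (forgetting $\eta$ and composing with the underlying vector space functor on $\rep(G)$), and then invoke the Tannakian reconstruction theorem from Appendix \ref{proobj}; the resulting group scheme will be an extension of $G$ by a pro-unipotent group. For Condition (3), a short exact sequence $0\to(c_0,\eta_0)\to(c_1,\eta_1)\to(c_2,\eta_2)\to 0$ in $\coc^0\widehat{\ms{T}^{\mathrm{ss}}A}$ splits at the level of underlying $G$-modules by reductivity, and Lem.\ref{lemcompletion}(1) identifies the middle MC element as the upper-triangular block matrix whose off-diagonal entry is precisely the cocycle $\delta$ representing the extension class.

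The principal obstacle is Condition (1) of part (3), namely abelianness: for a morphism $f:(V,\eta_V)\to(W,\eta_W)$ in $\coc^0\widehat{\ms{T}^{\mathrm{ss}}A}$ one must produce MC elements on $\ker f$ and $\coker f$ — formed in $\rep(G)$ using reductivity — that make the canonical maps into morphisms in the completion, and then verify that these newly constructed objects lie in the smallest sub-dg-category closed under extensions. The twisted-cocycle condition $d(f)+\eta_W f - f\eta_V=0$ forces $\eta_V$ and $\eta_W$ to restrict and descend to well-defined MC elements on the kernel and cokernel, but making this precise, together with checking exactness of the fiber functor, is the technical heart of the argument.
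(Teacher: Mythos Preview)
Your proposal is essentially correct and follows the standard route; the paper itself omits the proof entirely and refers the reader to \cite[Lem.~1.3]{prid3}, noting only that reductivity of $G$ is essential for (2).

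One genuine gap in your argument for part (2): you write that $\ms{T}f$ is the identity on the set of objects $\ob(\rep(G))$, but recall $\ms{T}A = \widehat{\ms{T}^{\mathrm{ss}}A}$ is the \emph{completion}, whose objects are pairs $(V,\eta)$ with $\eta$ a Maurer--Cartan element. Your argument establishes that $\ms{T}^{\mathrm{ss}}f:\ms{T}^{\mathrm{ss}}A\to\ms{T}^{\mathrm{ss}}B$ is a quasi-equivalence (identity on objects, quasi-isomorphism on hom-complexes via exactness of $(-)^G$); to conclude for $\ms{T}f$ you must then invoke Lem.~\ref{lemcompletion}(3), which says that completion preserves quasi-equivalences. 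This is the step that handles essential surjectivity on MC-twisted objects, and it is not automatic from what you wrote.

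For part (3), your identification of the obstacle is accurate. An alternative to checking abelianness by hand is to observe that $\coc^0\ms{T}A$ is equivalent (via the correspondence $(V,\eta)\mapsto$ associated $G$-equivariant flat connection) to the category of finite-dimensional representations of a suitable affine group scheme built from $G$ and $A^{\leq 1}$, whence it is Tannakian by construction; this is closer in spirit to how the reference \cite{prid3} organizes the argument.
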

\begin{rem}\label{remnotfunct}
If we define the \textup{homotopy category $\ho(\aredeqalg)$ of $\aredeqalg$} as the localization of $\aredeqalg$ obtained by inverting quasi-isomorphisms, by  Lemma \ref{lembasic} $\T$ induces a functor between homotopy categories:
\[
\T:\ho(\aredeqalg)\longrightarrow \ho(\tann).
\]
Unlike the finite group case \cite{moriya}, this is not an equivalence simply because morphisms between Tannakian dgc's do not always preserve semisimple objects. By the same reason the construction
\[
\tann\ni T\longmapsto \ared T\in \aredeqalg 
\]
is not functorial. If we restrict morphisms of $\tann_*$ to those which preserves semi-simple objects, this function  is extended to a functor which induces an equivalence of homotopy categories, see Prop.\ref{thmbasic} and Lem.\ref{lemmor}.
\end{rem}
\begin{defi}[the reductive dga associated to $T$; $\ared T$]
\ \\ 
Let $T=(T,\omega_T)$ be a Tannakian dg-category with a fiber functor. Let $\pi_1(\coc^0T)$ denote the affine group scheme which represents $\autbar^{\otimes}(\coc^0\omega_T)$ (see Appendix \ref{proobj}). We regard $\ring(\pi_1(\coc^0T)^\red)$ as a $\pi_1(\coc^0T)^\red$-representation by the right translation. We define an augmented $\pi_1(\coc^0T)^\red$-equivariant dg-algebra $\ared(T)\in\aredeqalg$ as follows. Let 
\[
\mca{O}(\pi_1(\coc^0T)^{\red})=\bigcup_{\lambda}V_{\lambda}
\] 
be the presentation  as the union of finite dimensional $\pi_1(\coc^0T)^{\red}$-subrepresentations. We regard $V_\lambda$'s as objects of $T$ via the equivalence $\widetilde{\coc^0\omega_T}:\coc^0T\to\rep(\pi_1(T)^\red)$ (see Appendix \ref{proobj}). As a complex, we set
\[
\ared(T)=\operatorname{colim}_{\lam}T(V_{\lam})
\]    
(see the notation under Def.\ref{defofcldgc}). The colimit is taken in the category $\nngc$ (in this case, this is the set-theoretic union). The $k$-algebra structure on $\mca{O}(\pi_1(\coc^0T)^{\red})$  defines a structure of dga on $\ared(T)$, the left translation of $\pi_1(\coc^0T)^{\red}$ on $\mca{O}(\pi_1(\coc^0T)^{\red})$ defines an action on $\ared(T)$ and the counit map of $\mca{O}(\pi_1(\coc^0T)^{\red})$ and the fiber functor of $T$ defines an augmentation of $\ared(T)$. Note that we do \textup{not} say the construction $T\longmapsto\ared(T)$ is functorial. \end{defi}
\begin{prop}\label{thmbasic}
\textup{(1)} For an augmented 0-connected reductive dga $A$, $\ared\T(A)$ is isomorphic to $A$ in $\aredeqalg$.\\
\textup{(2)} For a Tannakian dgc $(T,\omega_T)$ with a fiber functor, $\T\ared (T)$ is equivalent to $T$ in $\cldgck$.\\
\textup{(3)} For a Tannakian dgc $T\in\tann$, there exists a connected reductive dga $A\in\credeqalg$ such that $\T A$ is equivalent to $T$ as a closed dg-category. 
\end{prop}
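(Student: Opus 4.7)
The plan is to prove (1) and (2) in order and to derive (3) from (2) by choosing a fiber functor on $\coc^0 T$. The main tools are Lemma \ref{lembasic}(1), which identifies any $G$-representation $V$ with $(V \otimes \ring(G)_r)^G$ via the coaction, and Tannakian reconstruction (Appendix \ref{proobj}).

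For (1), write $A = (G, A)$. Tannakian duality identifies $\coc^0 \T(A) \simeq \rep(G)$ and gives $\pi_1(\coc^0 \T A)^\red \cong G$ canonically, since $G$ is already reductive. Writing $\ring(G)_r = \bigcup_\lambda V_\lambda$, the definition of $\ared$ unfolds to
\begin{align*}
\ared \T(A) \;=\; \operatorname{colim}_\lambda \T(A)(V_\lambda) \;=\; \operatorname{colim}_\lambda (V_\lambda \otimes A)^G \;=\; (\ring(G)_r \otimes A)^G,
\end{align*}
which is canonically isomorphic to $A$ as a complex by Lemma \ref{lembasic}(1). One then checks this respects the multiplicative structure (coming from the coproduct on $\ring(G)$ dual to multiplication in $G$), the $G$-action (via left translation on $\ring(G)_l$, matched with the original action on $A$ through the alternative description $(A \otimes \ring(G)_r)^G = A \otimes^G \ring(G)_l$), and the augmentations.

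For (2), set $G = \pi_1(\coc^0 T)^\red$, and let $\widetilde{\coc^0 \omega_T} : (\coc^0 T)^\ssim \to \rep(G)$ denote the Tannakian equivalence. I build a morphism $F: \T\ared(T) \to T$ in $\cldgck$ by first defining it on $\Tss \ared(T)$ and then extending to the completion. On objects, $F$ sends $V \in \rep(G)$ to the corresponding semisimple object of $T$. For morphism complexes, the goal is a natural isomorphism $(\inhom(V, W) \otimes \ared(T))^G \cong \hom_T(V, W)$ for semisimple $V, W$. This is produced by combining the internal-hom adjunction $\hom_T(V, W) = T(\inhom(V, W))$, the Tannakian identification $U \cong (U \otimes \ring(G)_r)^G$ applied with $U = \inhom(V, W)$, and a compatibility showing that $T$ commutes with the relevant $G$-invariants; this last step uses reductivity of $G$ together with the Peter--Weyl decomposition $\ring(G)_r = \bigoplus_\alpha V_\alpha^\vee \otimes V_\alpha$, which reduces the computation to finite-dimensional semisimple blocks where everything is manifest. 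The extension of $F$ from the semisimple part to the full completion $\T\ared(T)$ is then carried out using Lemma \ref{lemcompletion2} (after the standard passage between $\widehat{C}$ and $\widehat{C}^c$), with the completeness of $T$ (Def.\ref{deftannakian}(2)) ensuring the universal property on the target side. Finally, the quasi-equivalence of $F$ follows from condition (3) of Def.\ref{deftannakian}, which asserts that $T$ is generated from $(\coc^0 T)^\ssim$ by extensions in precisely the manner of the completion on the $\T \ared(T)$ side, so $F$ induces isomorphisms on morphism complexes everywhere.

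For (3), since $\coc^0 T$ is neutral Tannakian it admits some fiber functor $\omega^0$, and the definition of $\ared(T)$ only requires $\omega^0$ (not a fiber functor on all of $T$), so $\ared(T) \in \aredeqalg$ can be formed. The argument of (2), dropped to $\cldgc$ by forgetting the fiber-functor compatibility, then yields the equivalence $\T\ared(T) \simeq T$ in $\cldgc$. The hard part of the whole plan is the morphism-space comparison in (2): justifying the commutation of the functor $T$ with $G$-invariants on $U \otimes \ring(G)_r$, which ultimately rests on reductivity of $G$ and Peter--Weyl; the other steps (object correspondence, extension to the completion, final quasi-equivalence check) reduce routinely to the structural results collected in Section \ref{preliminaries}.
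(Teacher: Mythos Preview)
Your proposal is correct and follows the same architecture as the paper: part (1) via Lemma~\ref{lembasic}(1) and commutation of filtered colimits with finite limits, part (2) by building $F$ on $\Tss\ared(T)$ and extending to the completion via Lemma~\ref{lemcompletion2}, and part (3) as a variant of (2). The only substantive difference is the mechanism for the key isomorphism $(\inhom(V,W)\otimes\ared(T))^G\cong T(\inhom(V,W))$ in (2): the paper embeds $U=\inhom(V,W)$ into the equalizer sequence $0\to U\to U_u\otimes\ring(G)_r\to U_u\otimes\ring(G)_u\otimes\ring(G)_r$ and compares the two left-exact functors $(-\otimes A)^G$ and $T(-)=\hom_T(\uni,-)$ on that sequence, whereas you use the Peter--Weyl decomposition of $\ring(G)$ to compute both sides directly as $\bigoplus_\alpha \hom_G(V_\alpha,U)\otimes T(V_\alpha)$. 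Both routes rest on reductivity of $G$ and are equally valid; the paper's is marginally more uniform, yours makes the isomorphism more visibly explicit. One small correction: at the end of (2) you write ``quasi-equivalence'', but your own argument (isomorphism on all hom-complexes together with essential surjectivity from Def.~\ref{deftannakian}(3)) in fact yields an equivalence of underlying categories, which is exactly what the statement claims and what the paper asserts; you should state this rather than the weaker conclusion.
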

By this proposition, the functor $\T:\aredeqalg\to \tannp$ and the function $\ared:\ob(\tannp)\to\ob(\aredeqalg)$ induce a bijection between the isomorphism classes (resp. quasi-isomorphism classes) of dga's and the equivalence classes (resp. quasi-equivalence classes) of closed dgc's.
\begin{proof}
(1) This follows from Lem.\ref{lembasic},(1) and the fact that finite limit and filtered colimit (in the category of modules) commutes.\\
\indent (2) Let $A=\ared T$. We may assume $(\coc^0T)^{\ssim}=\rep(G)$ where $G=\pi_1(\coc^0T)^{\red}$. In the following we deal with $\ring(G)_r$ as if it is an object of $\rep(G)$. As the action of $G$ on $\ring(G)_r$ is locally finite, this does not matter. \\
For an object $V\in\ob(\rep (G))$ we have an exact sequence of $G$-modules:
\[
\xymatrix{0\ar[r]& V\ar[r]^{\rho_V\qquad }&V_u\otimes \ring(G)_r\ar[r]^{\phi \qquad}&V_u\otimes \ring(G)_u\otimes \ring(G)_r},
\]
where $(-)_u$ denotes the corresponding trivial module and  $\phi:=(\id_V\otimes \tau)\circ(\rho_V\otimes\id_{\ring(G)})-\id_V\otimes\lambda_G$, see Lem.\ref{lembasic}.
 We have a map between  sequences of complexes
\[
\xymatrix{0\ar[r]& (V\otimes A)^G\ar[r]\ar@{-->}[dd]&(V_u\otimes\ring(G)_r\otimes A)^G\ar[r]^{\phi\otimes \id_A\qquad}&(V_u\otimes \ring(G)_u\otimes \ring(G)_r\otimes A)^G \\
&& V_u\otimes A\ar[r]\ar[d]\ar[u]^{\rho_A}&V_u\otimes \ring(G)_u\otimes A\ar[d]\ar[u]^{\rho_A} \\
0\ar[r] &\hom_T(\uni,V)\ar[r]&\hom_T(\uni,V_u\otimes\ring(G)_r)\ar[r]^{\phi_*\qquad}&\hom_T(\uni,V_u\otimes \ring(G)_u\otimes\ring(G)_r).} 
\]
As the both horizontal sequences are levelwise exact and the vertical arrows are isomorphisms by Lem.\ref{lembasic}, the dotted arrow uniquely exists. We define a dg-functor $F:\Tss A\longrightarrow T$
by $F(V)=V$ for $V\in\ob(\rep(G))$ and $F_{(V,W)}:\hom_{\Tss A}(V,W)\longrightarrow\hom_C(FV,FW)$ being the composition
\[
\hom_{\Tss A}(V,W)=(\inhom_{\rep(G)}(V,W)\otimes A)^G\longrightarrow \hom_T(\uni,\inhom_T(V,W))\cong\hom_T(V,W).
\]
One can check this is a morphism of $\cldgck$. By definition of Tannakian dgc's, $F$ is extended to an equivalence $F:\T A^c\longrightarrow T\in\acldgc$ (see Lem.\ref{lemcompletion2}). The proof of (3) is similar to that of (2). 
\end{proof}
The following lemmas are used later
\begin{lem}\label{lempushout}
\textup{(1)}Let $G$ be a reductive affine group scheme. Then, $\Tss(G,k)^c$ is cofibrant in $\acldgc$. If $f:A\to B\in\mathrm{dg}\ms{Alg}(G)$ be a cofibration betwwen 0-connected $G$-dga's, then $\Tss(f)^c:\Tss(G,A)^c\to\Tss(G,B)^c$ is a cofibration in $\cldgc$. \\
\textup{(2)} Let $(G,A)\in\aredeqalg$. 
There exists a commutative square
\[
\xymatrix{\Tss(G,k) ^c\ar[d]\ar[r]&\vect\ar[d] \\
\Tss(G,A) ^c\ar[r]& \Tss(e,A)}
\]
where $e$ denotes the trivial group, the left vertical morphism is induced by the unit $k\to A$ and the bottom horizontal morphism is induced by the morphism $(G,A)\to (e,A)$ which is the identity on dg-algebras.  This diagram is a pushout square in $\dgclp$ and  a homotopy pushout square.
\end{lem}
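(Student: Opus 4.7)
My plan for part (1) is to verify cofibrancy of $\Tss(G,k)^c$ in $\acldgc$ directly by checking the left lifting property against trivial fibrations, exploiting the freeness that $(-)^c$ builds in at the object level. Given a trivial fibration $p\colon D\to E$ in $\acldgc$ and $f\colon \Tss(G,k)^c\to E$, the objects of $\Tss(G,k)^c$ are formal words on $\ob(\rep(G))\sqcup\{\uni,\mb{0}\}$ freely generated under $\otimes',\inhom',\oplus'$, so specifying a lift $\tilde f$ on objects reduces to specifying $\tilde f(V)$ for $V\in\rep(G)$; essential surjectivity of $p$ on $\coc^0$ combined with the isomorphism-lifting clause of Thm.\ref{thmmodelcldgc} yields lifts with $p\circ\tilde f=f$ strictly, and $\omega$-compatibility is inherited. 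For morphisms, hom-complexes are concentrated in degree $0$ and $p$ is degreewise surjective, so lifts exist, and semisimplicity of $\rep(G)$ (from $G$ reductive) allows me to pick a basis of $\hom_{\rep(G)}(V,W)$ and lift basis elements coherently with composition and tensor. For the second assertion, a cofibration $A\to B$ in $\dgalg(G)_0$ is a retract of a transfinite composition of Sullivan-type $G$-equivariant extensions $A\to A\otimes_k\bigwedge(W)$ with $W$ a finite-dimensional graded $G$-module concentrated in a single degree and prescribed differential; I would show that applying $\Tss(-)^c$ converts each such extension into a cell attachment in $\cldgc$ that freely adjoins morphisms of the form $\uni\to W$ in the appropriate degree, hence a cofibration.

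For part (2), I would verify the ordinary pushout property in $\dgclp$ via the universal property and then deduce the homotopy pushout claim from part (1). Given $D\in\dgclp$ with morphisms $u\colon\vect\to D$ (unique, since $\vect$ is initial in $\cldgc$) and $v\colon\Tss(G,A)^c\to D$ whose restrictions to $\Tss(G,k)^c$ agree, the construction of the required $w\colon\Tss(e,A)\to D$ exploits trivial $G$-representations: set $w(U)=u(U)$ on objects, and on the hom-complex $\hom_{\Tss(e,A)}(U,W)=\inhom(U,W)\otimes A$, notice that viewing $U,W\in\vect$ as trivial $G$-modules in $\rep(G)$ makes $(\inhom(U,W)\otimes A)^G=\inhom(U,W)\otimes A$, so $v$ already sends this complex into $\hom_D(u(U),u(W))$, and $w$ is defined by restricting $v$ to trivial reps; uniqueness is forced by these prescriptions, and compatibility with $u$ on the $\inhom$-factor is precisely the agreement on $\Tss(G,k)^c$. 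For the homotopy pushout property, take a cofibrant replacement $A'\to A$ in $\dgalg(G)_0$; Lem.\ref{lembasic}(2) together with exactness of $(-)^G$ for reductive $G$ ensures that $\Tss(G,A')^c\to \Tss(G,A)^c$ and $\Tss(e,A')\to \Tss(e,A)$ are weak equivalences, reducing the question to cofibrant $A'$; then $\Tss(G,k)^c\to \Tss(G,A')^c$ is a cofibration by part (1), and the ordinary pushout is automatically a homotopy pushout.

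The main obstacle is the coherent lifting in part (1). Individual lifts of objects and of morphisms are straightforward, but simultaneously preserving composition, $\otimes$, $\inhom$, $\oplus$, the coherence isomorphisms, and the fiber functor is where the real work lies. The key technical lever is semisimplicity of $\rep(G)$ (from $G$ reductive) together with the formal freeness of the closed tensor structure provided by $(-)^c$, which decouples the object-level lifting problem (controlled by a single generating collection of representations, then extended by formal closure) from the morphism-level problem (reduced to lifting $k$-linear maps between degree-zero hom-spaces). In part (2), once the universal-property calculation is carried out, the main subtlety is identifying the strict pushout precisely with $\Tss(e,A)$ rather than some completion or free version; this relies on $\vect$ being the initial object of $\cldgc$, which forces the strict (rather than formal) closed-tensor structure to propagate across the pushout.
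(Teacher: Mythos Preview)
Your argument for the strict pushout in part~(2) has a genuine gap. You claim that for trivial $G$-modules $U,W$, viewing them inside $\rep(G)$ gives $(\inhom(U,W)\otimes A)^G=\inhom(U,W)\otimes A$; but this is false, because $A$ carries a nontrivial $G$-action. What you actually get is $(\inhom(U,W)\otimes A)^G=\inhom(U,W)\otimes A^G$, so restricting $v$ to the trivial representations only recovers the invariant subalgebra $A^G$, not $A$ itself. Your proposed $w$ therefore does not land in $\hom_D$ with the correct domain complex, and the universal map cannot be constructed this way.

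The paper's fix is to bring in the regular representation. By Lem.~\ref{lembasic}(1), for any $G$-module $M$ one has $M\cong(M\otimes\ring(G)_r)^G$; applying this with $M=\inhom(V,W)\otimes A$ (where $V,W\in\vect$ are trivial) yields
\[
\hom_{\Tss(e,A)}(V,W)=\inhom(V,W)\otimes A\cong(\inhom(V,W)\otimes\ring(G)_r\otimes A)^G=\hom_{\Tss(G,A)^c}(V,\,W\otimes\ring(G)_r).
\]
Now push forward by $F$ and then compose with the image under $F$ of the counit $u:\ring(G)\to\uni$ (which lives in $\coc^0\Tss(G,k)^c$, where $F$ agrees with the map through $\vect$). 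This produces the required $\widetilde F_{(V,W)}:\hom_{\Tss(e,A)}(V,W)\to\hom_C(V,W)$, and one checks it assembles into the unique morphism $\Tss(e,A)\to C$. The essential idea you are missing is that the regular representation, not the trivial one, is what allows you to see all of $A$ through $G$-invariants.

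Your outline for part~(1) is reasonable and in the spirit of the paper's reference to \cite[Thm.~3.2.10]{moriya}; the homotopy pushout argument at the end of part~(2) is also fine once the strict pushout is established, and your explicit cofibrant replacement of $A$ is a correct way to fill in the application of part~(1).
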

\begin{proof}
(1) The proof is similar to that of \cite[Thm.3.2.10]{moriya}.\\ 
(2) We prove the first assertion.
Let 
\[
\xymatrix{
\Tss(G,k)^c\ar[d]\ar[r]^{\omega}&\vect\ar[d] \\
\Tss(G,A)^c\ar[r]^F&C}
\]
be a commutative square in $\cldgc$. For $V,W\in\ob(\Tss(e,A))$ we define a morphism of complex 
\[
\widetilde F_{(X,Y)}:\homo_{\Tss(e,A)}(V,W)\longrightarrow \homo_C (V,W)
\]
as the following composition.
\[
\begin{split}
\homo(V,W)\otimes_k A&\cong(\inhom(V,W)\otimes\ring(G)\otimes A)^G \\
&\cong\homo_{\Tss A^c}(V,W\otimes\ring(G))\stackrel{F}{\to}\homo_C(V,W\otimes F(\ring(G)))\stackrel{Fu}{\to}\homo_C(V,W).
\end{split}
\]
Here, $u:\ring(G)\longrightarrow \mb{1}\in\vect$ denotes the counit map. It is easy to check $F_{(V,W)}$'s form a morphism $\widetilde F:\Tss(e,A)\longrightarrow C\in\cldgck$ and this is the unique morphism making appropriate diagram commutative.\\
 By (1) and \cite[Lem.5.2.6]{hov}, this is a homotopy pushout square so the former one is. 

 \end{proof}
\begin{rem}\label{remcofibrant}
We can give an explicit cofibrant model for any Tannakian dg-category. In fact, by Lem.\ref{lemcompletion2} and Lem.\ref{lempushout}, for a cofibrant connected $G$-dga $A$, $\T A^c$ is cofibrant in $\cldgc$ (see the proof of Lem.\ref{lemcompletion2},(2)). 
\end{rem}
The following gives a description of hom-sets of closed dg-categories in terms of dg-algebras and closed $k$-categories. This is used to translate a property of schematic homotopy types into a property of equivariant dg-algebras, see the proof of Prop.\ref{proploop}.
\begin{lem}\label{lemmor}
\textup{(1)} Let $E$ be a closed dgc and $C\in\cldgc/E$ be a closed dgc over $E$. Let $A=(G,A)\in\credeqalg$ be an object and $\Tss A^c\longrightarrow E\in\cldgc$ be a morphism. We consider $\Tss A^c$ as an object of $\cldgc/E$. \\
\indent For a morphism $\alpha:\coc^0\Tss A^c\to\coc^0C\in \clcat$ We define a dg-algebra $C_{\alpha}\in\dgalg(G)$ as follows. We fix a presentation of $\ring(G)_r$ as a union of finite dimensional $G$-representations: $\ring(G)_r\cong\bigcup_{\lam} V_{\lam}$. We set 
\[
C_{\alpha}:=\colim_{\lam}C(\alpha(V_{\lam})),
\]
see the notation under Def.\ref{defofcldgc}. The algebra structure is defined from that of $\ring(G)$ and the action of $G$ is defined from the left translation on $\ring(G)$. Then, there exists a bijection: 
\[
\homo_{\cldgc/E}(\Tss(A)^c,C) 
\cong \left\{(\alpha,\beta)\left|
\begin{array}{c}
\alpha:\coc^0\Tss A^c\to\coc^0C\in \clcat/\coc^0E,\\
\beta:A\to C_{\alpha}\in\dgalg(G)/E_{\alpha}
\end{array}
\right. \right\}
\]
If the augmentation $a_C:C\to E\in\cldgc$ is a fibration, this bijection induces a bijection:
\[
[\Tss A,C]_{\cldgc/E}\cong\{(\alpha,\beta)|\alpha\in[\coc^0\Tss A,\coc^0C]_{\clcat/\coc^0E},\ \beta\in [A,C_{\alpha}]'_{\dgalg(G)/E_{\alpha}}\}.
\]
Here, 
\begin{enumerate}
\item $[\coc^0\Tss A,\coc^0C]_{\clcat/\coc^0E}:=\homo_{\clcat}(\coc^0\Tss A^c,\coc^0C)/\sim$, where $\alpha_1\sim\alpha_2$ if and only if there exists a natural isomorphism $t:\alpha_1\Rightarrow \alpha_2$ such that $t$ preserves tensors, i.e., $t(X\otimes Y)=t(X)\otimes t(Y)$, and $(a_C)_*t:a_C\circ \alpha_1\Rightarrow a_C\circ \alpha_2:\coc^0\Tss A^c\to \coc^0E$ is the identity transformation. 
\item $[A,C_{\alpha}]'_{\dgalg(G)/E_{\alpha}}:=[A,C_{\alpha}]_{\dgalg(G)/E_{\alpha}}/\sim$, where $f_1\sim f_2$ if and only if there exists a tensor preserving natural automorphism $t:\alpha\Rightarrow\alpha$ over $E$ such that $t_{\alpha(\ring(G))}\circ f_1=f_2$.
\end{enumerate}
\textup{(2)} We use the notation of the previous part. Let $(G,A), (H,B)\in\aredeqalg$. Let $\alpha:\coc^0\Tss(G,A)^c\to\coc^0\T(H,B)\in \clcatk$ be a morphism. Suppose $\alpha$ preserves semi-simple objects so that it induces a morphism $\alpha^*:H\to G$.
Then, in the case $E=\vect$,
\[
[A,\T(H,B)_\alpha]_{\dgalg(G)/\vect_{\alpha}}\cong [\alpha A,B]_{\dgalg(H)/k}.
\]
Here, $\alpha A$ denotes the pullback of $A$ by $\alpha^*:H\to G$. In particular, if $H$ is the one-element group, 
\[
[\T(G,A),\T(H,B)]_{\acldgc}\cong [A,B]_{\dgalg/k}.
\]   
In the case $E=*$ (the terminal object of $\cldgc$), 
\[
[A,\T(H,B)_\alpha]'_{\dgalg(G)}\cong[\alpha A,B]_{\dgalg(H)}/f\sim f*g \ \ (=:[\alpha A,B]'_{\dgalg(H)}).
\]
Here $f* g$ denotes the morphism $A\ni a\mapsto g\cdot a\mapsto f(g\cdot a)\in B$ and $g$ runs through $C_{G(k)}(\alpha^*H(k))$, the centralizer of the image $\alpha^*(H(k))$ in $G(k)$. 
\end{lem}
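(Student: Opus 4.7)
The plan is to prove (1) by constructing the bijection explicitly in both directions, verify it descends to homotopy classes when $a_C$ is a fibration, and then deduce (2) by specializing to $C = \T(H,B)$ and recognizing $\T(H,B)_\alpha$ as a coinduced $G$-dga. The main obstacle will be verifying that the equivalence relations (tensor-preserving natural isomorphisms, and in particular the centralizer action in the $E = *$ case) match on both sides; the bijection on naked hom-sets is essentially an unpacking exercise.

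For the forward direction of (1), given $F : \Tss(A)^c \to C$ in $\cldgc/E$, I would set $\alpha := \coc^0 F$, which is automatically a morphism in $\clcat/\coc^0 E$. To extract $\beta$, I apply $F$ to the hom complexes $\homo_{\Tss A^c}(\uni, V_\lambda) = (V_\lambda \otimes A)^G$ and pass to the colimit: since $A$ is locally finite as a $G$-representation, Lem.\ref{lembasic}(1) applied to $A$ itself yields $A \cong \colim_\lambda (V_\lambda \otimes A)^G$, and $F$ therefore induces $\beta : A \to \colim_\lambda C(\alpha V_\lambda) = C_\alpha$. The dga property follows from $F$ preserving tensors and composition combined with the multiplication on $\ring(G)$; the $G$-equivariance follows from functoriality of $F$ applied to the directed-system maps $V_\lambda \to V_{\lambda'}$ arising from left translations on $\ring(G)$.

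For the backward direction, given $(\alpha,\beta)$, I construct $F$ on objects via $\alpha$ (using that $\Tss A^c$ and $\coc^0 \Tss A^c$ share the same object set of words on $\rep(G) \cup \{\uni,\mb{0}\}$). On a morphism complex $\homo_{\Tss A^c}(V,W) = (\inhom(V,W) \otimes A)^G$, I use the filtered-colimit decomposition over finite-dim $G$-subreps $B_\mu \subset A$ to reduce to $\inhom(V,W) \otimes B_\mu$, and combine $\alpha$ on the $\inhom(V,W)$-factor with $\beta$ on the $B_\mu$-factor via Lem.\ref{lembasic}(1) to re-express the $G$-invariants in a form compatible with both. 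A direct check on generators shows the two constructions are mutually inverse. For the homotopy version, Lem.\ref{lempushout}(1) ensures $\Tss(A)^c$ is cofibrant in $\cldgc/E$, and combined with the fibrancy hypothesis on $a_C$, right homotopy classes compute $[\Tss A^c, C]_{\cldgc/E}$; the explicit bijection descends to homotopy classes because path objects in $\cldgc/E$ correspond under the bijection to natural isomorphisms in the $\alpha$-component plus dga-homotopies in the $\beta$-component, modulo conjugation by tensor-preserving natural automorphisms of $\alpha$ fixing the augmentation — exactly the equivalence relations in the statement.

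For part (2), I apply part (1) with $C = \T(H,B)$. Since $\alpha$ preserves semi-simple objects, it induces $\alpha^* : H \to G$ on reductive completions, and each $\alpha V_\lambda$ becomes an $H$-rep by restriction along $\alpha^*$. Then
\[
\T(H,B)_\alpha = \colim_\lambda (\alpha V_\lambda \otimes B)^H \cong (\alpha\ring(G)_r \otimes B)^H,
\]
which is precisely the coinduced $G$-dga from $B$ along $\alpha^*$. Frobenius reciprocity (the induction-restriction adjunction for equivariant dga's) then yields $\homo_{\dgalg(G)/\vect_\alpha}(A, \T(H,B)_\alpha) \cong \homo_{\dgalg(H)/k}(\alpha A, B)$, and passing to homotopy classes yields the first isomorphism of (2); the case $H = \{e\}$ is an immediate specialization. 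In the case $E = *$, the absence of augmentation allows arbitrary tensor-preserving natural automorphisms of $\alpha$; by Tannakian duality these correspond bijectively to elements of $G(k)$ commuting with the image $\alpha^*(H(k))$, i.e., the centralizer $C_{G(k)}(\alpha^*H(k))$, which produces precisely the equivalence $f \sim f*g$.
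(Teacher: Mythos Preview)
Your proof is correct and follows the same approach as the paper: the bijection in (1) is constructed exactly as you describe (the paper writes it as $F \mapsto (\coc^0 F,\ \colim_\lambda F_{(\uni,V_\lambda)})$ and refers to Prop.\ref{thmbasic} for the inverse), and the passage to homotopy classes likewise uses Lem.\ref{lempushout}(1) together with the explicit path objects of \cite{moriya} to decompose a right homotopy into a tensor-preserving natural isomorphism on $\coc^0$ plus a dga-homotopy. The only presentational difference is in (2): you invoke Frobenius reciprocity for the coinduced $G$-dga abstractly, whereas the paper writes out the correspondence $F \leftrightarrow f$ by hand (setting $f = \ring(\alpha^*)_* \circ F$ and $F = \T f$) and verifies mutual inversion via a short Sweedler-notation computation --- the content is identical.
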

 \begin{proof}
(1) The bijection is defined by $(F:\Tss(A)^c\to C)\longmapsto (\coc^0F,\colim_{\lambda}F_{(\uni,V_{\lam})})$. Here $\colim_{\lambda}F_{(\uni,V_{\lam})}$ denotes the following composition: $A\cong (\ring (G)_r\otimes A)^G\cong\colim_{\lam}(V_{\lam}\otimes A)^G\cong\colim\homo_{\Tss A^c}(\uni,V_{\lam})\stackrel{F_{V_{\lam}}}{\to}\colim_{\lam}\homo_C(\uni,\alpha(V_{\lam}))=C_{\alpha}$. The proof of the former claim is similar to Prop.\ref{thmbasic}. The latter part follows from an explicit description of  path objects in $\cldgc/E$ (see \cite[2.3.2]{moriya}) and Lem.\ref{lempushout}, (1). In fact, right homotopies of closed dg-categories of the above forms can be decomposed into natural transformations of $\coc^0$ and right homotopies of dg-algebras. Note that if $\alpha_1\sim\alpha_2$, $E_{\alpha_1}=E_{\alpha_2}$ and $C_{\alpha_1}\cong C_{\alpha_2}$ in $\dgalg(G)/E_{\alpha_1}$. \\
(2) We shall show morphisms $F:(\alpha^*)^*A\to \T(H,B)_\alpha\in\dgalg(G)$ are in one-to-one correspondence with morphisms $f:(\alpha^*)^*A\to B\in\dgalg(H)$. For given $F$, we set $f=\ring(\alpha^*)_* \circ F$, where $\ring(\alpha^*):\alpha(\ring(G))\to\ring(H)$ is the induced $H$-module morphism and $\ring(\alpha^*)_*:\T(H,B)_\alpha\to B$ is corresponding push-forward. In the other direction, for given $f$, We set $F=\T f:A\cong \T A(\ring(G))\to \T B(\alpha(\ring(G))=\T B_\alpha$. The construction $f\mapsto F\mapsto f$ is clearly the identity. We shall show $F\mapsto f\mapsto F$ is the identity. Let $a\in A$ and put $F(a)=\sum_iF_i(a)\otimes b_i(a)$, $F_i(a)\in\ring(G), b_i(a)\in B$. Put $\rho_A(a)=\sum_ja_j\otimes l_j$ and $\lambda_G(F_i(a))=\sum_kF_{i,k}(a)\otimes m_{i,k}(a)$, where $\lambda_{G}$ denotes the left translation, $a_j\in A$, $l_j, F_{i,k}(a),$ and $m_{i,k}(a)\in\ring(G)$. As $F$ is $G$-equivariant, $\sum_{i,j}F_i(a_j)\otimes b_i(a_j)\otimes l_j=\sum_{i,k}F_{i,k}(a)\otimes b_i(a)\otimes m_{i,k}(a)$.  The constructed $F$ is $\sum_{i,j}u\circ F_i(a_j)\cdot b_i(a_j)\otimes l_j$, where $u:\ring(G)\to k$ is the counit map, and this is equal to $\sum_{i,k}u\circ F_{i,k}(a)\cdot b_i(a)\otimes m_{i,k}(a)=\sum_iF_i(a)\otimes b_i(a)$.  This correspondence  clearly preserves right homotopy relation so the former pointed claims follow from Lem.\ref{lemcompletion2}.  For the unpointed ($E=*$) claim, similarly to the above, we have a bijection: $[A,\T(H,B)_\alpha]_\dgalg(G)\cong[\alpha A, B]_{\dgalg(H)}$. Under Tannakian duality, a (tensor-preserving) natural isomorphism $t:\alpha\Rightarrow \alpha$ corresponds an element $g$ of $C_{G(k)}(\alpha^*H(k))$ and $t_{\alpha(\ring(G))}\circ f$ does $f*g$ under the above bijection.
\end{proof}
\subsubsection{Iterated Hirsch extensions}
We shall introduce the notion of iterated Hirsch extensions of dg-algebras. The corresponding notion in the trivial group case was considered by  Sullivan, see \cite[P.279-280]{sul}. In fact, an iterated Hirsch extension is an iteration of Hirsch extensions as we see below, but this notion is useful because its classifying data directly correspond to homotopy invariants of a space. Let $l\geq 1$. Let $G$ be a reductive affine group scheme and $A\in\dgalg(G)_0$ be a connected $G$-equivariant dg-algebra. Let $(W,\eta)\in\T A$ be an object, where $W\in\ob(\rep(G))$ and $\eta$ is a MC element on $W$ (see subsection \ref{complete}), and $\alpha\in \coc^{l+1}(\T A (W,\eta))$  be a cocycle of degree $l+1$ (see the notation under Def.\ref{defofcldgc}). We define a $G$-dga $A\otimes_{(\alpha,\eta)}\bigwedge(W^\vee,l)\in\credeqalg$ as follows
\begin{enumerate}
\item As an equivariant commutative gaded algebra, 
\[
A \otimes _{(\alpha,\eta)}\bigwedge(W^\vee,l)=A\otimes _k\bigwedge(W^{\vee},l),
\]
where $\bigwedge(W^{\vee},l)$ is the free commutative graded algebra generated by the dual space $W^\vee$ with degree $l$ and regarded as an $G$-equivariant algebra with the action induced by that on $W$.

\item The differential $d$ is determined by its restrictions to $A$ and $W^\vee$.   $d|_A$ is equal to the differential of $A$. $d|_{W^\vee}$ is given by  
\[
\alpha\oplus (-^{\mathrm{t}}\eta) : W^{\vee}\longrightarrow A^{l+1}\oplus A^1\otimes_kW^{\vee}.
\]
Explicitly, if we express $\alpha$ and $\eta$ as 
\[
\eta=\sum_i  f_i\otimes a_i \ \ \  f_i\in\inhom(W,W),\ a_i\in A^1,\quad \alpha =\sum_j v_j\otimes b_j \ \ \  v_j\in W,\ b_j\in A^{l+1},
\]
we set $[\alpha \oplus (-^{\mathrm{t}}\eta)](u)=(\sum_ju(v_j)\cdot b_j, -\sum _ia_i\otimes (u\circ f_i))$.
Maurer-Cartan condition on $\eta$ and the condition $d_{\eta}\alpha=0$ ensures $d^2=0$.
\end{enumerate}
We always identify $A$ with a subalgebra of $A\otimes_{(\alpha,\eta)}\bigwedge(W^\vee, l)$ by $a\mapsto a\otimes 1$. 
\begin{defi}\label{defghe}
We use the above notations. Let $f:A\to B$ be a morphism of $G$-dga's. We say $f$ is an \textup{iterated Hirsch extension of} $A$ if there exist data $(W,\eta)\in\ob(\T A)$, $\alpha\in\coc^{l+1}(\T A(W,\eta))$, and  an isomorphism $\varphi:B\to A\otimes_{(\alpha,\eta)}\bigwedge(W^\vee,l)$ such that the following triangle is commutative.
\[
\xymatrix{A\ar[r]^f\ar[rd]^i&B\ar[d]^\varphi\\
& A\otimes_{(\alpha,\eta)}\bigwedge(W^\vee,l),}
\]
where $i$ is the natural inclusion.  We say a pair 
\[
\{\, (W,\eta),\, [\alpha]\, \} 
\]
of an object $(W,\eta)\in\ob(\T A)$ and a class  $[\alpha]\in\coh^{l+1}(\T A(W,\eta))$ is the \textup{classifying data of }$f$ (or $B$) if the above commutative triangle exists for $(W,\eta)$, $\alpha$, and for some isomorphism $\varphi$.  Classifying data is well-defined up to isomorphisms by Lem.\ref{lemghe} below. If $\eta$ can be chosen  as $0$, we say $f$ is a \textup{Hirsch extension of} $A$.  
\end{defi}
 An iterated Hirsch extension extension $A\to A\otimes_{(\alpha,\eta)}\bigwedge(W^\vee,l)$ is decomposable into a sequence of Hirsch extensions. Precisely, there exists a finite sequence of $G$-dga's
\[
A=A_0\to A_1\to\cdots\to A_m=A\otimes_{(\alpha,\eta)}\bigwedge(W^\vee,l)
\]
such that $A_{i}$ is a Hirsch extension of $A_{i-1}$ for $1\leq i\leq m$.
This is because the MC element $\eta$ can be replaced with a MC-element of the upper triangular form with zero diagonal, up to isomorphism (see Lem.\ref{lemcompletion}). In particular, if $A$ is minimal, $A\otimes_{(\alpha,\eta)}\bigwedge(W^\vee,l)$ is also minimal.\\
\indent The following is a variant of \cite[Thm.2.1]{sul} or \cite[Lem.9.3]{grimor}.
\begin{lem}\label{lemghe}
Let $G$ be a reductive affine group scheme.\\
\textup{(1)} Let $A$ be a connected $G$-dga. Let $A\otimes_{(\alpha_i,\eta_i)}\bigwedge(W_i^\vee,l)$, $i=0,1$ be two iterated Hirsch extensions of $A$. There exists an  isomorphism of $G$-dga's $\varphi:A\otimes_{(\alpha_0,\eta_0)}\bigwedge(W^\vee_0,l)\cong A\otimes_{(\alpha_1,\eta_1)}\bigwedge(W^\vee_1,l)$ fixing $A$ if and only if there exists an isomorphism  $\phi:(W_1,\eta_1)\cong (W_0,\eta_0)\in\coc^0\T A$ such that  $[\phi\circ \alpha_1]=[\alpha_0]\in\coh^{l+1}(\T A(W_0,\eta_0))$. \\
\textup{(2)} Let $\M$ be a minimal $G$-dga generated by elements of degree $\leq l$. If the $l$-th indecomposable module  $V$ of $\M$ is finite dimensional, $\M$ is an iterated Hirsch extension  of  the dg-subalgebra $\M(l-1)$ generated by elements of degree $\leq l-1$. Let $\{(W,\eta),[\alpha]\}$ be its  classifying data. We identify $V$ with a submodule of $\M^l$. As $\M$ is minimal, its differential $d|_V$, restricted to $V$, has a unique decomposition $d|_V=d^{\M(l-1)}\oplus d^{\M^1\otimes V}$ consisting of maps 
\[
d^{\M(l-1)}:V\longrightarrow \M(l-1)^{l+1},\quad d^{\M^1\otimes V}:V\longrightarrow \M^1\otimes V \in\rep^\infty(G).  
\]
 Then, 
\begin{enumerate}
\item the dual $V^\vee$ is isomorphic to $W$,
\item $d^{\M^1\otimes V}$ corresponds to the MC element $\eta$, and
\item $d^{\M(l-1)}$ corresponds to a cocycle $\alpha$ which gives the class $[\alpha]$.
\end{enumerate} 
\end{lem}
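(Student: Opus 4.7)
The plan is to prove the two parts separately, adapting the classical Sullivan--Hirsch lemma to the $G$-equivariant, twisted (MC) setting. For part (1), I would give explicit constructions in both directions. For the ``if'' direction: given an isomorphism $\phi:(W_1,\eta_1)\cong(W_0,\eta_0)$ in $\coc^0\T A$ and a cochain $\beta\in \T A(W_0,\eta_0)^l$ satisfying $d_\eta\beta = \phi\circ\alpha_1-\alpha_0$, I would define $\varphi$ on the generators $W_1^\vee$ as the transpose of $\phi$ corrected by $\beta$ (viewed as a linear map $W_1^\vee\to A^l$ via dualization), and extend $A$-linearly and multiplicatively. A direct calculation using the definitions of $d_\eta$ and of the differential on the generators of a Hirsch extension shows that $\varphi$ is a $G$-equivariant dga isomorphism fixing $A$. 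For the ``only if'' direction: any $G$-dga isomorphism $\varphi$ fixing $A$ sends each generator $w\in W_1^\vee$ to a degree-$l$ element of the form $a(w)\otimes 1 + 1\otimes\phi^\vee(w)$ with $a(w)\in A^l$ and $\phi^\vee(w)\in W_0^\vee$, so $\phi$ is read off as the leading coefficient and $\beta$ as the correction $a$; the dga-compatibility of $\varphi$ with the differentials on both sides forces the desired cohomological equality.

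For part (2), I would first establish the algebraic decomposition. Reductivity of $G$ lets us choose a $G$-equivariant splitting $V\hookrightarrow \M^l$ of the projection onto the $l$-th indecomposable module, and minimality together with the hypothesis that $\M$ is generated in degrees $\leq l$ upgrades this to a $G$-equivariant isomorphism of graded $G$-algebras $\M\cong\M(l-1)\otimes\bigwedge(V,l)$. Setting $W:=V^\vee$, I analyze $d|_V:V\to \M^{l+1}$. Minimality forces $d(V)\subset(\M^{\geq 1}\cdot\M^{\geq 1})^{l+1}$, and in this degree the only products of positive-degree elements containing a factor from $V$ lie in $\M^1\cdot V$, while all others lie in $\M(l-1)^{l+1}$. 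This yields the canonical $G$-equivariant splitting $d|_V = d^{\M(l-1)}\oplus d^{\M^1\otimes V}$.

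Next, I would identify these summands with the classifying data. Dualizing $d^{\M^1\otimes V}:V\to A^1\otimes V$ (with the sign dictated by the formula $-{}^{\mathrm{t}}\eta$ in Def.\ref{defghe}) produces an element $\eta\in\inhom(W,W)\otimes A^1$, and dualizing $d^{\M(l-1)}:V\to A^{l+1}$ yields an element $\alpha\in (W\otimes A^{l+1})^G\subset \T A(W)^{l+1}$. To verify that $\eta$ is a MC element and that $\alpha$ is a $d_\eta$-cocycle in $\T A(W,\eta)$, I would compute $d^2|_V = 0$ and project onto the $G$-equivariant summands of $\M^{l+2}$ indexed by the number of $V$-factors in the splitting $\M\cong\M(l-1)\otimes\bigwedge(V,l)$. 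The component linear in $V$ collects contributions from $d(a_i)\cdot v^{(i)}$ and $a_i\cdot d^{\M^1\otimes V}(v^{(i)})$ (writing $d^{\M^1\otimes V}(v) = \sum a_i\otimes v^{(i)}$) and translates, after dualization, to the MC equation $d\eta+\eta^2 = 0$; the $V$-free component $\M(l-1)^{l+2}$ combines $d\circ d^{\M(l-1)}$ with $a_i\cdot d^{\M(l-1)}(v^{(i)})$ and gives $d_\eta\alpha = 0$. Well-definedness of $\{(W,\eta),[\alpha]\}$ up to isomorphism then follows from part (1).

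The main obstacle I expect is this last projection step: one must sign-track via the graded Leibniz rule and verify that the cross-terms in $d^2|_V$ assemble precisely into the $\eta^2$ contribution to the MC equation and the twist piece of $d_\eta\alpha$, in particular using that products $a_ia_j\in\M^1\cdot\M^1$ lie in $\M(l-1)^2$ rather than creating new $V$-factors (which requires the generation-in-degrees-$\leq l$ hypothesis). Everything else reduces to dualization between $V$ and $W$, and $G$-equivariance is automatic because all splittings above can be chosen $G$-equivariantly by reductivity of $G$.
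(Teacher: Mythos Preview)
Your proposal is correct and follows precisely the classical Sullivan argument (to which the paper's proof simply refers, citing \cite[Thm.2.1]{sul}), adapted to the $G$-equivariant, MC-twisted setting; the paper gives no further details. The only minor caveat is that in the ``only if'' direction of (1) your expression $a(w)\otimes 1 + 1\otimes\phi^\vee(w)$ tacitly assumes $A^0=k$ rather than merely $\coh^0A\cong k$, but this is harmless in the intended applications (minimal $A$) and is the same simplification Sullivan makes.
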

\begin{proof}
See \cite[Thm.2.1]{sul}.

\end{proof}

\subsection{de Rham functor}\label{derhamfunct}
\indent In this subsection, we define a Quillen adjoint pair 
\[
\xymatrix{
\tdr:\sset \ar@<3pt>[r]& (\dgcl)^{\op}:\langle-\rangle \ar@<3pt>[l]}
\] 
and prove that $\tdr(K)$ is a Tannakian dg-category if $K$ is connected.\\

\indent We first recall the notion of standard simplicial commutative
dga $\nabla (*,*)$ over $k$ from \cite[Section 1]{bg}. 
Let $p\geq 0$ and $\nabla  (p,*)$ be the commutative graded algebra over $k$ 
generated by indeterminates $t_0,\dots,t_p$ of degree 0 and $dt_0,\dots,dt_p$
of degree 1 with relations
\[
t_0+\cdots +t_p =1, \ \ \
dt_0+\cdots +dt_p=0.
\]

We regard $\nabla(p,*)$ as a dga with the differential given by $d(t_i):=dt_i$. We
can define simplicial operators
\[
d_i:\nabla (p,*) \rightarrow \nabla (p-1,*),\ \ \  
s_i:\nabla(p,*) \rightarrow \nabla (p+1,*),\ \ 0\leq i\leq p 
\]
(see \cite{bg}) and we also regard $\nabla(*,*)$ as a simplicial commutative dga.\\
\indent The following definition is adopted in \cite{nsrat}
\begin{defi}[\cite{nsrat}]
Let $\vect^{\iso}$ be the subcategory of $\vect$ consisting of
all objects and isomorphisms. Let $K$ be a simplicial set.
\begin{enumerate}
\item A \textup{local system} $\msc{L} $ \textup{on} $K$ is a functor 
$(\Delta K)^{op}\rightarrow \vect ^{\iso}$.
\item A \textup{morphism of local systems} $\msc{L}\rightarrow \msc{L}'$ is a 
natural transformation $I \circ \msc{L} \Rightarrow 
I\circ \msc{L}':(\Delta K)^{op}\rightarrow  \vect$, where 
$I:\vect ^{\iso}\rightarrow \vect$ is the natural inclusion functor.
\end{enumerate} 
 We define the
 \textup{tensor} $\msc{L}\otimes \msc{L}'$, the \textup{internal 
hom object} $\mf{Hom}(\msc{L},\msc{L}')$ and the \textup{coproduct} $\msc{L} \oplus \msc{L}'$
 of two local systems $\msc{L}$, $\msc{L}'$  by using those of $\vect $. For example, $(\aloc\otimes\aloc')(\sigma)=\aloc(\sigma)\otimes \aloc'(\sigma)$.
We denote by $\loc(K)$ the closed $k$-category of local systems on $K$. 
If $K$ is pointed, $\loc(K)$ is regarded as a closed $k$-category with a fiber functor.  The fiber functor $\loc (K)\rightarrow\vect$ 
is given by the evaluation at the base point.
\end{defi}
It is well-known that for a pointed connected simplicial set $K$, there exists an equivalence of closed $k$-categories $\loc(K)\stackrel{\sim}{\to}\rep(\pi _1(K))$ which is functorial in $K$. In the following, we sometimes identify $k$-local systems with representations of the fundamental group, fixing such an equivalence.
\begin{defi}
Let $K$  be a simplicial set and $\msc{L}$ be a local system on $K$. The \textup{de Rham complex of $\msc{L}$-valued 
polynomial forms} $\cdr(K,\msc{L})\in \ms{C}^{\geq 0}(R)$ is defined as follows. For each $q\geq 0$, the degree $q$ part is given by  
\[
\cdr^q(K,\msc{L} )=\lim\nolimits_{\Delta K^{op}}\nabla(*,q)\ot _k\msc{L} .
\]
Here $\nabla(*,q)$ is regarded as a functor from $\Delta K^{op}$ to the category of $k$-vector spaces by composed with the functor 
$\Delta K^{op}\rightarrow \Delta^{op}$, the limit is taken in the 
category of possibly infinite dimensional $k$-vector spaces. For $q\leq -1$, we set $\cdr^q(K,\msc{L} )=0$. The differential $d:\cdr^q(K,\msc{L} )\to \cdr^{q+1}(K,\msc{L} )$ is defined from
that of $\nabla(*,q)$.
\end{defi}
 We shall define the generalized de Rham functor
\[
\tdr:\sset\longrightarrow (\dgcl )^{\op}.
\]
This  
is a natural generalization of the de Rham functor of \cite[Definition 2.1]{bg}. 
For $K\in\sset$ we define 
a closed  dgc $\tdr(K)$ as follows. An object is a local system on $K$ and 
$\hom_{\tdr(K)}(\msc{L},\msc{L} ')=\cdr(K,\mf{Hom}(\msc{L},\msc{L} '))$. The composition is defined 
from that of $\vect$ and the multiplication of $\nabla(*,*)$, i.e.,
\[
(\eta \cdot b)\circ (\omega \cdot a):=(\eta\cdot\omega )\cdot (b\circ a)
\]
for $\omega,\eta\in\nabla(*,*)$, $a\in \mf{Hom}(\msc{L},\msc{L}')$ and $b\in \mf{Hom}(\msc{L} ',\msc{L} '')$. 
The additional structures $\otimes$, $\inhom$ and $\oplus$ are defined similarly. (We agree that)  
$\tdr (\emptyset )$ is a terminal object of $\dgcl$.  
For each morphism $f:K\rightarrow L\in\sset$ we associate a morphism of closed dgc's 
$f^*:\tdrl\rightarrow \tdrk$ by 
\[
(\Delta (L)^{op}\stackrel{\msc{L}}{\to}\vect ^{\iso})\longmapsto 
(\Delta (K)^{op}\stackrel{\Delta f^{op}}{\to}\Delta (L)^{op}\stackrel{\msc{L}}{\to}\vect^{\iso}).
\]
Thus we have defined a functor 
$\tdr:\sset\longrightarrow (\dgcl )^{op}$. \\ 
\indent Let $C\in\dgcl$. We define a functor $\langle-\rangle:(\dgcl)^{\op}\rightarrow \sset$ 
by
\[
\langle C\rangle_n=\homo _{\dgcl}(C,\tdr (\Delta ^n))
\]
with obvious simplicial operators. Clearly $\langle-\rangle$ is a right adjoint of $\tdr$.\\
\indent We define an adjoint pair between pointed categories:
\[
\xymatrix{
\tdr:\ssetp \ar@<3pt>[r]& (\dgclp)^{op}:\langle-\rangle.\ar@<3pt>[l]}
\] 
For $K\in\ssetp$, we define $\tdr(K)\in\acldgc$ by the following pullback square:
\[
\xymatrix{
\tdr(K)\ar[r]\ar[d]&\tdr(K_u)\ar[d]^{\tdr(pt)}\\
\vect\ar[r]&\tdr(*),}
\]
where $K_u$ is the unpointed simplicial set underlying $K$. For $(C,\omega_C)\in\acldgc$, we set $\langle C,\omega_C\rangle:=\langle C\rangle$ whose base point is given by $*\cong\langle\vect\rangle\stackrel{\langle\omega_C\rangle }{\to}\langle C\rangle$.
\begin{lem}[\cite{moriya}]\label{lemqpair}
The above two adjoint pairs $(\tdr,\langle-\rangle):\sset\to\cldgc$ and $(\tdr,\langle-\rangle):\ssetp\to\acldgc$ are Quillen pairs. \end{lem}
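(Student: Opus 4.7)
The plan is to verify the Quillen condition via the left adjoint: show that $\tdr$ sends generating cofibrations (resp.\ generating trivial cofibrations) of $\sset$ to cofibrations (resp.\ trivial cofibrations) of $(\dgcl)^\op$, i.e.\ to fibrations (resp.\ trivial fibrations) of $\dgcl$. Since $\tdr$ is a left adjoint it preserves colimits, and $\sset$ is cofibrantly generated by the boundary inclusions $\partial\Delta^n\hookrightarrow\Delta^n$ and the horn inclusions $\Lambda^n_k\hookrightarrow\Delta^n$, so this suffices by Quillen's small object argument.

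For the first part, I would analyze the restriction morphism $\tdr(\Delta^n)\to\tdr(\partial\Delta^n)$ against the explicit characterization of fibrations in Thm.\ref{thmmodelcldgc}. The levelwise surjectivity on hom-complexes reduces to surjectivity of the restriction $\nabla(n,q)\to\nabla(\partial\Delta^n,q)$ of polynomial forms, which is the standard extendability property of the simplicial cdga $\nabla(*,*)$ recorded in \cite{bg}; tensoring with the internal hom $\inhom(\aloc,\aloc')$ and passing to the limit preserves surjectivity. For the isomorphism-lifting condition, given $\aloc\in\tdr(\Delta^n)$ and an isomorphism $\phi:\aloc|_{\partial\Delta^n}\to\aloc'$ in $\coc^0\tdr(\partial\Delta^n)$, I would use that $\Delta^n$ is contractible so every local system on it is constant, extend $\aloc'$ to a constant local system on $\Delta^n$, and extend $\phi$ by a polynomial homotopy using the extendability of $\nabla$ applied to $\inhom(\aloc,\aloc')$.

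For the generating trivial cofibrations, I must further show that $\tdr(\Delta^n)\to\tdr(\Lambda^n_k)$ is a quasi-equivalence. Essential surjectivity on $\coc^0$ is immediate since $\Lambda^n_k$ and $\Delta^n$ have the same fundamental groupoid (both are contractible for the relevant $n$, and a path for $n=1$), so any local system on the horn extends to the simplex. The quasi-isomorphism on hom-complexes, i.e.\ $\cdr(\Delta^n,\inhom(\aloc,\aloc'))\to\cdr(\Lambda^n_k,\inhom(\aloc|,\aloc'|))$, follows from the polynomial de Rham theorem with local coefficients applied to the weak equivalence $\Lambda^n_k\hookrightarrow\Delta^n$; since the coefficients are constant (both spaces being simply connected in the relevant range), this reduces to the classical statement. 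The variants of the polynomial de Rham theorem needed here are established in the Appendix.

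Finally, for the pointed statement I would pass the established unpointed Quillen pair through the defining pullback square of $\tdr(K)$ and the forgetful functors $\ssetp\to\sset$, $\acldgc\to\cldgc$. Since the model structure on $\acldgc$ is the one induced from $\cldgc$ on the slice over $\vect$ (Thm.\ref{thmmodelcldgc}(2)) and the pointed simplicial model structure is induced similarly, fibrations and weak equivalences in both categories are detected by the respective forgetful functors; the adjointness $(\tdr,\langle-\rangle)$ in the pointed setting is an immediate diagram-chase from the unpointed one. The main obstacle is the verification of the quasi-isomorphism statement on hom-complexes for horn inclusions, i.e.\ the polynomial de Rham theorem with local coefficients in the required generality; everything else is formal manipulation of the extendability of $\nabla$ and the explicit model-categorical criteria.
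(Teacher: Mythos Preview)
The paper does not give its own proof of this lemma: it simply attributes the result to \cite{moriya} and gives no argument. (The analogous Lemma~\ref{lemqpair2} for simplicial presheaves is then said to be ``similar to that of Lem.\ref{lemqpair}'', again deferring to the external reference.) So there is no in-paper argument to compare against.

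Your proposed approach is the standard and correct one: verify the Quillen condition on generating (trivial) cofibrations by checking that the restriction maps $\tdr(\Delta^n)\to\tdr(\partial\Delta^n)$ and $\tdr(\Delta^n)\to\tdr(\Lambda^n_k)$ satisfy the explicit fibration/trivial-fibration criteria of Thm.~\ref{thmmodelcldgc}. Surjectivity on hom-complexes is indeed the extendability of $\nabla(*,*)$ from \cite{bg}, the quasi-isomorphism for horns is the twisted polynomial de~Rham theorem (Prop.~\ref{propderham}), and the pointed case follows formally from the slice description of the model structures. One small wording issue: for the isomorphism-lifting condition you speak of extending $\phi$ ``by a polynomial homotopy using the extendability of $\nabla$''. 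That is not quite the mechanism. An isomorphism in $\coc^0\tdr(K)\simeq\loc(K)$ is a morphism of local systems, i.e.\ a locally constant section of $\inhom(\aloc,\aloc')$, not a general polynomial form to be homotoped. The lift is purely combinatorial: extend $\aloc'$ over the new top simplex by transporting along $\phi$ and the face maps of $\aloc$, and take the identity on that simplex as the extension of $\phi$. This works uniformly (including the case $n=1$, where $\partial\Delta^1$ is disconnected); no appeal to the extendability of $\nabla$ is needed at this step.
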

\begin{defi}[$\ared(K)$]
 Let $K$ be a pointed connected simplicial set. We set $\ared(K):=\ared(\tdr(K))\in\aredeqalg$. We always identify  the affine group scheme of $\ared(K)$ with $\pi_1(K)^\red$ (see Appendix \ref{proobj}). When we want to clarify the field of definition, we write $\ared(K,k)$.  
\end{defi}
We shall show the closed dg-category $\tdr(K)$ of a connected simplicial set $K$ is a Tannakian dg-category. Recall that there is a natural equivalence $\T A^c\to\T A$ of closed dg-categories for each equivariant dga $A$ (see sub-subsection \ref{modify}).
\begin{thm}\label{propcomplete1}
\textup{(1)} Let $K\in\sset$ (resp. $\ssetp$). 
\begin{enumerate}
\item $\tdr K$ is complete.
\item $0\to\aloc_0\to \aloc_2\to \aloc _1\to 0$ is a short exact sequence in $\loc(K)\cong\coc^0\tdr(K)$ if and only if $\aloc_0\to\aloc_2\to\aloc_1$ is an extension in $\tdr K$ in the sense of subsection \ref{complete}.
\end{enumerate}
 In particular, if $K$ is connected, $\tdr K\in\tann$ (resp. $\tannp$).\\
\textup{(2)} Let $K$ be a pointed connected simplicial set.  There exists a morphism of $\acldgc$ which is an equivalence between underlying categories:
\[
 \T\ared(K)^c\longrightarrow \tdr K.
\]
 In particular, if $f:A\to \ared(K)\in\dgalg(\pi_1^\red)$ is a quasi-isomorphism ($\pi_1=\pi_1(K)$),  finite dimensional representations $V$ of $\pi_1$ determine and are determined by objects $(W,\eta)\in\ob(\T A)$ up to isomorphisms via the following equivalences:
\[
\rep(\pi_1)\simeq\loc(K)\simeq \coc^0\tdr(K)\simeq\coc^0\T \ared(K)^c\simeq\coc^0\T\ared(K)\stackrel{f_*}{\simeq}\coc^0\T A.
\] 
 
In this correspondence,  $W$ is isomorphic to the semi-simplification of $V$. 
\end{thm}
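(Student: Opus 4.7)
For Part (1), the first axiom of Def.\ref{deftannakian}, namely that $\coc^0\tdr(K)$ is neutral Tannakian, is immediate for connected $K$ via the standard equivalences $\coc^0\tdr(K)\simeq\loc(K)\simeq\rep(\pi_1(K))$; in the pointed case the fiber functor is the evaluation at the basepoint. The substantive content is axioms (2) and (3), which I would prove simultaneously by exhibiting a natural bijection between $H^1(\hom_{\tdr K}(\aloc_1,\aloc_0))$ and the set of equivalence classes of short exact sequences $0\to\aloc_0\to\aloc_2\to\aloc_1\to 0$ in $\loc(K)$, realised via the class $[gd(h)]$ of subsection \ref{complete}.

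One direction is almost formal. Given an SES of local systems, the morphisms $a,b$ are already degree-$0$ cocycles; to construct the degree-$0$ ``splittings'' $g,h$ it suffices to choose, at each simplex $\sigma$, a $k$-linear splitting of the underlying SES of vector spaces, since by the limit formula defining $\cdr^{0}$ with values in $\inhom(\aloc_i,\aloc_j)$ such a collection is precisely an element of $\hom^0_{\tdr K}$ (no compatibility with the local-system transition maps being required). Such splittings exist because we work over a field, which simultaneously proves axiom (3) and produces a map SES $\mapsto [gd(h)]$ into $H^1$.

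To show this map is a bijection, and so obtain completeness, my plan is to identify both sides with the local-coefficient cohomology $H^1(K;\inhom(\aloc_1,\aloc_0))$: the left with $\mathrm{Ext}^1_{\loc(K)}(\aloc_1,\aloc_0)\cong H^1(K;\inhom(\aloc_1,\aloc_0))$ via the classical theory of group cohomology under $\rep(\pi_1(K))\simeq\loc(K)$, and the right via the polynomial de Rham theorem with local coefficients from the Appendix. I expect the main obstacle to be verifying that these two identifications agree, i.e.\ that $[gd(h)]$ corresponds under the de Rham isomorphism to the classical singular $1$-cocycle measuring the failure of two pointwise linear splittings to coincide on an edge; this is a concrete but careful cocycle-level chase.

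Part (2) is essentially immediate once Part (1) is in hand. Since $\tdr(K)\in\tannp$, Prop.\ref{thmbasic}(2) directly supplies an equivalence $\T\ared(K)^c\to\tdr(K)$ in $\acldgc$; by the convention fixed before Def.\ref{defacldgc}, an equivalence in $\cldgc$ is by definition one inducing an equivalence of underlying categories. The chain of equivalences in the ``in particular'' claim is then assembled by concatenating $\rep(\pi_1)\simeq\loc(K)\simeq\coc^0\tdr(K)$, the equivalence just obtained, the natural equivalence $\T A^c\to\T A$ of sub-subsection \ref{modify}, and the equivalence $f_*:\coc^0\T\ared(K)\to\coc^0\T A$ coming from the quasi-equivalence $\T f$ of Lem.\ref{lembasic}(2). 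Finally, $W\cong V^{\ssim}$ because the objects of $\Tss A$ are by construction finite-dimensional $\pi_1^{\red}$-representations, hence semi-simple as $\pi_1$-representations, while the MC-element $\eta$, placed in upper-triangular form with zero diagonal via Lem.\ref{lemcompletion}(1), exhibits $(W,\eta)$ as an iterated extension whose associated graded is $W$.
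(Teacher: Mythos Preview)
Your proposal for Part~(2) is correct and matches the paper's one-line deduction from Prop.~\ref{thmbasic}(2). Part~(1), however, contains a genuine error in the step you treat as routine.

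You assert that to construct the degree-$0$ splittings $g,h$ one may ``choose, at each simplex $\sigma$, a $k$-linear splitting of the underlying SES of vector spaces'' and that ``no compatibility with the local-system transition maps [is] required.'' This is false. By definition
\[
\hom^0_{\tdr K}(\aloc_i,\aloc_j)=\cdr^0(K;\inhom(\aloc_i,\aloc_j))=\lim\nolimits_{(\Delta K)^{\op}}\nabla(*,0)\otimes_k\inhom(\aloc_i,\aloc_j),
\]
and the limit \emph{does} impose compatibility: for every face or degeneracy $a:\tau\to\sigma$ one must have $(a^*\otimes\msc{L}(a))(\omega_\sigma)=\omega_\tau$. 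A family of constant linear splittings, one per simplex, will not satisfy this unless the splittings are already intertwined by the monodromy---which is exactly what fails for a non-split SES. The existence of $g,h$ is therefore not free; it is equivalent to the levelwise exactness of $\cdr^q(K;-)$ on short exact sequences of local systems, which needs an argument (e.g.\ induction over skeleta using the extendability of $\nabla(*,q)$ along $\partial\Delta^n\hookrightarrow\Delta^n$). Without this your map ``SES $\mapsto[gd(h)]$'' is not even defined, and the rest of your bijection strategy cannot start.

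The paper avoids this difficulty by a different reduction. It records two stability facts---completeness is preserved under homotopy pullbacks and sequential homotopy limits, and ``being an extension'' is equivalent to levelwise exactness of all hom-sequences---and then uses the small object argument to reduce to $K=\Delta^n$ and $K=\partial\Delta^n$. There the claim is checked by hand; for instance, for $S^1\simeq\partial\Delta^2$ one writes a class $\sum_i P_i(t)\,dt\cdot f_i$ and produces the extension explicitly via $g_0=\bigl(\begin{smallmatrix}g'&\sum_i(\int_0^1 P_i\,dt)\,f_ig\\ 0&g\end{smallmatrix}\bigr)$. This approach sidesteps both the splitting issue and the cocycle-level comparison you flagged as ``the main obstacle,'' at the cost of a structural reduction that you did not attempt.
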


\begin{proof}
First note that the following facts.
\begin{enumerate}
\item The completeness is stable under homotopy pullbacks and homotopy limits of towers.
\item For a dgc $C$, a sequence of chain morphisms $c_0\stackrel{a}{\to}c_2\stackrel{b}{\to}c_1$ in $C$ is an extension if and only if for any object $c\in\ob(C)$ the two sequences of complexes
\[
\begin{split}
&0\to\homo_C(c,c_0)\stackrel{a_*}{\to}\homo_C(c,c_2)\stackrel{b_*}{\to}\homo_C(c,c_1)\to 0
, \\
& 0\to\homo_C(c_1,c)\stackrel{b^*}{\to}\homo_C(c_2,c)\stackrel{a^*}{\to}\homo_C(c_0,c)\to 0
\end{split}
\]  
are both levelwise exact.

\end{enumerate}
By these  facts and small object argument, all we have to do is to prove the propositon for $X=\Delta^n,  \partial\Delta^n$ for  $n\geq 1$. 
We only show the case of $\tdr (\partial\Delta^2)$ and the others are clear.
We may replace $\partial\Delta^2$ by $S^1 =\Delta ^1/0\!\sim \!1$ and we identify local systems on $S^1$ with representation of the free group $\bb{Z}$. Let $(V,g),(V',g')\in 
\rep(\bb{Z})$.
Let $\Sigma _iP_i(t)dt\! \cdot\! f_i\in\hom _{\tdr S^1}^1((V,g),(V',g'))$ where $t=t_0$, $P_i(t)$ is a polynomial of $t$, and $f_i
\in \hom (V,V')$. Put  
\[
g_0=\left(
\begin{array}{cc}
g' &\Sigma _i\int _0^1P_i(t)dt\! \cdot \! f_ig \\
0& g
\end{array}\right).
\]    
Then the obvious sequence $(V',g')\to (V'\oplus V,g_0)\to (V,g)$ is an extension whose class is
equal to $[\Sigma _iP_i(t)dt\! \cdot\! f_i]$. 

So $\tdr(S^1)$ is complete and the third condition of Def.\ref{deftannakian} is proved by a similar argument.\\
\indent (2) follows from (1) and Prop.\ref{thmbasic}.
\end{proof}
The following lemma is used in the proof of Thm.\ref{thmspace}.
\begin{lem}[cf. Rem.4.43 of \cite{prid0}]\label{lemhtpygr}
Let $G$ be a reductive affine group scheme and $\M$ be a minimal $G$-dga. Let $V^i$ denote the $i$-th indecomposable module of $\M$. For $i\geq 2$, there exists an isomorphism of groups $\pi_i(\R\langle \T\M\rangle)\cong (V^i)^\vee$. This isomorphism is functorial about morphisms between minimal equivariant dga.
\end{lem}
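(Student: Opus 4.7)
I would argue by induction on the Postnikov-type filtration of $\M$. Let $\M(n)\subset \M$ be the sub-$G$-dga generated in degrees $\leq n$, so that $\M=\colim_n \M(n)$. By Lem.\ref{lemghe}(2) together with the decomposition of an iterated Hirsch extension into a finite sequence of elementary Hirsch extensions (remark following Def.\ref{defghe}), each inclusion $\M(n-1)\hookrightarrow \M(n)$ breaks up into a finite chain of elementary Hirsch extensions, and the generators added in degree $n$ form a $G$-module isomorphic to $V^n\cong W_n^\vee$. Applying $\T$ and the right Quillen functor $\mathbb{R}\langle-\rangle$ (Lem.\ref{lemqpair}), one obtains a tower of pointed simplicial sets
\[
\cdots\longrightarrow \mathbb{R}\langle \T\M(n)\rangle \longrightarrow \mathbb{R}\langle \T\M(n-1)\rangle\longrightarrow\cdots
\]
whose homotopy limit is $\mathbb{R}\langle \T\M\rangle$, and only the stages up to $n=i+1$ can influence $\pi_i$. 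It therefore suffices to show that $\pi_i(\mathbb{R}\langle\T\M(i)\rangle)\cong (V^i)^\vee$ and that $\pi_i(\mathbb{R}\langle\T\M(i+1)\rangle)\to \pi_i(\mathbb{R}\langle\T\M(i)\rangle)$ is an isomorphism.

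\textbf{Elementary step.} For a single Hirsch extension $A\to B=A\otimes_{(\alpha,0)}\bigwedge(W^\vee,l)$ (one can arrange $\eta=0$ up to isomorphism by Lem.\ref{lemcompletion}), I would realize $B$ as a pushout in $\dgalg(G)$,
\[
B\;\cong\; A\otimes_{F}E,\qquad F=\bigwedge(W^\vee,l+1)\text{ with }d=0,\quad E=\bigwedge(W^\vee,l)\otimes\bigwedge(W^\vee,l+1)\text{ acyclic},
\]
where $F\to A$ is the classifying cocycle $\alpha$ and $F\to E$ is the canonical inclusion. Combining Lem.\ref{lempushout}(1) (cofibrancy of $\T^{\ssim}$ on such extensions) with the right Quillen property of $\mathbb{R}\langle-\rangle$, this pushout becomes a homotopy pullback after $\mathbb{R}\langle\T(-)\rangle$. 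Since $E$ is acyclic, $\mathbb{R}\langle\T E\rangle$ is contractible, and we obtain a homotopy fibre sequence
\[
\mathbb{R}\langle \T B\rangle\longrightarrow \mathbb{R}\langle \T A\rangle \longrightarrow \mathbb{R}\langle \T F\rangle,
\]
and hence a long exact sequence of homotopy groups whose connecting map is controlled by the class $[\alpha]$.

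\textbf{Base building block.} The essential calculation is that for the free $G$-dga $F=\bigwedge(U^\vee,l)$ with trivial differential, $\mathbb{R}\langle \T F\rangle$ is (equivalent to) an Eilenberg--MacLane object with $\pi_lF\cong U$ and all other homotopy groups trivial. This can be established by unwinding the definition of $\langle-\rangle$, using Lem.\ref{lemmor} to rewrite
\[
\hom_{\cldgc}\bigl(\T(F)^c,\,\T_{\mathrm{PL}}(\Delta^n)\bigr)\;\cong\;\bigl\{(\alpha,\beta)\bigr\},
\]
where $\beta$ ranges over $G$-dga maps $F\to \T_{\mathrm{PL}}(\Delta^n)_{\alpha}$; such maps are classified by $G$-equivariant linear maps $U^\vee\to \coc^l\T_{\mathrm{PL}}(\Delta^n)_\alpha$, and the usual polynomial de Rham calculation (cf.\ \cite{bg}) then identifies the resulting simplicial set with an Eilenberg--MacLane realisation of $U$. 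Plugging this into the long exact sequence above and adding up over the generators of degree $i$ in $\M(i)$ (which form $V^i$) gives $\pi_i(\mathbb{R}\langle\T\M(i)\rangle)\cong (V^i)^\vee$, while the Hirsch extensions adding degree $>i$ generators do not change $\pi_i$ by the same long exact sequence.

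\textbf{Main obstacle and naturality.} The principal difficulty is the homotopical control of $\T$ on pushouts: $\T$ is not a left adjoint, so one must check case by case (using Lem.\ref{lempushout}-style cofibrancy arguments and the explicit cofibrant model of Rem.\ref{remcofibrant}) that the specific pushouts above are homotopy pushouts of closed dg-categories, so that $\mathbb{R}\langle-\rangle$ converts them into homotopy pullbacks. Once that is in place, naturality in morphisms of minimal $G$-dga's is automatic, because the whole argument uses only the Postnikov filtration, which is preserved (up to coherent isomorphism) by any morphism of minimal equivariant dg-algebras via Lem.\ref{lemghe}.
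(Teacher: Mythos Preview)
Your approach is valid in spirit but takes a far more laborious route than the paper, and the ``main obstacle'' you flag is genuinely unresolved.  The paper's proof is a one-line computation:
\[
\pi_i(\R\langle \T\M\rangle)\;\cong\;[\T\M,\tdr S^i]_{\acldgc}\;\cong\;[\M,\adr S^i]_{\dgalg/k}\;\cong\;(V^i)^\vee,
\]
using the Quillen adjunction, then Lem.\ref{lemcompletion2} and Lem.\ref{lemmor}(2) (the case $H=e$) to pass from closed dg-categories to ordinary augmented dga's, and finally the classical identification of $[\M,\adr S^i]_{\dgalg/k}$ with the dual of the $i$-th indecomposables.  The whole point of Lem.\ref{lemmor} is precisely to make such direct translations possible, so that one never has to analyse how $\T$ interacts with colimits.

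Your inductive Postnikov argument, by contrast, hinges on the claim that $\Tss(-)^c$ carries the specific pushouts $B\cong A\otimes_F E$ to homotopy pushouts in $\cldgc$.  Lem.\ref{lempushout} only treats the very special square with $\Tss(G,k)^c\to\vect$ along one side; it does not give you the general Hirsch pushout you need, and proving that case would amount to reproving a form of Lem.\ref{lemmor} anyway.  Moreover, your ``base building block'' computation of $\R\langle\T F\rangle$ for $F=\bigwedge(U^\vee,l)$ via Lem.\ref{lemmor}(1) is essentially the same adjunction-plus-translation step that the paper applies directly to $\M$ itself --- so once you grant yourself that tool for $F$, you might as well apply it to $\M$ and skip the entire tower.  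In short: your strategy is not wrong, but it reinvents the key lemma in a special case while leaving the general pushout-preservation step as a gap; the paper's argument uses that lemma once, globally, and is done.
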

\begin{proof}
By Lem.\ref{lemcompletion2} and \ref{lemmor}, $\pi_i(\R\langle \T\M\rangle)\cong [\T\M,\tdr S^i]_{\acldgc}\cong[\M,\adr S^i]_{\dgalg/k}\cong(V^i)^\vee. $
\end{proof}

\section{The de Rham homotopy theory for general spaces}\label{derhamhtpy}
In this section, we see how the minimal models describe algebraic topological invariants of spaces and provide some examples of minimal models. We also prove an equivalence between Tannakian dg-categories with subsidiary data and fiberwise rationalizations. In the proofs of results of this section, we use the correspondence between Tannakian dg-categories and schematic homotopy types (Thm.\ref{thmsht} and Cor.\ref{thmunpointedsht}). As for logical order,  section \ref{sht} is previous to this section. 
\subsection{Homotopy invariants}
We shall recall the notion of algebraically goodness  introduced by To\"en \cite{champs}. \\
\indent Let $\gam$ be a discrete  group. 
Let $\coh^i(\gam,-)$ be the $i$-th derived functor of invariants
\[
\rep^{\infty}_k(\gam)\longrightarrow k-\ms{Mod},\ \ \ \ V\longmapsto V^{\gam},
\]
and $\coh^i(\gam^{\alg},-)$ be the $i$-th derived functor of the functor
\[
\rep^{\infty}_k(\gam^{\alg})\longrightarrow k-\ms{Mod},\ \ \ \ V\longmapsto V^{\gam^{\alg}}.
\] 
 Any $\gam^{\alg}$-module can be regarded as a $\gam$-module by pulling back by the canonical map $\gam\to\gam^{\alg}(k)$ so there exists a canonical natural transformation
\[
\coh^i(\gam^{\alg},-)\Longrightarrow \coh^i(\gam,-):\rep^{\infty}_k(\gam^{\alg})\longrightarrow k-\ms{Mod}.
\]
\begin{defi}[algebraically good, \cite{champs,kpt2,prid0}]\label{defalggood}
Under above notations, we say $\gam$ is algebraically good over $k$ if for each $i\geq 0$ and  each finite dimensional representation $V\in \rep_k(\gam^{\alg})$, the canonical map
\[
\coh^i(\gam^{\alg},V)\longrightarrow \coh^i(\gam,V)
\]
is an isomorphism. 
\end{defi}
The following was proved by Pridham \cite{prid0}.
\begin{thm}[\cite{prid0}]\label{propalggood}
Let $\gam$ be a discrete  group. $\gam$ is algebraically good over $k$ if and only if  the minimal model of $\ared(K(\gam,1),k)$ is generated by elements of  degree 1.   
\end{thm}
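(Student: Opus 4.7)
My approach is to translate both conditions in the equivalence into properties of the schematization $(K(\gam,1)\otimes k)^\sch$ via Thm.\ref{result4}. By Thm.\ref{propcomplete1}, the minimal model $\M$ of $\ared(K(\gam,1))$ and the Tannakian dg-category $\tdr(K(\gam,1))$ correspond under $\T$, and the latter corresponds to $(K(\gam,1)\otimes k)^\sch$ under the equivalence $\ho(\shtp)\simeq\ho(\tannp)^{\op}$. By Lem.\ref{lemhtpygr} the indecomposable modules $V^i$ are dually the higher homotopy groups of the realization of $\T\M$, so $V^i=0$ for every $i\ge 2$ if and only if the corresponding schematic homotopy type has vanishing higher homotopy and is therefore of Eilenberg--MacLane type $K(\pi_1^{\sch},1)$. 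Since $\pi_1^{\sch}=\gam^\alg$ (being the Tannakian group of $\coc^0\tdr(K(\gam,1))\simeq\loc(K(\gam,1))\simeq\rep(\gam)$), the theorem is reduced to showing that $\gam$ is algebraically good if and only if $(K(\gam,1)\otimes k)^\sch\simeq K(\gam^\alg,1)$.

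\textbf{Both directions.} The schematic cohomology of $(K(\gam,1)\otimes k)^\sch$ with coefficients in any $V\in\rep(\gam^\alg)$ equals $H^*(K(\gam,1),V)=H^*(\gam,V)$, by the de Rham description of cohomology in terms of $\M$ with Maurer--Cartan twist for non-semisimple $V$ recalled in the introduction. The schematic cohomology of $K(\gam^\alg,1)$ with the same coefficients equals $H^*(\gam^\alg,V)$ by the definition of group scheme cohomology. Sufficiency is then immediate: equivalence of the two schematic homotopy types forces agreement of their schematic cohomologies, which is exactly algebraic goodness. For necessity, algebraic goodness supplies this cohomology agreement for all $V\in\rep(\gam^\alg)$; combined with the coincidence of schematic fundamental groups and the fact that $K(\gam^\alg,1)$ has no higher schematic homotopy, a Postnikov-tower / obstruction argument in $\ho(\shtp)$ (killing $k$-invariants stage by stage using the cohomology agreement) upgrades this to an equivalence $(K(\gam,1)\otimes k)^\sch\simeq K(\gam^\alg,1)$.

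\textbf{Main obstacle.} The delicate step is the Postnikov comparison in the necessity direction. A more algebraic route I would pursue in parallel is to identify the sub-dga $\M(1)=\bigwedge V^1\subset \M$ with the Chevalley--Eilenberg complex of $\mathrm{Lie}(\ru(\gam^\alg))$, so that $H^*((\M(1)\otimes V^{\ssim})^{\gam^\red},d_{\text{twisted}})$ computes $H^*(\gam^\alg,V)$. Algebraic goodness, together with the fact that $\M$ computes $H^*(\gam,-)$, then forces $\M(1)\hookrightarrow \M$ to be a $\gam^\red$-equivariant quasi-isomorphism of minimal $\gam^\red$-dga's; by uniqueness of minimal models this gives $\M(1)=\M$, i.e., $V^i=0$ for $i\ge 2$. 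The central technical task, common to either route, is the identification of the pro-nilpotent group scheme dual to $(V^1,d|_{V^1})$ with $\ru(\gam^\alg)$; this can be carried out by Tannakian reconstruction applied to $\coc^0\T\M(1)$ together with the decomposition $\gam^\alg\cong\ru(\gam^\alg)\rtimes\gam^\red$.
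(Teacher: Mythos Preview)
Your reduction of the statement to the equivalence $(K(\gam,1)\otimes k)^\sch\simeq K(\gam^\alg,1)$ is exactly the reformulation the paper uses, though the paper phrases it on the dga side: algebraic goodness is equivalent to the canonical map $\ared(K(\gam^\alg,1))\to\ared(K(\gam,1))$ being a quasi-isomorphism. From there the paper is much shorter than your proposal: it cites \cite[Prop.~4.12]{kpt2} for the fact that this map is \emph{always} an isomorphism on $H^{\le 1}$ and a monomorphism on $H^2$, and invokes Cor.~\ref{corsht} to see that the minimal model of the source is always generated in degree~$1$. Standard minimal-model theory (a map that is iso on $H^{\le 1}$ and mono on $H^2$ induces an isomorphism of $1$-minimal models) then gives both directions at once: the map is a quasi-isomorphism iff the target also has $1$-generated minimal model.

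Your second route --- identifying $\M(1)$ with the minimal model of $\ared(K(\gam^\alg,1))$ via Tannakian reconstruction of $\coc^0\T\M(1)$ --- is a genuine and valid alternative to the KPT citation. The point is that for a \emph{minimal} $\gam^\red$-dga one has $d(V^1)\subset\bigwedge^2 V^1$, so both the Maurer--Cartan equation for objects $(V,\eta)$ and the cocycle condition for degree-$0$ morphisms in $\T\M$ involve only $\M(1)$; hence $\coc^0\T\M(1)=\coc^0\T\M\simeq\rep(\gam^\alg)$, and since $\M(1)$ has no higher indecomposables, Cor.~\ref{corsht} identifies $\T\M(1)$ with $K(\gam^\alg,1)$. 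This recovers the low-degree comparison without the external reference, at the cost of a longer argument. Your first route (Postnikov obstruction theory in $\ho(\shtp)$) would ultimately reduce to the same algebraic statement about the inclusion $\M(1)\hookrightarrow\M$ and is less direct; I would drop it in favor of the second.
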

\begin{proof}
Clearly, $\gam$ is algebraically good if and only if the canonical map $\ared(K(\gam^\alg,1))\to\ared(K(\gam,1))$ is a quasi-isomorphism (see subsection \ref{equivsht}). But by \cite[Prop.4.12]{kpt2}, for any discrete group $\Gamma$ the map $\coh^i(\ared(K(\gam^\alg,1))\to\coh^i(\ared(K(\gam,1)))$ is an isomorphism for $i=0,1$ and a monomorphism for $i=2$ and by Cor.\ref{corsht}, the minimal model of $\ared(K(\gam^\alg,1))$ is generated by degree one elements so the claim follows.
\end{proof}

\indent  We shall show how the minimal model describes homotopy theory of a space.  
\begin{thm}\label{thmfdhtpygr}
Let $K$ be a pointed connected simplicial set.  Put $\pi_i:=\pi_i(K)$. For $i\geq 2$, we regard $\pi_i$ as a $\pi_1$-module by the canonical action. Let 
\begin{enumerate}
\item $K\to\cdots\stackrel{p_i}{\to} K_{(i-1)}\stackrel{p_{i-1}}{\to}\cdots\stackrel{p_1}{\to} K_{(1)}$ be the Postnikov tower of $K$, 
\item $\M$ be the minimal model of $\ared(K)$, 
\item $\M(i)$ be the dg-subalgebra of $\M$ generated by $\oplus_{j\leq i}\M^j$,
 and 
\item $n\geq 2$ be an integer.
\end{enumerate}
Suppose  $\pi_1$ is algebraically good over $k$ and $\pi_i$ is of finite rank as an Abelian group for each $2\leq i\leq n$. \\
 \textup(1) There exists a commutative diagram in $\tannp$
\[
\xymatrix{
\T\M(1)^c\ar[r]^{\T l_1^c}\ar[d]_{q_1}&\T\M(2)^c\ar[r]^{\T l_2^c}\ar[d]_{q_2}&\cdots\ar[r]^{\T l_{n-1}^c\ }&\T\M(n)^c\ar[d]_{q_n}  \\
\tdr K_{(1)}\ar[r]^{p_1^*}& \tdr K_{(2)}\ar[r]^{p_2^*}&\cdots\ar[r]^{p_{n-1}^*}&\tdr K_{(n)}
}
\]
such that $l_i:\M(i)\to\M(i+1)$ is the inclusion and all the vertical arrows are quasi-equivalences. \\
\textup{(2)} For each $2\leq i\leq n$, the inclusion $l_{i-1}:\M(i-1)\to\M(i)$ is an iterated Hirsch extension (see Def.\ref{defghe}). Let $\{(W_i,\eta_i),[\alpha_i]\}$ be its classifying data. Then,   
\begin{enumerate}
\item The object $(W_i,\eta_i)\in\ob(\T\M(i-1))=\ob(\T\M)$ is isomorphic to the $\pi_1$-representation $\pi_i\otimes_\Z k$ under the correspondence of  Thm.\ref{propcomplete1},(2).
\item The class $[\alpha_i]\in\coh^{i+1}(\T\M(i-1)(W_i,\eta_i))$ corresponds to the k-invariant tensored with $k$ via the isomorphism:  
\[
 \coh^{i+1}[\T\M(i-1)(W_i,\eta_i)]\cong\coh^{i+1}[\tdr(K_{(i-1)})(\pi_i\otimes_{\Z}k)]\cong\coh^{i+1}(K_{(i-1)};\pi_i\otimes_{\Z}k).
\]
induced by $q_{i-1}$ in the above diagram. Here, $\pi_i\otimes_\Z k$ is considered as a local system on $K_{(i-1)}$. \end{enumerate}
\end{thm}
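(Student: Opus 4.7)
The plan is to induct on $i$, using the equivalence between Tannakian dg-categories and schematic homotopy types (Thm.\ref{thmsht}) and the correspondence between Tannakian dg-categories and reductive equivariant dg-algebras (Prop.\ref{thmbasic}) to transfer the Postnikov tower of $K$ into the degree-filtration of the minimal model $\M$. The guiding principle is that $\M(i)$ should play the role of the minimal model of $\ared(K_{(i)})$, with the successive extensions $\M(i-1)\subset \M(i)$ encoding the individual Postnikov stages of $K$.

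For the base case $i=1$, we have $K_{(1)}=B\pi_1$. The canonical map $K\to B\pi_1$ trivially preserves semi-simple representations of $\pi_1$, so by Rem.\ref{remnotfunct} it induces a morphism $\ared(B\pi_1)\to \ared(K)$ in $\aredeqalg$. By the algebraic goodness of $\pi_1$ and Prop.\ref{propalggood} the minimal model of $\ared(B\pi_1)$ is generated in degree $1$, and minimality forces it to be naturally identified with $\M(1)\subset \M$; applying $\T$ and using Thm.\ref{propcomplete1}(2) then yields the required $q_1$. For the inductive step, I would use the Postnikov principal fibration $K(\pi_i,i)\to K_{(i)}\to K_{(i-1)}$ together with Thm.\ref{thmsht} to translate it into an extension of Tannakian dg-categories over $\tdr K_{(i-1)}$. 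That extension corresponds, via Prop.\ref{thmbasic}, to an iterated Hirsch extension of $\ared(K_{(i-1)})$; matching it with the algebraic iterated Hirsch extension $\M(i-1)\subset \M(i)$ from Lem.\ref{lemghe}(2) by the uniqueness clause of Lem.\ref{lemghe}(1) gives both $q_i$ and a topological interpretation of its classifying data.

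For part (2), the identification $(W_i,\eta_i)\cong \pi_i\otimes k$ proceeds via Lem.\ref{lemhtpygr}, which yields $(V^i)^{\vee}\cong \pi_i(\R\langle\T\M\rangle)$; applying Thm.\ref{thmsht} identifies the latter with $\pi_i((K\otimes k)^\sch)$, which under the algebraic goodness and finite-rank hypotheses is naturally isomorphic to the semi-simplification of $\pi_i\otimes k$, with the MC element $\eta_i$ encoding the non-semi-simple extension data via Thm.\ref{propcomplete1}(2). The identification of $[\alpha_i]$ with $\kappa_i\otimes k$ then follows by functoriality: the classifying map on the topological side is tautologically $\kappa_i$, and transport along $q_{i-1}$ together with the dictionary between Hirsch extensions and schematic principal fibrations sends it to $[\alpha_i]$. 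The main obstacle is precisely making this last dictionary rigorous: one must check that iterated Hirsch extensions of connected reductive dga's correspond, under $\T$ and the Quillen pair underlying Thm.\ref{thmsht}, to principal fibrations of schematic homotopy types in a manner that sends classifying data to $k$-invariants naturally and functorially. In principle this can be extracted from the model-categorical machinery of Thm.\ref{thmsht}, but unwinding it cohomology-class by cohomology-class and checking compatibility with the inductive construction is where the real bookkeeping lies.
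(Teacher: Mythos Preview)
Your inductive skeleton and the base case via algebraic goodness (Prop.\ref{propalggood}) match the paper exactly. The divergence is in the inductive step, and there the gap you flag is real and not merely bookkeeping.

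The paper does \emph{not} run the inductive step through the abstract equivalence of Thm.\ref{thmsht}. Instead it proves a concrete equivariant Hirsch Lemma (Lem.\ref{hirschlem}): given the Postnikov fibration $K(\pi_i,i)\to K_{(i)}\to K_{(i-1)}$, one writes down by hand the morphism
\[
\varrho:\ \M(i-1)\otimes_{(\alpha,\eta)}\textstyle\bigwedge((\pi_i^{\ssim})^\vee,i)\ \longrightarrow\ \ared(K_{(i)})
\]
using a cocycle representative of the relative class $\tilde k\in \coh^{i+1}(K_{(i-1)},K_{(i)};\pi_i\otimes k)$, and then proves $\varrho$ is a quasi-isomorphism by a Serre spectral sequence comparison (carried out in cubical de Rham theory, Appendix~\ref{derhamthm}). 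That spectral sequence argument is the entire substance of the inductive step; the identification of $(W_i,\eta_i)$ with $\pi_i\otimes k$ and of $[\alpha_i]$ with the $k$-invariant is built into the very construction of $\varrho$, so part~(2) falls out for free rather than requiring a separate transport argument via Lem.\ref{lemhtpygr} and Thm.\ref{thmsht}.

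Your proposed route has a circularity problem, not just a bookkeeping one. Thm.\ref{thmsht} is an equivalence between \emph{schematic homotopy types} and Tannakian dgc's; applying it to the Postnikov tower of $K$ requires first knowing that $(K_{(i)}\otimes k)^\sch$ is the $i$-th Postnikov stage of $(K\otimes k)^\sch$, equivalently that $\pi_j((K\otimes k)^\sch)\cong \pi_j(K)\otimes k$ for $j\le i$. But under the present hypotheses that is precisely the content of the theorem you are proving (it is what Lem.\ref{hirschlem} establishes level by level). The ``dictionary'' you say you still need---that iterated Hirsch extensions go to principal $K(\pi,i)$-fibrations under $\R\langle\T(-)\rangle$, compatibly with classifying data---\emph{is} Lem.\ref{hirschlem}; once you prove it, the rest of your outline collapses to the paper's proof. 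So the honest summary is: your framework is correct, but the step you deferred is the step the paper actually proves, and it is proved by a direct Serre spectral sequence computation rather than by abstract model-categorical transport.
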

See  Lem.\ref{lemghe},(2).  See also \cite[Thm.1.58, Rem.4.43]{prid0}.\\
\indent Before we begin the proof, we state a corollary which may be useful for computations.
\begin{cor}\label{corhirsch}
Under the assumption of Thm.\ref{thmfdhtpygr}, suppose the action of $\pi_1$ on $\pi_n\otimes_\Z k$ is semisimple. Then, $\M(n)$ is a Hirsch extension of $\M(n-1)$. (Note that we do not assume $\M^1=0$.)
\end{cor}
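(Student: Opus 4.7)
The strategy is to invoke Thm.\ref{thmfdhtpygr} to exhibit $\M(n-1)\hookrightarrow\M(n)$ as an iterated Hirsch extension whose classifying data can be read off from $\pi_n\otimes_{\Z}k$, and then to use the semisimplicity hypothesis to replace that data by equivalent data with trivial Maurer--Cartan element, whereupon Lem.\ref{lemghe}(1) delivers an isomorphic presentation as an honest Hirsch extension.

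Concretely, Thm.\ref{thmfdhtpygr}(2) shows that $\M(n-1)\hookrightarrow\M(n)$ is an iterated Hirsch extension with classifying data $\{(W_n,\eta_n),[\alpha_n]\}$, where the pair $(W_n,\eta_n)\in\ob(\T\M(n-1))=\ob(\T\M)$ corresponds to the $\pi_1$-representation $\pi_n\otimes_{\Z}k$ under the equivalence $\coc^0\T\M\simeq\rep(\pi_1)$ of Thm.\ref{propcomplete1}(2) (pulled back along a quasi-isomorphism $\M\to\ared(K)$). That theorem further asserts that under this equivalence the underlying $\pi_1^{\red}$-representation $W_n$ is the semisimplification of the corresponding $\pi_1$-representation. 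Under the present hypothesis $\pi_n\otimes_{\Z}k$ is already semisimple, so $W_n\cong\pi_n\otimes_{\Z}k$; moreover the object $(W_n,0)\in\coc^0\T\M$, obtained by viewing the semisimple module $W_n$ as an object of the semisimple full subcategory $\Tss\M\subset\T\M=\widehat{\Tss\M}$ (on which all Maurer--Cartan elements vanish), also maps to $\pi_n\otimes_{\Z}k$ under the same equivalence.

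Since the equivalence of Thm.\ref{propcomplete1}(2) is a bijection on isomorphism classes, the preceding discussion produces an isomorphism $\phi:(W_n,\eta_n)\stackrel{\sim}{\to}(W_n,0)$ in $\coc^0\T\M$; because $\M(n-1)^1=\M^1$ by minimality of $\M$, this isomorphism automatically lives in $\coc^0\T\M(n-1)$. Applying Lem.\ref{lemghe}(1) with $(W_0,\eta_0):=(W_n,0)$, $(W_1,\eta_1):=(W_n,\eta_n)$, and the cocycle $\alpha_0:=\phi\circ\alpha_n\in\coc^{n+1}(\T\M(n-1)(W_n,0))$, we conclude that $\M(n)$ is isomorphic as a $\pi_1^{\red}$-dga over $\M(n-1)$ to $\M(n-1)\otimes_{(\alpha_0,0)}\bigwedge(W_n^\vee,n)$; by Def.\ref{defghe} this exhibits $\M(n)$ as a Hirsch extension of $\M(n-1)$. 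The only delicate point is the identification of $(W_n,0)$ as a preimage of the semisimple representation $\pi_n\otimes_{\Z}k$, but this is intrinsic to the construction of the equivalence in Thm.\ref{propcomplete1}(2): semisimple representations already lie in $\rep(\pi_1^{\red})$ and therefore lift canonically to $\Tss\M$ with zero Maurer--Cartan element, where they are identified with their own semisimplifications.
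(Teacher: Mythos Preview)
Your argument is correct and is exactly the reasoning the paper leaves implicit (the corollary is stated without proof, as an immediate consequence of Thm.\ref{thmfdhtpygr} and Lem.\ref{lemghe}). One small remark: once you have the isomorphism $\phi:(W_n,\eta_n)\to(W_n,0)$ in $\coc^0\T\M$, note that $\phi$ lies in $(\inhom(W_n,W_n)\otimes\M^0)^G=\End_G(W_n)$ since $\M^0=k$, and the cocycle condition $\phi\eta_n=0$ then forces $\eta_n=0$ outright; so in fact no change of presentation is needed, though your appeal to Lem.\ref{lemghe}(1) is of course also valid.
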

To prove the theorem, we need to show a variant of the Hirsch Lemma \cite[Thm.11.1]{grimor} formulated in the following. \\ 
\indent Let $m\geq 2$. We shall consider a fibration
\[
p:E\longrightarrow B
\]
between pointed connected simplicial sets  whose  fiber $F$ satisfies $\pi_0(F)\cong *$, $\pi_i(F)=0$ for $i\leq m-1$, and $\pi_m(F)$ is an Abelian group of finite rank. Put $\pi_1:=\pi_1(B)\cong\pi_1(E)$. Let $\pi$ denote the local system of the $n$-th $k$-tensored homotopy groups of fibers of $p$ or the corresponding representation of $\pi_1(B)$. Then
\[
\coh ^{m+1}(B,E;\pi)\cong\Hom_{\rep(\pi_1(B))}(\pi _{m+1}(B,E),\pi)\cong \Hom_{\rep(\pi_1(B))}(\pi,\pi). 
\]
See \cite[P.344]{gj} or \cite[P.289]{white}. Take the element $\tilde k$ of $\coh ^{m+1}(B,E;\pi)$ corresponding to the identity on $\pi$ via this isomorphism. By definition, the image of $\tilde k$ in $\coh^{m+1}(B;\pi)$ is the k-invariant of $p$ tensored with $k$. Let $\M_B$ be a minimal model of $\ared(B)$. We may replace $\tdr (B)$ by $\T\M_B$ and we have $\coh^{m+1}(B,E;\pibar)\cong \coh^{m+1}(\cone (p^*:\cdr (B;\pibar)\to \cdr (E;\pibar))\cong \coh^{m+1}(\cone(p^*:\T\M_B(\pi^\ssim,\eta)\to \cdr (E;\pi)))$. Here, $(\pi^\ssim,\eta)$ is a pair of the semisimplification of $\pi$ and a MC-element $\eta$, see Thm.\ref{propcomplete1}. We take a cocycle $(\alpha,\beta )\in\cone ^{m+1}(p^*:\T\M_B(\pi^\ssim,\eta)\to \cdr (E;\pi))$, where $\alpha \in \T\M_B(\pi^\ssim,\eta)^{m+1}$ and $\beta\in \cdr^m(E;\pibar)$ which represents $\tilde k$.  \\
\indent Take the iterated Hirsch extension $ \M_B\otimes _{(\alpha,\eta)}\bigwedge((\pi^\ssim)^{\vee},m)$. $p$ and $\beta$ define a morphism of $\pi_1^\red$-dga's:
\[
\varrho: \M_B\otimes _{(\alpha,\eta)}\bigwedge((\pi^\ssim)^{\vee},m)\longrightarrow \ared (E)
\]
by $\varrho|_{\M_B}=p^*$ and $\rho|_{(\pi^\ssim)^{\vee}}=\beta$ (see Def.\ref{defghe}). Here $\beta$ is considered as a $\pi_1^{\red}$-module homomorphism $(\pi^\ssim)^{\vee}\longrightarrow \ared E$.\\
\indent  The essence  of the proof of the following is the same as that of \cite[Thm.11.1]{grimor}. 
\begin{lem}\label{hirschlem}
We use the above notations. \\
\textup{(1)} $\varrho$ induces an isomorphism between $i$-th cohomology groups for each $i\leq m$ and a monomorphism between $m+1$-th cohomology groups. \\
\textup{(2)} If a fiber $F$ of $p$ satisfy $\pi_i(F)=0$ for $i\geq m+1$, the map $\varrho$ is a quasi-isomorphism. 
\end{lem}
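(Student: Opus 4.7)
The plan is to adapt the proof of the classical Hirsch Lemma \cite[Thm.11.1]{grimor} to our equivariant, local-coefficient setting. The core strategy is to introduce compatible filtrations on both sides of $\varrho$ and compare the resulting spectral sequences. All of the work takes place inside $\dgalg(\pi_1^\red)$, with the Tannakian correspondence of Prop.\ref{thmbasic} and Thm.\ref{propcomplete1} used freely to move between the dg-category picture and the dg-algebra picture (e.g.\ to interpret $(W,\eta)$ as $\pi$).

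On the Hirsch extension $\widetilde{\M}:=\M_B\otimes_{(\alpha,\eta)}\bigwedge((\pi^\ssim)^\vee,m)$, I would use the decreasing filtration $F^p\widetilde{\M}:=\M_B\cdot\bigwedge^{\geq p}((\pi^\ssim)^\vee,m)$. The differential splits into three pieces: the differential of $\M_B$, the $\alpha$-component which sends a new generator of $(\pi^\ssim)^\vee$ into $\M_B^{m+1}$ (strictly lowering $p$), and the $\eta$-component which preserves the number of new generators and exactly restores the non-semisimple part of the $\pi_1$-action. Hence $F^\bullet$ is preserved by the differential, and the associated graded is naturally identified with $\M_B$ tensored with $\bigwedge^p\pi^\vee$ (considered as a $\pi_1$-module via $\eta$). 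The $E_2$ page is therefore $\coh^*(B;\bigwedge^p\pi^\vee)$ as local-system cohomology.

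On the side of $\ared(E)$ I would use the Leray-Serre filtration of the fibration $p:E\to B$. Because $F$ is $(m-1)$-connected with $\pi_m(F)\otimes k\cong\pi$, the $E_2$ page is $\coh^p(B;\coh^q(F;\ring(\pi_1^\red)_r))$ and agrees with the $E_2$ above in the range $q\leq m$. Making this Serre spectral sequence rigorous with equivariant coefficients in $\ring(\pi_1^\red)_r$ is the technical step requiring the cubical model of Appendix \ref{derhamthm}. Once this is in place, $\varrho$ visibly preserves filtrations: $\varrho|_{\M_B}=p^*$ lies in $F^0$, while $\varrho(v)=\beta(v)$ for $v\in(\pi^\ssim)^\vee$ is a fibrewise cochain lying in $F^1$ that transgresses to a representative of $\tilde k$. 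The choice of $(\alpha,\beta)$ as a cone-cocycle representing $\tilde k$ then forces the first non-trivial differential on the Serre side to match the $\alpha$-component of the Hirsch differential under $\varrho$, so a standard comparison theorem yields an isomorphism on $\coh^i$ for $i\leq m$ and a monomorphism on $\coh^{m+1}$. Part (2) follows by the same argument carried through every degree: the assumption $\pi_i(F)=0$ for $i\geq m+1$ kills all rows of the Serre $E_2$ above $q=m$, and there are no further rows on the Hirsch side either.

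The main obstacle is setting up the Serre spectral sequence for $\ared(E)$ so that it is compatible with the reductive $\pi_1^\red$-equivariance, and matching its $d_{m+1}$ with cup product by the class $\tilde k$ represented by $(\alpha,\beta)$; this is precisely the ``technical part'' alluded to by the authors, and where Appendix \ref{derhamthm} enters. Once it is granted, the remainder is a routine spectral-sequence comparison in the spirit of \cite[Thm.11.1]{grimor}.
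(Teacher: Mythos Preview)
Your filtration on $\widetilde{\M}=\M_B\otimes_{(\alpha,\eta)}\bigwedge((\pi^\ssim)^\vee,m)$ is the wrong one, and the argument breaks at the very first step. You set $F^p\widetilde{\M}=\M_B\cdot\bigwedge^{\geq p}((\pi^\ssim)^\vee,m)$, a decreasing filtration by wedge degree in the new generators. But you yourself observe that the $\alpha$-component of $d$ sends a generator $v\in(\pi^\ssim)^\vee\subset F^1$ into $\M_B^{m+1}\subset F^0\setminus F^1$; ``strictly lowering $p$'' in a decreasing filtration means \emph{leaving} $F^p$, so $d(F^1)\not\subset F^1$ and the filtration is not preserved by the differential. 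The same mismatch occurs on the target: $\varrho(v)=\beta(v)$ is a form whose restriction to a fiber represents an isomorphism $\pi\to\coh^m(F)$, hence is nonzero on fibers and lies in $F^0\setminus F^1$ of the Serre filtration, not in $F^1$ as you claim. So $\varrho$ is not filtration-preserving for your pair of filtrations, and no spectral-sequence comparison can even get started.

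The paper (and already \cite[Thm.11.1]{grimor} in the classical case) filters by \emph{base} degree: $F^p_B=\bigoplus_{i\geq p}\M_B^i\otimes_k\bigwedge((\pi^\ssim)^\vee,m)$ on the source, and the genuine Serre filtration (vanishing on the preimage of the $(p-1)$-skeleton of $B$) on the target. With this choice both $d$ and $\varrho$ are filtered, and the associated graded on the source has $E_1^{p,q}\cong[((\bigwedge\pi^\vee)^q\otimes V)^\ssim\otimes\M_B^p]^{\pi_1^\red}$ with $d_1=d_{\M_B}+\eta$, giving $E_2^{p,q}\cong\coh^p[\T\M_B((\bigwedge\pi^\vee)^q\otimes V)]$. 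There is a further point you gloss over: identifying the induced map on $E_2$ with an isomorphism is not automatic in the presence of local coefficients. The paper handles this by exploiting that the comparison map is a morphism of \emph{closed} dg-categories, reducing via tensor/evaluation with a distinguished class $e\in E_2^{0,m}$ to the $(p,0)$-row, where the identification is obvious (this is the content of the Sub-lemma). Your appeal to a ``standard comparison theorem'' does not supply this step.
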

\begin{proof}
We prove (2). The proof of (1) is similar and easier. In the following, we use cubical sets instead of simplicial sets. For details about cubical de Rham theory, see Appendix \ref{derhamthm}. We use the same notation as the case of simplicial sets for the corresponding notion in the cubical case. Put $A_B:=\M_B\otimes _{(\alpha,\eta)}\bigwedge((\pi^\ssim)^{\vee},m)$. We define a descending filtration 
\[
A_B=F^0_B\supset F^1_B\supset\cdots\supset F^p_B\supset\cdots
\]
by $F^p_B=\oplus_{i\geq p}\M_B^p\otimes _k\bigwedge((\pi^\ssim)^{\vee},n)$ i.e., $F^p_B$ is the ideal of $A_B$ generated by $\oplus_{i\geq p}\M_B^p$. This filtration induces a filtration of closed dg-category consisting of 
ideals closed under tensor and internal hom: $\T A_B=\T F^0_B\supset \T F^1_B\supset\cdots\supset \T F^p_B\supset\cdots$
Note that $\T F^p_B\circ \T F^{q}_B, \ \T F^p_B\otimes \T F^{q}_B,\ \inhom (\T F^p_B,\T F^q_B)\subset \T F^{p+q}$. On the other hand, we define a filtration $\{F^p(\tdr E)\}_{p\geq 0}$ of $\tdr (E)$ by $F^p(\tdr E)$ being the $(\otimes,\inhom)$-closed ideal  generated by images of homogeneous morphism in $\T\M_B$ of degree $\geq p$, by $p^*:\tdr B\longrightarrow \tdr E$. We may describe $F^p(\tdr E)$ as follows. Let $\sigma\in E$ be a non-degenerate $l$-cube. We identify a cube with the sub-cubical set generated by it. For notational simplicity, we assume $p|_{\sigma}:\sigma\to p(\sigma)$  is the projection to the first $k$ components $q_k:\square^l\to\square^k$. Indeed, after change of coordinate, $p|_{\sigma}$ is isomorphic to $q_k$ for some $k$. Then 
\[
\omega\in \homo_{F^p(\tdr E)}(\aloc,\aloc')\iff\forall \sigma \ \omega|_{\sigma}\in\square(k,\geq p)\otimes_k\square(l-k,*)\otimes_k\inhom(\aloc,\aloc')(\sigma).
\]
Here, we identify $\square(l,*)$ with $\square(k,*)\otimes_k\square(l-k,*)$. 
Note that $F^p(\tdr E)\circ F^{p'}(\tdr E)$, $F^p(\tdr E)\otimes F^{p'}(\tdr E)$, 
$\inhom(F^p(\tdr E),F^{p'}(\tdr E))$ $\subset  F^{p+p'}(\tdr E)$. Let $\tcube(E)\in\dgc$ be a dgc defined as follows.
\begin{enumerate}
\item $\ob(\tcube(E))=\loc (E)$. 
\item $\hom_{\tcube(E)}(\aloc,\aloc')=\ccube(E;\inhom(\aloc,\aloc'))$ (see Appendix \ref{derhamthm}) and the composition is given by the cup-product.
\end{enumerate}
We define a filtration $\{F^p(\tcube(E))\}$ on $\tcube(E)$ as usual, by 
\[
\hom_{F^p(\tcube(E))}(\aloc,\aloc')=\ker(i^*:\hom_{\tcube E}(\aloc,\aloc')\to \hom_{\tcube E^{(p-1)}}(\aloc,\aloc')),
\]
where $E^{(p-1)}=p^{-1}(B^{p-1})$ and $i: E^{(p-1)}\to E$ is the inclusion ($B^{p-1}$ is  the $p-1$-th skeleton of $B$).
 $\varrho$ induces filtration-preserving morphism
\[
\T \varrho:\T A_B\longrightarrow \T\ared E\simeq \tdr E.
\]
Stokes map 
\[
\rho:\tdr E\to \tcube E
\] (this is a morphism of dg-graphs) also preserves filtration. \\
\indent In the following, for a (closed) dgc $C$ with a filtration $\{F^p(C)\}$, $E_r(C)$ denotes a (closed) dgc defined  by
\begin{enumerate}
\item $\ob E_r(C)=\ob C$.
\item $\homo_{E_r(C)}(-,-)=E_r(\hom_C(-,-);\homo_{F^{\bullet}(C)}(-,-))$, the $E_r$-term of the spectral sequence, with the differential $d_r$.
\end{enumerate}
To prove the lemma, it is enough to prove $E_2(\rho\circ \T\varrho )$ induces isomorphisms between each hom-complexes.
Note that there is a diagram 
\[
\xymatrix{E_1\tdr(E)\ar[r]^{\varphi}\ar[rd]_{E_1\rho}&\tdr (B;\mca{H}_F)\ar[d]^{\rho'}\\
&E_1\tcube E}
\]
Here, 
\begin{enumerate}
\item $\tdr (B;\mca{H}_F)$ is a closed dgc defined by
\begin{enumerate}
\item $\ob\tdr(B;\mca{H}_F)=\ob(\loc(B))$,

\item $\hom_{\tdr(B;\mca{H}_F)}(-,-)=\bigoplus_{p+q=n}\cdr^p(B;\mca{H}_F^q\otimes\inhom(-,-))$, where $\mca{H}_F^q$ is a local system on $B$ given by $\tau\mapsto \coh^q(p^{-1}(\tau(0,\dots,0),\Q)$,
\end{enumerate}
\item $\rho'$ is the Stokes map with $\mca{H}^q_F\otimes\inhom(-,-)$-coefficients ( Here, we used the well-known identification $\hom_{E_1\tcube E}^{p,q}(-,-)\cong \ccube^p(B;\mca{H}^q_F\otimes\inhom(-,-))$, and 
\item $\varphi$ is a morphism of closed dgc's  defined as follows. An element $x\in\hom_{E_0(\tdr E)}^{p,q}(-,-)$ defines a form $\omega$ on $E$ such that $\omega|_{\sigma}\in\square(k,p)\otimes_k\square(n-k,q)$ (with the above notations). Let $\tau\in B$ be a non-degenerate $k$-cube. We take a lift $\tilde{\tau}$ of the following diagram
\[
\xymatrix{F_{\tau}\ar[r]\ar[d]&E\ar[d] \\
\square^k\times F_{\tau}\ar@{-->}[ur]^{\tilde{\tau}}\ar[r]^{\tau}&B}
\]
Here $F_{\tau}=p^{-1}(\tau(0,\dots ,0))$. 
Fixing basis of $\square(k,p)$, $\alpha_1,\dots,\alpha_N$, we can write 
$\tilde{\tau}^*(\omega)=\alpha_1\otimes \beta_1+\cdots+\alpha_N\otimes\beta_N$, $\beta_i\in\cdr^q(F_{\tau};\inhom(-,-)|_{\tau})$. So if $x$ is a cocycle, it gives an element $\varphi(x)\in\cdr^p(B;\mca{H}^q_F\otimes\inhom(-,-))$ defined as $\alpha_1\otimes [\beta_1]+\cdots+\alpha_N\otimes[\beta_N]$ on $\tau$. \end{enumerate}
The Fubini's theorem ensures the diagram is commutative. It is easy to see $\coh^*\rho':\coh^*\tdr(B;\mca{H}_F)\to E_2\tcube E$ induces bijections of hom-sets, so all we have to do is to prove $\coh^*(\varphi\circ E_1\T\varrho)=\coh^*(\varphi)\circ E_2(\T\varrho):E_2(\T A_B)\to \coh^*\tdr(B;\mca{H}_F)$ is an equivalence. We prove this by using the fact that the map $\coh^*(\varphi)\circ E_2(\T\varrho)$ is a morphism of closed dgc's. 
(This is  analogous  to the fact that in simply connected case, the corresponding claim was proved by using the fact that the corresponding map is a morphism of algebras.)\\
\indent We need the following sub-lemma. We use the notation that $E^{p,q}_r(C)(X):=\hom^{p,q}_{E_r(C)}(\uni, X)$.
\begin{sublem}
Let $U^q\in\ob\T A_B$ be $\mca{H}_F^q$ regarded as an object of $\T A_B$.
\begin{enumerate}
\item There exists a bijection $E_2^{p,q}(\T A_B)(V)\cong\coh^p[\T \M_B(  (\bigwedge \pibar^{\vee})^q\otimes V)]$ for each $V\in \ob(\T A_B)$.
\item $\coh^*(\varphi)\circ E_2(\T\varrho)$ induces isomorphisms on $(p,0)$-terms  of each hom-complex for $p\geq 0$.
\item $e\in E_2(\T A_B)^{0,m}( (U^m)^{\vee})\cong\hom_{\rep(\pi_1)}(U^m,  \pibar^{\vee})$ be an element corresponding to an isomorophism, and $ev_{U^q}\in\hom^{0,0}_{E_2(\T A_B)}({U^q}^{\vee}\otimes U^q,\uni)$ be the evaluation. Then  the maps
\[
\begin{aligned}
E_2^{p,0}\T A_B( U^{km}\otimes V)&\longrightarrow E_2^{p,km}\T A_B(V)&\quad x\longmapsto& ev_{U^{km}}\circ(e^k\otimes x) \\
\coh^p[\tdr(B;\mca{H}_F^0)( \psi (U^{km})\otimes \aloc)]&\longrightarrow \coh^p[\tdr(B;\mca{H}^{km}_F)(\aloc)]&\quad y\longmapsto& ev_{\psi(U^{km})}\circ(\psi(e^k)\otimes y) 
\end{aligned}
\]
are bijections, where $k\geq1$, $e^k\in  E_2(\T A_B)^{0,km}( (U^km)^{\vee})$ is the $k$-times  $e$, and $\psi=H^*(\varphi)\circ E_2(\T \varrho)$.
\end{enumerate}
\end{sublem}
\begin{proof}[Proof of Sub-lemma]
 $E^{p,q}_0(\T A_B)(V)$ and $E^{p,q}_1(\T A_B)(V)$ are naturally isomorphic to $[((\bigwedge \pibar^{\vee})^q\otimes V)^{\ssim}\otimes\M_B^p]^{\pi_1^{\red}}$ and $d_1$ is equal to $d_{\M_B}+\eta_{(\bigwedge \pibar^{\vee})^q\otimes V}$ ($\eta_{(\bigwedge \pibar^{\vee})^q\otimes V}$ is the MC-element of $(\bigwedge \pibar^{\vee})^q\otimes V$) so the first part follows. The second part is clear. For the third part, the first map is identified with the pushforward  
 $e^k_*:\coh^p[\T \M_B((\bigwedge \pibar^{\vee})^{km}\otimes V)]\to \coh^p[\T\M_B(U^{km}\otimes V)]$ via the bijection of part 1, so this is a bijection. To see the second map, note that $\T \varrho (e)={}^te\circ\beta\in \cdr^m(E;(\mca{H}_F^m)^{\vee})$. The restriction of this element to a fiber is a cocycle in $\cdr^m(F;(\mca{H}_F^m|_F)^{\vee})$ and it represents an isomorphism in $\hom_{\rep(\pi_1)}(\mca{H}_F^m|_F,\coh^m(F))\subset \coh^m(F;(\mca{H}_F^m|_F)^{\vee})$. Thus $\psi(e)$ corresponds to an isomorphism via the identification $\coh^0[\tdr(B;\mca{H}_F^m)((\mca{H}_F^m)^{\vee})]\cong\coh^0(B;\mca{H}_F^m\otimes(\mca{H}_F^m)^{\vee})\cong\hom_{\loc(B)}(\mca{H}_F^m,\mca{H}_F^m)$ so we can see the second map is a bijection similarly to the first one. \end{proof}
As $H^*(\varphi)\circ E_2(\T \varrho)$ is a morphism of closed dgc's, the following diagram is commutative.
\[
\xymatrix{E_2^{p,0}\T A_B( U^{km}\otimes V)\ar[r]\ar[d]^{\psi}& E_2^{p,km}\T A_B(V)\ar[d]^{\psi}\\
\coh^p[\tdr(B;\mca{H}_F^0)( \psi (U^{mk})\otimes \psi V)]\ar[r]& \coh^p[\tdr(B;\mca{H}^{mk}_F)(\psi V)],}
\]
where the horizontal maps are the ones in the sub-lemma. This  implies that $\psi$ is an equivalence of categories.
\end{proof}

\begin{proof}[Proof of Thm.\ref{thmfdhtpygr}]
The proof is successive applications of Thm.\ref{propalggood} and Lem.\ref{hirschlem}. Lem.\ref{hirschlem},(1) is necessary to prove $\M(n)$ is isomorphic to the minimal model of $\ared(K_{(n)})$, see \cite[Prop.7.10]{bg}.
\end{proof}
For the first infinite higher homotopy group, we have the following.
\begin{thm}\label{thmifdhtpygr}
Let $K$ be a pointed connected simplicial set with $\pi_1(K)$ algebraically good. We use the notation of Thm.\ref{thmfdhtpygr}. Let $n\geq 2$ and suppose $\pi_i(K)$ is of finite rank as an abelian group for $2\leq i\leq n-1$. As the action of $\pi_1(K)^\red$ on $V^n$ is locally finite and as $\M$ is minimal, there exist finite dimensional $\pi_1(K)^\red$-submodules $\{V_\lam^n\}_\lam$ of $V^n$ such that $\bigcup_\lambda V_\lambda=V^n$, $d^{\M^1\otimes V^n}(V^n_\lam)\subset M^1\otimes V^n_\lam$, and $(\{V_\lam^n\}_\lam,\subset)$ forms a filtered  system.  The restriction of $d^{\M^1\otimes V^n}$ to $V_\lam^n$ defines a MC-element $\eta_\lam$ on $(V^n_\lam)^\vee$ in $\Tss \M$ (see Lem.\ref{lemghe}). Thus we obtain an inverse system of finite dimensional $\pi_1(K)$-representations corresponding to $\{((V^n_\lam)^\vee,\eta_\lam)\} $. \\

\indent Then, the limit of the inverse system is isomorphic to the pro-finite dimensional completion of $\pi_n(K)\otimes_\Z k$. Here, the pro-finite dimensional completion of (possibly infinite dimensional) $\pi_1(K)$-representation $X$ is the limit of the inverse system $\{W_\mu\in\rep(\pi_1(K))|\mu:X\to W_\mu\in\rep^\infty(\pi_1(K))\}$ taken in $\rep^\infty(\pi_1(K))$. 
\end{thm}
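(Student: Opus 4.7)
The plan is to identify $\varprojlim_\lambda U_\lambda$, where $U_\lambda := ((V_\lambda^n)^\vee, \eta_\lambda)$, with the pro-finite-dimensional completion of $\pi_n(K) \otimes_\Z k$ by showing that the sub-system $\{U_\lambda\}_\lambda$, equipped with canonical $\pi_1$-equivariant maps $\pi_n(K) \otimes k \to U_\lambda$, is initial (i.e.\ suitably cofinal) among all finite-dimensional $\pi_1$-representations receiving a map from $\pi_n(K) \otimes k$. This extends the finite-rank identification of Thm \ref{thmfdhtpygr}(2)(a) to the infinite-rank case by passing to filtered limits.

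First I would verify that $\{U_\lambda\}$ is a well-defined inverse system in $\rep(\pi_1)$: for $V_\lambda^n \subset V_\mu^n$, the hypothesis $d^{\M^1 \otimes V^n}(V_\lambda^n) \subset \M^1 \otimes V_\lambda^n$ gives $\eta_\mu|_{V_\lambda^n} = \eta_\lambda$, so the dual of the inclusion is a $\pi_1$-equivariant surjection $U_\mu \to U_\lambda$ with respect to the twisted actions. Next, for each $\lambda$ I would construct the canonical map $\pi_n(K) \otimes k \to U_\lambda$ as follows: the sub-dga $\M_\lambda := \M(n-1) \otimes_{(\alpha_\lambda, \eta_\lambda)} \bigwedge(V_\lambda^n, n) \subset \M$ is a minimal iterated Hirsch extension of $\M(n-1)$, and via the Quillen pair $(\tdr, \langle - \rangle)$ (Lem \ref{lemqpair}) together with the schematization equivalence of Thm \ref{thmsht}, the composite $\M_\lambda \hookrightarrow \M \to \ared(K)$ yields a pointed map $K \to \R\langle \T\M_\lambda\rangle$. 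Applying $\pi_n$ and invoking Lem \ref{lemhtpygr} (respecting the $\pi_1$-action inherited from the schematic structure) produces the desired map, naturally in $\lambda$.

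For cofinality, given a finite-dimensional $\pi_1$-representation $W$ and a morphism $\phi: \pi_n(K) \otimes k \to W$ in $\rep(\pi_1)$, I would form the principal $K(W, n)$-fibration $K^\phi \to K_{(n-1)}$ classified by the pushforward $\phi_*(k_n) \in H^{n+1}(K_{(n-1)}; W)$ of the k-invariant of $K_{(n)} \to K_{(n-1)}$. (When $k \ne \Q$, one first chooses a $\Q$-form of $W$ so that Lem \ref{hirschlem} applies with a finite-rank fiber, then tensors back at the end.) The composite $K \to K_{(n)} \to K^\phi$ together with Lem \ref{hirschlem}(1) produces a morphism
\[
\M(n-1) \otimes_{(\alpha_\phi, \eta_\phi)} \bigwedge(W^\vee, n) \longrightarrow \M,
\]
where $(W, \eta_\phi)$ corresponds under Thm \ref{propcomplete1}(2) to the $\pi_1$-representation $W$ and $\alpha_\phi$ encodes $\phi_*(k_n)$. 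By minimality of $\M$, the image of $W^\vee$ lies in the indecomposables $V^n$, and being finite-dimensional is contained in some $V_\lambda^n$. Lem \ref{lemghe}(1) on uniqueness of classifying data then identifies $(W, \eta_\phi)$ with the $\pi_1$-quotient of $U_\lambda$ determined by this inclusion, producing the desired factoring $\pi_n(K) \otimes k \to U_\lambda \to W$ of $\phi$.

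The main obstacle will be the compatibility check: that the factoring of $\phi$ constructed from the classifying data really agrees with the composition of $\phi$ and the dual of the inclusion $W^\vee \hookrightarrow V_\lambda^n$ using the canonical map from the second step. This reduces to matching the MC-element $\eta_\phi$ with the restriction $\eta_\lambda|_{W^\vee}$ up to a unique tensor-preserving automorphism, and then carefully tracing through Lem \ref{lemghe}(1) and the correspondence of Thm \ref{propcomplete1}(2). A secondary bookkeeping issue is the coherent choice of $\Q$-forms when $k \ne \Q$, which is handled by systematically working inside the schematic category where the base field is inert with respect to the topological Postnikov construction. Once these compatibilities are settled, $\varprojlim_\lambda U_\lambda$ satisfies the universal property characterizing the pro-finite-dimensional completion of $\pi_n(K) \otimes k$, completing the proof.
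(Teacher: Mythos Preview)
The paper's own proof is a single sentence --- ``The proof is similar to that of Thm.~\ref{thmfdhtpygr} so we omit'' --- so there is no detailed argument to compare against. Your cofinality strategy is a reasonable way to make that hint precise: filtering $V^n$ by finite-dimensional $\pi_1^\red$-stable pieces and matching each $U_\lambda$ with a finite-dimensional quotient of $\pi_n(K)\otimes_\Z k$ is exactly the bookkeeping needed to pass from the finite-rank identification of Thm.~\ref{thmfdhtpygr} to the general case, and the tools you invoke (Lem.~\ref{hirschlem}, Lem.~\ref{lemghe}, Thm.~\ref{propcomplete1}) are the ones driving that earlier proof.

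There is, however, one genuine gap. Your fix for $k\neq\Q$ --- ``choose a $\Q$-form of $W$, apply Lem.~\ref{hirschlem}, then tensor back'' --- fails as stated, because a finite-dimensional $k$-linear $\pi_1$-representation $W$ need not admit any $\pi_1$-stable $\Q$-form (take $\pi_1=\Z$ acting on a line by a scalar outside $\Q$), and without one there is no topological $K(W,n)$-fibration to which Lem.~\ref{hirschlem} applies. The repair is to drop the topological space $K^\phi$ and work directly on the dga side: form the iterated Hirsch extension $\M(n-1)\otimes_{(\alpha_\phi,\eta_\phi)}\bigwedge(W^\vee,n)$ as a $\pi_1^\red$-dga, with $(W,\eta_\phi)$ corresponding to $W$ under Thm.~\ref{propcomplete1}(2) and $\alpha_\phi$ representing $\phi_*$ of the $k$-tensored k-invariant in $\coh^{n+1}(\T\M(n-1)(W,\eta_\phi))$. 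The map into $\M$ is then produced by lifting a relative cocycle along the quasi-isomorphism $\M(n-1)\to\ared(K_{(n-1)})$, exactly as in the construction of $\varrho$ preceding Lem.~\ref{hirschlem}, with no constraint on the base field. A second, smaller wrinkle: the image of $W^\vee$ in $\M^n$ lands in $V^n\oplus\M(n-1)^n$, not in $V^n$ alone, so you must project to the indecomposables and check that this projection carries $\eta_\phi$ to the restriction of $\eta_\lambda$; this follows from the decomposition $d|_{V^n}=d^{\M^1\otimes V^n}\oplus d^{\M(n-1)}$ in Lem.~\ref{lemghe}(2).
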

\begin{proof}
The proof is similar to that of Thm.\ref{thmfdhtpygr} so we omit. 
\end{proof}
\subsection{Examples}
\begin{defi}
We denote by $\mred(K)$ (resp. $\mdr(K)$) the minimal model of $\ared(K)$ (resp. $\adr(K)$). Here $\adr(K)$ is the usual polynomial de Rham algebra over $k$. When we want to clarify the field of definition, we write $\ared(K,k),\mred(K,k)$ and $\mdr(K,k)$.
\end{defi}
\begin{exa}
Let $X$ be a pointed connected simplicial set with $\pi_1(X)$ finite. Then 
$\mred(X)\cong\mdr(\widetilde{X})$, where $\widetilde{X}$ denotes the universal covering of $X$ and the action of $\pi_1$ on $\mdr(\widetilde{X})$ is induced from the one on $\widetilde{X}$ (see \cite{moriya}).
\end{exa}

The method of proof of Lem.\ref{hirschlem} provide some examples.

\begin{thm}\label{thmnilpotent}
Let $K\in\ssetp$ be a nilpotent simplicial set of finite type. Then $\mred(K)\cong\mdr(K)$. Here $\mdr(K)$ is considered as $\pi_1(K)^\red$-dga with the trivial action.
\end{thm}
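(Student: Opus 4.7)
The plan is to induct on the Postnikov tower of $K$, using Lem.\ref{hirschlem} and the identification of classifying data in Thm.\ref{thmfdhtpygr}. The first sub-goal is to establish that the $\pi_1^\red$-action on $\mred(K)$ is trivial. Since $K$ is nilpotent, $\pi_1(K)$ acts nilpotently on each $\pi_n(K)\otimes_\Z k$ for $n\geq 2$, so the semisimplification $(\pi_n\otimes_\Z k)^\ssim$ is the trivial representation of the same dimension. By Thm.\ref{thmfdhtpygr}, the indecomposable modules $V^n$ for $n\geq 2$ are therefore trivial $\pi_1^\red$-modules. For $V^1$, note that any semisimple representation of a nilpotent group factors through the abelianization, on which the group acts trivially by conjugation. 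Since $\mred(K)$ is freely generated by $\bigoplus_n V^n$ as a graded commutative algebra and the differential is $\pi_1^\red$-equivariant, the action on $\mred(K)$ is trivial throughout.

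Next, I would compare $\mred(K)$, now viewed as a minimal dga with trivial $\pi_1^\red$-action, with Sullivan's $\mdr(K)$, stage by stage along the Postnikov tower. For the base case $K_{(1)}=K(\pi_1,1)$, finitely generated nilpotent $\pi_1$ is algebraically good (Thm.\ref{propalggood}), so $\mred(K_{(1)})$ is generated in degree $1$; one identifies it with the Chevalley--Eilenberg cochain algebra of the Mal'cev Lie algebra of $\pi_1$, which is precisely $\mdr(K_{(1)})$ by Sullivan's classical result. For the inductive step, applying Lem.\ref{hirschlem},(2) to the Postnikov fibration $K_{(n)}\to K_{(n-1)}$ with fiber $K(\pi_n,n)$ exhibits $\mred(K_{(n)})$ as an iterated Hirsch extension of $\mred(K_{(n-1)})$ with classifying data $\{(W_n,\eta_n),[\alpha_n]\}$: the module $W_n=((\pi_n\otimes k)^\ssim)^\vee$ is trivial, the MC element $\eta_n$ encodes the nilpotent $\pi_1$-action, and $[\alpha_n]$ represents the Postnikov $k$-invariant tensored with $k$. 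By the observation following Def.\ref{defghe}, since $W_n$ is trivial, $\eta_n$ may be conjugated to upper triangular form, decomposing this iterated Hirsch extension into a sequence of classical Hirsch extensions whose filtration of $\pi_n\otimes k$ is exactly the $\pi_1$-invariant filtration by the nilpotent action. This is precisely the sequence of classical Hirsch extensions Sullivan uses to construct $\mdr(K_{(n)})$ from $\mdr(K_{(n-1)})$ in the nilpotent setting.

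The main obstacle is ensuring that the match between the two constructions at each Postnikov stage is genuine, not merely at the level of generators but at the level of cocycles $\alpha_n$ representing the $k$-invariants. This reduces to showing that the comparison map $\adr(K_{(n-1)})\hookrightarrow \ared(K_{(n-1)})$ (inclusion of $\pi_1^\red$-invariants under the left-translation action) induces an isomorphism on the relevant cohomology groups $\coh^{n+1}(-,(\pi_n\otimes k)^\ssim)$ once both are seen through the appropriate Tannakian dg-category via Thm.\ref{propcomplete1},(2); the nilpotence of the $\pi_1$-action on $\pi_n\otimes k$ guarantees that the twisted coefficient cohomology we need is the classical one computed by $\adr$, so the classifying data $[\alpha_n]$ for $\mred$ pulls back to Sullivan's classifying cocycle for $\mdr$. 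Combined with the inductive isomorphism $\mred(K_{(n-1)})\cong \mdr(K_{(n-1)})$ and the uniqueness of minimal models, this yields $\mred(K_{(n)})\cong\mdr(K_{(n)})$, and passing to the limit gives $\mred(K)\cong\mdr(K)$.
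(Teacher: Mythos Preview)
Your Postnikov-tower argument is the right shape for the higher stages, but the crux of the theorem---and the place where your proof has a genuine gap---is the base case $K_{(1)}=K(\pi_1,1)$ for a nilpotent group $\pi_1$. You dispatch this by citing algebraic goodness of finitely generated nilpotent groups and then ``identifying'' $\mred(K(\pi_1,1))$ with the Chevalley--Eilenberg algebra. Within the paper's logical order this is circular: algebraic goodness of nilpotent groups is obtained from Thm.~\ref{thmext}, whose proof in turn invokes Thm.~\ref{thmnilpotent}. Moreover, Thm.~\ref{propalggood} is only a characterisation, not a supply of examples; and your one-line argument that $V^1$ carries the trivial $\pi_1^\red$-action (``semisimple representations factor through the abelianisation, on which the group acts trivially by conjugation'') does not address what the $\pi_1^\red$-action on $V^1\cong \coh^1\!\bigl(\pi_1;\mca{O}(\pi_1^\red)_r\bigr)$ actually is---it is the left-translation action, and its triviality amounts to the vanishing $\coh^1(\pi_1;k_\chi)=0$ for every nontrivial character $\chi$, which you have not established.

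The paper avoids this circularity by a different route: it first reduces to the case $K=K(N,1)$ (your Postnikov induction handles everything above the first stage once that case is known), and then proves $\mred(K(N,1))\cong\mdr(K(N,1))$ directly, using not the Postnikov tower but the tower of fibrations
\[
K(N_i,1)\longrightarrow K(N_{i-1},1)
\]
coming from the upper/lower central series of $N$, with fibres $K(A_i,1)$ for finitely generated abelian $A_i$. One then reruns the spectral-sequence argument of Lem.~\ref{hirschlem} on this tower. The new wrinkle, flagged in the paper, is that here $\pi_1(E)\neq\pi_1(B)$; this is handled by observing that for a finite-dimensional $\pi_1(E)$-module $V$ the relevant $E_2$-terms agree with those for the $\pi_1(B)$-module $V^{\pi_1(F)}$. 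Your approach would become correct if you replaced the hand-waved base case by this lower-central-series argument (or by an independent, non-circular proof that nilpotent groups are algebraically good together with a direct computation of the $\pi_1^\red$-action on $V^1$).
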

\begin{proof}
It is enough to show $\mred(K(N,1))\cong\mdr(K(N,1))$ for a nilpotent group $N$. The proof is similar to that of Lem.\ref{hirschlem} and we use the tower of fibrations associated to nilpotent extensions instead of the Postnikov tower. A slight difference is that $\pi_1(B)\not=\pi_1(E)$ in this case. But in fact, for a finite dimensional $\pi_1(E)$-module $V$, the $E_2$-terms concerning $V$ is naturally isomorphic to those concerning the $\pi_1(B)$-module $V^{\pi_1(F)}$ of $\pi_1(F)$-invariants, where $F$ denotes the fiber, so the proof of Lem.\ref{hirschlem} still works. 
\end{proof}
\begin{rem}\label{remnilpotent}
If we express Thm.\ref{thmnilpotent} in the language of schematic homotopy types, there exists an isomorphism of schematic homotopy types:
\[
(L\otimes k)^\sch\cong (L\otimes k)^{uni}\times K(\pi_1(L)^\red,1)\in\ho(\sht)
\]
for nilpotent $L$ of finite type (see \cite{champs} for notation).
\end{rem}
\begin{exa}\label{exaknmn}
Suppose $k$ is algebraically closed. Let $n\geq 2$ be an integer and  $N$ be the free abelian group of rank $l$ and Let $M$ be a finite rank abelian group with $N$-action. Let $K=K(N,M,n):=K(N\ltimes  K(M,n-1),1)$. Here, $K(M,n-1)$ is an  Eilenberg Maclane space realized as simplicial abelian group with the induced action of $N$. Let $g_j\in GL(M\otimes_ {\Z}k)$ be  the action of $j$-th generater of $N$ and $g_j=g_j^s+g_j^n$ be a Jordan decomposition commutative with each other, where $g_j^n$ is nilpotent and $g_j^s$ is semisimple. Note that $\sum_{j=1,\dots, l}g_j^n\cdot s_j$ is a MC-element corresponding to $M\otimes _{\Z}k$ and the  k-invariant is zero as the section $K(N,1)\to K(N,M,n)$ exists.
 When we denote the module of $i$-dimensional generators of $\mred(K,k)$ by $V^i$, by Thm.\ref{thmfdhtpygr},
\[
V^i=
\left\{ 
\begin{array}{cc}
\bigoplus_{j=1,\dots ,l}k\cdot s_j& (i=1)\\
((M\otimes_{\Z} k)^{\ssim})^{\vee}& (i=n)\\
0   & (otherwise)
\end{array}\right.
\]   
and $d(s_j)=0$, $d(x)=\sum_{j=1,\dots, l}{}^tg_j^n(x)\cdot s_j$ for $x\in V^n$. Here, $(M\otimes_{\Z}k)^\ssim$ is the semisimplification of $N$-representation $M\otimes_\Z k$, i.e., the $j$-th generator  acts on it by $g_j^s$.  $N$ acts  on $V^1$ trivially.
\end{exa}

\begin{exa}[cell attachment]\label{exacell}
We shall give an explicit model of cell attachment which is a natural generalization of \cite[Prop.13.12]{fht}. Let $X$ be a pointed connected CW complex. Let $\pi_i:=\pi_i(X)$ ($i\geq 1$), $\mred(X)=\M =\bigwedge(V^i,d_\M)$ and $n\geq 2$.  Take $a\in\pi_n(X)$. We also denote by $a:V^n\to k$ the corresponding image by the map $\pi_n(X)\to [\tdr(X),\tdr(S^n)]\cong (V^n)^\vee$ (see Lem.\ref{lemhtpygr}). Let $X\cup_aD^{n+1}$ be the space obtained by attaching a $n+1$-cell to $X$ along $a$. A model of $X\cup_aD^{n+1}$ (i.e. a $\pi_1^\red$-dga quasi-isomorphic to $\ared(X\cup_aD^{n+1})$), $\bigwedge(V^i)\oplus_a\ring(\pi_1^\red)_l u$ is given as follows.
\begin{enumerate}
\item As a graded module, it is $\bigwedge (V^i)\oplus\ring(\pi_1^\red)_l u$, where $\ring(\pi_1^\red)_l u$ is a copy of $\ring(\pi_1^\red)_l$ whose degree is $n+1$.
\item The algebra structure is determined by that of $\M$ and $\M\cdot (\ring(\pi_1^\red)_lu)=(\ring(\pi_1^\red)_lu)^2=0$
\item The differential $d$ is determined by the derivation property from the formula
\[
dx=\left\{
\begin{array}{ll}
0& \textrm{if } x\in \ring(\pi_1^\red )_lu \\
d_\M x, & \textrm{if } x\in V^i, i\not= n\\
d_\M x+a_*(x)u & \textrm{if } x\in V^n, 
\end{array}
\right.
\]
where $a_*:V^n\to \ring (\pi_1^\red)_l$ is the composition:
$\xymatrix{V^n\ar[rr]^{\textrm{coaction of }V^n\quad}&&V^n\otimes \ring(\pi_1^\red)_l\ar[r]^{\quad a\otimes\id}&\ring(\pi_1^\red)_l .}$. 
 \end{enumerate} 
In fact, as $\pi_1(X\cup_aD^{n+1})\cong \pi_1(X)$, $\ared(X\cup_aD^{n+1})$ is isomorphic to $\ared(X)\times_{\adr(S^n)\otimes\ring(\pi_1^\red)_l}\adr(D^{n+1})\otimes\ring(\pi_1^\red)_l$. So by an argument similar to \cite[Prop.13.12]{fht} we see the latter is quasi-isomorphic to the above model (see also Lem.\ref{lemmor}).\\
\indent We shall present some concrete examples. Suppose $k$ is algebraically closed.\\
1) Let $X_0=S^1\times S^2$ and $a_0$ be a generator of $\pi_3(X_0)\cong \Z$. Let $X_1=X_0\cup_{a_0}D^4$. Using the above model, we can compute the minimal model $\M_1=\mred(X_1)$. Note that $\ring(\Z^\red)$ is isomorphic to the group ring $k\langle k^*\rangle$ of the discrete group $k^*=k-\{0\}$.  the fifth stage of $\M_1$ is presented as 
\[
\begin{split}
\M_1(5)=\bigwedge &(t,s,v_{\alpha,i},w_\alpha)_{\alpha\in k^*, i\geq 1}, \deg t=1, \deg s=2, \deg v_{\alpha,i}=4, \textrm{and }\deg w_\alpha=5 \\
\textrm{with } dv_{\alpha, 1}&=\left\{
\begin{array}{ll}
ts^2 &\textrm{for } \alpha= 1 \\
0& \textrm{otherwise}  
\end{array}
\right.,  \quad 
dw_\alpha =\left\{
\begin{array}{ll}
t^3& \textrm{for } \alpha = 1\\
tv_{\alpha,1} & \textrm{otherwise}
\end{array}
\right.\\
dv_{\alpha,i}&=tv_{\alpha,i-1} \textrm{ for all } \alpha \in  k^* \textrm{ and }i\geq 2
\end{split}
\]
Here, the fixed generator of $\pi_1\cong \Z$ acts as $v_{\alpha,i}\mapsto \alpha v_{\alpha,i}$ and $w_\alpha\mapsto \alpha w_\alpha$. In particular, we see the fourth k-invariant of $X_1$ is non-zero.
Independently, it is easy to see $\pi_4(X_1\cup_{a_1}D^{n+1})$ is the free $\pi_1$-module generated by one element, and the pro-finite dimensional completion of this has the form $(\oplus_{\alpha\in k^*}U_\alpha)^\vee$ where $\pi_1$ acts on the infinite dimensional vector space $U_\alpha =k\langle u_{\alpha,1},u_{\alpha,2},\dots\rangle$ by the infinite size Jordan block of eigen-value $\alpha$.\\
2)Next, we take an element $a_1\in \pi_4(X_1)$ and put $X_2=X_1\cup_{a_1}D^5$. We can identify $\pi_4(X_1)$ with the Laurent polynomial ring $\Z[x,x^{-1}]$, where the action of the fixed generater of $\pi_1$ corresponds the multiplication of $x$. we regard $a_1$ as a Laurent polynomial $P(x)$. Let $\phi:\pi_4(X_1)\to (\oplus_{\alpha\in k^*}U_\alpha)^\vee$ be the structure map of the completion and put $\phi_{\alpha,i}=\phi(1)(u_{\alpha,i})$ where $1\in \Z[x,x^{-1}]$. The differential of $\M_1(5)\oplus_{a_1}\ring(\pi_1^\red)u$ has the following expression. 
\[
d(\sum_{i=1}^Nc_iv_{\alpha, i})=\delta_{\alpha ,1}c_1ts^2+\sum_{i=2}^Nc_itv_{\alpha, i-1}+
(\phi_{\alpha,1},\dots ,\phi_{\alpha,N})P(A_{\alpha,N}^{-1})\left(
\begin{array}{c}
c_1\\
\vdots \\
c_N
\end{array}
\right)
\cdot \underline{\alpha}u.
\]
Here, $\delta_{\alpha,1}$ is the Kronecker delta, $A_{\alpha,N}$ is the Jordan block of size $N$ and $\underline{\alpha}$ is the element of $k\langle k^*\rangle$ corresponding to $\alpha$. So if we let $R$ be the set of non-zero distinct roots of $P(x)$ and $p_\alpha$ be the multiplicity of $\alpha\in R$, $\M_2(5)=\bigwedge (t,s,v_{\alpha,i},w_{\beta} )_{\alpha^{-1}\in R,\ 1\leq i\leq p_\alpha,\  \beta^{-1} \in R\cup\{1\}}$
with the degree and differential given by the same formula as $\M_1(5)$. It is easy to see $\pi_4(X_2)\cong \Z[x,x^{-1}]/(P(x))$ (finite rank) so we can say the profinite dimensional completion of $\pi_5(X_2)$ is the dual of $k\langle w_{\beta}\rangle_{\beta^{-1}\in R\cup\{1\}}$ (Thm.\ref{thmifdhtpygr}). 
 \end{exa}
\begin{lem}\label{lemdivisible}
Let $\gam$ be a commutative divisible group. Then the $\Q$-pro-algebraic completion $\gam^{\alg}_{\Q}$ is pro-unipotent. In particular, if $L$ is a pointed connected simplicial set with $\pi_1(L)$ commutative and divisible, $\mred(L;\Q)\cong \mdr(L;\Q)$.
\end{lem}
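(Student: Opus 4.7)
The plan is to first show that the reductive pro-algebraic quotient $\gam^{\red}_{\Q}$ is trivial, whence pro-unipotence of $\gam^{\alg}_{\Q}$ follows from the decomposition $\gam^{\alg}_{\Q}\cong\ru(\gam^{\alg}_{\Q})\rtimes\gam^{\red}_{\Q}$ recalled in the Notation subsection. Since $\rep(\gam^{\red}_{\Q})$ is equivalent to the category of finite-dimensional semisimple $\Q$-linear representations of $\gam$, it suffices to show that every simple finite-dimensional $\Q[\gam]$-module is the trivial one-dimensional module.

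So let $V$ be such a simple module. Commutativity of $\gam$ together with Schur's lemma gives that $K:=\End_{\Q[\gam]}(V)$ is a finite extension of $\Q$ and that $V$ is one-dimensional over $K$, so the action is encoded by a character $\chi:\gam\to K^{*}$. Divisibility of $\gam$ forces $\chi(\gam)$ to be a divisible subgroup of $K^{*}$, and the main point is that $K^{*}$ contains no non-trivial divisible subgroup. Indeed, for each finite prime $\mf{p}$ of $\ring_K$, the valuation $v_{\mf{p}}:K^{*}\to\Z$ must annihilate $\chi(\gam)$ because $\Z$ has no non-trivial divisible subgroup; hence $\chi(\gam)\subset\ring_K^{*}$. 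By Dirichlet's unit theorem $\ring_K^{*}$ is a finitely generated abelian group, whose only divisible subgroup is trivial. Therefore $\chi$ is trivial, which forces $\dim_{\Q}V=1$ with trivial $\gam$-action. Every semisimple finite-dimensional $\Q$-representation of $\gam$ is thus a direct sum of trivial ones, so $\gam^{\red}_{\Q}$ is trivial and $\gam^{\alg}_{\Q}$ is pro-unipotent.

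For the ``in particular'' clause, if $\pi_{1}(L)$ is commutative and divisible, the first part yields $\pi_{1}(L)^{\red}=\{1\}$ over $\Q$, so $\ring(\pi_{1}(L)^{\red})_{r}=\Q$ viewed as the trivial local system on $L$. Unwinding the definition of $\ared$ given in the Introduction, this gives $\ared(L;\Q)=\adr(L;\Q)$ with trivial $\pi_{1}^{\red}$-action, and minimal models over the trivial group coincide with classical Sullivan minimal models, yielding $\mred(L;\Q)\cong\mdr(L;\Q)$.

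The only genuinely non-trivial ingredient is the absence of non-trivial divisible subgroups in $K^{*}$ for a number field $K$; the reduction to this point (via Schur's lemma and the reductive/unipotent decomposition) and the final identification of the dg-algebras are direct applications of definitions and of the Tannakian/structural results recalled earlier in the paper.
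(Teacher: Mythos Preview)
Your proof is correct. The paper leaves this lemma to the reader, so there is no authorial argument to compare against; your reduction via Schur's lemma to a character $\chi:\gam\to K^{*}$ into a number field, followed by the observation that $K^{*}$ has no nontrivial divisible subgroup (valuations force $\chi(\gam)\subset\ring_K^{*}$, then Dirichlet's unit theorem finishes it), is a clean and standard way to supply the omitted details.
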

\begin{proof}
Left to the reader.
\end{proof}
\begin{thm}\label{thmext}
Let 
$0\longrightarrow M\longrightarrow \widetilde{\gam}\longrightarrow \gam \longrightarrow 1$
be an extension of groups such that $M$ is a finitely generated abelian group or a finite dimensional $\Q$-vector space. We regard $M$ as a $\gam$-module with the action induced by the extension. Then 
\[
\mred(K(\widetilde{\gam},1);k)\cong\mred(K(\gam,1);k)\otimes_{(\alpha, \eta)}\bigwedge[((M\otimes k^{\ssim}))^{\vee},1].
\] 
See Def.\ref{defghe} for the notation. Here, $\eta$ is the MC-element corresponding to the $\gam$-module $M\otimes k$ and $\alpha\in \coc^2\T\mred(K(\gam,1))((M\otimes k)^{\ssim},\eta)$ is a cocycle which represents the class $[\widetilde{\gam}]\otimes k\in\coh^2(\gam,M\otimes k)$. In particular, if $\gam$ is algebraically good over $k$, $\widetilde{\gam}$ is also algebraically good over $k$. (In these statements, when $M$ satisfies the second condition, we assume $k=\Q$.)\end{thm}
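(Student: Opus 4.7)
The strategy is to identify $\mred(K(\widetilde\gam,1);k)$ as an iterated Hirsch extension of $\mred(K(\gam,1);k)$ by degree-$1$ generators whose classifying data encodes the $\gam$-representation $M\otimes k$ and the extension class $[\widetilde\gam]\otimes k$. By Lem.\ref{lemghe},(1) the claim reduces to (a) exhibiting some iterated Hirsch extension of $\mred(K(\gam,1))$ that is quasi-isomorphic to $\ared(K(\widetilde\gam,1))$ and (b) verifying its classifying data.

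First I would set up the data. Via Thm.\ref{propcomplete1},(2) the finite-dimensional $\gam$-representation $M\otimes k$ corresponds to an object $((M\otimes k)^\ssim,\eta)\in\ob(\T\mred(K(\gam,1)))$, where the MC element $\eta$ records the deviation from semisimplicity. The extension class $[\widetilde\gam]\otimes k\in H^2(\gam;M\otimes k)\cong H^2(K(\gam,1); M\otimes k)$ transports, via a chosen quasi-isomorphism $\mred(K(\gam,1))\to\ared(K(\gam,1))$ and the equivalence $\T\ared(K(\gam,1))\simeq \tdr(K(\gam,1))$, to a class $[\alpha]\in H^2[\T\mred(K(\gam,1))((M\otimes k)^\ssim,\eta)]$. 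This produces the proposed classifying data $\{((M\otimes k)^\ssim,\eta),[\alpha]\}$. I then construct a morphism of $\widetilde\gam^\red$-dga's
\[
\varrho:\mred(K(\gam,1))\otimes_{(\alpha,\eta)}\textstyle\bigwedge[((M\otimes k)^\ssim)^\vee,1]\longrightarrow \ared(K(\widetilde\gam,1))
\]
in direct analogy with the construction preceding Lem.\ref{hirschlem}: on the subalgebra $\mred(K(\gam,1))$, $\varrho$ is the composition of the fixed quasi-isomorphism with the pullback $p^*$ along $p:K(\widetilde\gam,1)\to K(\gam,1)$, and on the added degree-$1$ generators it is given by a $1$-cochain $\beta\in\ared^1(K(\widetilde\gam,1); ((M\otimes k)^\ssim)^\vee)$ with $d\beta=p^*\alpha$. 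Such $\beta$ exists because the pulled-back extension $0\to M\to \widetilde\gam\times_\gam\widetilde\gam\to\widetilde\gam\to 1$ is split by the diagonal, so $p^*[\widetilde\gam]\otimes k=0$ in $H^2(K(\widetilde\gam,1);M\otimes k)$.

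The main step is to show $\varrho$ is a quasi-isomorphism by an adaptation of the spectral sequence argument of Lem.\ref{hirschlem} to the case where the fiber $F\simeq K(M,1)$ has nontrivial fundamental group. I would filter the source by degree in $\mred(K(\gam,1))$ and the target by the preimage of the skeletal filtration of $K(\gam,1)$; the target $E_2$-term is the Serre spectral sequence $H^p(\gam;H^q(F;k))$ with $\widetilde\gam$-twisting. On the source, the $E_1$-term reduces to the cohomology of $\bigwedge[((M\otimes k)^\ssim)^\vee,1]$ with the differential induced by $\eta$, which by Thm.\ref{propcomplete1} recovers $\bigwedge[M^\vee\otimes k,1]\cong H^*(K(M,1);k)$ as a $\gam$-representation. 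The two hypotheses on $M$ enter here: if $M$ is a finite-dimensional $\Q$-vector space and $k=\Q$, Lem.\ref{lemdivisible} gives $\ared(K(M,1);\Q)\cong\adr(K(M,1))$ with trivial action, so the argument reduces to a cleaner variant of the classical Hirsch lemma \cite[Thm.~11.1]{grimor}; if $M$ is finitely generated abelian, one uses the Jordan decomposition of the $\gam$-action on $M\otimes k$ and argues as in Lem.\ref{hirschlem}, with the nilpotent part encoded in $\eta$ and the MC-twisted Koszul complex for $\bigwedge[((M\otimes k)^\ssim)^\vee,1]$ computing $H^*(F;k)$ as a $\gam$-module. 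Once $\varrho$ is a quasi-isomorphism, Lem.\ref{lemghe},(1) identifies $\mred(K(\widetilde\gam,1))$ with the claimed Hirsch extension. Finally, the algebraic goodness assertion is immediate from Thm.\ref{propalggood}: if $\gam$ is algebraically good, then $\mred(K(\gam,1))$ is generated in degree $1$, and our extension adds only degree-$1$ generators, so $\mred(K(\widetilde\gam,1))$ is generated in degree $1$, whence $\widetilde\gam$ is algebraically good.

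The principal obstacle is the spectral sequence step: Lem.\ref{hirschlem} as stated requires an $(m-1)$-connected fiber with $m\geq 2$, whereas here the fiber is aspherical with $\pi_1\cong M$. The adaptation must keep careful track of the $\gam$-action on $M\otimes k$, split between the semisimple part (encoded by the object $(M\otimes k)^\ssim$) and the non-semisimple part (encoded by $\eta$), and verify that the isomorphism on $E_2$-terms respects this decomposition. The split between the f.g.\ abelian case and the $\Q$-vector-space case is forced by the different nature of the pro-reductive completion of $M$ (mixed reductive/unipotent versus pro-unipotent), so that the MC element $\eta$ and the cohomology computation of the twisted exterior algebra behave differently in each case.
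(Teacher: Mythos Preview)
Your approach is essentially the same as the paper's: run the Hirsch-lemma spectral sequence argument (Lem.\ref{hirschlem}) on the fibration $K(M,1)\to K(\widetilde\gam,1)\to K(\gam,1)$, using Thm.\ref{thmnilpotent} (for $M$ finitely generated abelian) or Lem.\ref{lemdivisible} (for $M$ a $\Q$-vector space) to control the minimal model of the fiber, and then read off algebraic goodness from Thm.\ref{propalggood}. The paper's proof is a one-line pointer to exactly this route.

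The one point where your write-up is slightly incomplete is precisely the ``principal obstacle'' you flag at the end. The paper resolves it (in the proof of Thm.\ref{thmnilpotent}, to which the present proof refers) by the observation that although here $\pi_1(E)=\widetilde\gam\neq\gam=\pi_1(B)$, for a finite-dimensional $\pi_1(E)$-module $V$ the relevant $E_2$-terms are naturally isomorphic to those computed with the $\pi_1(B)$-module $V^{\pi_1(F)}=V^{M}$ of $M$-invariants. This is what lets the comparison of $E_2$-pages go through verbatim as in Lem.\ref{hirschlem}: on the target side the Serre $E_2$ with $\widetilde\gam$-coefficients collapses to $\gam$-coefficients in $V^M$, and on the source side the iterated Hirsch extension is built over $\mred(K(\gam,1))$, i.e.\ already a $\gam^\red$-dga. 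Your discussion of splitting the $\gam$-action into semisimple and MC-encoded parts is correct but orthogonal to this issue; once the $V\rightsquigarrow V^M$ reduction is in place, the rest of your argument (and the paper's) goes through as written.
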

\begin{proof}
When we use Thm.\ref{thmnilpotent} or Lem.\ref{lemdivisible}, the proof is similar to that of Thm.\ref{thmnilpotent}.
\end{proof}

\subsubsection{components of  free loop spaces}
Let $K$ be a connected fibrant simplicial set and $\pi_1$ be the fundamental group of $K$ with respect to a fixed base point $*$. Let $\Lambda K$ be the free loop space of $K$, i.e., the internal hom-object $\inhom(S^1,K)$ in the category of unpointed simplicial sets. Choose a point $b\in S^1$. The evaluation at $b$ defines a fibration $p:\Lambda K\to K$ whose fiber is the pointed loop space $\Omega K$ of $K$. Let $\gamma\in\pi_1$. We let $\gamma$ denote its representative loop and $\Lambda_\gamma K$ do the connected component of $\Lambda K$ containing $\gamma$. $p$ induces a fiber sequence $\Omega_{\gamma}K\to \Lambda_\gamma K\to K$. Let   $\partial :\pi_i(K)\to \pi_{i-1}(\Omega_\gamma K)\cong \pi_i(K)$ be the boundary map of the long exact sequence of this fiber sequence. One can easily see for $\alpha\in \pi_i(K)$, $\partial(\alpha)$ vanishes if and only if the Whitehead product $[\gamma,\alpha]$ vanishes. So there exist  exact sequences of groups 
\[
\begin{split}
0\to& \pi_2(K)_{\gamma}\to \pi_1(\Lambda_\gamma K)\to C_{\pi_1}(\gamma)\to 1, \\
0\to&\pi_{i+1}(K)_{\gamma}\to \pi_i(\Lambda_\gamma K)\to \pi_i(K)^\gamma\to 0
\end{split}
\]
for $i\geq 2$, where $\pi_i(K)^{\gamma}=\ker(\id-\gamma)$,  $\pi_i(K)_\gamma=\coker(\id-\gamma)$ ($\gamma$ means the action on the homotopy groups), and $C_{\pi_1}(\gamma)$ denotes the centralizer of $\gamma$ in $\pi_1$. Thus, as is well-known, different components of $\Lambda K$ have different homotopy types. In this sub-subsection, we give   a model of  $\Lambda_\gamma K$ under the assumption that $\gamma$ is in the center of $\pi_1$. For the nilpotent case, the result here is already included in \cite{brszc} and we use their argument  in the proof. The difference form the nilpotent one is that the  category of unpointed schematic homotopy types is not equivalent to that of unaugmented equivariant dga's.\\
\indent    Let $\M =\mred(K,*)$.  
Let  $\Lambda\M$ be a $\pi_1^{\red}$-dga defined as follows. 
\begin{enumerate}
\item $\Lambda\M= \bigwedge(V^i, \bar V^i)=\bigwedge(V^i)\otimes\bigwedge(\bar V^i)$, where  for $i\geq 1$, $\bar V^i$ is a copy of $V^i$ with  $\deg \bar V^i=i-1$.  We identify $\M$ with a subalgebra of $\Lambda\M$ via the natural identification $:\M\supset V^i=V^i\subset \Lambda\M$.

\item We define a derivation $i:\M\to \Lambda\M$ of degree $-1$ by $i(x)=\bar x$ ($x\in V^i$, $\bar x\in \bar V^i$ is the copy of $x$), $i(xy)=i(x)y+(-1)^{\deg x}xi(y)$. Then, the differential $d_{\Lambda\M}$ on $\Lambda\M$ is defined  by $i\circ d_{\M}+d_{\Lambda\M}\circ i=0$.
\end{enumerate}
Suppose $\gamma\in\pi_1$ be in the center of $\pi_1$.  Let $\bar{\gamma}$ be the image of $\gamma$ under the map $\pi_1\to\pi_1^\alg(k)$ and $\bar{\gamma}=(u,s)$ be the decomposition such that $u\in\ru(\pi_1^\alg)(k)$, $s\in\pi_1^\red(k)$. Under the identification of Cor.\ref{corsht}, we regard $u$ as a linear map $\bar V^1\cong  V^1\to k$ Let $I\subset \Lambda\M$ be the homogeneous ideal generated by 
\[
\{x-u(x), dx\mid x\in\bar V^1\}\cup\{y-s\cdot y\mid y\in\Lambda\M\}.
\] This ideal is closed under $\pi_1^\red$-action and differential. The first part of the above set is the same as $K_u$ in \cite[P.4946]{brszc}. We define a $\pi_1(\Lambda_{\gamma}K,\gamma)^\red$-dga $\Lambda_\gamma\M$ by
\[
\Lambda_\gamma\M:=\Lambda\M/I,
\]
where the action of $\pi_1(\Lambda_{\gamma}K,\gamma)^\red$ is given by the pullback of the action of $\pi_1$ by the evaluation $\Lambda_\gamma K\to K$ at some fixed point of $S^1$. $\Lambda_\gamma \M$ is not necessarily minimal. 
The following is a non-simply connected version of \cite[Theorem]{vs}.  
\begin{prop}\label{proploop}
Suppose $k=\Q$. Let $K$ be a connected simplicial set such that $\pi_1(K)$ is algebraically good, $\pi_2(K)$ is a finitely generated abelian group or finite dimensional $\Q$-vector space, and $\pi_i(K)$ is of finite rank for each $i\geq 3$. Let $\gamma$ be an element of the center of $\pi_1(K)$.   Then, under the above notations, there exists a quasi-isomorphism $\mred(\Lambda_{\gamma}K,\gamma)\stackrel{\sim}{\to}\Lambda_{\gamma}\M$.
\end{prop}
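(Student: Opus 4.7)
The plan is to transfer the problem to the algebraic side via Theorem~\ref{result4} and Proposition~\ref{thmbasic}, and then to adapt the arguments of \cite{vs} and \cite{brszc} to the equivariant setting. The essential identity to exploit is that $\Lambda K = \inhom(S^1,K)$ is a cotensor, so a map $L\to \Lambda K$ is the same as a map $L\times S^1\to K$. At the level of augmented $\pi_1^{\red}$-dgas, the dga $\Lambda\M$ together with the derivation $i$ plays the role of the cotensor of $\M$ with $S^1$. Concretely, for a connected $A\in\dgalg(\pi_1^{\red})$, I would verify by the same explicit formula used in the classical simply connected case (extending a map $f:\M\to A$ to a map $\Lambda\M\to A\otimes\adr(S^1)$ by sending $\bar x\mapsto f(x)\otimes 1+ i(f(x))\otimes dt$) that there is a natural bijection $[A\otimes\adr(S^1),\M]'_{\dgalg(\pi_1^{\red})}\cong [A,\Lambda\M]'_{\dgalg(\pi_1^{\red})}$. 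Equivariance is automatic since each $\bar V^i$ is taken as a copy of $V^i$ as a $\pi_1^{\red}$-module.

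Since $\Lambda K$ is not connected, $\Lambda\M$ will not directly model the pointed component $(\Lambda_\gamma K,\gamma)$, but only the unpointed schematization of $\Lambda K$, whose scheme of connected components corresponds, via Cor.~\ref{corsht} and Lem.~\ref{lemmor}, to conjugacy classes of $k$-points of $\pi_1(K)^{\alg}$. A basepoint in $\Lambda_\gamma K$ is a simplicial map $S^1\to K$ based at $\ast$, which under the cotensor identification becomes an augmentation of $\Lambda\M$ to $k$. Writing $\bar\gamma=(u,s)\in\ru(\pi_1^{\alg})(k)\rtimes\pi_1^{\red}(k)$, this augmentation sends $\bar x\in\bar V^1$ to $u(x)$ and, on the semisimple direction, factors through $s$-translation. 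The generators of the ideal $I$ are then seen as the defining equations of this pointing datum: the relations $\{x-u(x),\,dx\mid x\in\bar V^1\}$ implement the unipotent part exactly as in the ideal $K_u$ of \cite[p.4946]{brszc}, while $\{y-s\cdot y\mid y\in\Lambda\M\}$ cut down to the semisimple component determined by $s$. Crucially, the centrality of $\gamma$ ensures that $s$ is $\pi_1^{\red}$-central, so that $I$ is a $\pi_1^{\red}$-stable ideal and $\Lambda\M/I$ inherits an action of $\pi_1(\Lambda_\gamma K,\gamma)^{\red}$ compatible with the evaluation $p:\Lambda_\gamma K\to K$.

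The final step is to verify that $\Lambda_\gamma\M=\Lambda\M/I$ is quasi-isomorphic to $\ared(\Lambda_\gamma K,\gamma)$ as a $\pi_1(\Lambda_\gamma K,\gamma)^{\red}$-dga. I would do this inductively along the Postnikov tower, using Thm.~\ref{thmfdhtpygr} and Thm.~\ref{thmifdhtpygr} to compute equivariant indecomposables from the short exact sequences
\begin{align*}
0&\to \pi_2(K)_\gamma \to \pi_1(\Lambda_\gamma K)\to C_{\pi_1}(\gamma)\to 1,\\
0&\to \pi_{i+1}(K)_\gamma \to \pi_i(\Lambda_\gamma K)\to \pi_i(K)^\gamma\to 0\quad(i\geq 2)
\end{align*}
recorded before the statement, and then checking that the minimal model of $\Lambda\M/I$ produces precisely these $\pi_1(\Lambda_\gamma K,\gamma)^{\red}$-modules. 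Alternatively, one can run the argument of \cite{brszc} more directly: in the nilpotent case $\pi_1^{\red}$ is trivial so the $s$-relations are vacuous and one recovers their model; the present construction reduces to theirs upon taking $s$-fixed parts.

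The main obstacle I expect is the correct bookkeeping in the preceding step, namely matching the pointing datum of $(\Lambda_\gamma K,\gamma)$ with the ideal $I$ and showing that no further relations are needed. The centrality hypothesis is the essential simplification: only then is the pair $(u,s)$ compatible with the $\pi_1^{\red}$-action in the required equivariant sense, and only then does $\Lambda\M/I$ make sense as an equivariant quotient. Without centrality, one would replace $\pi_1^{\red}$ by the pro-reductive completion of the centralizer of $\gamma$, and the resulting construction is substantially more delicate.
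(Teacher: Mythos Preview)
Your outline---the exponential adjunction for $\Lambda K$, the interpretation of the ideal $I$ as the pointing datum $(u,s)$, the role of centrality, and the appeal to \cite{brszc}---matches the paper's. But the execution differs in two places, and you miss the main subtlety.

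First a slip: the cotensor bijection is written backwards and the formula does not typecheck ($i$ is a derivation on $\M$, not on $A$, so ``$i(f(x))$'' is undefined). What you want is the paper's $(\ast)$, namely
\[
[\psi^*\Lambda_\gamma\M,\,B]_{\dgalg(H)}\;\cong\;\bigl\{\beta\in[\psi'^*\M,\,\wedge\xi\otimes B]_{\dgalg(\Z^\red\times H)}\;\big|\;a_B\circ\beta=u\bigr\},
\]
and you must keep the $\Z^\red$-factor coming from $\pi_1(S^1)$.

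More substantively, the paper does \emph{not} finish by Postnikov induction. It shows that $\Lambda_\gamma\M$ and $\mred(\Lambda_\gamma K)$ corepresent the same functor on $\dgalg(H)$, using the universal property of the component $(K^\sch)^{S^1}_\gamma$ in the \emph{unpointed} category $\ho(\sht)$ and translating via Lem.~\ref{lemmor}. The point stressed just after the statement is that the unpointed homotopy category of schematic homotopy types is \emph{not} equivalent to unaugmented equivariant dga's: by Lem.~\ref{lemmor}(2) one must further quotient by the action of a centralizer (the $[-,-]'$ versus $[-,-]$ distinction). The paper isolates this discrepancy in three auxiliary sets $H_1,H_2,H_3$, and the centrality of $\gamma$ is used precisely to show $H_2=H_3$ (since then $u\ast g=u$ for all $g\in G(\Q)$). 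Your observation that centrality makes $I$ a $\pi_1^\red$-stable ideal is correct but is only half of its role.

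Your Postnikov approach could in principle be made to work, but you would need to (i) check that $\pi_1(\Lambda_\gamma K)$ is algebraically good (this follows from Thm.~\ref{thmext} once you note $C_{\pi_1}(\gamma)=\pi_1$), and (ii) match the $k$-invariants, not just the indecomposable modules---matching modules alone does not pin down the quasi-isomorphism type. Finally, the paper treats the two hypotheses on $\pi_2(K)$ separately: when $\pi_2$ is a $\Q$-vector space the map $(\Lambda_\gamma K)^\sch\to(K^\sch)^{S^1}_\gamma$ is an equivalence and the corepresentability argument runs directly; when $\pi_2$ is only finitely generated abelian this fails, and one first passes to the fiberwise rationalization $K_\Q$.
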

\begin{proof}
See also Rem.\ref{remloop} below. In the following, $(-\otimes \Q)^\sch$ is abbreviated to $(-)^\sch$. Let $(K^{\sch})^{S^1}\in\sprq$ be an object given by $(K^{\sch})^{S^1}(R)=(K^{\sch}(R))^{S^1}$ for $R\in\Q-\Alg$, where the right hand side is the exponential in $\sset$ and $K^{\sch}$ is taken to be fibrant in $\sprl{\Q}$. Let $(K^\sch)^{S^1}_\gamma$ be the connected component of $(K^\sch)^{S^1}$ containing $\gamma$. In general, we have a map $f:(\Lambda_\gamma K)^\sch\to (K^\sch)^{S^1}_\gamma$ induced by the map $K\to K^\sch(k)$.  \\
\indent We first consider the case that $V=\pi_2(K)$ is a $\Q$-vector space. In this case, as $V^\alg\cong V$ by Lem.\ref{lemdivisible}, by comparing the long exact sequences associated to the fibrations $p^\sch:(\Lambda_\gamma K)^\sch\to K^\sch$, $q:(K^\sch)^{S^1}_\gamma\to K^\sch$, the evaluation at the base point of $S^1$, we can see $f$ is a weak equivalence and $\pi_1(\Lambda_\gamma K)^\red\cong\pi_1^\red$. Let $G=\pi_1^\red$. Let $(H,B,a_B)\in\aredeqalg$ be a reductive dga and $\psi:H\to G$ be a homomorphism. 
We fix three sets $H_1$, $H_2$, and $H_3$ as follows. 
\[
\begin{split}
H_1&=\{\beta\in[\psi'^*\M, \wedge\xi\otimes B]_{\dgalg(\Z^\red\times H)}| 
 a_B\circ\beta =u\in[s^*\M,\wedge\xi ]_{\dgalg(\Z^\red)} \} \\
H_2&=\{\beta'\in [\psi'^*\M,\wedge\xi\otimes B]'_{\dgalg(\Z^\red\times H)}\mid a_B\circ\beta'=u\in[s^*\M,\wedge\xi]'_{\dgalg(\Z^\red) }\}\\
H_3&=\left\{ \beta'\in [\psi'^*\M,\wedge\xi\otimes B]'_{\dgalg(\Z^\red\times H)}\left| 
\begin{split}
\text{for a representative } \tilde \beta'\in& [\psi'^*\M,\wedge\xi\otimes B] \text{ of } \beta',\\  
a_B\circ\tilde\beta'=u&\in[s^*\M,\wedge\xi]  
\end{split}
\right.\right\}
\end{split}
\]
For the notation $[-,-]'$, see Lem.\ref{lemmor} and below. (we put $E=*$).
Here, $(\Z^\red,\wedge\xi)$ is the minimal model of $\ared(S^1)$ and $\xi$ denotes a generator of degree 1, $\psi'=s\times \psi$, and $u$ is considered as a morphism $s^*\M\to \wedge\xi\in\dgalg(\Z^\red)$. 
By an argument similar to the proof of \cite[Thm.6.1]{brszc}, we see that
there exists a natural bijection:
\[
[\psi^* \Lambda_\gamma \M , B]_{\dgalg(H)}\cong H_1 \tag{*}
\]
(Here we use $H$-equivariant affine stacks\cite{kpt2} instead of rational simplicial sets in \cite{brszc}. and note that there is a natural bijection 
\[
[A,U(C)]_{\dgalg( H\times \Z^\red)}\cong [A_{\Z},C]_{\dgalg( H)}
\]
,where for an $H$-dga $A$, and an $H\times \Z^\red$-dga $C$, $A_\Z$ denotes $\Z$-coinvariants of $A$, and $U(C)$ is $C$ considered as $H\times \Z^\red$-dga with trivial $\Z^\red$-action.)\\
\indent  We shall see what universal property $\mred((K^\sch)^{S^1}_{\gamma})$ have, using Lem.\ref{lemmor}.   We work on the intermediate category whose objects are pointed schematic homotopy types but whose morphisms are those of $\ho(\sht)$, the unpointed homotopy category between underlying unpointed schematic homotopy types. $(K^\sch)^{S^1}_{\gamma}$ have a universal property as follows. Let $(Y,y)$ be a pointed schematic homotopy type. The following two sets of morphisms in $\ho(\sht)$ is naturally bijective. 
\begin{enumerate}
\item morphisms $\phi:Y\to (K^\sch)_\gamma^{S^1}\in\ho(\sht)$ such that $\pi_0(\phi(\Q)):\pi_0(Y(\Q))\to\pi_0((K^\sch)^{S^1}_\gamma(\Q))$ maps $[y]$ to $[\gamma]$.
\item morphisms $\varphi:Y\times (S^1)^{\sch}\to K^{\sch}$ such that  the composition $y\times S^1\to Y\times (S^1)^\sch(\Q)\to K^{\sch}(\Q)$ is freely homotopic to $\gamma$.  
\end{enumerate}
As $\gamma$ is in the center, this bijection gives the following bijection.
\begin{equation*}
\begin{split}
\{\alpha\in [\rep (G)^c,&\rep(H')]\mid \omega_{H'}\circ\alpha=\omega_G\in[\rep(G)^c,\vect]\} \\
&\cong\{\alpha' \in [\rep(G)^c,\rep(H'\times \Z^\alg)]\mid i_{\Z}^*\circ \alpha'=s\in[\rep(G)^c,\rep(\Z^\alg)]\},
\end{split}\tag{**}
\end{equation*}
see Lem.\ref{lemmor}. Here $[-,-]$ means $[-,-]_{\clcat}$ so "$=$" means naturally isomorphic, respecting tensors, and in the left hand side $G$ is identified with $\pi_1(\Lambda_{\gamma}K)^\red$ while it is identified with $\pi_1(K)^\red$ in the right hand side ($H'=\pi_1(Y)$). So by Lem.\ref{lemmor} and the universal property of $(K^\sch)^{S^1}_\gamma$, we get a natural bijection:
\[
[\psi^*\mred( (K^\sch)^{S^1}_\gamma),B]'_{\dgalg(H)}\cong H_2.
\]
On the other hand, (*) implies $[\psi^*\Lambda_\gamma\M, B]'_{\dgalg(H)}\cong H_3$.  Now, elements in $[s\M,\wedge\xi]$ which is identified with $u$ in $[s\M,\wedge\xi]'$ are $\{u* g|g\in G(\Q)\}$ but by assumption these are equal to $u$. So $H_2=H_3$, which implies $\Lambda_\gamma\M\simeq \mred((K^\sch)^{S^1}_{\gamma})$.\\
\indent For the case $\pi_2(K)$ finitely generated abelian, $f$ is not a weak equivalence. Let $r:K\to K_\Q$ be a fiberwise rationalization so that $\pi_1(r):\pi_1(K)\cong\pi_1(K_{\Q})$ and $\pi_i(r)\otimes \Q:\pi_i(K)\otimes \Q\cong\pi_i(K_{\Q})$.  Consider the induced map $r_* :\Lambda_\gamma K\to\Lambda_\gamma K_\Q$. By Lem.\ref{lemdivisible}, the pull-back by $\pi_1(r_*)$ preserves semi-simple $\Q$-representations so  maps $\pi_1(r_*)^\red:\pi_1(\Lambda_\gamma K)^\red\to\pi_1(\Lambda_\gamma K_\Q)^\red$ and $\ared(r_*):\ared(K_\Q)\to \ared(K)$ are induced. By \ref{lemmor}, $(\alpha_*)^*\tdr(K_\Q)\to\tdr(K)$ is equivalent to $\T \ared(\alpha_*):\T\ared(K_\Q)\to\T\ared(K)$. So by Thm.\ref{thmfdhtpygr}, Thm.\ref{thmext} and naturality of the construction of the iterated Hirsch extensions, we see $\mred(\Lambda_\gamma K_\Q)\cong\mred(\Lambda_\gamma K)$ as underlying dga's.

\end{proof}
\begin{rem}\label{remloop}
If we define a "category of dg-algberas over pro-reductive groupoids" appropriately, it is equivalent to the sub-category of $\ho(\sht)$ whose morphisms are those which preserve semisimple local systems. We can prove Prop.\ref{proploop}, using this category similarly.  To deal with more general base loops than the center,especially loops which do not preserve semisimple local systems, it will be inevitable  to consider Tannakian dgc's as the universality of the component is not contained in the semisimple subcategory.


\end{rem}

\begin{exa}\label{exaloop}
Let $K$ be a connected simplicial set  such that $\pi_1(K)=\Z$ and \\
$\M(=\mred(K))=\wedge(t,s_1,s_2,s_3,s_4,u_1,u_2,u_3)$ $\deg t=1,\deg s_i=2, \deg u_j=3$, where 
\begin{enumerate}
\item the generator $g=1\in\Z$ act on $\M$ by $g\cdot t=t$,  $g\cdot (s_i,s_{i+1})=
(s_i,s_{i+1})\left(
\begin{array}{cc}
0&1 \\
-1&0
\end{array}\right)
$ ($i=1,3$),  $g\cdot u_j=-u_j$ ($j=1,2$), $g\cdot u_3=u_3$, and 
\item the differential is given by
$ds_1=ds_2=0$, $ds_3=ts_1$, $ds_4=ts_2$,  $du_1=s_1s_2$, $du_2=2tu_1-s_1s_3-s_2s_4$, $du_3=s_1s_4-s_2s_3$.
\end{enumerate}
For example, the action of $\pi_1$ on $\pi_2\otimes\Q$ ($\pi_i=\pi_i(K)$) is as follows (see the proof of Thm.\ref{propcomplete1}).
\[
g\cdot (s_4^*,s_3^*,s_2^*,s_1^*)=(s_4^*,s_3^*,s_2^*,s_1^*)
\left(
\begin{array}{cccc}
0&1&0&-1\\
-1&0&1&0\\
0&0&0&1\\
0&0&-1&0
\end{array}\right)
\]
Here,  $(s_i^*)$ is the dual basis of $(s_i)$.\\
\textup{(i) $\gamma=e$ (the unity).} $\Lambda_e\M=\wedge(t, \bar s_i, s_i, \bar u_j, u_j)_{i=1,\dots,4,j=1,2,3}$, which is already minimal. For example, $d\bar u_2=2t\bar u_1+\bar s_1s_3+s_1\bar s_3+\bar s_2 s_3+ s_2\bar s_3$. In this case, $\pi_1(\Lambda_eK)\cong \pi_2\rtimes \pi_1$ and by the description of the minimal model, we can compute the action of $\pi_1(\Lambda_eK)$ on the homotopy group. For example, the action of $\bar s_1$ (considered as an element of $\pi_2\subset \pi_2\rtimes \pi_1$) on $\pi_2(\Lambda_eK)\otimes\Q\cong \Q\langle \bar u_3^*,\bar u_2^*, \bar u_1^*, s_4^*, s_3^*, s_2^*, s_1^*\rangle$ is given by 
\[
\left(
\begin{array}{ccccccc}
1&&&1&&& \\
&1&&&-1&& \\
&&1&&&1&\\
&&&1&&&\\
&&&&1&&\\
&&&&&1&\\
&&&&&&1
\end{array}
\right)
\]\\
\textup{(ii) $\gamma=g$.} $\Lambda_g\M=\wedge(t,\bar u_3,u_3)$ with $d=0$, which is minimal\\
\textup{(iii) $\gamma=g^2$.} $\Lambda_{g^2}\M$ is not minimal. A minimal model of $\Lambda_{g^2}\M$ is $\N_{2}:=\wedge(t,\bar u_1,\bar u_3, u'_2,u_3)$, $\deg t=1$, $\deg \bar u_1=\deg\bar u_3=2$, $\deg u'_2=\deg u_3=3$, $d=0$.
 $\N_2$ is identified with a subalgbra of $\Lambda_{g^2}\M$ whose inclusion is a quasi-isomorphism, by  $t\mapsto t$, $\bar u_j\mapsto \bar u_j$, $u'_2\mapsto u_2-t\bar u_2/2$, $u_3\mapsto u_3$.\\
 \textup{(iv) $\gamma=g^4$.} A minimal model of $\Lambda_{g^4}\M$ is $\N_4:=\wedge(t,\bar s_1,\bar s_2, s'_3, s'_4,\bar u'_1,\bar u'_3,u'_2, u'_3)$ with $\deg s'_i=\deg \bar u'_j=2$, $\deg u'_j=3$, and $ds'_3=ds'_4=d\bar u'_1=0$, $d\bar u'_3=-\bar s_1s'_4+\bar s_2s'_3$, $du'_2=du'_3=0$. $\N_4$ is identified with a subalgbra of $\Lambda_{g^4}\M$ whose inclusion is a quasi-isomorphism, by $x\mapsto x$ ($x=t, \bar s_1,\bar s_2$), $s'_i\mapsto s_i-t\bar s_i/4$ ($i=3,4$), $\bar u'_1\mapsto \bar u_1+(\bar s_1\bar s_4-\bar s_3\bar s_2)/4$, $\bar u'_3\mapsto \bar u_3-\bar s_3\bar s_4/4$, $u'_2\mapsto u_2-(\bar s_3s'_3+\bar s_4s'_4+t\bar u_2)/4$. $u'_3\mapsto u_3-t\bar u_3/4-13t\bar s_3\bar s_4/16+(\bar s_3s_4-\bar s_4s_3)/4$.
\end{exa}
\begin{rem}
Let $[\gamma]\in\coh^2(\pi_1;(\pi_2)_\gamma)$ be the class associated to the extension $0\to (\pi_2)_\gamma\to\pi_1(\Lambda_\gamma K)\to\pi_1\to 1$. When $\gamma$ is the unity, clearly $[\gamma]=0$. In the above example, for all $\gamma$, the class $[\gamma]\otimes\Q\in \coh^2(\pi_1;(\pi_2)_\gamma\otimes \Q)$ is zero. But this is not true in general, and a counterexample exists even in the nilpotent case. For example, let $\M=\wedge(t_1,t_2,t_3,s)$ with $\deg t_i=1$, $\deg s=2$, $dt_i=0$, and $ds=t_1t_2t_2$, and let $\gamma=(t_1=1,t_2=0,t_2=0)$. Then $\Lambda_\gamma\M=\wedge(t_1,t_2,t_3,\bar s, s)$ with $d\bar s=-t_2t_3$, which imply $[\gamma]\otimes \Q=[-t_2t_3]\not= 0$ (see Thm.\ref{thmext}).
\end{rem}

\subsection{Equivalence with algebraically good spaces}
In this subsection, we will show a homotopy category of Tannakian dgc's with subsidiary data is equivalent to a homotopy category of some spaces. We restrict our discussion  to the pointed case.
Let $\ssetpc$ denote the category of pointed connected simplicial sets. We say a connected pointed simplicial set $K$ is {\em algebraically good} if  $\pi_1(K)$  is algebraically good and $\pi_i(K)$ is a finite dimensional $\Q$-vector space for each $i\geq 2$. we denote by $\ssetpgd$ the full subcategory of $\ssetpc$ consisting of algebraically good spaces.
\begin{defi}
\textup{(1)} Define a category $\tannplus$ as follows.
\begin{enumerate}
\item An object is a triple $(T,\gam,\phi)$ consisting of a Tannakian dgc $T$, a discrete group $\gam$ and an equivalence of closed $k$-categories $\phi:\coc^0T\stackrel{\sim}{\to}\rep(\gam)$ such that the following diagram is commutative.
\[
\xymatrix{\coc^0T\ar[r]^{\phi}\ar[dr]_{\coc^0\omega_T}&\rep(\gam)\ar[d]^{\omega_{\gam}}\\
&\vect ,}
\]
where $\omega_{\gam}$ is the forgetful functor.
\item A morphism $(T,\gam,\phi)\longrightarrow (T',\gam',\phi')$ is a pair $F=(F,F^{gr})$ of a morphism of Tannakian dgc's $F:T\longrightarrow T'$ and a group homomorphism $F^{gr}:\gam'\longrightarrow \gam$ such that the following diagram is commutative.
\[
\xymatrix{\coc^0T\ar[r]^{\coc^0F}\ar[d]^{\phi}&\coc^0T'\ar[d]^{\phi'}\\
\rep(\gam)\ar[r]^{(F^{gr})^*}&\rep(\gam')}
\] 
\end{enumerate}
We say a morphism $F:(T,\gam,\phi)\longrightarrow (T',\gam',\phi')\in\tannplus$ is a weak equivalence if $F:T\to T'$ is a quasi-equivalence and $F^{gr}$ is an isomorphism. $\ho(\tannplus)$ denotes the localization of $\tannplus$ obtained by inverting all weak equivalences.  \\
\textup{(2)} $\tannplusgd$ denotes the full subcategory consisting of $(T,\gam,\phi)$'s such that $\pi_i(T):=[ T,\tdr S^i]_{\acldgc}$ is a finite dimensional $\Q$-vector space for $i\geq 2$ and $\gam$ is algebraically good (By Lem.\ref{lemhtpygr}, $\pi_i(T)$ is isomorphic to the dual of the $i$-th indecomposable module of the minimal model of the corresponding dga). The corresponding full subcategory of $\ho(\tannplus)$ is denoted by $\ho(\tannplusgd)$.
\textup{(3)} Let $F_0, F_1:(T,\gam,\phi)\longrightarrow (T',\gam',\phi')\in\tannplus$ be two morphisms.  We say $F_1$ and $F_2$ are { \em right homotopic} (resp. {\em left homotopic}), written $\sim_r$ (resp. $\sim_l$) if $F_1^\gr=F_2^\gr$ and $F_0$ and $F_1:T\to T'$ are right homotopic (resp. left homotopic) as morphisms of $\acldgc$.
\end{defi}
We shall show $\ho(\ssetpgd)$ and $\ho(\tannplusgd)$ are equivalent. \\
\indent We do not claim $\tannplus$ has a model category structure but we have the following lemma.
\begin{lem}\label{lemhomotopic}
Let $(T,\gam,\phi)$, $(T',\gam',\phi')\in \tannplus$. Suppose  $T$ is  cofibrant and $T'$ is  fibrant in $\acldgc$. Then, for two morphism $F_0,F_1:(T,\gam,\phi)\to(T',\gam',\phi')\in\tannplus$, $F_0\sim_r F_1$ if and only if $F_0\sim_l F_1$. $\sim_r$ (so $\sim_l$) is an equivalence relation on $\hom_{\tannplus}((T,\gam,\phi),(T',\gam',\phi'))$ and  there exists a natural bijection:
\[
\homo_{\ho(\tannplus)}((T,\gam,\phi),(T',\gam',\phi'))\cong \homo_{\tannplus}((T,\gam,\phi),(T',\gam',\phi'))/\sim_r .
\]  

\end{lem}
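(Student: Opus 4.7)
The plan is to deduce the lemma from the model category structure on $\acldgc$ (Thm \ref{thmmodelcldgc}) by treating a morphism of $\tannplus$ as the pair of its underlying $\acldgc$-morphism and the discrete group datum $F^\gr$. By the very definitions, $F_0 \sim_r F_1$ (resp.\ $\sim_l$) in $\tannplus$ just means $F_0^\gr = F_1^\gr$ together with a right- (resp.\ left-) homotopy of the underlying $\acldgc$-morphisms $T \to T'$. Since $T$ is cofibrant and $T'$ is fibrant in $\acldgc$, the two $\acldgc$-homotopy relations agree and form an equivalence relation on $\hom_{\acldgc}(T,T')$; adjoining the reflexive/symmetric/transitive condition $F_0^\gr = F_1^\gr$ preserves these properties, so $\sim_r = \sim_l$ on $\hom_{\tannplus}((T,\gam,\phi),(T',\gam',\phi'))$ and it is an equivalence relation.

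To exhibit the bijection with $\hom_{\ho(\tannplus)}$, I would first observe that any path object $T' \to PT' \to T'\times T'$ in $\acldgc$ can be promoted to $\tannplus$: the weak equivalence $T' \to PT'$ induces an equivalence $\coc^0 T' \simeq \coc^0 PT'$, and transporting $\phi'$ along this gives $PT'$ a canonical $\tannplus$-structure with group $\gam'$ for which the structure map and both projections become $\tannplus$-weak-equivalences with identity group part. Consequently any $\sim_r$-equivalent pair is connected by a bona fide zigzag in $\tannplus$ with weak-equivalence legs, and the natural map $\hom_{\tannplus}/\!\sim_r \to \hom_{\ho(\tannplus)}$ is well-defined.

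For bijectivity, I reduce zigzags in $\tannplus$ to a single morphism modulo $\sim_r$. A weak equivalence $(W, W^\gr)$ in $\tannplus$ has $W^\gr$ a strict isomorphism of discrete groups (hence freely invertible) and $W$ a quasi-equivalence in $\acldgc$; by the model structure on $\acldgc$, with $T$ cofibrant and $T'$ fibrant, every morphism $T \to T'$ in $\ho(\acldgc)$ lifts to a genuine $\acldgc$-morphism, uniquely up to $\acldgc$-homotopy. Equipping such a lift with the group part obtained by strictly composing the zigzag's (inverted) iso group parts yields a $\tannplus$-morphism well-defined modulo $\sim_r$, which gives surjectivity. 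For injectivity, if $F_0, F_1$ agree in $\ho(\tannplus)$ then their underlying $\acldgc$-morphisms agree in $\ho(\acldgc)$ and are therefore right-homotopic, while tracking the group part through the connecting zigzag forces $F_0^\gr = F_1^\gr$, so $F_0 \sim_r F_1$. The main technical obstacle is checking, in the surjectivity step, that the chosen $\acldgc$-lift is compatible with the prescribed composed group part via $\phi$ and $\phi'$ on $\coc^0$-level; this follows because both pieces of data arise from the same underlying zigzag and satisfy the same Tannakian compatibilities, which pin them down up to the ambiguity already absorbed into $\sim_r$.
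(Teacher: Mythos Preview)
Your overall approach is correct and is essentially what the paper does, but you and the paper emphasize different points and your handling of the ``main technical obstacle'' is where the comparison is instructive.

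The paper's proof is extremely brief: it isolates one key observation, namely that if $F_0,F_1:T\to T'$ are right- or left-homotopic in $\acldgc$ then the induced maps $(\coc^0 F_0)^*,(\coc^0 F_1)^*:\aut^\otimes(\omega_{\coc^0 T'})\to\aut^\otimes(\omega_{\coc^0 T})$ coincide. From this it deduces that homotopic morphisms become equal in $\ho(\tannplus)$, and then simply cites Quillen and Hovey for the standard localization argument. Your proof unfolds that standard argument explicitly (path-object promotion, zigzag straightening, injectivity via the two forgetful functors to $\grp^{\op}$ and $\ho(\acldgc)$), which is fine and arguably clearer.

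However, the paper's key observation is exactly what you need to close your acknowledged gap. In your surjectivity step you lift a zigzag in $\ho(\acldgc)$ to an actual $F:T\to T'$ and pair it with the composed group isomorphism $g$; you then have to check that $\phi'\circ\coc^0 F = g^*\circ\phi$ strictly, and your justification (``Tannakian compatibilities \dots\ pin them down'') is too vague. The clean fix is to note that for the specific path object used in $\acldgc$ (hom-complexes tensored with $\nabla(1,*)$), one has $\coc^0(PT')=\coc^0(T')$ with both projections restricting to the identity on $\coc^0$; hence, for $T$ cofibrant and $T'$ fibrant, homotopic maps have \emph{literally equal} $\coc^0$. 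This is a sharpening of the paper's observation, and it immediately shows that your straightened $F$ inherits the strict compatibility square from the original zigzag, so $(F,g)$ is a genuine $\tannplus$-morphism. Make that explicit and your argument is complete.
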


\begin{proof}
Note that if $F_0, F_1:T\to T'\in\tannp$ are right or left homotopic in $\acldgc$, the induced morphisms $(\coc^0F_0)^*, (\coc^0F_1)^*:\aut^\otimes(\omega_{\coc^0T'})\to \aut^\otimes(\omega_{\coc^0T})$ are equal. By using this fact, we see right or left homotopic morphisms represent the same morphism in $\ho(\tannplus)$. So the lemma is standard, see \cite{quiha} or \cite[Thm.1.2.10]{hov}. 
\end{proof}
\begin{rem}
The author does not know whether the statement corresponding to Lem.\ref{lemhomotopic} in the unpointed case is true. In this case, we will consider groupoids instead of groups.  As morphisms of groupoids have non-trivial homotopic relation, if we define a right or left homotopic relation similarly, the proof does not go on similary.
\end{rem}
Define two functors
\[
\tdr^+:\ssetpc\longrightarrow (\tannplus)^{\op}, \ \ \ \R\langle-,-,-\rangle:(\tannplus)^{\op}\longrightarrow \ssetpc
\]
as follows. \\
\indent For $K\in\ssetpc$, $\tdr^+(K)=(\tdr(K),\pi_1(K),\phi_K)$, where $\phi_K$ is a fixed functorial equivalence $\coc^0(\tdr(K))\stackrel{\sim}{\to}\loc(K)\stackrel{\sim}{\to}\rep(\pi_1(K))$. For $(T,\gam,\phi)\in \tannplus$, $\R\langle T,\gam,\phi\rangle$ is the following pullback.
\[
\xymatrix{\R\langle T,\gam, \phi\rangle\ar[r]\ar[d]&\langle QT\rangle\ar[d]^{p}\\
B\gam \ar[r]^{\phi*}&B\Pi_1\langle QT\rangle}
\]
Here, $QT$ is a fixed functorial cofibrant replacement of $T$, $\Pi_1\langle QT\rangle$ is the fundamental groupoid,  $p$ is the canonical map which gives an isomorphism of fundamental groupoid, and $\phi^*$ is the following composition.
\[
B\gam\to B\aut^{\otimes}(\omega_{\gam})\stackrel{\phi^*}{\cong} B\aut^{\otimes}(\coc^0\omega_{QT})\cong B\pi_1\langle QT\rangle \to B\Pi_1\langle T\rangle ,
\]
see Appendix \ref{proobj}, where the first map is induced by the natural "evaluation" $\gam\to \aut^\otimes(\omega_\gam)$ and the inverse of the isomorphism $\aut^\otimes(\coc^0\omega_T)\cong \pi_1\langle QT\rangle$ is the following composition:
\[
\pi_1\langle QT\rangle\cong[QT,\tdr S^1]_{\acldgc}\stackrel{\coc^0}{\to}[\coc^0(QT), \rep(\Z)]_{\clcatk}\cong\aut^\otimes(\coc^0\omega_{QT}),
\] 
where the last isomorphism sends an action of $1\in\Z$ to an automorphism. This is in fact, an isomorphism by Thm.\ref{thmchamps} and \ref{thmsht}. Note that $\tdr^+$ preserves weak equivalences so it induces a functor $\tdr^+:\ho(\ssetpc)\longrightarrow \ho(\tannplus)^{\op}$.  As the map $p:\langle QT\rangle \longrightarrow B\Pi_1\langle QT\rangle$ is a fibration, the above pullback  is a homotopy pullback. So $\R\langle-,-,-\rangle$ also preserves weak equivalences and it defines a functor between homotopy categories.
 We have an obvious natural map
\[
\Phi:\homo_{\ssetpc}(K, \R\langle T,\gam,\phi\rangle )\longrightarrow \homo_{\tannplus}((QT,\gam,\phi),\tdr^+(K)).
\]
We can easily see if $K$ is reduced, i.e., $K_0\cong *$, this map is a bijection and preserves and reflects homotopy relation so $\Phi$ induces a bijection
\[
\Phi:\homo_{\ho(\ssetpc)}(K, \R\langle T,\gam,\phi\rangle )\longrightarrow \homo_{\ho(\tannplus)}((T,\gam,\phi),\tdr^+(K)) 
\]
by Lem.\ref{lemhomotopic}. Thus the pair $(\tdr^+,\R\langle -,-,-\rangle)$ is an adjoint pair.

\begin{thm}\label{thmspace}
\textup{(1)} The above adjoint pair induces an equivalence between full subcategories :
\[
\xymatrix{
\tdr^+:\ho(\ssetpgd) \ar@<3pt>[r]^{\sim \qquad  }& \ho(\tannplusgd)^{\op}:\R\langle -,-,-\rangle.\ar@<3pt>[l]}
\]
\textup{(2)} Let $K$ be a connected pointed simplicial set whose fundamental group is algebraically good and whose higher homotopy groups are of finite rank as Abelian groups. Then, the unit 
$K\to \R\langle\tdr(K),\pi_1(K),\phi_K\rangle$ is  the fiberwise rationalization.
\end{thm}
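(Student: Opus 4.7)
The plan is to exploit the adjunction $(\tdr^+, \R\langle-,-,-\rangle)$ established immediately before the theorem, so that the proof of part (1) reduces to showing the two functors restrict to the specified subcategories and that the unit and counit are weak equivalences there. Throughout, I will work under $k = \Q$, matching the rationality built into the definitions of $\ssetpgd$ and $\tannplusgd$.

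First I would check that the two functors restrict appropriately. Given $K \in \ssetpgd$, Theorem \ref{thmfdhtpygr}(2) identifies the $i$-th indecomposable module $V^i$ of $\mred(K)$ with $(\pi_i(K) \otimes k)^{\ssim,\vee}$, which is finite dimensional under the hypothesis that $\pi_i(K)$ is a finite dimensional $\Q$-vector space. By Lemma \ref{lemhtpygr}, $\pi_i(\tdr(K)) \cong (V^i)^{\vee}$ is then finite dimensional, placing $\tdr^+(K)$ in $\tannplusgd$. Conversely, for $(T,\gamma,\phi) \in \tannplusgd$, the homotopy pullback defining $\R\langle T,\gamma,\phi\rangle$ has fundamental group $\gamma$ (algebraically good by hypothesis) and higher homotopy groups isomorphic to $\pi_i(T)$ via the long exact sequence of the pullback, which are finite dimensional $\Q$-vector spaces by assumption.

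Next, I would verify that the unit $\eta_K : K \to \R\langle\tdr^+(K)\rangle$ is a weak equivalence for $K \in \ssetpgd$. The $\pi_1$-level is tautological from the pullback with $B\pi_1(K)$. For $i \geq 2$, combining Lemma \ref{lemhtpygr} with Theorem \ref{thmfdhtpygr} yields $\pi_i(\R\langle\tdr^+(K)\rangle) \cong (V^i)^{\vee} \cong \pi_i(K) \otimes \Q \cong \pi_i(K)$ (the last step using that $\pi_i(K)$ is already a $\Q$-vector space), and one must verify that the natural map from $\pi_i(K)$ realizes this isomorphism, including $\pi_1(K)$-equivariance. The counit $(T,\gamma,\phi) \to \tdr^+\R\langle T,\gamma,\phi\rangle$ reduces similarly to: (a) the group part $\gamma \cong \pi_1(\R\langle-\rangle)$, which holds by construction, and (b) a quasi-equivalence of Tannakian dg-categories, which follows from Theorem \ref{thmfdhtpygr} by comparing minimal models, using Theorem \ref{thmsht} to identify $\langle QT\rangle$ with the underlying space of the corresponding schematic homotopy type.

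For part (2), the computation in the preceding step shows that $\pi_1(\eta_K)$ is the identity on $\pi_1(K)$ while $\pi_i(\eta_K)$ is the canonical map $\pi_i(K) \to \pi_i(K) \otimes \Q$ for $i \geq 2$; when the $\pi_i(K)$ have finite rank as abelian groups, this is exactly the rationalization on each higher homotopy group, which characterizes the fiberwise rationalization \cite{kan}. The main obstacle will be making the identification between $\langle QT\rangle$ and the geometric realization of the schematization fully rigorous \emph{with} its $\pi_1$-module structure, so that the long exact sequence of the pullback transports the correct equivariant isomorphisms; this requires careful use of Theorem \ref{thmsht} together with algebraic goodness, which is precisely what allows the passage from $\pi_1(K)^{\alg}$-cohomology (naturally visible through $\tdr^+$) to $\pi_1(K)$-cohomology (needed to pin down the pullback along $B\pi_1(K) \to B\pi_1(K)^{\alg}$).
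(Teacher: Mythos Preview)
Your outline matches the paper's approach closely: both reduce to showing the unit $u_K:K\to\R\langle\tdr^+(K)\rangle$ is a weak equivalence, handle $\pi_1$ by construction of the pullback, and invoke Lemma~\ref{lemhtpygr} and the circle of results around Theorem~\ref{thmfdhtpygr} for the higher $\pi_i$. The essential surjectivity/counit step is also handled in the paper, though by a slightly different maneuver: rather than analyzing the counit directly, the paper observes that a morphism in $\tannplusgd$ is a weak equivalence as soon as $\R\langle-,-,-\rangle$ sends it to one, and combines this with the triangle identity.

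The one place where your sketch is thinner than the paper is precisely the step you flag: knowing abstractly that $\pi_i(\R\langle\tdr^+(K)\rangle)\cong (V^i)^\vee\cong \pi_i(K)\otimes\Q$ does not yet tell you that the \emph{unit map} realizes this isomorphism. The paper resolves this by passing to the Postnikov fiber $F\simeq K(\pi_i,i)\stackrel{f}{\to}K$. From the proof of Lemma~\ref{hirschlem}, $f^*:\tdr K\to\tdr F$ is identified with the morphism $\T(\pi_1^\red,\M)\to\T(e,\bigwedge(\pi_i^\vee,i))$ that sends $V^j\mapsto 0$ for $j<i$ and $V^i\stackrel{\sim}{\to}\pi_i^\vee$; hence $\pi_i(\R\langle f^*\rangle)$ is an isomorphism by Lemma~\ref{lemhtpygr}. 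Since $F$ is simply connected of finite type, classical Sullivan theory gives that $F\to\R\langle\tdr F\rangle$ is a weak equivalence, and the commuting square on $\pi_i$ finishes the argument. Your self-identified ``main obstacle'' (the $\pi_1$-equivariance via Theorem~\ref{thmsht}) is somewhat beside the point here; what is actually needed is this naturality trick through the Postnikov fiber, and once you have it, part~(2) follows by the same square with $\pi_i(K)\to\pi_i(K)\otimes\Q$ in place of an isomorphism.
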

\begin{proof}
(1) For $K\in\ssetpgd$, we shall show the unit map $u_K:K\longrightarrow \R\langle\tdr^+(K)\rangle$ is a weak equivalence. we use the notation of Thm.\ref{thmfdhtpygr}. By definition, the map $\pi_1(u_K):\pi_1(K)\to\pi_1(\R\langle\tdr^+(K)\rangle)$ is an isomorphism. For $i$-th homotopy group ($i\geq 2$), we may assume $K$ is weak equivalent to the $i$-th level of the Postnikov tower of $K$. Consider the fiber sequence $F\stackrel{f}{\to} K\stackrel{p_{i-1}}{\to}K_{(i-1)}$, where $F$ is the fiber of type $K(\pi_i(K),i)$. By the proof of Lem.\ref{hirschlem}, the corresponding morphism $f^*:\tdr K\to \tdr F$ is equivalent to the morphism $\T(\pi_1^\red,\M)\to\T(e,\bigwedge(\pi_i^\vee,i))\in\tannp$ induced by a morphism $ (\pi_1^\red, \M)\to (e,\bigwedge(\pi_i^\vee,i))\in\aredeqalg$ which maps $V^j$ to $0$ ($j< i$) and $V^i$ to $\pi_i^\vee$ isomorphically.  So by Lem.\ref{lemhtpygr}, $\pi_i(\R\langle f^*\rangle):\pi_i(\R\langle \tdr F\rangle)\to\pi_i(\R\langle \tdr K\rangle)$ is an isomorphism. As the unit $F\to \R\langle \tdr F\rangle$ is a weak equivalence by the classical Sullivan's theory, we see the map $\pi_i(K)\to\pi_i(\R\langle \tdr K\rangle)$ is an isomorphism. This implies $\pi_i(u_K)$ is an isomorphism. Thus, $u_K$ is a weak equivalence and $\tdr^+$ is fully faithful.\\
\indent By a similar argument, we see a map $F:(T,\gam,\phi )\longrightarrow (T',\gam',\phi')\in\tannplusgd$ is a weak equivalence  if the induced map $\R F: \R\langle T,\gam,\phi \rangle\longrightarrow \R\langle T',\gam',\phi'\rangle$ is a weak equivalence so $\tdr^+$ is essentially surjective. One can prove (2) by a similar argument.  
\end{proof}

\section{Correspondence with schematic homotopy types}\label{sht}
\subsection{Definitions}\label{definitions}
We recall the notion of schematic homotopy types from \cite{champs, kpt2}. Let $R-\Alg$ denote the category of (discrete) commutative unital $R$-algebras for a commutative unital $k$-algebra $R$.
\begin{defi}\label{defsht}
\textup{(1)} $\sprk$ denotes the \textup{category of simplicial presheaves on} $\kaff:=(\kalg)^{\op}$. In other words, $\sprk$ is the category of functors $\kalg\longrightarrow \sset$ and natural transformations. 
\begin{enumerate}
\item For a simplicial presheaf $X\in\ob(\sprk)$ and a $R$-point $s\in X(R)_0$, ($R\in\kalg$) the \textup{$i$-th homotopy presheaf $\pi^{pre}_i(X,s)$ with base point $s$} is a presheaf on $\aff_R=(R-\Alg)^{\op}$ defined by $\pi^{pre}_i(X,s)(S):=\pi_i(X(S),u(s))$ for $S\in R-\Alg$. Here, $u(s)$ denotes  the image of $s$ by the unit $u: R\to S$, and if  $i=0$, we ignore $s$ and suppose $R=k$. The \textup{$i$-th homotopy sheaf $\pi_i(X,s)$ with base point $s$} is the sheafification of $\pi_i^{pre}(X,s)$ with respect to the faithfully flat quasi-compact topology on $\aff_R$. We say $X\in\sprk$ is connected if the $0$-th homotopy sheaf $\pi_0(X)$ is one point sheaf.
\item We regard $\sprk$ as a model category via the \textup{objectwise projective model structure}, denoted by $\sprkobj$, where a morphism $f:X\to Y\in\sprk$ is a weak equivalence (resp. a fibration) if and only if $f_R:X(R)\to Y(R)\in\sset$ is a weak equivalence (resp. fibration) of simplicial sets for any $R\in k-\Alg$. We call the above weak equivalence an \textup{objectwise equivalence}.
\item We also consider the \textup{local projective model structure \cite{blander,dhi} on} $\sprk$ , denoted by $\sprkl$, where weak equivalences and cofibrations are defined as follows. 
\begin{enumerate}
\item A morphism $f:X\to Y$ is a weak equivalence, called a  \textup{local equivalence}, if and only if it induces  isomorpisms of $0$-th homotopy sheaves $\pi_0(X)\cong \pi_0(Y)$ and of $i$-th homotopy sheaves $\pi_i(X,s)\cong \pi_i(Y,f(s))$ for any $i\geq 1$, any $R\in\kalg$, and any base point $s\in X(R)_0$.
\item A morphism is a cofibration if and only if it is a cofibration in the objectwise projective model structure. \end{enumerate}
We say an object $X\in\sprk$ is a local object if a fibrant replacement $X\to RX$ in $\sprkl$ is an objectwise equivalence.

\item We set $\sprkp:=*/\sprk$, $\sprkobjp:=*/\sprkobj$, and $\sprklp:=*/\sprkl$.
\end{enumerate}
\textup{(2)}(\cite{champs,kpt2}) We say a pointed simplicial presheaf $X\in\sprkp$ is a \textup{pointed schematic homotopy type} if the following  conditions are satisfied. 
\begin{enumerate}
\item $X$ is a connected local object. 
\item The homotopy sheaf $\pi_i(X,*)$ is represented by an affine group scheme over $k$ for any $i\geq 1$ and  it is unipotent for $i\geq 2$.
\end{enumerate}
We say a pointed schematic homotopy type $X\in\sprkp$ is a \textup{pointed affine stack} if $\pi_1(X,*)$ is represented by a unipotent affine group scheme over $k$. We denote the full subcategory of $\sprkp$ consisting of pointed schematic homotopy types by $\shtp$.  $\ho(\shtp)$ denotes the corresponding full subcategory of $\ho(\sprkobjp)$ (or $\ho(\sprklp)$). We say an object $U\in\sprk$ is a \textup{schematic homotopy type} if there exists a point $*\in U(k)_0$ such that the pointed object $(U,*)\in\sprkp$ is a pointed schematic homotopy type. We denote by $\sht$ the full subcategory of $\sprk$ consisting of schematic homotopy types.\\
\textup{(3)}(\cite{champs}) Let $K\in\sset$ (resp. $\ssetp$) be a connected simplicial set. We denote by $\underline{K}$ be a constant simplicial presheaf given by $\underline{K}(R)=K$ for $R\in k-\Alg$. The \textup{schematization of} $K$ is a morphism 
\[
\underline{K}\longrightarrow (K\otimes k)^\sch\in\ho(\sprkobj)
\] (resp. $\ho(\sprkobjp)$) which is initial in the category $\underline{K}/\ho(\sht)$ (resp. $\underline{K}/\ho(\shtp)$). Here, $\underline{K}/\ho(\sht)$ is the full subcategory of the over category $\underline{K}/\ho(\sprk)$ consisting of objects $\underline{K}\to X$ such that $X$ is a schematic homotopy type. $\underline{K}/\ho(\shtp)$ is similar. 
\end{defi}
The above definition of pointed schematic homotopy types is different from the original one but they are shown to be equivalent, see \cite[Cor.3.16]{kpt2}. The existence of the schematization in the pointed category was proved in \cite{champs} (we will reprove this later, based on the above definition). The existence of the unpointed schematization follows from Cor.\ref{thmunpointedsht}. \\
\indent We shall define an adjoint pair $(\tdr,\langle-\rangle):\sprk\to(\cldgc)^\op$ (or $\sprkp\to(\acldgc)^\op$). This is an analogue of the adjoint pair $(\tdr,\langle-\rangle):\sset\to(\cldgc)^\op$ defined in subsection \ref{derhamfunct}. \\
\indent We first define a notion of local systems on a simplicial presheaf. In order to ensure some functoriality, we take care about the definition of the category of projective modules. The following definition of (1) can be found in \cite[1.3.7]{hag2}, for example. Let $R-\Mod$ denote the category of $R$-modules and $R$-linear maps.
\begin{defi}
\textup{(1)} For $R\in k-\Alg$ we define a category $R-\proj$ as follows.
\begin{enumerate}
\item An object is a pair $(M_-,\phi_-)$ consisting of a function $M_-$ which assigns each $R$-algebra $S$ a finitely generated projective $S$-module $M_S$ and a function $\phi_-$ which assigns each morphism $f:S\to S'\in R-\Alg$ an isomorphism $\phi_f:M_S\otimes _SS'\to M_{S'}\in S'-\Mod$ such that for each sequence $S\stackrel{f}{\to}S'\stackrel{f'}{\to}S''\in R-\Alg$ the following diagram is commutative.
\[
\xymatrix{(M_S\otimes_S S')\otimes _{S'}S''\ar[r]^{\qquad \phi_f\otimes S''} \ar[d]&M_S'\otimes _{S'}S''\ar[d]^{\phi_{f'}}\\
M_S\otimes_S S''\ar[r]^{\phi_{f'\circ f}}&M_S''}
\]
Here, the left vertical arrow is induced from the associativity isomorphism and unit isomorphism. Sometimes we omit $\phi$.
\item A morphism $F:(M,\phi)\to (N,\varphi )$ is a function assigning each $R$-algebra $S$ a morphism $F_S:M_S\to N_S\in S-\Mod$ such that for any $f:S\to S'\in R-\Alg$ the following diagram is commutative
\[
\xymatrix{M_S\otimes _SS'\ar[r]^{F_S\otimes S'}\ar[d]^{\phi_f}&N_S\otimes_SS'\ar[d]^{\varphi_f}\\
M_{S'}\ar[r]^{F_{S'}}&N_{S'}}
\]
\end{enumerate}
We regard $R-\proj$ as a closed $k$-category in the obvious way. For example the internal hom $\inhom (M, N)$ is given by $\inhom(M,N)_S=\inhom(M_S,N_S)$, the right hand side is the internal hom in $S-\Mod$. For $f:R\to R'\in \kalg$ we have a morphism $f_*:R-\proj\to R'-\proj\in R-\clcat$ defined by $f_*(M)_S'=M_{f^*S'}$, where $S'\in R'-\Alg$. Note that $f'_*\circ f_*=(f'\circ f)_*$ \\
\textup{(2)} Let $X$ be an object of $\sprk$. Let $R-\proj^{\iso}$ be the full subcategory of $R-\proj$ consisting of all objects and all isomorphisms. \\
A \textup{local system} $\msc{L} $ \textup{on} $X$ is a collection $\{\aloc_R\}_{R\in\kalg}$ of functors $\aloc_R:[\Delta (X(R))]^{\op}\to R-\proj^{\iso}$ such that for each morphism $f:R\to R'\in\kalg$ the following diagram is commutative.
\[
\xymatrix{[\Delta (X(R))]^{\op}\ar[r]^{\aloc_R}\ar[d]^{(\Delta X_f)^\op}&R-\proj\ar[d]^{f_*} \\
[\Delta (X(R'))]^{\op}\ar[r]^{\aloc_{R'}}&R'-\proj}
\]
A \textup{morphism} $\alpha:\aloc\to \aloc'$ \textup{of local systems} is a collection $\{\alpha_R\}$ of natural transformations $\alpha_R:\aloc_R\Rightarrow \aloc'_R:[\Delta (X(R))]^{\op}\to R-\proj$ compatible with ring homomorphisms. We denote by $\loc(X)$ the category of local systems on $X$. We regard $\loc(X)$ as a closed $k$-category in the obvious way.

\end{defi}
Let $X\in\sprkp$ be a connected object such that $\pi_1(X,*)$ is represented by an affine group scheme $G$ over $k$. It is easy to see $\loc(X)$ is equivalent to  the category of finite dimensional representations of $G$.  
\begin{defi}
Let $X$  be a simplicial presheaf and $\msc{L}$ be a local system on $X$. The \textup{de Rham complex of $\msc{L}$-valued 
polynomial forms} $\cdr(X;\msc{L})\in \ms{C}^{\geq 0}(k)$ is defined as follows. For each $q\geq 0$, the degree $q$ part is the submodule of 
\[
\prod_{R\in k-\Alg}\, \prod_{p\geq 0}\, \prod_{\sigma\in X(R)_p}\nabla(p,q)\otimes_k\aloc_R(\sigma)_R
\]  
consisting of $\{\omega_{\sigma}\}_{R\in\kalg, \sigma\in \Delta X(R)}$'s such that 
\begin{enumerate}
\item for each  algebra $R\in\kalg$ and  morphism $ a:\tau\rightarrow \sigma \in \Delta X(R)$, $a^*\omega_{\sigma}=\omega_{\tau}$, and 
\item for each map $f:R\to R'\in\kalg$ and simplex  $\sigma\in X(R)$, $(\id \otimes_kf_*)(\omega_\sigma)=\omega_{X_f\sigma}$.

\end{enumerate}
Here,  $a^*$ denotes the tensor of $a^*:\nabla(|\sigma|,q)\to \nabla(|\tau|,q)$ and $\aloc_R(a):\aloc_R(\sigma)\to \aloc_R(\tau)$ ($|\cdot|$ denote the dimension of a simplex), and $(\id \otimes_kf_*)(\omega_\sigma)$ denotes the image of $\omega_\sigma$ by the tensor of the identity of $\nabla(|\sigma|,q)$ and the following composition.
\[
\aloc _R(\sigma)_R\to\aloc_R(\sigma)_R\otimes_RR'\stackrel{\phi_f}{\to}\aloc_R(\sigma)_{R'}=\aloc_{R'}(X_f\sigma).
\]  
  For $q\leq -1$, we set $\cdr^q(X,\msc{L} )=0$. The differential $d:\cdr^q(X,\msc{L} )\to \cdr^{q+1}(X,\msc{L} )$ is defined from
that of $\nabla(*,q)$.
\end{defi}
We  define a functor
\[
\tdr:\sprk\longrightarrow (\dgcl )^{\op}
\]
by setting 
\[
\ob (\tdr(X))=\ob (\loc(X)),\qquad
\homo_{\tdr(X)}(\aloc,\aloc')=\cdr(X;\inhom(\aloc,\aloc'))
\]
for $X\in\sprk$. The composition and the closed tensor structure are defined in the obvious way. \\
\indent A right adjoint 
\[
\langle-\rangle:(\cldgc)^\op\longrightarrow \sprk
\] 
of $\tdr$ is defined by 
\[
\langle C\rangle (R)_n:=\homo_{\cldgc}(C,\tdr(h_R\times\Delta^n))
\]
for $C\in\cldgc$, $n\geq 0$  and  $R\in k-\Alg$. Here $h_R$ denotes the Yoneda embedding of $Spec R$, and $h_R\times \Delta^n$ is a simplicial presheaf given by $h_R\times \Delta^n(S):=h_R(S)\times \Delta^n\in\sset$.
For a pointed simplicial presheaf $X\in\sprkp$, we define a closed dgc with a fiber functor $\tdr(X)\in\cldgck$ by the following pullback square: 
\[
\xymatrix{\tdr(X)\ar[r]\ar[d]&\tdr(X_u)\ar[d]^{\tdr(pt)}\\
\vect\ar[r]&\tdr(*),}
\]
where $X_u\in\sprk$ denotes the underlying unpointed simplicial presheaf. On the other hand, for an object $C\in\cldgck$, $\langle C\rangle$ is naturally pointed as $\langle \vect \rangle\cong *$ so we have an adjoint pair $(\tdr, \langle-\rangle):\sprkp\to(\acldgc)^\op$. For connected object $X\in\sprkp$, $\tdr(X)$ is equivalent to $\tdr(X_u)$ as closed dgc's.

\begin{lem}\label{lemqpair2}
The adjoint pairs $(\tdr,\langle-\rangle):\sprkobj\to(\cldgc)^\op$ and $(\tdr,\langle-\rangle):\sprkobjp\to(\acldgc)^\op$
are  Quillen pairs. \end{lem}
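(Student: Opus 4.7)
The plan is to reduce to the unpointed Quillen pair of Lem.\ref{lemqpair} by evaluating at each $R \in k-\Alg$, and to identify $\tdr(h_R \times -)$ as an $R$-coefficient analogue of the polynomial de Rham functor on $\sset$.

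First, for the unpointed case, I verify that $\langle-\rangle:(\cldgc)^{\op}\to\sprkobj$ preserves fibrations and trivial fibrations. Since (trivial) fibrations in $\sprkobj$ are detected objectwise, it suffices to show that for every (trivial) cofibration $C\to D\in\cldgc$ and every $R$, the map $\langle D\rangle(R)\to\langle C\rangle(R)$ is a (trivial) fibration of simplicial sets. I introduce the auxiliary functor $\tdr_R:\sset\to(\cldgc)^{\op}$, $\tdr_R(K):=\tdr(h_R\times K)$. Using the standard isomorphism $\homo_{\sprk}(h_R\times L,X)\cong\homo_{\sset}(L,X(R))$ for $L\in\sset$ (valid because $h_R$ is a constant simplicial presheaf representing $R$-points), one reads off that $\tdr_R$ is left adjoint to $C\mapsto\langle C\rangle(R)$. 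Hence the desired property reduces to: $\tdr_R$ is left Quillen for every $R$.

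Unwinding the definitions, a local system on $h_R\times K$ is equivalent to a functor $\Delta(K)^{\op}\to R-\proj^{\iso}$ (base-changed to $S-\proj$ along each $\phi:R\to S$), and the corresponding hom-complex identifies with the polynomial de Rham complex of $K$ with values in an $R$-local system. Thus $\tdr_R$ is essentially the polynomial de Rham functor with $R$-coefficients, and the proof of Lem.\ref{lemqpair} carries through with $R$ in place of $k$. The ingredients needed are (a) the restriction $\cdr^*(\Delta^n;\aloc)\to\cdr^*(\partial\Delta^n;\aloc)$ is surjective for any $R$-local system $\aloc$, (b) the analogous restriction for horn inclusions is a quasi-isomorphism, and (c) isomorphisms between $0$-cocycles of local systems lift along such restrictions. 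Claims (a) and (b) follow from their $k$-coefficient analogues by exact tensoring with $R$ (the complex $\nabla(n,*)$ is free over $k$); (c) is immediate because $\Delta^n$ and $\partial\Delta^n$ ($n\geq 2$) are simply connected so every local system on them is constant up to isomorphism.

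The pointed case follows formally: both pointed functors are obtained from the unpointed ones by the pullback squares in the definitions of $\tdr$ on $\sprkp$ (against $\vect\to\tdr(*)$), and the induced model structures on $\sprkobjp=*/\sprkobj$ and $\acldgc=\cldgc/\vect$ are compatible with these pullbacks, so the Quillen structure transfers. The main obstacle is verifying the $R$-linear version of properties (a) and (b) — but since $\nabla(n,*)$ is a free $k$-algebra, this follows from the classical argument in \cite{bg} by exact base change to $R$, and the remainder of the argument is formal.
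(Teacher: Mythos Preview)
Your proposal is correct and follows essentially the same route as the paper. The paper's proof is terse: it invokes Prop.~\ref{propderham2} and notes the key identification $\cdr(h_R\times\Delta^n,\aloc)\cong\cdr(\Delta^n;\aloc_R|_{\id_R\times\Delta^n})$, which is exactly what your auxiliary functor $\tdr_R$ encodes; from there both arguments reduce to the proof of Lem.~\ref{lemqpair} with $R$-coefficients.
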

\begin{proof}
When we use Prop.\ref{propderham2}, the proof is similar to that of Lem.\ref{lemqpair}. Note that for a local system $\aloc$ on $h_R\times \Delta^n$, $\cdr(h_R\times \Delta^n,\aloc')\cong \cdr(\Delta^n;\aloc_R|_{\id_R\times\Delta^n})$.
\end{proof}

\subsection{Equivalence between schematic homotopy types and Tannakian dg-categories}\label{equivsht}
In  this subsection, we prove the following theorem. Note that for connected $K\in\ssetp$ $\L\tdr(\underline{K})$ is naturally (quasi-)equivalent to $\tdr(K)$ which is defined in \ref{derhamfunct}.
\begin{thm}[cf. Cor.3.57 of \cite{prid0}]\label{thmsht}
\textup{(1)} The derived adjunction $(\L\tdr,\R\langle-\rangle):\ho(\sprkobjp)\longrightarrow\ho(\dgclp)$ induces an equivalence between full subcategories:
\[
\xymatrix{
\bb{L}\tdr:\ho(\shtp) \ar@<3pt>[r]^{\sim}& \ho(\tannp)^{\op}:\R\langle -\rangle.\ar@<3pt>[l]}
\]
\textup{(2)} For connected pointed simplicial set $K$, the unit of the adjunction
\[
\underline{K}\longrightarrow \R\langle \tdr(K)\rangle\in\ho(\sprkobjp)
\]
is the schematization over $k$.
\end{thm}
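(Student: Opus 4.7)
The plan is to reduce the theorem to the known equivalence between pointed affine stacks and augmented connected cdgas from \cite{champs}, together with the correspondence Prop.\ref{thmbasic} between Tannakian dg-categories and reductive equivariant dgas. The key structural fact is that every pointed schematic homotopy type $X$ with $G:=\pi_1(X)^\red$ sits in a homotopy fiber sequence $F\to X\to K(G,1)$ in $\sprklp$ with $F$ a pointed affine stack, while on the Tannakian side every object $T\in\tannp$ is (quasi-)equivalent to $\T(G,A)^c$ for some connected $G$-dga $A$ (Prop.\ref{thmbasic}), fitting in the homotopy pushout square of Lem.\ref{lempushout}(2) whose other two corners are $\Tss(G,k)^c$ and $\Tss(e,A)$. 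The proof would match these two decompositions piece by piece under the Quillen pair $(\L\tdr,\R\langle-\rangle)$ of Lem.\ref{lemqpair2}.

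Step 1: I would verify the ``basic pieces.'' For a pointed affine stack $F$ (i.e.\ $\pi_1(F)$ pro-unipotent), Toën's equivalence in \cite{champs} gives $F\simeq \R\Spec A$ for an augmented connected cdga $A$; one then checks that $\L\tdr(F)\simeq \Tss(e,A)$ by direct inspection on the standard cosimplicial resolution used to compute both sides, reducing to a variant of the polynomial de Rham theorem (cf.\ Appendix). For $K(G,1)$ with $G$ pro-reductive, reductivity yields $\coh^{>0}(K(G,1);V)=0$ for any $V\in\rep(G)$, so the natural map $\Tss(G,k)\to \tdr(K(G,1))$ is a quasi-equivalence; since $\coc^0\tdr(K(G,1))\simeq \loc(K(G,1))\simeq\rep(G)$, this identifies $\L\tdr(K(G,1))\simeq\T(G,k)$. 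In the reverse direction, $\R\langle\Tss(e,A)\rangle$ is an affine stack (by the Toën equivalence) and $\R\langle\Tss(G,k)\rangle\simeq K(G,1)$ (using Tannakian duality on $\pi_1$ and vanishing of higher $\pi_i$, computable via Lem.\ref{lemhtpygr}).

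Step 2: I would then glue. Being a left Quillen functor, $\L\tdr$ preserves the homotopy cofiber corresponding to $F\to X\to K(G,1)$; comparing with Lem.\ref{lempushout}(2) one obtains a commutative square of homotopy pushouts in $\cldgc$ and a natural equivalence $\L\tdr(X)\simeq \T(G,A)^c\in\tannp$. Dually, applying $\R\langle-\rangle$ to the pushout square of Lem.\ref{lempushout}(2) produces a homotopy pullback in $\sprklp$ whose corners are (by Step 1) a pointed affine stack and $K(G,1)$; the resulting $\R\langle T\rangle$ is then connected, local, and has the correct homotopy sheaves (i.e.\ $\pi_1=G$, higher $\pi_i$ pro-unipotent), hence lies in $\shtp$. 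That the unit $X\to\R\langle\L\tdr(X)\rangle$ and counit $\L\tdr\R\langle T\rangle\to T$ are weak equivalences follows from a five-lemma on the long exact sequences of homotopy sheaves (resp.\ on cohomology with local coefficients), since this holds on each of the two basic pieces by Toën's theorem and by the computation for $K(G,1)$.

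Step 3 (the schematization claim): For a connected $K\in\ssetp$ the unit $\underline{K}\to\R\langle\tdr(K)\rangle$ lands in $\shtp$ by Thm.\ref{propcomplete1}; I would show it is initial in $\underline{K}/\ho(\shtp)$. Given any morphism $\underline{K}\to X$ with $X\in\shtp$, Part (1) gives $X\simeq\R\langle T\rangle$ with $T\in\tannp^\op$, and adjunction plus Lem.\ref{lemqpair2} translates such a morphism into a morphism $T\to\L\tdr(\underline K)\simeq\tdr(K)\in\ho(\tannp)^\op$ (unique up to homotopy by the established equivalence), thereby producing the desired factorization through $\R\langle\tdr(K)\rangle$.

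I expect the main technical obstacle to lie in Step 1, specifically in matching $\L\tdr$ with Toën's equivalence on pointed affine stacks: this requires a careful cosimplicial/hypercover-style computation to identify the derived global sections on an affine stack with the underlying cdga, which is precisely the content of the polynomial de Rham theorem in the affine-presheaf setting (the variant alluded to in the Appendix and Lem.\ref{lemqpair2}). A secondary subtlety is checking that the identifications of homotopy sheaves under $\R\langle-\rangle$ (via Lem.\ref{lemhtpygr}) are compatible with the Tannakian duality $\pi_1\leftrightarrow\aut^\otimes(\omega)$, so that the Galois/Tannakian origin of $\pi_1(\R\langle T\rangle)$ matches $\pi_1(\coc^0 T)$ as used elsewhere in the paper.
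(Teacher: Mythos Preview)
Your overall strategy matches the paper's: reduce to the two ``basic pieces'' (pointed affine stacks via To\"en's Thm.\ref{thmaffine}, and $K(G,1)$ for $G$ pro-reductive), then glue along the fiber sequence $F\to X\to K(G,1)$ and the pushout square of Lem.\ref{lempushout}(2). Step~1 and Step~3 are essentially what the paper does (Prop.\ref{lemiroiro}(5), Lem.\ref{lemfiberseq}(1), and the adjunction argument).

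There is, however, a genuine gap in Step~2. You write that ``being a left Quillen functor, $\L\tdr$ preserves the homotopy cofiber corresponding to $F\to X\to K(G,1)$.'' But $F\to X\to K(G,1)$ is a homotopy \emph{fiber} sequence in $\sprklp$, not a cofiber sequence, and a left Quillen functor has no reason to preserve homotopy limits. The identification $\L\tdr(F)\simeq\Tss(e,A)$ with $A=\ared(X)$ (which is what you need for the comparison with Lem.\ref{lempushout}(2)) is not formal; it amounts to showing that $\adr$ of the homotopy fiber of $X\to K(G^\red,1)$ is quasi-isomorphic to $\ared(X)$ as a plain dga. The paper proves this by an explicit construction: in Lem.\ref{lemfiberseq} it builds a concrete model $\widetilde X$ of the fiber using the presheaf of fiber functors $Fib(\loc(X)^\ssim)$, and writes down mutually inverse $G^\red$-equivariant dga maps $\phi:\ared(X)\to\adr(\widetilde X)$ and $\varphi:\adr(\widetilde X)\to\ared(X)$. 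This is the technical heart of the argument, and your sketch locates the difficulty in the wrong place (Step~1 rather than here).

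A second, smaller point: you assert that $\R\langle T\rangle$ is a local object, but this is not immediate from the fiber-sequence description. The paper deduces it by a retract argument: once the unit $u_X$ is known to be a \emph{local} equivalence, one uses the square comparing $\R\langle\L\tdr(X)\rangle$ with its local fibrant replacement $R_{loc}\R\langle\L\tdr(X)\rangle$ (together with Prop.\ref{lemiroiro}(4), invariance of $\L\tdr$ under local equivalences) to exhibit $\R\langle\L\tdr(X)\rangle$ as a retract of a local object in $\ho(\sprkobjp)$, hence itself local. Only then does the local equivalence $u_X$ become an objectwise equivalence.
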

We prove this using the following theorem of \cite{champs}.
\begin{thm}[Thm.0.0.2 and Cor.2.2.3 of \cite{champs}]\label{thmaffine}
Let $\calg/k$ be the category of commutative and unital cosimplicial $k$-algebras with augmentations. 
Let $\ring :\sprklp\to (\calg/k)^{\op}$ be the functor such that  for a simplicial presheaf $X\in \sprklp$, $\ring(X)^n=\hom(X_n,\ring_0)$, where $\ring_0$ is the tautological presheaf $\kalg\ni R\mapsto R\in\set$ and $\hom$ is the set of morphisms between presheaves.
Then $\ring$ is a left Quillen functor  and its derived functor $\L\ring :\ho(\sprklp)\to \ho(\calg/k)^\op $ induces an equivalence between the full subcategory of affine stacks and one of connected (i.e.$\coh^0\cong k$) cosimplicial algebras. 
\end{thm}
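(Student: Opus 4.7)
My strategy is to reduce the equivalence to Toën's Theorem~\ref{thmaffine} on affine stacks, using the reductive decomposition of Prop.~\ref{thmbasic} together with the homotopy pushout of Lem.~\ref{lempushout}. Any $(T,\omega_T)\in\tannp$ has a cofibrant model $\T(G,A)^c$ with $(G,A):=\ared(T)$ and $G=\pi_1(\coc^0T)^\red$ (see Prop.~\ref{thmbasic} and Rem.~\ref{remcofibrant}). Lem.~\ref{lempushout}(2) gives a homotopy pushout square in $\acldgc$ with corners $\Tss(G,k)^c$, $\vect$, $\Tss(G,A)^c$, $\Tss(e,A)$, to which the right Quillen functor $\R\langle-\rangle$ associates a homotopy pullback in $\sprkobjp$ whose total space is $\R\langle T\rangle$.

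\textbf{Computing the corners.} For $\R\langle\Tss(G,k)^c\rangle$, Lem.~\ref{lemmor}(1) applied to each test object $\tdr(h_R\times\Delta^n)$ expresses its $n$-simplices over $R$ as the nerve of the groupoid of $G$-torsors over $\operatorname{Spec}R\times\Delta^n$, identifying $\R\langle\Tss(G,k)^c\rangle\simeq K(G,1)\in\shtp$ with $\pi_1=G$. For $\R\langle\Tss(e,A)\rangle$, I compare our dga-based $\tdr$ with Toën's cosimplicial-algebra-based functor $\ring$ of Thm.~\ref{thmaffine}; the Thom--Sullivan normalization provides the required Quillen equivalence between commutative dgas and cosimplicial algebras, so that Thm.~\ref{thmaffine} identifies $\R\langle\Tss(e,A)\rangle$ with the pointed affine stack associated to $A$, whose homotopy group sheaves are the duals of the indecomposables of the minimal model of $A$ and are in particular unipotent for $i\geq 2$. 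The homotopy pullback then exhibits $\R\langle T\rangle$ as a fibration with affine-stack fiber over $K(G,1)$: $\pi_1$ is an extension of reductive $G$ by a unipotent group, hence affine, while $\pi_i$ for $i\geq 2$ agrees with that of the fiber and is unipotent; so $\R\langle T\rangle\in\shtp$.

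\textbf{Essential surjectivity, unit/counit, and schematization.} Conversely, every $X\in\shtp$ fits in a fibration $F\to X\to K(\pi_1(X)^\red,1)$ whose fiber $F$ is a pointed affine stack, since $\ru(\pi_1(X))$ and all higher $\pi_i(X)$ are unipotent. Applying $\L\tdr$ and checking that it sends this homotopy pullback to a pushout of the type of Lem.~\ref{lempushout} (via cofibrancy of $\Tss(G,k)^c$), I obtain $\L\tdr(X)\simeq\T\ared(\L\tdr X)^c\in\tannp$ using Thm.~\ref{propcomplete1} and Prop.~\ref{thmbasic}. The unit and counit of the derived adjunction are then weak equivalences by combining the verifications on the two building blocks (affine stacks via Thm.~\ref{thmaffine}, $K(G,1)$'s by direct computation from Lem.~\ref{lemmor}). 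For part~(2), the canonical comparison $\L\tdr(\underline{K})\xrightarrow{\sim}\tdr(K)$ in $\tannp$ (both reduce to polynomial de Rham forms on $K$) combined with part~(1) yields, for every $Y\simeq\R\langle T\rangle\in\shtp$, adjunction bijections
\[
[\underline{K},Y]_{\sprkobjp}\cong[T,\tdr(K)]_{\tannp^{\op}}\cong[\R\langle\tdr(K)\rangle,Y]_{\sprkobjp},
\]
so $\underline{K}\to\R\langle\tdr(K)\rangle$ is initial among maps from $\underline{K}$ into pointed schematic homotopy types, i.e.\ the schematization. \textbf{Main obstacle.} The hardest step is identifying $\R\langle\Tss(e,A)\rangle$ with Toën's affine stack: this requires a clean compatibility between the dga model and the cosimplicial algebra model used in Thm.~\ref{thmaffine} (through Thom--Sullivan), together with careful analysis of $\langle-\rangle$ on the generators $h_R\times\Delta^n$ as $R$ varies; this is the bridge between our closed-dgc formalism and \cite{champs}.
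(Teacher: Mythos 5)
Your proposal does not prove the assigned statement. Thm.\ref{thmaffine} is To\"en's theorem about the cosimplicial-algebra functor $\ring:\sprklp\to(\calg/k)^{\op}$: that it is a left Quillen functor for the local projective model structure and that $\L\ring$ restricts to an equivalence between the homotopy category of \emph{affine stacks} and that of \emph{connected cosimplicial algebras}. The paper itself gives no proof of this --- it is imported verbatim from \cite{champs} (Thm.0.0.2 and Cor.2.2.3) and then used as the external input that makes Thm.\ref{thmsht} possible. What you have written is instead a sketch of the proof of Thm.\ref{thmsht} (the equivalence $\ho(\shtp)\simeq\ho(\tannp)^{\op}$ and the identification of the schematization), and your very first sentence declares that your strategy is to ``reduce the equivalence to To\"en's Theorem~\ref{thmaffine}.'' Relative to the assigned statement this is circular: you are assuming the theorem you were asked to establish.

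To actually prove Thm.\ref{thmaffine} you would need the content of To\"en's paper, none of which appears in your proposal: the construction of the right adjoint (the ``spectrum'' of a cosimplicial algebra), the verification that $\ring$ sends local-projective cofibrations and trivial cofibrations appropriately (this is where the faithfully flat descent / hypercover input enters, since the model structure on $\sprklp$ is a localization), and then the core computation that for a connected cosimplicial algebra the unit and counit of the derived adjunction are equivalences --- which in \cite{champs} proceeds by decomposing a connected algebra via its Postnikov/minimal tower into principal fibrations with fibers $K(V,n)$ for $V$ a (possibly pro-)unipotent vector-space group scheme, and checking the base case by an explicit Eilenberg--Moore type argument. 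If the exercise is in fact Thm.\ref{thmsht}, your outline is broadly aligned with the paper's route (reductive fibration $F\to X\to K(G^{\red},1)$, Lem.\ref{lempushout}, Lem.\ref{lemfiberseq}, Prop.\ref{lemiroiro}), but as a proof of Thm.\ref{thmaffine} it is a non-starter.
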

This theorem can be formulated using dg-algebras instead of cosimplicial algebras as follows.
Let $\adr:\sprklp\to (\dgalg/k)^\op$ be the functor defined by $\adr(X)=\cdr(X,\uni)$. This is a left Quillen functor whose right adjoint $|-|$ is given by $|A|(R)_n:=\hom_\dgalg(A,\adr(h_R\times \Delta^n))$
It is well-known that the Thom-Sullivan cochain functor $Th:\calg/k\to \dgalg/k$ induces equivalence of homotopy categories, and it is easy to see the following diagram commutative up to natural quasi-isomorphism.
\[
\xymatrix{
\sprklp\ar[r]^{\ring}\ar[rd]^{\adr}&(\calg/k)^\op\ar[d]^{Th}\\
  &(\dgalg/k)^\op.}
\]
So we may rephrase Thm.\ref{thmaffine} that the functor $\L\adr:\ho(\sprklp)\to\ho(\dgalg/k)^\op$ induces an equivalence between affine stacks and 0-connected dga's.\\   
\indent We shall state some fundamental results.
\begin{defi}
For a pointed schematic homotopy type $X$, we put $\ared(X):=\ared(\L\tdr(X))$
\end{defi}
\begin{prop}\label{lemiroiro}
\textup{(1)} Let $X\in\sprkp$. $\L\tdr X$ is complete. If $X$ is connected, $\L\tdr X$ is a Tannakian dg-category with a fiber functor. $\L\tdr X$ and $\T\ared (X)$ are equivalent. \\
\textup{(2)} For $T\in\tannp$, let $T^{\ssim}$ denote the full sub dgc of $T$ consisting of semisimple objects of $\coc^0T$. The inclusion $T^{\ssim}\longrightarrow T$ induces an isomorphism $\R\langle T\rangle\longrightarrow \R\langle T^{\ssim}\rangle\in \ho(\sprkobjp)$. \\
\textup{(3)} For any object $T\in\tannp$, $\R\langle T\rangle\in\ho(\sprkobjp)$ is connected.\\
\textup{(4)} Let $X, Y\in\sprkp$ be connected objects which is cofibrant in $\sprkobjp$. For a local equivalence $f:X\to Y$, the corresponding morphism $f^*:\tdr Y\to\tdr X$ is a quasi-equivalence. \\
\textup{(5)} Let $F\in\sprkp$ be a pointed affine stack. The unit of the adjoint
\[
F\longrightarrow\R\langle\bb{L}\tdr(F)\rangle\in\ho(\sprkobjp)
\]
is an isomorphism.
\end{prop}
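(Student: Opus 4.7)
The plan is to prove the five parts in order, leveraging the $\T/\ared$ correspondence from subsection~\ref{comparison} and To\"en's affine-stack theorem~\ref{thmaffine}. For (1), let $Q\to X$ be a cofibrant replacement in $\sprkobjp$ so that $\L\tdr X\simeq\tdr(Q)$. Since $\tdr$ is left Quillen (Lem.~\ref{lemqpair2}), $\tdr(Q)$ is obtained from the values $\tdr(h_R\times\Delta^n)$ on the generating cofibrations by limit constructions that preserve completeness, and for the generators themselves the explicit upper-triangular-MC argument of Thm.~\ref{propcomplete1} goes through after base change to $R$. When $X$ is connected, $\coc^0\tdr X\simeq\loc(X)\simeq\rep(\pi_1(X,*)^{\alg})$ is neutral Tannakian, verifying axiom~(1) of Def.~\ref{deftannakian}, and axiom~(3) is verified by the same explicit construction; the equivalence $\L\tdr X\simeq\T\ared(X)$ is then Prop.~\ref{thmbasic}(2). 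For (2), the remark following Def.~\ref{deftannakian} identifies $T$ with $\widehat{T^{\ssim}}{}^c$ up to quasi-equivalence, so Lem.~\ref{lemcompletion2}(2) with $E=\vect$ (which has no nonzero MC-elements) gives $[T,D]\cong[T^{\ssim},D]$ for any complete fibrant $D\in\acldgc$; taking $D=\tdr(h_R\times\Delta^n)$, complete by~(1), and running the bijection simplicially in $n$ shows that $\R\langle T\rangle\to\R\langle T^{\ssim}\rangle$ is an objectwise equivalence.

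For (3), by (2) we may replace $T$ by $T^{\ssim}$, a neutral Tannakian $k$-category. A point of $\langle T^{\ssim}\rangle(R)$ is a fiber functor on $T^{\ssim}$ valued in finitely generated projective $R$-modules that restricts along $k\to R$ to $\omega_T$, and by Tannakian duality (Appendix~\ref{proobj}) the presheaf of such fiber functors is a torsor under $\aut^{\otimes}(\omega_T)$ and therefore fppf-locally trivial, whence $\pi_0(\R\langle T\rangle)=*$ as an fppf sheaf. For (4), by (1) and Prop.~\ref{thmbasic} it suffices to exhibit a quasi-isomorphism $\ared(Y)\to\ared(X)$ of $\pi_1^{\red}$-equivariant dga's: a local equivalence between connected simplicial presheaves induces an equivalence $\loc(Y)\simeq\loc(X)$ and hence an isomorphism of pro-algebraic (a fortiori pro-reductive) fundamental-group completions by Tannakian duality, while the underlying dga map is a quasi-isomorphism because the generalized de Rham theorem (Appendix~\ref{derhamthm}) identifies $\coh^{\bullet}(\cdr(-;\aloc))$ with sheaf cohomology of $\aloc$, which is a local-equivalence invariant.

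For (5), since $F$ is a pointed affine stack $\pi_1(F,*)$ is pro-unipotent, so $\pi_1(F,*)^{\red}=e$ and $\ared(F)$ reduces to the plain augmented dga $\adr(F)$ with trivial group action; by~(1) together with Lem.~\ref{lemmor}(1) applied with $G=e$ and $E=\vect$, one obtains a natural identification $\R\langle\L\tdr F\rangle\simeq\R\langle\T(e,\adr(F))\rangle\simeq|\adr(F)|$ in $\ho(\sprkobjp)$, where $|-|$ is the right adjoint of $\adr$ in the dga-reformulation of Thm.~\ref{thmaffine}. The unit $F\to|\adr(F)|$ is an isomorphism by Thm.~\ref{thmaffine} and factors canonically through the required unit $F\to\R\langle\L\tdr F\rangle$. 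The main obstacle will be~(4): matching the explicit presentation of $\ared$ as a filtered colimit over finite-dimensional pieces of $\ring(\pi_1^{\red})$ with sheaf cohomology in an equivariantly compatible manner requires careful bookkeeping; the remaining parts reduce to already-established results.
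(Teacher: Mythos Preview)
Your proposal is correct and follows essentially the same route as the paper for all five parts: (1) via the argument of Thm.~\ref{propcomplete1}, (2) via Lem.~\ref{lemcompletion2}(2) together with completeness from (1), (3) via Tannakian duality (the paper just cites \cite[Thm.~3.2]{dmos}, which is precisely your torsor statement), (4) via the de Rham theorem reducing to local-equivalence invariance of $\cspl(-;\aloc)$, and (5) via the dga reformulation of Thm.~\ref{thmaffine}.

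One small point to fix in (2): the phrase ``running the bijection simplicially in $n$'' does not quite work, because Lem.~\ref{lemcompletion2}(2) yields bijections on \emph{homotopy classes} $[T,D]\cong[T^{\ssim},D]$, not on point-set hom-sets, and homotopy classes in each simplicial degree do not assemble into a simplicial set. The paper's assembly is cleaner: use the derived adjunction to get, for every $L\in\sprkp$, natural bijections
\[
[L,\R\langle T\rangle]\cong[T,\L\tdr L]\cong[T^{\ssim},\L\tdr L]\cong[L,\R\langle T^{\ssim}\rangle],
\]
the middle isomorphism by your Lem.~\ref{lemcompletion2}(2) and part (1); then Yoneda in $\ho(\sprkobjp)$ gives the isomorphism. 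These are the same ingredients, just packaged so that the conclusion is immediate. For (5), your direct observation that $\pi_1(F)^{\red}=e$ forces $\ared(F)=\adr(F)$ is equivalent to the paper's use of (2) to identify $(\L\tdr F)^{\ssim}$ with $\Tss(e,\adr(QF))$; either way one lands on $\R\langle\L\tdr F\rangle\simeq\R|\adr(QF)|\simeq F$.
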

\begin{proof}
The proof of (1) is similar to that of Thm.\ref{propcomplete1}. For (2), by (1) and Lem.\ref{lemcompletion2}, for any $L\in\sprk$,
\[
[L,\R\langle T^{\ssim}\rangle]_{\sprk}\cong [T^{\ssim},\bb{L}\tdr(L)]_{\cldgc}\cong[T,\bb{L}\tdr(L)]_{\cldgc}\cong[L,\R\langle T\rangle]_{\sprk}.
\]
(3) follows from \cite[Thm.3.2]{dmos}. (4) follows from Prop.\ref{propderham2} as invariance of $\cspl(-;\aloc)$ under local equivalence between connected objects are well-known (see e.g.\cite{champs}).
 (5) follows from the reformulation of Thm.\ref{thmaffine} and (2) as $\R\langle\L\tdr F\rangle\cong\R\langle\Tss\adr (QF)\rangle\cong\R |\adr (QF)|$ $\cong F$.
\end{proof}
For an affine group scheme $G$, define a simplicial presheaf $K(G,1)\in\sprkp$ by $K(G,1)(R):=K(G(R),1)$.
\begin{lem}\label{lemfiberseq}
\textup{(1)} Let $G$ be a reductive affine group scheme. Let $K\in\sprkp$ be a fibrant model of $K(G,1)$ in $\sprklp$. The unit of the adjoint 
\[
K\longrightarrow\R\langle\L\tdr(K)\rangle \in\ho(\sprkobjp)
\]
is a local equivalence. (Note that local equivalences are stable under objectwise equivalences so we apply the notion to the morphism of $\ho(\sprkobjp)$)\\
\textup{(2)} Let $X$ be a pointed connected simplicial presheaf such that $\pi_0(X)\cong *$ and $\pi_1(X)$ is represented by an affine group scheme $G$. Suppose $X$ is fibrant in $\sprklp$. Let $K\in\sprqp$ be a fibrant model of $K(G^{\red},1)$ in $\sprklp$ and $p:X\longrightarrow K\in\sprkp$ be a fibration such that 
$\pi_1(p):\pi_1(X)\longrightarrow\pi_1(K)$ is isomorphic to the canonical map $G\longrightarrow G^{\red}$. Let $F\in\sprkp$ be the fiber of $p$ at the base point. Then the sequence 
\[
\R\langle \L\tdr(F)\rangle\longrightarrow \R\langle \L\tdr (X)\rangle\longrightarrow\R\langle \L\tdr (K)\rangle
\]
is a homotopy fiber sequence in $\sprkobjp$.
\end{lem}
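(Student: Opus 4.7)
The plan is to treat (1) by direct computation of $\L\tdr(K(G,1))$ using the reductivity of $G$ and then computing the homotopy sheaves of $\R\langle \T(G,k)\rangle$, and to treat (2) by reducing to the homotopy pushout square of Lem.\ref{lempushout}(2) via Prop.\ref{lemiroiro}(2) and applying the right adjoint $\R\langle-\rangle$.

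\textbf{For (1),} first identify $\L\tdr(K)$. By Prop.\ref{lemiroiro}(4) it suffices to compute $\L\tdr$ on any cofibrant objectwise model of the constant simplicial presheaf $\underline{K(G,1)}$. Local systems on $\underline{K(G,1)}$ correspond to finite-dimensional representations of $G$, and $\cdr(\underline{K(G,1)}; V)$ computes the derived functor of $G$-invariants applied to $V$. Since $G$ is reductive, this functor is exact, so the complex is concentrated in degree zero with value $V^G$; hence $\L\tdr(K) \simeq \T(G,k) = \rep(G)$ in $\tannp$, with trivial dg-structure. One then computes the sheafified homotopy groups of $\R\langle \T(G,k)\rangle$: connectedness is Prop.\ref{lemiroiro}(3); for $\pi_1$, the derived adjunction with Tannakian reconstruction (Appendix \ref{proobj}) and Lem.\ref{lemmor} identifies the $R$-valued points with tensor automorphisms of the $R$-valued fibre functor on $\rep(G)$, hence with $G(R)$; for $i \geq 2$, Lem.\ref{lemhtpygr} applied to the minimal $G$-dga $k$ (no indecomposables in degrees $\geq 2$) gives vanishing. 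These match the sheafified homotopy groups of $\underline{K(G,1)}$ (hence of the local fibrant model $K$), so the unit $K \to \R\langle \L\tdr K\rangle$ is a local equivalence.

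\textbf{For (2),} let $A$ denote the underlying $G^\red$-dga of $\ared(X)$, so that by Prop.\ref{lemiroiro}(1) one has $\L\tdr(X) \simeq \T(G^\red, A)$, and by part (1), $\L\tdr(K) \simeq \T(G^\red, k)$. The key geometric step is the identification $\L\tdr(F) \simeq \T(e, A)$: since $\pi_1(F) = \ker(G \to G^\red) = \ru(G)$ is pro-unipotent, $\ared(F) = (e, \adr(F))$, and it remains to verify $\adr(F) \simeq A$ as dgas. This is a Serre-type computation: the local system on $X$ defining $A$, namely $\ring(G^\red)_r$, restricts trivially to $F$ because $\ru(G)$ is the kernel of $G \to G^\red$ and so acts trivially on $\ring(G^\red)$; combined with the reductivity of $G^\red$ (so the $E_2$-sheet of the Serre spectral sequence for $F \to X \to K$ concentrates in the base-degree zero column of invariants) and Lem.\ref{lembasic}(1) identifying $(H^*(\adr F) \otimes \ring(G^\red))^{G^\red} \cong H^*(\adr F)$, one obtains the required quasi-isomorphism. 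With these identifications in hand, Prop.\ref{lemiroiro}(2) reduces the computation of $\R\langle \L\tdr(-)\rangle$ on $K, X, F$ to that of $\R\langle \Tss(-)\rangle$ on the corresponding reductive dgas. Lem.\ref{lempushout}(2) then gives a homotopy pushout in $\cldgc$
\[
\xymatrix{
\Tss(G^\red, k)^c \ar[r] \ar[d] & \vect \ar[d] \\
\Tss(G^\red, A)^c \ar[r] & \Tss(e, A),
}
\]
hence a homotopy pullback in $(\cldgc)^\op$. Since $\R\langle-\rangle : (\cldgc)^\op \to \sprkobjp$ is a right adjoint from a Quillen pair and so preserves homotopy limits, applying it yields the claimed homotopy fiber sequence $\R\langle \L\tdr F\rangle \to \R\langle \L\tdr X\rangle \to \R\langle \L\tdr K\rangle$.

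The main obstacle is the Serre-type dga-level identification $\adr(F) \simeq A$ in Part (2): one must verify this quasi-isomorphism not merely at the level of cohomology, and then check that the induced morphism of Tannakian dg-categories $\L\tdr X \to \L\tdr F$ matches, under the equivalences above, the bottom morphism $\Tss(G^\red, A)^c \to \Tss(e, A)$ of Lem.\ref{lempushout}(2). This requires careful tracking of how the monodromy $G^\red$-action on $\adr(F)$ and the right-translation action on $\ring(G^\red)$ interact in the collapse of the spectral sequence.
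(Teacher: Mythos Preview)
Your overall architecture matches the paper: identify $\L\tdr$ on $K$, $X$, $F$ as $\T(G^\red,k)$, $\T(G^\red,A)$, $\T(e,A)$ respectively, then invoke the homotopy pushout of Lem.~\ref{lempushout}(2) and apply $\R\langle-\rangle$. The difference is precisely at the step you flag as the obstacle, the identification $\adr(F)\simeq A$ as a dga with the correct compatibility. Your Serre spectral sequence argument yields the right answer on cohomology, but producing a genuine dga quasi-isomorphism from a collapsing spectral sequence and then checking that the induced map $\L\tdr(X)\to\L\tdr(F)$ agrees with the bottom arrow of the pushout square is exactly the delicate point, and you have not closed it.

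The paper bypasses this entirely by an explicit construction. It replaces $K$ by the nerve of the presheaf of groupoids $Fib(\loc(X)^{\ssim})$ of fiber functors on the semisimple local systems, so that the fiber $\widetilde X$ of the map $X\to N(Fib(\loc(X)^{\ssim}))$ has a concrete description: $\widetilde X(R)_n$ consists of pairs $(\sigma,l)$ with $\sigma\in X(R)_n$ and $l$ a tensor isomorphism from the fiber functor at $\sigma(0)$ to the one at the basepoint. One then writes down mutually inverse maps of $G^{\red}$-dgas $\phi:\ared(X)\to\ared(\widetilde X)$ and $\varphi:\ared(\widetilde X)\to\ared(X)$ by hand, using the Hopf-algebra structure of $\ring(G^{\red})$; this gives an \emph{isomorphism}, not merely a quasi-isomorphism, and the compatibility with the square of Lem.~\ref{lempushout}(2) is then immediate. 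The argument is essentially that $\widetilde X\to X$ is a $G^{\red}$-torsor, so forms on $X$ with coefficients in $\ring(G^{\red})$ are the same as ordinary forms on the total space. With this in hand, (1) falls out as a special case: when $G$ is reductive one takes $X=K$, the fiber $\widetilde X$ is contractible, and $\ared(X)\simeq(G,k)$.

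Two small points. First, $K(G,1)$ here is the simplicial presheaf $R\mapsto K(G(R),1)$, not a constant presheaf; your phrase ``constant simplicial presheaf $\underline{K(G,1)}$'' is off, though your subsequent claims about local systems on it are correct. Second, Lem.~\ref{lemhtpygr} as stated only computes $\pi_i$ at $k$-points; for the homotopy \emph{sheaf} you need the $R$-valued version $[h_R\wedge S^i,\R\langle\tdr K\rangle]$, which is what the paper actually computes (and which reappears in the proof of Cor.~\ref{corsht}).
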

\begin{proof}
Let $QF$ be a cofibrant replacement of $F$. We first show $\ared(QF)$ ($\cong \cdr(QF;\mb{1})$) is quasi-isomorphic to $\ared(X)$ with trivial group action as a dga.
As an explicit model of $F$ we use $\widetilde X$ defined as follows. We denote by $Fib(\loc (X)^{\ssim})$ the presheaf of groupoid of fiber functors of $\loc (X)^{\ssim}$, the category of semi-simple local systems on $X$. Precisely, for an algebra $R\in k-\Alg$, An object of $Fib(\loc(X)^\ssim)(R)$ is a morphism $\omega:\loc(X)^\ssim\to R-\proj\in \clcat$. and a morphism is a natural transformation preserving tensors. By the definition of $R-\proj$ the correspondence $R\mapsto   Fib(\loc(X)^\ssim)(R)$ is functorial.
Let $N(Fib(\loc (X)^{\ssim}))$ be the corresponding simplicial presheaf obtained by taking nerve in the objectwise manner. This is a fibrant model of $K(G^\red,1)$ in $\sprklp$ ($G=\pi_1(X)$). We have a natural morphism 
\[
p':X\longrightarrow N(Fib(\loc (X)^{\ssim}))
\]
such that $\pi_1(p'):\pi_1(X)\longrightarrow \pi_1N(Fib(\loc (X)^{\ssim}))$ is isomorphic to $G\longrightarrow G^{\red}$. We set   
\[
\widetilde X(R)_n=\{(\sigma,l)|\sigma\in X(R)_n,l\in Fib(\loc (X)^{\ssim})(\omega_{\sigma (0)},\omega_*)\}
\]
where $\omega_x:\loc (X)^\ssim\to R-\proj$ denotes the $R$-fiber functor comes from $x\in X(R)_0$. Note that $\widetilde X$ has a $G^{\red}$-action given by $g\cdot (\sigma, l)=(\sigma, g\circ l)$. We shall show $\ared(\widetilde X)$ is isomorphic to $\ared(X)$. $\ring(G^{\red})$ is considered as a local system on $X$ by
\[
X(R)_n\ni\sigma\longmapsto \ring(\underline{\homo}_{Fib(\loc (X)^{\ssim})}(\omega_{\sigma(0)},\omega_*)).
\]
Here, $\underline{\homo}_{Fib(\loc (X)^{\ssim})}(-,-)$ denotes the sheaf of natural transformations which is represented by an affine scheme over $R$. Note that if $X$ is cofibrant, $\widetilde X$ is also cofibrant.\\
A map $\phi:\ared(X)\longrightarrow \ared(\widetilde X)$ is defined as follows. An element of $\ared(X)$ is presented as $\sum_i\alpha_i\otimes f_i$, $\alpha_i\in\nabla (n,*)$, $f_i\in\ring(\underline{\homo}_{Fib(\loc (X)^{\ssim})}(\omega_*,\omega_{\sigma(0)}))$ locally on $\sigma\in X(R)_n$. 
We define an element $\phi(\sum_i\alpha_i\otimes f_i)$ to be $\sum_i\alpha_i\otimes l(f_i)\in\nabla(n,*)\otimes R$ on $(\sigma, l)$. Here, $l$ is considered as a map $\ring(\underline{\homo}_{Fib(\loc (X)^{\ssim})}(\omega_{\sigma(0)},\omega_*))\longrightarrow R$.\\
A map $\varphi:\ared(\widetilde X)\longrightarrow \ared(X)$ is defined as follows. $\ared(\widetilde X)$ has an $G^{\red}$-action induced from the action on $\widetilde X$. The corresponding comoduled map induce an isomorpism $\rho:\ared(\widetilde X)\cong(\ring (G^{\red})\otimes\ared(\widetilde X))^{G^{\red}}$. Via this isomorphism, an element of $\ared(\widetilde X)$ are presented as $\sum_jv_j\otimes a_j$, $v_j\in \ring(G^{\red})$, $a_j\in\ared(\widetilde X)$. We define an element $\varphi(\sum_jv_j\otimes a_j)$ to be $\sum_ja_j(\sigma,l)\otimes_R[(l\circ\iota\otimes\id)\circ \mu^*(v_j)]$ locally on $\sigma$ ($l\in Fib(\loc (X)^{\ssim})(\omega_{\sigma (0)},\omega_*)$ is arbitrarily fixed. ) Here,
\[
\mu^*:\ring(G^{\red})\cong \ring(\underline{\homo}(\omega_*,\omega_*))\longrightarrow \ring(\underline{\homo}(\omega_*,\omega_{\sigma(0)}))\otimes_R\ring(\underline{\homo}(\omega_{\sigma(0)},\omega_*))
\]
is the cocomposition and 
\[
\iota :\ring(\underline{\homo}(\omega_*,\omega_{\sigma(0)}))\longrightarrow\ring(\underline{\homo}(\omega_{\sigma(0)},\omega_*))
\] is the coinverse map. $\phi$ and $\varphi$ are morphisms of $G^{\red}$-equivariant dga and inverse to each other. Thus, $\ared(\widetilde X)\cong\ared(X)$.  Consider the case that $G$ is reductive and $X$ is a fibrant model of $K(G,1)$ in $\sprklp$. Clearly $F$ is contractible so $\ared(X)\simeq (G,k)$. and $\L\tdr(K)\cong \Tss(G^{\red},k)$ ($\cong \rep(G^\red)$).
Then, 
\[
\pi_n^{pre}(\R\langle\tdr(K)\rangle)(R)\cong[h_R\wedge S^n, \R\langle\tdr(K)\rangle]_{\sprkobjp}
\cong[\Tss(G,k),\tdr(h_R\wedge S^n)]_{\acldgc}
\]
Here, $h_R\wedge S^n\in\sprkp$ be an object defined by the following pushout diagram
\[
\xymatrix{h_R\ar[r]\ar[d]^{\id\times v}&{*} \ar[d]\\
h_R\times S^n\ar[r]&h_R\wedge S^n,}
\]
where $v$ is the base point of $S^n$. If $n\geq 2$, this is clearly $*$. If  $n=1$, The last term is bijective to the set 
\[
\hom_{\clcat/R-\proj}(\Tss(G,k)^c,\rep_R(\Z))/\sim
\]
Here $\rep_R(\Z)$ is the closed $k$-category defined as follows. 
\begin{enumerate}
\item An object of $\rep_R(\Z)$ is a pair $(V,r)$ of $V\in\vect$ and $R$-linear representation $r:\Z\to \gl_R(V\otimes_k R)$ of $\Z$, and
\item a morphism $f:(V_0,r_0)\to (V_1,r_1)$ is a morphism $f:V_0\to V_1$ of $\vect$ such that $f\otimes_k R$ is compatible with the actions.
\end{enumerate}
The augmentation $\omega_{\rep_R(\Z)}:\rep_R(\Z)\to R-\proj$ is given by the forgetful functor. $\sim$ is an equivalence relation such that $F_1\sim F_2$ if and only if there is a natural isomorphism $i:F_1\Rightarrow F_2$ such that $i$ preserves tensors and $(\omega_{\rep_R(\Z)})_*i$ is the identity. This set is bijective to $\aut^{\otimes}(\omega_{\Tss(G,k)^c}:\Tss(G,k)^c\to R-\proj)$ in the way that an action of $1\in \Z$ corresponds a natural automorphism. Thus we have $\pi_1^{pre}(\R\langle\tdr(K)\rangle)\cong\autbar^{\otimes}(\omega)\cong G$ and obtain (1) (see Appendix \ref{proobj}). \\
\indent By (1), Thm.\ref{thmbasic} and Lem.\ref{lempushout}, there is a commutative diagram in $\cldgck$:
\[
\xymatrix{
\Tss(G,k)\ar[r]\ar[d]&\Tss(G,\ared X)\ar[r]\ar[d]&\Tss(e,\ared X)\ar[d] \\
\tdr(K)^{\ssim}\ar[r]&\tdr(X)^{\ssim}\ar[r]&\tdr(QF)^{\ssim}}
\]
such that  the left and middle vertical arrows are quasi-equivalences. By the above assertion, the right vertical arrow is also a quasi-equivalence so the bottom horizontal sequence is a homotopy cofiber sequence and this imply the claim of (2).  
 
\end{proof}

\begin{proof}[Proof of Thm.\ref{thmsht}]
 Prop.\ref{lemiroiro} (5), Lem.\ref{lemfiberseq} and the long exact sequence of homotopy sheaves 
show the unit 
\[
u_X: X\longrightarrow\R\langle\L\tdr(X)\rangle
\]
is a local equivalence for $X\in\shtp$ Consider the commutative diagram in $\ho(\sprkobjp)$:
\[
\xymatrix{ 
\R\langle\L\tdr(X)\rangle\ar[d]^{i}\ar[r]^{u\qquad}&\R\langle\L\tdr(\R\langle\L\tdr(X)\rangle)\rangle\ar[d]^{\R\langle\L\tdr(i)\rangle}\\
R_{loc}\R\langle\L\tdr(X)\rangle \ar[r]^{u'\qquad}&\R\langle\L\tdr( R_{loc}\R\langle\L\tdr(X)\rangle)\rangle,}
\] 
where $u=u_{\R\langle\L\tdr(X)\rangle}$, $u'=u_{R_{loc}\R\langle\L\tdr(X)\rangle}$, and $i$ is a fibrant replacement in $\sprklp$. By Prop.\ref{lemiroiro},(3), $\R\langle\L\tdr(i)\rangle\circ u$ is an isomorphism so $u'\circ i$ is and we may regard $\R\langle\L\tdr(X)\rangle$ as a retract of $R_{loc}\R\langle\L\tdr(X)\rangle$ in $\ho(\sprkobjp)$. By a characterization of local objects, this imply $\R\langle\L\tdr(X)\rangle$ is also a local object. As a local equivalence between local objects is an objectwise equivalence, we see $u_X$ is an isomorphism. Thus $\L\tdr:\ho(\shtp)\longrightarrow \ho(\tann)$ is fully faithful. By Lem.\ref{lempushout} and \ref{lemfiberseq}, we see a morphism $f:T\to T'\in \tann$ is a quasi-equivalence if and only if $\R\langle f\rangle:\R\langle T'\rangle\to\R\langle T\rangle$ is a weak equivalence so $\L\tdr$ is essentially surjective. 
\end{proof}
The following is a corollary of Thm.\ref{thmsht}.
\begin{cor}[cf. Rem.4.43 of \cite{prid0}]\label{corsht}
Let $X\in\shtp$ be a schematic homotopy type and $\M$ be a minimal model of $\ared(X)$. Let $V^i$ denote the $i$-th indecomposable module of $\M$.  For $i\geq 2$, there exists an isomorphism of affine group schemes : $\pi_i(X)\cong(V^i)^\vee$. For  $i=1$, there is an isomorphism of affine schemes (without group structure) $\ru(\pi_1(X))\cong (V^1)^{\vee}$ Here, we regard $(V^i)^\vee$ as an affine group scheme whose coordinate ring is the polynomial ring on $V^i$. 
\end{cor}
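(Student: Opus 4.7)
The strategy is to pass to the equivalent category $\ho(\tannp)^{\op}$ via Thm.\ref{thmsht}, identify $\L\tdr(X)$ with $\T\M$ using Prop.\ref{lemiroiro}(1) together with Lem.\ref{lembasic}(2) (the minimal model $\M$ being quasi-isomorphic to $\ared(X)$ as reductive dga's), and then upgrade the $k$-point computation of Lem.\ref{lemhtpygr} to an isomorphism of representable affine (group) schemes by working out the $R$-point functors.

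For $i\geq 2$, I would compute the $R$-points of the affine group scheme $\pi_i(X)$ for each $R\in\kalg$ via the Quillen adjunction of Lem.\ref{lemqpair2}:
\[
\pi_i(X)(R) \;\cong\; [h_R\wedge S^i,\, \R\langle \T\M\rangle]_{\sprkobjp} \;\cong\; [\T\M,\, \L\tdr(h_R\wedge S^i)]_{\acldgc}.
\]
The crucial intermediate step is a K\"unneth-type identification $\L\tdr(h_R\wedge S^i)\simeq \T(e,\,R\otimes_k\adr(S^i))$ augmented over $\vect$, resting on two ingredients: first, $\L\tdr(h_R)\simeq \T(e,R)$, which uses that $\loc(h_R)$ is equivalent to $R{-}\proj$ and that $\cdr(h_R;-)$ is concentrated in degree zero; second, the fact that $\L\tdr$ turns the defining homotopy pushout for $h_R\wedge S^i$ into a homotopy pullback in $\acldgc$. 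Applying Lem.\ref{lemmor}(2) with $H=e$ then rewrites the hom-set as $[\M,\,R\otimes_k\adr(S^i)]_{\dgalg/k}$, and a standard induction up the minimal tower $\{\M(j)\}$ (using minimality of $\M$ and the fact that $R\otimes_k\adr(S^i)$ has cohomology $R$ concentrated in degrees $0$ and $i$) identifies this naturally in $R$ with $\hom_k(V^i,R)=(V^i)^{\vee}(R)$. The resulting iso of affine schemes upgrades to one of abelian unipotent affine group schemes, since the group structures on both sides come from the cogroup structure of $S^i$ and the $k$-linear structure on $V^i$ respectively.

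For $i=1$ I would reduce to the case of a pointed pro-unipotent affine stack. Let $F$ denote the homotopy fibre of the natural map $X\to K(\pi_1(X)^{\red},1)$; by Lem.\ref{lemfiberseq}, $F$ is a pointed affine stack with $\pi_1(F)\cong \ru(\pi_1(X))$, and $\L\tdr F\simeq \T(e,\ared(X))$. Forgetting the $\pi_1^{\red}$-equivariance of $\M$ yields a minimal Sullivan model of $\ared(F)$ with the same underlying vector space $V^1$ in degree $1$; Prop.\ref{lemiroiro}(5) together with the dga reformulation of Thm.\ref{thmaffine} then gives $F\cong \R|\M|$ as pointed affine stacks. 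The pro-unipotent group $\pi_1(F)$ has Lie algebra $(V^1)^{\vee}$ (with bracket dual to $d|_{V^1}\colon V^1\to\wedge^2 V^1$), whose coordinate ring as a mere affine scheme is $\mathrm{Sym}(V^1)$, proving $\ru(\pi_1(X))\cong(V^1)^{\vee}$ as affine schemes.

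The main obstacle is the K\"unneth identification $\L\tdr(h_R\wedge S^i)\simeq \T(e,R\otimes_k\adr(S^i))$: controlling $\L\tdr$ on non-constant simplicial presheaves such as $h_R\wedge S^i$ requires some care, especially since $\loc(h_R)\simeq R{-}\proj$ is not literally $\rep(e)$ based but only Morita-equivalent to it after extension of scalars. Once this identification is established, the rest of the argument is a formal combination of minimal model theory with Lem.\ref{lemhtpygr}, Lem.\ref{lemmor}, and the adjunction of Lem.\ref{lemqpair2}.
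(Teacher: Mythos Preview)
Your overall strategy matches the paper's proof closely: the paper also passes through Thm.\ref{thmsht}, computes $\pi_i^{pre}(X)(R)\cong[\L\tdr(X),\L\tdr(h_R\wedge S^i)]_{\acldgc}$, identifies the target via Lem.\ref{lemmor}, and for $i=1$ reduces to the homotopy fibre $F$ of $X\to K(\pi_1(X)^{\red},1)$ exactly as you propose.

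There is one concrete issue in your K\"unneth step. Your proposed model $\T(e,\,R\otimes_k\adr(S^i))$ has $\coh^0\cong R$, whereas $h_R\wedge S^i$ is a connected pointed presheaf (the copy of $h_R$ at the basepoint is collapsed), so $\L\tdr(h_R\wedge S^i)$ must have $\coh^0\cong k$. Relatedly, $R\otimes_k\adr(S^i)$ carries no natural augmentation to $k$ for general $R\in\kalg$, so the hom-set $[\M,\,R\otimes_k\adr(S^i)]_{\dgalg/k}$ is not well-posed. The paper sidesteps this by writing down directly the augmented $k$-dga $A^R_i$ with $(A^R_i)^0=k$, $(A^R_i)^i=R$, and zero elsewhere, and checking $\T(e,A^R_i)\simeq\L\tdr(h_R\wedge S^i)$; then $[\M,A^R_i]_{\dgalg/k}\cong\hom_k(V^i,R)$ is immediate from minimality. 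Your approach is easily repaired along these lines (take the homotopy pushout $k\leftarrow R\to R\otimes_k\adr(S^i)$ rather than $R\otimes_k\adr(S^i)$ itself), but as written the identification fails.
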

\begin{proof}
Let $i\geq 2$. Then, by Thm.\ref{thmsht} 
\[
\pi_i^{pre}(X)(R)\cong \pi_i^{pre}(\R\langle\L\tdr(X)\rangle)(R)\cong [h_R\wedge S^i,\R\langle\L\tdr(X)\rangle]_{\sprkobjp}\cong[\L\tdr(X),\L\tdr(h_R\wedge S^i)]_{\acldgc},
\]
see the proof of Lem.\ref{lemfiberseq} for the notation. If we define $A^R_i\in\dgalg/k$ by $(A^R_i)^0=k, (A^R_i)^i=R,$ and $(A^R_i)^j=0$ for $j\not=0,i$, we easily see $\T(e, A^R_i)\simeq \L\tdr(h_R\wedge S^i)$ so by Lem.\ref{lemmor}, $\pi_i^{pre}(X)(R)\cong[\M,A^R_i]_{\dgalg/k}\cong \hom_{k-\Mod}(V^i,R)$ and claim for $i\geq 2$ follows. The case of $i=1$ follows from a similar argument using the homotopy fiber $F$ of the map $X\to K(\pi_1(X)^\red,1)$. Note that $\pi_1(F)\cong\ru(\pi_1(X))$ and $\ared(F)\simeq\ared(X)$ (see the proof of Lem.\ref{lemfiberseq}).  
\end{proof}
We obtain the unpointed version of Thm.\ref{thmsht}
\begin{cor}\label{thmunpointedsht}
\textup{(1)} The derived adjunction $(\L\tdr ,\R\langle-\rangle):\ho(\sprkobj)\longrightarrow\ho(\dgcl)^\op$ induces  an equivalence between subcategories:
\[
\xymatrix{
\bb{L}\tdr:\ho(\sht) \ar@<3pt>[r]^{\sim}& \ho(\tann)^{\op}:\R\langle -\rangle.\ar@<3pt>[l]}
\]
\textup{(2)} Any connected  simplicial set $K\in\sset$ admits the schematization $\underline{K}\to (K\otimes k)^\sch$ in the unpointed category. It is realized as the unit of the adjunction
$\underline{K}\longrightarrow \R\langle \tdr(K)\rangle\in\ho(\sprkobj)$.
\end{cor}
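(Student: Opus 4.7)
My approach is to bootstrap both parts from the pointed results (Theorem \ref{thmsht} and Proposition \ref{lemiroiro}) by a base-point forgetting argument. Two observations are used throughout. First, every Tannakian dgc admits a fiber functor (Proposition \ref{thmbasic},(3), or Tannaka reconstruction of Appendix \ref{proobj}), so the forgetful functor $\acldgc\to\cldgc$ is essentially surjective onto $\tann$. Second, for a connected pointed $X\in\sprkp$ the underlying closed dgc of $\tdr(X)\in\acldgc$ is quasi-equivalent to the unpointed $\tdr(X_u)$ (remarked just before Lemma \ref{lemqpair2}); this comes from the defining pullback square for $\tdr(X)$ together with $\vect\stackrel{\sim}{\to}\tdr(\ast)$, since the strict pullback is homotopically correct when $\tdr(X_u)\to\tdr(\ast)$ is a fibration.

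For Part (1), I would first show that the unit $\eta_X:X\to\R\langle \L\tdr X\rangle$ is an isomorphism in $\ho(\sprkobj)$ whenever $X\in\sht$. By definition of a schematic homotopy type, one can choose a base point $x\in X(k)_0$ so that $(X,x)\in\shtp$, and Theorem \ref{thmsht} gives that $\eta_{(X,x)}$ is an isomorphism in $\ho(\sprkobjp)$; forgetting base points preserves weak equivalences, so (using the second observation to identify the underlying dgc's) $\eta_X$ is an isomorphism in $\ho(\sprkobj)$. For essential surjectivity of $\L\tdr$, equip $T\in\tann$ with a fiber functor $\omega$ by the first observation, obtain $(X,x)\in\shtp$ with $\L\tdr(X,x)\simeq(T,\omega)$ from Theorem \ref{thmsht}, and conclude that $X\in\sht$ satisfies $\L\tdr X\simeq T$ in $\ho(\tann)$.

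For Part (2), set $X_K:=\R\langle \tdr(K)\rangle$. Since $K$ connected makes $\underline{K}$ connected, the analogue of Proposition \ref{lemiroiro},(1) (reduced to the pointed case via the second observation) shows $\L\tdr\underline{K}$ is Tannakian; hence $X_K\in\sht$ by Part (1). For any $Y\in\sht$, combining the Quillen adjunction of Lemma \ref{lemqpair2}, the isomorphism $\eta_Y$ from Part (1), and the equivalence $\L\tdr X_K\simeq\L\tdr\underline{K}$ (counit of the Part (1) equivalence) yields
\[
\hom_{\ho(\sprkobj)}(X_K,Y) \;\cong\; \hom_{\ho(\cldgc)}(\L\tdr Y,\L\tdr\underline{K}) \;\cong\; \hom_{\ho(\sprkobj)}(\underline{K},Y),
\]
and by the triangle identity this composite coincides with precomposition by $\eta_{\underline{K}}$. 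Hence $\eta_{\underline{K}}:\underline{K}\to X_K$ is initial in $\underline{K}/\ho(\sht)$, which is precisely the unpointed schematization.

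The main obstacle is the second observation in the opening paragraph, namely the identification of the pointed and unpointed $\L\tdr$ on connected objects. Once that is in hand (and it is essentially stated in the paper), the rest of the argument is a formal manipulation of the pointed equivalence Theorem \ref{thmsht}, the Quillen adjunction of Lemma \ref{lemqpair2}, and the availability of fiber functors on any Tannakian dgc.
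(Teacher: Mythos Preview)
Your proposal is correct and follows essentially the same approach as the paper: the paper's proof is the single line ``This is clear from Thm.\ref{thmsht}'', and what you have written is a careful unpacking of exactly why the pointed equivalence descends to the unpointed one via base-point forgetting and the availability of fiber functors. The two ingredients you isolate---that every $T\in\tann$ admits a fiber functor, and that for connected pointed $X$ the underlying closed dgc of $\tdr(X)$ agrees with $\tdr(X_u)$---are precisely what makes the corollary ``clear'', so your argument is a faithful elaboration rather than an alternative route.
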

\begin{proof}
This is clear from Thm.\ref{thmsht}.
\end{proof}
\appendix
\section{Appendix}

\subsection{Variants of simplicial de Rham theorem }\label{derhamthm}
In this subsection, we state some variants of de Rham theorem for simplicial sets, which is used in sections \ref{derhamhtpy} and \ref{sht}. Their proofs are much similar to the proof in \cite[section 2, section 3]{bg}, so we only indicate  how to modify.

 \subsubsection{Twisted  de Rham theorem} 
 Let $K$ be a simplicial set and $\aloc$ be a local system on $K$. We denote by $\cspl(K;\aloc)$ the normalized  cochain complex of $K$ with $\aloc$ coefficients. An element of the degree $q$-part $\cspl^q(K;\aloc)$ is a function $u$ which assigns each $q$-simplex $\sigma\in K_q$ an element $u(\sigma)\in\aloc(\sigma)$ such that for any degenerate simplex $\sigma$, $u(\sigma)=0$. \\
\indent We have the Stokes map as usual:
\[
\rho_{K,\aloc}:\cdr(K;\aloc)\longrightarrow \cspl(K;\aloc)\in\nngc .
\]
The twisted de Rham theorem is the following 
\begin{prop}\label{propderham}
For a simplicial set $K$ and a local system $\aloc$ on $K$, $\rho_{K,\aloc}$ is a quasi-isomorphisms. 
\end{prop}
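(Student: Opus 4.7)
The plan is to follow the axiomatic / skeletal-induction strategy of Bousfield--Gugenheim, reducing the twisted statement to the untwisted one by exploiting the fact that any local system on a standard simplex is trivial.

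First I would verify that both functors $\cdr(-;\aloc)$ and $\cspl(-;\aloc)$ are \emph{extendable with local coefficients}, meaning that for every simplicial subset $i\colon L\hookrightarrow K$ and every local system $\aloc$ on $K$, the restriction map $i^{*}\colon F(K;\aloc)\to F(L;i^{*}\aloc)$ is surjective in each degree. For $\cspl$ this is immediate by extending a cochain by zero on simplices not in $L$. For $\cdr$, it follows from the existence of a $k$-linear splitting of the degreewise surjection $\nabla(p,q)\to \nabla(\partial p,q)$ already used in \cite{bg}, tensored with the identity on the fibers of $\aloc$; the latter is compatible with the face/degeneracy maps because $\aloc$ is a functor on $\Delta K^{\op}$.

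Next I would check the statement on the universal case $K=\Delta^{n}$. Since $\Delta^{n}$ is contractible and has trivial fundamental groupoid, any local system $\aloc$ on $\Delta^{n}$ is (non-canonically) isomorphic to the constant local system with value $V:=\aloc(\mathrm{id}_{[n]})$. Under such a trivialization one has canonical isomorphisms $\cdr(\Delta^{n};\aloc)\cong \cdr(\Delta^{n};k)\otimes_{k} V$ and $\cspl(\Delta^{n};\aloc)\cong \cspl(\Delta^{n};k)\otimes_{k} V$, and $\rho_{\Delta^{n},\aloc}$ is identified with $\rho_{\Delta^{n},k}\otimes \id_{V}$. The classical (untwisted) polynomial de Rham theorem of \cite{bg} shows both sides are quasi-isomorphic to $V$ concentrated in degree $0$, so $\rho_{\Delta^{n},\aloc}$ is a quasi-isomorphism, and the same argument handles $\partial \Delta^{n}$ by a further easy induction.

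Then I would extend to general $K$ by skeletal induction. Writing $K$ as the filtered colimit of its finite subcomplexes, and each finite subcomplex as built from the previous skeleton by pushouts $\partial\Delta^{n}\hookrightarrow \Delta^{n}$, extendability gives short exact sequences of complexes
\[
0\to F(K^{(n)}, K^{(n-1)};\aloc)\to F(K^{(n)};\aloc)\to F(K^{(n-1)};\aloc|_{K^{(n-1)}})\to 0
\]
for $F=\cdr$ and $F=\cspl$, compatible with $\rho$. Comparing the associated Mayer--Vietoris long exact sequences via the five lemma, using the simplex case as the base and the fact that both $\cdr$ and $\cspl$ turn filtered colimits of subcomplexes into limits of complexes (and commute with cohomology in such limits because the transition maps are surjective, so $\mathrm{lim}^{1}$ vanishes), one deduces that $\rho_{K,\aloc}$ is a quasi-isomorphism for every $K$.

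The main technical point to get right will be the Mayer--Vietoris / extendability step with coefficients: although on each attached cell the local system $\aloc|_{\Delta^{n}}$ trivializes, its restriction to $\partial\Delta^{n}$ must be compatibly identified with the local system already present on $K^{(n-1)}$, and the splitting of $\nabla(p,q)\to\nabla(\partial p,q)$ has to be tensored with a choice of trivialization in a way that respects face maps. This is the same bookkeeping as in \cite[\S 2-3]{bg} but carried fiberwise; it amounts to noting that the simplex category $\Delta K^{\op}$ is cofibered over the cells of $K$, so $\aloc$ contributes only a degreewise tensor factor and does not interfere with the acyclicity/extendability arguments.
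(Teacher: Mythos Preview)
Your proposal is correct and follows essentially the same Bousfield--Gugenheim strategy as the paper. The only difference is packaging: rather than running the skeletal induction and Mayer--Vietoris argument by hand, the paper observes that one can work in the slice category $\sset/K$ with models $\{\Delta^n\to K\mid n\geq 0\}$ in place of $\{\Delta^n\}$, so that the local system $\aloc$ is automatically pulled back along each model and the extendability/acyclic-models argument of \cite[\S2--3]{bg} applies verbatim without the fiberwise bookkeeping you flag in your last paragraph.
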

\begin{proof}
We have two functors:
\[
\cdr(-;\aloc), \ \cspl(-;\aloc):\sset/K\longrightarrow \nngc
\]
defined by 
\[
(\phi:L\to K)\longmapsto \cdr(L;\phi^*\aloc),\ \cspl(L;\phi^*\aloc).
\]
respectively.
We take $\{\Delta^n\to X|n\geq 0\}$ as models instead of $\{\Delta^n\}$ and apply the argument in \cite[section 2, section 3]{bg}. It is clear $\cdr(-;\aloc)$ and $\cspl(-;\aloc)$ is corepresentable and acyclic on models (in a suitably modified sense).
\end{proof}
\subsubsection{Twisted de Rham theorem for simplicial presheaves}
We use notations defined in subsection \ref{definitions}. 
Let $X\in\sprk$ be a simplicial presheaf and $\aloc $ be a local system on $X$. We define a non-negatively graded cochain complex $\cspl(X;\aloc)$ as follows. An element of $\cspl^q(X;\aloc)$ is a collection $\{u_R\}_{R\in k-\Alg}$ such that $u_R\in\cspl^q(X(R);\aloc_R)$ and for $f:R\to R'\in k-\Alg$ and $\sigma\in X(R)$, $f_*(u_R(\sigma))=u_{R'}(X_f(\sigma ))$. the differential is defined in the component-wise manner.\\
\indent We have a Stokes map
\[
\rho_{X,\aloc}:\cdr(X;\aloc)\longrightarrow \cspl(X;\aloc)\in\nngc.
\]
Let $I=\{h_R\times \partial\Delta^n\to h_R\times \Delta^n|R\in k-\Alg, n\geq 0\}$, where $h_R$ denotes the Yoneda embedding  of $Spec R$.
\begin{prop}\label{propderham2}
For an $I$-cell object $X\in\sprk$ and a local system $\aloc$ on $X$, the Stokes map $\rho_{X,\aloc}$ is a quasi-isomorphism.
\end{prop}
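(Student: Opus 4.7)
The plan is to proceed by cellular induction on the $I$-cell structure of $X$, using the classical twisted de Rham theorem (Proposition \ref{propderham}) as the base case. The overall structure mirrors the argument in \cite[\S 2--3]{bg}: check the generating cells, show both functors convert cell attachments into pullback squares of complexes with a surjection on one side, then propagate through transfinite compositions.

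First, I would verify the claim for the generating cells $h_R\times\Delta^n$ and $h_R\times\partial\Delta^n$. A local system $\aloc$ on $h_R\times L$ (where $L$ is a simplicial set regarded as a constant presheaf) is, by the compatibility condition with ring homomorphisms in the definition of $\loc$, completely determined by its restriction $\aloc_R|_{\{\id_R\}\times L}$, which is a local system of finitely generated projective $R$-modules on $L$. Unwinding the definitions, the natural restriction maps
\[
\cdr(h_R\times L;\aloc)\longrightarrow \cdr(L;\aloc_R|_{\{\id_R\}\times L}), \qquad \cspl(h_R\times L;\aloc)\longrightarrow \cspl(L;\aloc_R|_{\{\id_R\}\times L})
\]
are isomorphisms of complexes of $R$-modules intertwining Stokes maps (for $\cdr$, this is essentially the identification noted in the proof of Lemma \ref{lemqpair2}). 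Then the argument of Proposition \ref{propderham}, applied with coefficients in projective $R$-modules rather than $k$-vector spaces (the proof goes through verbatim since everything is $R$-linear), yields the base case.

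Second, I would show the two functors convert pushouts along generating cofibrations into pullback squares in $\nngc$ with the restriction map $h_R\times\Delta^n\to h_R\times\partial\Delta^n$ inducing a levelwise surjection. For a pushout square
\[
\xymatrix{
h_R\times\partial\Delta^n\ar[r]\ar[d]&X_\alpha\ar[d]\\
h_R\times\Delta^n\ar[r]&X_{\alpha+1}
}
\]
both $\cdr(-;\aloc|)$ and $\cspl(-;\aloc|)$ send it to a pullback square of complexes by their definition as subobjects of products indexed by simplices, and the vertical restrictions are levelwise surjective because polynomial forms on $\Delta^n$ surject onto those on $\partial\Delta^n$ (via extension of polynomials) and normalized cochains admit the obvious cochain-level extension. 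This gives matching Mayer--Vietoris long exact sequences, and naturality of $\rho$ together with a five-lemma argument propagates the quasi-isomorphism from $X_\alpha$ and the cell to $X_{\alpha+1}$.

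Third, I would handle transfinite compositions: both functors send filtered colimits of cell attachments to the corresponding inverse limits of complexes; by the previous step, the transition maps in these towers are surjective, so a Mittag--Leffler / $\lim^1$ argument shows the inverse limits compute the correct cohomology. Transfinite induction then gives the claim for all $I$-cell objects. The main obstacle is mostly bookkeeping — it lies in the first step, carefully verifying that a local system on $h_R\times L$ really is rigidly determined by its restriction to $\{\id_R\}\times L$ so that the two complexes reduce cleanly to the classical case; once this is established, the cellular induction is essentially formal.
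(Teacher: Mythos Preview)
Your argument is correct, but it organizes the proof differently from the paper. The paper follows the acyclic-models method of \cite[\S 2--3]{bg}: one takes $\{h_R\times\Delta^n\to X\}$ as the set of models in the over-category $\sprk/X$, checks that both $\cdr(-;\aloc)$ and $\cspl(-;\aloc)$ are corepresentable and acyclic on these models (the latter using exactly the identification $\cspl(h_R\times\Delta^n;\aloc')\cong\cspl(\Delta^n;\aloc_R|_{\id_R\times\Delta^n})$ you isolate in your first step), and then the abstract acyclic-models theorem produces the natural quasi-isomorphism in one stroke. You instead run a hands-on transfinite cellular induction: verify on cells, glue via Mayer--Vietoris and the five lemma at successor stages, and pass through limits via Mittag--Leffler. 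The underlying inputs are the same --- the reduction to the classical twisted de Rham theorem on cells, and the extension lemma for polynomial forms giving the needed surjectivity --- but your packaging avoids invoking the acyclic-models machinery at the cost of having to track surjectivity of the transition maps through the tower. The paper's route is terser once \cite{bg} is granted; yours is more self-contained and perhaps more transparent about exactly where the $I$-cell hypothesis is used.
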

\begin{proof}
The proof is similar to that of Prop.\ref{propderham}. In this case, we take $\{h_R\times \Delta^n\to X\}$ as models. Note that for a local system $\aloc'$ on $h_R\times \Delta^n$, $\cspl(h_R\times \Delta^n,\aloc')\cong \cspl(\Delta^n;\aloc_R|_{\id_R\times\Delta^n})$ so $\cspl(-;\aloc)$ is acyclic on models.
\end{proof}
\subsubsection{de Rham theorem for cubical sets}
Let $\cset$ be the category of cubical sets. An object of $\cset$ consists of a collection of sets $\{K_n\}_{n\geq 0}$, face maps $\partial^{\epsilon}_i:K_n\to K_{n-1}$ ($0\leq i\leq n$, $\epsilon=0, 1$), and degeneracy maps $s_i:K_n\to K_{n+1}$ ($0\leq i\leq n$) which satisfy the standard cubical identities. We denote by $\square ^n\in \cset$ the standard $n$-cube. We regard $\cset $ as a model category with the model structure given in \cite{cisinski,cube}, where trivial fibrations are precisely those which have right lifting property with respect to the maps $\partial \square^n\to\square^n\in\cset$ ($n\geq 0$, of course, $\partial\square^n$ is the boundary of $\square^n$).\\
\indent We denote by $\square (n,*)$ the  dg-algebra of $k$-polynomial forms on $\square ^n$. This is the commutative graded algebra over $k$ freely generated by $t_1,\dots ,t_n$ and $dt_1,\dots, dt_n$ with $\deg t_i=0$, $\deg dt_i=1$. (It is isomorphic to $\nabla(n,*)$.) For each $q\geq 0$, We regard $\square(*,q)$ as a cubical set (or cubical abelian group). Face maps and degeneracy maps are defined by the pullback of corresponding maps between the standard cubes. For example, $\partial _i^{\epsilon}:\square(n,q)\to\square(n-1,q)$ is defined as follows.
\[
\partial_i^{\epsilon}(t_j)=
\left\{
\begin{array}{ll}
t_j&j< i \\
\epsilon&j=i \\
t_{j-1}&j>i
\end{array}
\right.
\] 
We need the following.
\begin{lem}\label{lemcube}
For each $q\geq0$, the unique morphism $\square(*,q)\to *\in\cset$ is a trivial fibration.
\end{lem}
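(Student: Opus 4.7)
By the characterization of trivial fibrations in Cisinski's model structure on $\cset$, I need to verify the right lifting property with respect to each boundary inclusion $\partial\square^n \hookrightarrow \square^n$ for $n \geq 0$. Via the Yoneda lemma, a map $\square^n \to \square(*,q)$ is just an element of $\square(n,q)$, i.e., a polynomial $q$-form on the $n$-cube, and a map $\partial\square^n \to \square(*,q)$ is precisely a compatible family $\{\omega^\epsilon_i\}$ ($1 \leq i \leq n$, $\epsilon = 0,1$) of polynomial $q$-forms on the codimension-one faces, where compatibility means $\partial^{\epsilon'}_j\omega^\epsilon_i = \partial^\epsilon_{i-1}\omega^{\epsilon'}_j$ for $j < i$ (and the analogous identity for $j \geq i$). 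Thus the statement reduces to: every such compatible family extends to some $\omega \in \square(n,q)$ via $\partial^\epsilon_i\omega = \omega^\epsilon_i$.

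I would prove the extension statement by induction on $n$. The case $n = 0$ is trivial since $\partial\square^0 = \emptyset$. For the inductive step, I would extend in one variable at a time. Set $\alpha_\epsilon := \omega^\epsilon_1 \in \square(n-1,q)$, considered as a polynomial in $t_2,\dots,t_n$ and $dt_2,\dots,dt_n$, and form the linear interpolation
\[
\widetilde\omega := (1-t_1)\alpha_0 + t_1\alpha_1 \in \square(n,q).
\]
This element satisfies $\partial^\epsilon_1\widetilde\omega = \omega^\epsilon_1$ for $\epsilon = 0,1$ by construction. To correct the restrictions to the remaining faces $\{t_i = \epsilon'\}$ for $i \geq 2$, I use the compatibility of the original family: the difference $\omega^{\epsilon'}_i - \partial^{\epsilon'}_{i-1}\widetilde\omega$ is a polynomial $q$-form on $\square^{n-1}$ whose restriction to the faces $\{t_1 = 0\}$ and $\{t_1 = 1\}$ vanishes (by the compatibility with $\omega^\epsilon_1$), and whose restrictions to the remaining codim-one faces of $\square^{n-1}$ form a compatible system via the induction hypothesis applied on $\square^{n-1}$. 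Solving the lower-dimensional extension problem inductively and multiplying a chosen extension by $t_1(1-t_1)$-type polynomial factors (vanishing on $\{t_1=0,1\}$) to lift it to a correction term on $\square^n$, one assembles the desired $\omega$.

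The main obstacle is the bookkeeping in the correction step: one must ensure that the successive corrections introduced to fix the restrictions on $\{t_i = \epsilon'\}$ for $i = 2,\dots,n$ do not spoil the values already fixed on $\{t_1 = 0,1\}$ nor on the previously corrected faces. This is handled by always multiplying an extension produced by the inductive hypothesis by a polynomial of the form $\prod_{j \in J} t_j^{a_j}(1-t_j)^{b_j}$ (with exponents chosen so that the product vanishes on the already-correct faces and equals $1$ on the face being corrected), so that the correction is invisible on all previously-treated faces; this is the exact cubical analogue of the construction used by Bousfield--Gugenheim for $\nabla(*,q)$ in \cite[Section 1]{bg}. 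Once this combinatorial extension is in place, surjectivity onto compatible boundary data is established and the lifting property follows.
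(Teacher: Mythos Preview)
Your overall strategy---linear interpolation for one pair of opposite faces, then successive corrections for the remaining faces---is the same as the paper's, but your inductive bookkeeping is off and more complicated than necessary. The paper does \emph{not} induct on $n$; it fixes $n$ and runs a downward induction on the face index $l$. At each step one already has $y\in\square(n,q)$ with $\partial_i^\epsilon y=x_i^\epsilon$ for $i\geq l$, sets $z^\epsilon:=x_{l-1}^\epsilon-\partial_{l-1}^\epsilon y$, observes via the cubical identities that $\partial_i^{\epsilon'}z^\epsilon=0$ for all $i\geq l-1$, and then replaces $y$ by $y':=(1-t_{l-1})\,s_{l-1}z^0+t_{l-1}\,s_{l-1}z^1+y$. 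The cubical identities $\partial_i^\epsilon s_{l-1}=s_{l-1}\partial_{i-1}^\epsilon$ (for $i>l-1$) and $\partial_{l-1}^\epsilon s_{l-1}=\mathrm{id}$ immediately give $\partial_i^\epsilon y'=x_i^\epsilon$ for $i\geq l-1$, and the induction closes.

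By contrast, your ``induction on $n$'' does not directly furnish what the correction step needs: after matching the $t_1$-faces you still face an extension problem on $\square^n$ (with prescribed values on all of $\partial\square^n$), not on $\square^{n-1}$, so the inductive hypothesis as stated does not apply. One can rescue your outline by replacing the outer induction on $n$ with the inner induction on face index, at which point the products $\prod t_j^{a_j}(1-t_j)^{b_j}$ you propose are seen to be unnecessary: the single linear factor $t_{l-1}$ (resp.\ $1-t_{l-1}$) combined with the degeneracy $s_{l-1}$ already kills the correction on all previously treated faces, because $z^\epsilon$ itself vanishes there. This is exactly the ``fibrancy of simplicial abelian groups'' trick the paper invokes, and it makes the argument a two-line computation rather than the Bousfield--Gugenheim-style combinatorics you sketch.
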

\begin{proof}
We imitate the proof of fibrancy of simplicial abelian groups (see for example, \cite[Lem.3.4]{gj}). We shall show the morphism of the claim has right lifting property with respect to the maps $\partial \square ^n\to \square^n$ ($n\geq 0$). Suppose $2n$ elements $x_i^{\epsilon}\in \square(n-1,q)$, $i=1,\dots,n$, $\epsilon=0,1$ such that $\partial _i^{\epsilon_1}x_j^{\epsilon_2}=\partial_{j-1}x_i^{\epsilon_1}$ for $i<j$, are given. (This is equivalent to giving a map $\partial\square^n\to\square(*,q)\in\cset$.) We use induction. Let $1<l\leq n$. Suppose we have $y\in\square(n,q)$ such that for $l\leq \forall i\leq n$, $\forall \epsilon$, $\partial_i^{\epsilon}y=x_i^{\epsilon}$. (For $l=n$, put $y=(1-t_n)s_n(x_n^0)+t_ns_n(x_n^1)$.) \\
\indent Consider an element $z^{\epsilon}:=x_{l-1}^{\epsilon}-\partial^{\epsilon}_{l-1}y$. Cubical identities imply $\forall i\geq l-1$, $\forall \epsilon'$, $\partial_i^{\epsilon'}z^{\epsilon}=0$. Set $y':=(1-t_{l-1})s_{l-1}z^0+t_{l-1}s_{l-1}z^1+y$. Again cubical identities imply $\partial_i^{\epsilon}y'=x_i^{\epsilon}$ for $l-1\leq \forall i\leq n$, $\forall \epsilon$. Thus we can construct a lifting inductively.
\end{proof}
Let $K\in\cset $. The notion of a local system on $K$ is defined similarly to the case of simplicial sets. For a local system $\aloc$ on $K$, we define the de Rham complex $\cdr(K;\aloc )\in\nngc$ using $\square(*,*)$. Let $\ccube(K;\aloc)$ be the normalized cochain complex of $K$ with $\aloc$-coefficients. Explicitly, an element of $\ccube^q(K;\aloc)$ is a function $u$ which assigns each $q$-cube $\sigma\in K_q$ an element $u(\sigma)\in \aloc(\sigma)$ such that $u(\sigma)=0$ for a degenerate cube $\sigma$. The differential $d:\ccube^q(K;\aloc)\to\ccube^{q+1}(K;\aloc)$ is as usual, given by
\[
du=\sum_{i=1}^{q}(-1)^i[(\partial^0_i)^*u-(\partial^1_i)^*u].
\]
Here, $(\partial^{\epsilon}_i)^*u=\aloc (\partial^{\epsilon}_i)^{-1}\circ u\circ \partial^{\epsilon}_i$.
We have a Stokes map
\[
\rho_{K,\aloc}:\cdr(K;\aloc)\longrightarrow \ccube(K;\aloc)\in\nngc.
\]
\begin{prop}\label{propcube}
For a cubical set $K$ and a local system $\aloc$ on $K$, $\rho_{K,\aloc}$ is a quasi-isomorphism.
\end{prop}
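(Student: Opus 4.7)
The plan is to follow exactly the same acyclic-models strategy as in the proof of Prop.\ref{propderham}, with the standard simplex $\Delta^n$ replaced throughout by the standard cube $\square^n$ and with Lem.\ref{lemcube} playing the role that the fibrancy of simplicial abelian groups plays in the simplicial case. More precisely, I would consider the two functors
\[
\cdr(-;\aloc),\ \ccube(-;\aloc):\cset/K\longrightarrow \nngc,\qquad (\phi:L\to K)\longmapsto \cdr(L;\phi^*\aloc),\ \ccube(L;\phi^*\aloc)
\]
and take as models the set $\{\square^n\to K\mid n\geq 0\}$ of all cubes of $K$. The Stokes map $\rho_{-,\aloc}$ is a natural transformation between these two functors, and it suffices to check corepresentability and acyclicity on models.

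First I would verify acyclicity on models. For a single cube $\square^n$ with a local system $\aloc'$ (necessarily constant up to isomorphism, since $\square^n$ is contractible as a cubical set) one needs $H^*(\cdr(\square^n;\aloc'))$ and $H^*(\ccube(\square^n;\aloc'))$ to be concentrated in degree $0$ and equal to $\aloc'$ there. The cubical case of the second statement is classical. For the first, the Poincar\'e lemma reduces to showing $H^*(\square(n,*))=k$ concentrated in degree $0$; this is a direct computation (each $t_i$ integrates to a primitive of $dt_i$) and can also be extracted from Lem.\ref{lemcube}, which says that $\square(*,q)\to *$ is a trivial fibration in $\cset$ and hence that, after passing to normalized chains, the associated complex is acyclic in positive degrees. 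That $\rho$ induces an isomorphism in degree $0$ on a single cube is immediate from the definitions.

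Next I would check corepresentability in the appropriate sense: both functors send a cubical set $L$ to the equalizer of the product over non-degenerate cubes of its values on these cubes. For $\ccube(-;\aloc)$ this is built into the definition (a normalized cochain is exactly a function on non-degenerate cubes compatible with faces). For $\cdr(-;\aloc)$, the compatibility with limits in $\cset$ is immediate from the definition via the end $\int_{\square L}\square(*,q)\otimes\aloc$; here Lem.\ref{lemcube} is needed to guarantee that local forms defined on each cube can be extended along face inclusions, i.e., that the analogue of the fibrancy argument used in \cite[\S 2]{bg} for $\nabla(*,*)$ goes through. Once both functors are shown to be corepresentable and acyclic on the cube models, the acyclic models theorem (in the form used in \cite[\S 2,\S 3]{bg}) produces natural chain homotopy inverses to $\rho_{K,\aloc}$, concluding the proof.

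The only genuinely non-formal step is the extension/fibrancy argument underlying corepresentability of $\cdr(-;\aloc)$; this is where Lem.\ref{lemcube} is essential, and it is the analogue of the main technical input in the simplicial case of \cite{bg}. Everything else is a mechanical transcription of the simplicial proof with $\Delta^n$ replaced by $\square^n$ and $\nabla(*,*)$ by $\square(*,*)$.
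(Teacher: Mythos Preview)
Your proposal is correct and follows exactly the approach the paper takes: the paper's proof simply says to imitate the acyclic-models argument of Prop.\ref{propderham} with cubes in place of simplices, using Lem.\ref{lemcube} to verify that $\cdr(-;\aloc)$ is corepresentable. Your identification of Lem.\ref{lemcube} as the cubical substitute for the extendability of $\nabla(*,*)$ in \cite{bg} is precisely the point.
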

\begin{proof}
The proof is similar to that of Prop.\ref{propderham}. We use Lem.\ref{lemcube} to prove $\cdr(-;\aloc)$ is corepresentable.
\end{proof}
We can define an adjoint pair
\[
\xymatrix{
\tdr:\cset \ar@<3pt>[r]& (\cldgc)^{op}:\langle-\rangle.\ar@<3pt>[l]}
\]
We can see this is a Quillen pair, using Lem.\ref{lemcube} and Prop.\ref{propcube}. This fact is implicitly used in the proof of Lem.\ref{hirschlem}.

\subsection{Tannakian theory}\label{proobj}
In this subsection, we summarize Tannakian theory of \cite{dmos}. It states a duality between affine group schemes and certain closed $k$-categories. The category corresponding to a group scheme is the category of finite dimensional representations of it. We give a characterization of such categories and describe how the group scheme is recovered.\\
\indent  Let $\vect'$ be the category of all finite dimensional $k$-vector spaces and linear maps.
\begin{defi}
A closed $k$-category $T\in \clcat$ (see Def.\ref{defofcldgc}) is said to be  a neutral Tannakian category if it satisfies the following conditions.
\begin{enumerate}
\item $T$ is an Abelian category.

\item $\homo _T(\mathbf{1},\mathbf{1})\cong k$.
\item There exists a morphism $\omega:T\to\vect'\in \clcat$ which is faithful and exact.
\end{enumerate}
\end{defi}
In the above definition, we use $\vect'$ instead of $\vect$ in order to make neutral Tannakian categories stable under equivalences of $\clcat$. (Recall that $\vect$ is a small full subcategory of $\vect'$ such that the natural inclusion $\vect\to \vect'$ is an equivalence, see the paragraph above Def.\ref{defacldgc}.)
\begin{exa}
Let $\gam$ (resp. $G$) be a discrete group (resp. an affine group scheme over $k$). The closed $k$-category of finite dimensional $k$-representations of $\gam$ (resp. $G$) $\rep(\gam)$ (resp. $\rep(G)$) is a neutral Tannakian category.
\end{exa}
Let $k-\Alg$ denote the category of commutative and unital $k$-algebras and $\grp$ denote the category of groups.
\begin{exa}
Let $\underline{G}:\kalg\to\grp$ be a functor. A finite dimensional representation of $\underline{G}$ is a pair of a vector space $V\in \vect$ and a natural transformation $r_{-}:\underline{G}\Longrightarrow \underline{\mathrm{GL}}(V):\kalg \to \grp$, where $\underline{\mathrm{GL}}(V)$ is the functor given by $\kalg\ni R\mapsto \mathrm{GL}_R(V\otimes_kR)\in\grp$. There is an obvious notion of morphisms between finite dimensional representations of $\underline{G}$.  We denote the category of finite dimensional representations of $\underline{G}$ by $\mathrm{Rep}(\underline{G})$ and the forgetful functor $\rep(\underline{G})\to \vect$ by $\omega_G$.  $\mathrm{Rep}(\underline{G})$ has an closed tensor structure such that $\omega_G$ is a morphism of closed $k$-categories. Then $\mathit{Rep}(\underline{G})$ is a neutral Tannakian category. If $\underline{G}$ is represented by an affine group scheme $G$, $\rep(\underline{G})$ is equivalent to the category $\rep(G)$.
\end{exa}
For a neutral Tannakian category $T$ and a morphism $\omega:T\longrightarrow \vect\in \clcat$ which is exact and faithful, we define a functor $\autbar^{\otimes} (\omega):k-\Alg \to\grp$  by 
\[
\kalg\ni R\longmapsto \aut^{\otimes} (\omega\otimes _kR)\in\grp.
\]
Here $\aut ^\otimes(\omega\otimes_kR)$ is the group of tensor preserving natural isomorphisms  $\alpha: \omega\otimes _kR\Rightarrow \omega\otimes _kR: T\longrightarrow R-\mathrm{Mod}$ ($\omega \otimes_kR:T\to R-\mathrm{Mod}$ is the morphism from $T$ to the closed $k$-category of $R$-modules defined by $T\ni t\mapsto \omega (t)\otimes_kR\in R-\mathrm{Mod}$). We define a morphism of closed $k$-categories $\widetilde\omega :T\to \rep(\autbar^{\otimes}(\omega))$ by 
\[
T\ni t\longmapsto (\omega (t),ev(t):\autbar^{\otimes}(\omega )\Longrightarrow \underline{\mathrm{GL}}(\omega(t)))\in \mathrm{Rep}(\aut^{\otimes}(\omega)),
\]
where $ev(t)$ is the evaluation of elements of $\aut^{\otimes}(\omega )$ at $t$.
\begin{thm}[Thm.2.11 of \cite{dmos}] We use the above notations. \\
(1) The functor $\autbar^{\otimes}(\omega )$ is represented by an affine group scheme, which is called the \textup{Tannakian dual of} $(T,\omega )$.\\
(2) The above morphism $\widetilde\omega :T\longrightarrow \rep(\autbar^{\otimes}(\omega))$ is an equivalence of categories.\\
(3) Suppose $(T,\omega )=(\repg ,\omega_G)$ for some affine group scheme $G$. Let $\underline{G}$ be the functor corresponding to $G$. There is a natural isomorphism $\underline{G}\cong \autbar^{\otimes}(\omega)$ defined by $\underline{G}(R)\ni g\mapsto g\cdot \in \aut^{\otimes}(\omega _G\otimes _kR)$.
\end{thm}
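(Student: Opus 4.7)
The plan is to reconstruct the group scheme $\autbar^{\otimes}(\omega)$ as the spectrum of a Hopf algebra built from "matrix coefficients" of $\omega$, and then to verify by a direct diagram-chasing argument that the reconstructed representation category recovers $T$.

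For part (1), I would first observe that for each $V\in T$, the functor $R\mapsto \End_R(\omega(V)\otimes_k R)$ is representable by $\mathrm{Spec}\,S^{\bullet}(\omega(V)\otimes \omega(V)^{\vee})$. The functor $R\mapsto \mathrm{Nat}(\omega\otimes_k R,\omega\otimes_k R)$ is the subfunctor of $\prod_V \End_R(\omega(V)\otimes R)$ cut out by the naturality equations indexed by morphisms in $T$ and the further tensor-compatibility equations; these are all closed conditions, so one obtains an affine subscheme. The ring of functions is most cleanly described as the coequalizer
\[
B \;=\; \mathrm{coeq}\!\left(\bigoplus_{f\colon V\to W}\omega(V)\otimes \omega(W)^{\vee}\rightrightarrows \bigoplus_{V}\omega(V)\otimes \omega(V)^{\vee}\right),
\]
equipped with the Hopf algebra structure induced by composition (comultiplication) and tensor product (multiplication); antipodes come from duals in $T$. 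Then $\autbar^{\otimes}(\omega)=\mathrm{Spec}\,B$ as functors to groups. The finite generation needed for affineness of the group scheme is replaced by a pro-structure built from the Tannakian subcategories $\langle V\rangle\subset T$ generated by each object.

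For part (2), faithfulness of $\widetilde{\omega}$ is immediate from faithfulness of $\omega$. Fullness and essential surjectivity proceed by identifying finite-dimensional $B$-comodules with objects of $T$. The key lemma is that for any finite-dimensional comodule $(W,\rho\colon W\to W\otimes B)$, the explicit description of $B$ yields a factorization of $\rho$ through $W\otimes \omega(V)\otimes \omega(V)^{\vee}$ for some $V\in T$; adjointing this produces a morphism $W\otimes \omega(V)^{\vee\vee}\to \omega(V)$, and one shows $W\cong \omega(V')$ for a suitable subquotient $V'\subset V$, using exactness of $\omega$ and the Abelian structure of $T$. The same diagram-chase handles morphisms, giving fullness. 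For part (3), the natural map $G\to \autbar^{\otimes}(\omega_G)$ is a closed immersion because $\mathcal{O}(G)$ is the filtered colimit of its finite-dimensional subcomodules, so an element of $G(R)$ acting trivially on every $V\in\rep(G)$ must be the identity; conversely, surjectivity follows by applying part (2) on the $\rep(G)$ side, which identifies both sides as having equivalent categories of representations.

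The main obstacle will be the essential surjectivity in part (2): producing, from an abstract $B$-comodule $W$, a specific $V\in T$ with $\omega(V)\cong W$. This is where the exactness and faithfulness hypotheses on $\omega$ are used crucially, together with the fact that $T$ admits kernels and images (so that subquotients of $V$ actually exist in $T$). A secondary technical subtlety is keeping track of the pro-structure when passing from each Tannakian subcategory $\langle V\rangle$ (whose dual is a genuine algebraic group scheme of finite type) to the colimit, and verifying that the tensor-compatibility equations define a \emph{subgroup} scheme rather than merely a submonoid — this requires invoking the rigidity (existence of duals) of $T$ so that every tensor-preserving natural endomorphism of $\omega$ is automatically invertible.
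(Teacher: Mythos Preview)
The paper does not give its own proof of this statement: it is quoted verbatim as Thm.~2.11 of \cite{dmos} in the appendix and used as a black box throughout. So there is nothing in the paper to compare your argument against.

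That said, your sketch is the standard Deligne--Milne argument and is essentially correct. One small point worth tightening: in part (2) you say the comodule structure map $\rho\colon W\to W\otimes B$ factors through $W\otimes \omega(V)\otimes\omega(V)^{\vee}$ for \emph{some} $V$, and then produce $V'$ as a subquotient of $V$. This is right, but the cleanest way to organize it is to first reduce to the algebraic (finite-type) case by working inside a single Tannakian subcategory $\langle V\rangle$; there the dual group is a genuine algebraic subgroup of $\mathrm{GL}(\omega(V))$, and every comodule is a subquotient of a sum of copies of $\omega(V)$ by the comodule-finiteness lemma. Your remark about rigidity forcing tensor-endomorphisms of $\omega$ to be invertible is exactly the point that makes $\autbar^{\otimes}(\omega)$ a group scheme rather than just a monoid scheme, and it is worth stating as a separate lemma rather than a parenthetical.
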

\begin{cor}
 The pro-algebraic completion $\gam^\alg$ of a discrete group $\gam$ is isomorphic to the Tannakian dual of $\rep(\gam)$ which is equipped with the natural "evaluation" $\gam\to\aut^\otimes(\omega)=\autbar^\otimes(\omega)(k)$ given by 
 \[
 \gam\ni\gamma\longmapsto [\rep(\gam)\ni(V,r)\mapsto (r(\gamma):V\to V)].
 \]
  The maximal reductive quotient $G^\red$ of an affine group scheme $G$ is isomorphic to the Tannakian dual of the full subcategory of $\rep(G)$ consisting of semi-simple representations. (This is closed under tensors and internal homs.)
\end{cor}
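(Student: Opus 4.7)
The plan is to deduce both statements directly from the preceding theorem (Thm.~2.11 of \cite{dmos}) by using the defining universal properties of $\gam^\alg$ and $G^\red$ recalled in the Notations subsection to identify $\rep(\gam)$ (respectively the semi-simple subcategory of $\rep(G)$) with the representation category of a known affine group scheme, and then quoting part (3) of the theorem. Neither half requires real work beyond chasing these definitions; the main subtlety is just to verify that the fiber functors match and that the Tannakian dual is invariant under equivalence of pairs $(T,\omega)$.

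For the first claim I would argue as follows. By the definition of the pro-algebraic completion recalled at the end of the Notations subsection, the pullback functor along $\psi_\gam:\gam\to\gam^\alg(k)$ gives a bijection between finite-dimensional representations of $\gam^\alg$ and those of $\gam$. This pullback is $k$-linear, exact, and preserves tensor and internal hom, so it is an equivalence of neutral Tannakian categories $\rep(\gam^\alg)\simeq\rep(\gam)$ intertwining the forgetful fiber functors $\omega_{\gam^\alg}$ and $\omega_\gam$. Part (3) of the cited theorem, applied to $G=\gam^\alg$, identifies $\autbar^\otimes(\omega_{\gam^\alg})$ with $\gam^\alg$; transporting this identification along the equivalence above yields $\autbar^\otimes(\omega_\gam)\cong\gam^\alg$. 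Finally, the evaluation map $\gamma\mapsto[(V,r)\mapsto r(\gamma)]$ of the corollary corresponds under this identification to $\psi_\gam$, which is the defining map.

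For the second claim, let $T^\ssim\subset\rep(G)$ denote the full subcategory of semi-simple representations. One first checks that $T^\ssim$ inherits a closed tensor structure: this follows because $T^\ssim$ is precisely the essential image of the pullback along $G\to G^\red$, which is a morphism in $\clcat$, so $T^\ssim$ is stable under $\otimes$ and $\inhom$. The Notations subsection states that this same pullback induces a bijection between finite-dimensional representations of $G^\red$ and semi-simple representations of $G$; this bijection is an equivalence $\rep(G^\red)\simeq T^\ssim$ in $\clcat$ compatible with the forgetful fiber functors. Applying part (3) of the theorem to $G^\red$ identifies its Tannakian dual with $G^\red$ itself, and transporting along the equivalence gives the Tannakian dual of $(T^\ssim,\omega)$ as $G^\red$.

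The only conceptual point worth stressing is the invariance of the construction $(T,\omega)\mapsto\autbar^\otimes(\omega)$ under equivalences of neutral Tannakian categories compatible with fiber functors; this is immediate from the definition of $\aut^\otimes$ as tensor-preserving natural automorphisms. Given that, both halves of the corollary are formal consequences of the theorem, so there is no substantive obstacle, and the proof will be essentially a one-line reduction in each case.
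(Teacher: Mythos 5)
Your proof is correct, and it follows exactly the intended route: the paper states this corollary without any written proof, treating it as an immediate consequence of Thm.~2.11 of \cite{dmos} together with the universal properties of $\gam^\alg$ and $G^\red$ recalled in the Notations subsection. Your argument simply makes that omitted deduction explicit (including the two points worth checking --- that the pullback functors are closed-tensor equivalences compatible with the fiber functors, and that $\autbar^{\otimes}(\omega)$ is invariant under such equivalences), so there is nothing to add.
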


\section*{Acknowledgements}
The author is grateful to Masana Harada for many valuable discussions and comments to improve presentations of the paper. He also thank Daisuke Kishimoto for letting him know the book \cite{grimor}. He thank Akira Kono for constant encouragement. He thank Takuro Mochizuki and the members of Homotopical Algebraic Geometry seminar, especially,  Isamu Iwanari and Hiroyuki Minamoto,  for attention to this work.

\end{document}